\documentclass[letterpaper]{amsart}
\usepackage{ifpdf}
\ifpdf
 \usepackage[implicit,naturalnames,hyperfootnotes=false,final]{hyperref}
 \usepackage[pdftex]{color}
\else
 \usepackage{color}
 \gdef\texorpdfstring#1#2{#1}
\fi
\usepackage{lmodern}
\usepackage{xcolor}
\usepackage{enumitem}
\usepackage{amsmath}
\usepackage{amssymb}
\usepackage{amsthm}
\usepackage{bbm}
\usepackage{amsfonts}
\usepackage{mathrsfs}
\usepackage{calligra}
\usepackage{dsfont}
\usepackage{nicefrac}
\usepackage{esint}
\usepackage{mathtools}
\usepackage{xparse}
\usepackage{calc}
\usepackage[nostandardtensors]{simpletensors}
\usepackage{partial}
\usepackage{bracket-hack}

\NewDocumentCommand\regsphere{sO{\kappa}D<>{2|\mass|}D(){\Aradius}mO{p}}
{\mathcal R^{\hspace{-.05em}#6}_{\;#2\IfBooleanTF{#1}\relax{,#4}}(#3,#5)}
\ifx\undef\NotDefinedCommand\gdef\undef#1{\let#1\relax}\fi
\newtheoremstyle{mytheorem}{3pt}{}{\itshape}{}{\bfseries}{\nopagebreak\newline}{.5em}{}
\newtheoremstyle{mydefinition}{3pt}{}{}{}{\bfseries}{\nopagebreak\newline}{.5em}{}
\gdef\mytheorem{\theoremstyle{mytheorem}}
\gdef\mydefinition{\theoremstyle{mydefinition}}
\gdef\mytheoremcounter{theorem}
\mytheorem
\newtheorem{theorem}{Theorem}[section]
\newtheorem{corollary}[\mytheoremcounter]{Corollary}
\newtheorem{lemma}[\mytheoremcounter]{Lemma}

\newtheorem{proposition}[\mytheoremcounter]{Proposition}

\mydefinition
\newtheorem{definition}[\mytheoremcounter]{Definition}

\theoremstyle{remark}
\newtheorem{remark}[\mytheoremcounter]{Remark}

\gdef\ii{I}\gdef\ij{J}\gdef\ik{K}\gdef\il{L}
\gdef\oi{i}\gdef\oj{j}\gdef\ok{k}

\NewDocumentCommand\outsymbol{om}{\overline{\IfValueTF{#1}{#1{#2}}{#2}}}
\NewDocumentCommand\unisymbol{om}{\widehat{\IfValueTF{#1}{#1{#2}}{#2}}}

\makeatletter
\newdimen\middle@width
\newcommand*\phantomas[3][c]{\ifmmode\makebox[\widthof{$#2$}][#1]{$#3$}\else\makebox[\widthof{#2}][#1]{#3}\fi}
\NewDocumentCommand\tracefree{sm}
{\IfBooleanTF{#1}{\fixedheighttracefree{#2}}{\mathring{#2}}}
\NewDocumentCommand\fixedheighttracefree{om}
{\IfValueTF{#1}%
 {\setlength\middle@width{\widthof{\ensuremath{#1#2}}/3}%
  #1#2\hskip-\middle@width\phantomas\relax{\tracefree{\ }}\hskip\middle@width}%
 {\mathchoice{\fixedheighttracefree[\displaystyle]{#2}}{\fixedheighttracefree[\textstyle]{#2}}{\fixedheighttracefree[\scriptstyle]{#2}}{\fixedheighttracefree[\scriptscriptstyle]{#2}}}}
\NewDocumentCommand\mean{om}
{\IfValueTF{#1}%
 {\setlength\middle@width{\widthof{\ensuremath{#1#2}}}%
  \phantomas{#1#2}{{#1\hspace{-.1em}\textbf{\small$\boldsymbol\backslash$}\!}}%
  \hskip-\middle@width#1#2}%
 {\mathchoice{\mean[\displaystyle]{#2}}{\mean[\textstyle]{#2}}{\mean[\scriptstyle]{#2}}{\mean[\scriptscriptstyle]{#2}}}}
\gdef\meanH{\hspace{-.08em}\mean{\hspace{.08em}\H}}

\makeatother
\NewTensor[\outsymbol]\mass m
\let\oldPhi\Phi\RenewTensor\Phi\oldPhi
\let\oldphi\varphi\RenewTensor\varphi\oldphi
\NewTensor[\outsymbol]\outPhi\oldPhi
\let\oldPsi\Psi\RenewTensor\Psi<\!>\oldPsi
\NewTensor[\outsymbol]\outPsi\oldPsi
\NewTensor[\outsymbol]\outx x
\NewTensor[\outsymbol]\outy y
\NewTensor[\outsymbol]\outp p
\gdef\outsigma{\Hradius}

\gdef\schwarzs{\mathcal S}
\gdef\schwarzoutg{\outg[\schwarzs]}
\gdef\euclideane{e}
\gdef\eukg{\g[\euclideane\hspace{.05em}]}
\gdef\euknu{\nu[\euclideane\hspace{.05em}]}
\gdef\eukoutg{\outg[\euclideane]}
\gdef\euklevi{\levi[\euclideane\hspace{.05em}]}
\gdef\sphlevi{\levi[\sphg*\hspace{.05em}]}
\gdef\rad{|\outx|}
\gdef\radoutsigma{|\outx_\outsigma|}
\gdef\volume#1{\left|#1\right|}
\gdef\lieD#1#2{\mathfrak L_{#1}#2}
\undef\c\NewDocumentCommand\c{sG{c}}{\IfBooleanTF{#1}{#2}{\IndexSymbol*{#2}}}
\gdef\cSob{\c{c_S}}
\NewDocumentCommand\Cof{G{C}d<>d()}
{\IfValueTF{#3}{\Cof{#1}<\IfValueTF{#2}{#2,}\relax{#3}>}{\CofSnd{#1\IfValueTF{#2}{[#2]}\relax}}}
\NewDocumentCommand\CofSnd{G{C}d<>o}
{\IfValueTF{#3}{\CofSnd{#1}<\IfValueTF{#2}{#2,}\relax{#3}>}{\mathop{#1\IfValueTF{#2}{(#2)}\relax}}}

\NewTensor\M[\hspace{-.05em}]<\hspace{-.05em}>\Sigma
\NewTensor[\outsymbol]\outM[\hspace{-.025em}]{\textrm M}
\NewTensor[\unisymbol]\uniM[\hspace{-.025em}]{\textrm M}
\NewTensor[\outsymbol]\outexp{\text{\normalfont exp}}
\NewTensor[\unisymbol]\uniexp{\text{\normalfont exp}}
\NewTensor*\graphf[\!]f
\NewTensor\graphF F
\NewTensor\sphg{\Omega}
\NewTensor*\conformalf[\hspace{-.05em}]<\hspace{-.05em}>v
\NewDocumentCommand\conffst{O{\relax}md<>}
{\IfValueTF{#3}{\confthd{#1}{#2}{#3}}{\confsnd[#1]{#2}}}
\NewDocumentCommand\confsnd{O{\relax}mt_}
{\IfBooleanTF{#3}{\confthd{#1}{#2}}{#2}}
\gdef\confthd#1#2#3{#2_{#1#3}}
\gdef\conf{\conffst u}
\gdef\conft{\conffst v}
\gdef\confth{\conffst w}
\gdef\confmug{\conffst\mu}
\NewDocumentCommand\confgauss{o}
{\IfNoValueTF{#1}\relax{{}^{#1\!}}\conffst[\hspace{.1em}]{\gaussfont\gausssymbol}}
\gdef\cmu{\c_\mu}
\NewTensor*\rnu[\hspace{-.05em}]u
\NewTensor*\meanrnu[\hspace{-.05em}]{\mean u}
\NewTensor\rbeta[\!]\beta
\NewTensor[\MakeSymbol<>\vec]*\centerz[\!]<\hspace{-.05em}>z
\undef\metric\NewTensor[\newmathcal]\metric[\hspace{-.15em}]<\hspace{.05em}>g
\let\g\metric
\NewTensor[\newmathcal]\h[\!\!]<\hspace{.05em}>h
\NewTensor[{\outsymbol[\newmathcal]}]\outmetric<\hspace{.05em}>g
\let\outg\outmetric
\NewTensor[{\unisymbol[\newmathcal]}]\unimetric[\!\!]<\hspace{.05em}>g
\let\unig\unimetric
\NewTensor[\newmathcal]\riem R
\let\rc\riem
\NewTensor[{\outsymbol[\newmathcal]}]\outriem R
\let\outrc\outriem
\NewTensor[\normalfont\textrm]\ricci{Ric}
\let\ric\ricci

\NewTensor[{\outsymbol[\normalfont\textrm]}]\outricci{Ric}
\let\outric\outricci
\NewTensor[{\unisymbol[\normalfont\textrm]}]\uniricci{Ric}
\let\uniric\uniricci
\NewTensor[\newmathcal]\scalar[\!]S
\NewTensor[{\outsymbol[\newmathcal]}]\outscalar S
\let\outsc\outscalar
\NewTensor[{\unisymbol[\newmathcal]}]\uniscalar S
\let\unisc\uniscalar
\gdef\gausssymbol{K}
\gdef\gaussfont{\newmathcal}
\NewTensor[\gaussfont]\gauss<\!>\gausssymbol
\NewTensor[\textrm]\II k

\let\zFund\k
\NewTensor[\tracefree*]\IItrf[\!]{\textrm k}

\let\zFundtrf\ktrf
\NewTensor[{\outsymbol[\textrm]}]\outII[\hspace{-.05em}]k

\let\outzFund\outk
\NewTensor[\mathcal]\mc[\!]<\hspace{-.05em}> H
\let\H\mc
\NewTensor[{\outsymbol[\mathcal]}]\outmc[\!]<\hspace{-.05em}>H
\let\outH\outmc
\let\oldnu\nu
\NewTensor*\normal[\hspace{-.05em}]\oldnu
\let\nu\normal
\NewTensor[\outsymbol]\timenormal[\!]\vartheta
\let\tv\timenormal

\NewTensor*\laplace[\!]\Delta
\NewTensor[\outsymbol]*\outlaplace[\!]\Delta
\NewTensor\Hess[\!]{\text{Hess}}
\NewTensor\Hesstrf[\!]{\text H\tracefree{\text{es}}\text s}
\NewTensor[\outsymbol]\outHess[\!]{\text{Hess}}
\undef\div\NewTensor\div[\!]{\text{div}}
\NewTensor[\outsymbol]\outdiv[\!]{\text{div}}
\NewTensor*\tr[\!]{\text{tr}}
\NewTensor[\outsymbol]*\outtr{\text{tr}}
\NewTensor\levi<{\MakeSymbol<>{\!}}>{\MakeSymbol<\Gamma>\nabla}
\NewTensor[\outsymbol]\outlevi<{\MakeSymbol<>{\!}}>{\MakeSymbol<\Gamma>\nabla}
\NewTensor*\mug[\!]\mu
\NewTensor[\outsymbol]*\outmug[\!]\mu
\NewTensor[\outsymbol]\outmomden[\!]J
\NewTensor[\outsymbol]\outenden[\hspace{-.05em}]\varrho

\NewTensor[\textrm]*\jacobiext[\hspace{-.05em}]<\hspace{-.05em}> L
\NewDocumentCommand\trzd{smm}{{#2}\odot{#3}}
\NewDocumentCommand\trtr{smm}
{\def\testa{\string#2}\def\testb{\string#3}%
 \ifx\testa\testb\IfDisplaystyleTF{\left|}|#2\IfDisplaystyleTF{\right|}|_{\g*}^2\else\tr(\trzd{#2}{#3})\fi}

\NewDocumentCommand\outc{G{c}}{\IndexSymbol(\outsymbol)*{#1}}\let\oc\outc
\NewTensor\outve\ve
\gdef\outtr{\IndexSymbol(\outsymbol)[\!]{\text{tr}}}
\NewTensor*\eflap[\!]f
\NewTensor*\ewlap[\!]\lambda
\NewTensor*\efjac[\!]{\mathcal f}
\NewTensor*\ewjac[\!]<\!>\gamma
\NewTensor*\funcg[\!]g
\NewTensor*\funch[\!]h
\gdef\trans#1{{#1}^{\text t}}
\gdef\sphtrans#1{{#1}^{{}_{\sphg*}\text t}}
\gdef\deform#1{{#1}^{\text d}}

\gdef\Hradius{\sigma}
\gdef\Aradius{r}
\gdef\rradius{R}

\NewTensor\mHaw{{\normalfont m_{\text{H}}}}

\NewTensor[\MakeSymbol<\outsymbol>\vec]\impuls P
\NewTensor\impulsf[\hspace{-.15em}]{\trans{\bar P}}
\undef\d
\NewDocumentCommand\d{s}{\IfBooleanTF{#1}\relax{\mathop{}\!}\mathrm d}
\newcommand\pullback[1]{{#1}^*}
\newcommand\pushforward[1]{{#1}_*}

\DeclareMathOperator\id{id}

\DeclareMathOperator\lin{lin}
\DeclareMathOperator\graph{graph}

\newcommand\R{\mathds{R}}
\newcommand\N{\mathds{N}}
\newcommand\X{\mathfrak{X}}
\newcommand\Lp{{\normalfont\textrm L}}
\newcommand\Wkp{{\normalfont\textrm W}}
\newcommand\Hk{{\normalfont\textrm H}}
\newcommand\Ck{{\normalfont\textrm C}}

\newcommand\sphere{\mathds S}
\newcommand\ve{\varepsilon}

\usepackage{urwchancal}
\DeclareMathAlphabet{\mathcal}{OT1}{pzc}{m}{n}
\let\newmathcal\mathcal

\usepackage{auto-eqlabel}
 \setlist[description]{font=\normalfont\itshape\space}
\begin{document}
\title[Geometric charac.\ of asympt.\ flatness \& linear momentum]{Geometric characterizations of\\asymptotic flatness and linear momentum\\in general relativity}
 \author{Christopher Nerz}
 \address{Mathematisches Institut\\Eberhard Karls Universit\"at T\"ubingen\\Auf der Morgenstelle 10\\72076~T\"ubingen\\Germany\smallskip}
 \email{christopher.nerz@math.uni-tuebingen.de}
 \date\today
\begin{abstract}\noindent
In 1996, Huisken-Yau proved that every three-dimensional Riemannian manifold can be uniquely foliated near infinity by stable closed surfaces of constant mean curvature (CMC) if it is asymptotically equal to the (spatial) Schwarz\-schild solution. 
Later, their decay assumptions were weakened by Metzger, Huang, Eichmair-Metzger, and the author. In this work, we prove the reverse implication, i.\,e.\ any three-dimensional Riemannian manifold is asymptotically flat if it possesses a CMC-\emph{cover} satisfying certain geometric curvature estimates, a uniqueness property, a \emph{weak} foliation property, and each surface has weakly controlled instability. With the author's previous result that every asymptotically flat manifold possesses a CMC-foliation, we conclude that asymptotic flatness is characterized by existence of such a CMC-cover.
Additionally, we use this characterization to give a geometric (i.\,e.\ coordinate-free) definition of a \linebreak[1]\hbox{\emph{{\normalfont(\hspace{-.1em}}CMC-{\normalfont\hspace{.05em})}linear}} \emph{momentum} and prove its compatibility with the linear momentum defined by Arnowitt-Deser-Misner.
\end{abstract}\maketitle
\let\sc\scalar
\section{Introduction}
Surfaces of constant mean curvature (CMC) were for the first time used in mathematical general relativity by Christodoulou-Yau who studied quasi-local mass of asymptotically flat manifolds \cite{christodoulou71some}. In 1996, Huisken-Yau proved the existence of a unique foliation by stable CMC-surfaces \cite{huisken_yau_foliation}. They considered Riemannian manifolds $(\outM,\outg*,\outx)$ which are asymptotically equal to the (spatial) Schwarzschild solution, i.\,e.\ they assumed existence of a coordinate system $\outx:\outM\setminus\outsymbol L\to\R^3\setminus \overline{B_1(0)}$ mapping the manifold (outside of some compact set $\outsymbol L$) to the Euclidean space (outside the closed unit ball) such that the push forward $\outx_*\outg*$ of the metric $\outg*$ is asymptotically equal to the (spatial) Schwarzschild metric. More precisely, they assumed that the $k$\mbox-th derivatives of the difference $\outg_{ij}-\schwarzoutg_{ij}$ of the metric $\outg*$ and the Schwarzschild metric $\schwarzoutg*:=(1+\nicefrac{\mass}{2\vert x\vert})^4\,\eukoutg*$ decays in these coordinates like $\vert\outx\vert^{-2-k}$ for every $k\le4$, where the mass $\mass$ was assumed to be positive and $\eukoutg*$ denotes the Euclidean metric. This is abbreviate by $\outg*=\schwarzoutg*+\mathcal O_4(\vert x\vert^ {-2})$. Later, these decay assumptions were weakened by Metzger, Huang, Eichmair-Metzger, and the author \cite{metzger2007foliations,Lan_Hsuan_Huang__Foliations_by_Stable_Spheres_with_Constant_Mean_Curvature,metzger_eichmair_2012_unique,nerz2015CMCfoliation}: It is sufficient to assume asymptotic flatness to ensure existence of a CMC-foliation (and its uniqueness in a well-defined class of surfaces). Here, being asymptotically flat means $\outg*=\eukoutg*+\mathcal O_2(\vert x\vert^{{-}\frac12-\outve})$ with $\outsc=\mathcal O_0(\vert x\vert^{-3-\ve})$, where $\outsc$ the scalar curvature of $\outg*$. Further properties of this foliation were studied by Huisken-Yau, Corvino-Wu, Eichmair-Metzger, the author, and others \cite{huisken_yau_foliation,Corvino__On_the_center_of_mass_of_isolated_systems,metzger_eichmair_2012_unique,nerz2013timeevolutionofCMC,nerz2015CMCfoliation}.\pagebreak[2]\smallskip

Inspired by an idea to use the CMC-foliation to define a unique\footnote{up to Euclidean isometries} coordinate system $\outsymbol y:\outM\setminus\outsymbol K\to\interval{\Hradius_0}\infty\times\sphere^2$ which Huisken explained to the author, we prove that asymptotic flatness does not only \emph{imply} the existence and (local) uniqueness of a CMC-\emph{cover} $\{\M<\Hradius>\}_{\Hradius\ge\Hradius_0}$, but is \emph{characterized} by it. A simple version of the more general versions (s.~Section~\ref{section_main_theorem}) is the following:
\begin{corollary}[{Simple version of Theorem~\ref{Suff_ass_for_asymp_flat}}]
Let $(\outM,\outg*)$ be a three-dimensional Riemannian manifold without boundary and $\outve\in\interval0{\frac12}$ be a constant. There exists a coordinate system $\outx:\outM\setminus\outsymbol L\to\R^3\setminus\overline{B_1(0)}$ outside a compact set $\outsymbol L\subseteq\outM$ such that $(\outM,\outg*,\outx)$ is $\Ck^2_{\frac12+\outve}$-asymp\-to\-tic\-ally flat with strictly positive ADM-mass if and only if there are constants $\c>0$, $M>0$, $\Hradius_0\ge\Hradius_0'=\Cof{\Hradius_0'}[\outve][\c][M]$, and a family $\mathcal M:=\{\M<\Hradius>\}_{\Hradius>\Hradius_0'}$ of spheres in $\outM$ such that
\begin{enumerate}[nosep,label={\normalfont(\alph{enumi})}]
\item $\mathcal M$ is \emph{locally unique}, i.\,e.~if $\M'\hookrightarrow\outM$ is a CMC-surface and a graph on $\M\in\mathcal M$ which is $\Wkp^{2,p}$-close to $\M$, then $\M'\in\mathcal M$; \label{Simple_version__ass_fst}
\item $\outM\setminus\bigcup_\Hradius\M<\Hradius>$ is relatively compact;
\item $\M<\Hradius>$ is stable as surface of constant mean curvature $\H<\Hradius>\equiv\nicefrac{{-}2}\Hradius$ for every $\Hradius>\Hradius_0$;
\item the Hawking masses $\mHaw(\M<\Hradius>)$ of the elements $\M<\Hradius>\in\mathcal M$ are bounded away from zero and infinity, i.\,e.~$\vert\mHaw(\M<\Hradius>)\vert\in\interval{M^{-1}}M$ for every $\Hradius>\Hradius_0$;
\item $\sup_{\M<\Hradius>}(\Hradius^{\frac52+\outve}\,\vert\outric*\vert_{\outg*} + \Hradius^{3+\outve}\,\vert\outsc\vert) \le \c$ for every $\Hradius>\Hradius_0$; \label{Simple_version__ass_lst}
\item the elements of $\mathcal M$ are pairwise disjoint; \label{Simple_version__ass}
\end{enumerate}
In this setting, $\mathcal M$ is a smooth foliation and the ADM-mass $\mass$ of $(\outM,\outg*)$ satisfies $\mass=\lim_{\Hradius\to\infty}\mHaw(\M<\Hradius>)$.
\end{corollary}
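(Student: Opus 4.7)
The plan is to prove the non-trivial direction of the equivalence, namely that the existence of a CMC-cover satisfying (a)--(f) forces $(\outM,\outg*)$ to be asymptotically flat; the converse is the author's earlier construction in \cite{nerz2015CMCfoliation}, so we may concentrate on producing the coordinate system $\outx$ and proving the metric decay. The overall strategy is (i) upgrade the cover $\mathcal M$ to a genuine foliation of the end of $\outM$, (ii) show that each leaf $\M<\Hradius>$ is geometrically close to a round Euclidean sphere of radius $\sim\Hradius$ around a well-defined center, (iii) build an asymptotic coordinate system $\outx$ by parametrizing the leaves as graphs over standard coordinate spheres, and (iv) read off the decay of $\outx_*\outg*$ from the resulting geometric data.

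For step~(i), the local uniqueness assumption (a) together with disjointness (f) permits a smooth deformation of each leaf in $\Hradius$ by solving the linearized CMC-equation; its invertibility on the normal direction is a consequence of the stability assumption~(c) and the pointwise Ricci bound in~(e), via the first-eigenvalue characterization of the Jacobi operator. A connectedness argument then shows that the leaves sweep out the complement of a compact set. For step~(ii), combining stability with the Gauss equation, the bound $\vert\outric*\vert_{\outg*}\lesssim\Hradius^{-5/2-\outve}$, and the Hawking-mass bound~(d) yields $\vert\M<\Hradius>\vert\sim 4\pi\Hradius^2+\mathcal O(\Hradius)$ and the integral estimate $\int_{\M<\Hradius>}\vert\ktrf\vert^2\,\d\mu\lesssim\Hradius^{-1-2\outve}$. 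Bootstrapping via the Codazzi identity and a De~Lellis--M\"uller-type nearly-umbilical comparison then produces $\Ck^1$-closeness of each leaf to a round sphere; in particular one obtains for each $\Hradius$ a center $\vec z_\Hradius\in\R^3$ and an effective Euclidean radius $\Hradius+\mathcal O(1)$.

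For step~(iii), the foliation of the end furnishes a diffeomorphism $\outM\setminus\outsymbol L\to\interval{\Hradius_0}\infty\times\sphere^2$ sending $p\in\M<\Hradius>$ to $(\Hradius,\pi_\Hradius(p))$, where $\pi_\Hradius:\M<\Hradius>\to\sphere^2$ is the $\Ck^1$-small angular reparametrization bringing $\M<\Hradius>$ to its best-fitting Euclidean sphere; composing with $(\Hradius,\omega)\mapsto\Hradius\,\omega+\vec z_\Hradius$ then defines $\outx$. That $\vec z_\Hradius$ stabilizes as $\Hradius\to\infty$ (so that the coordinates are genuinely asymptotically Euclidean rather than drifting) is extracted by an analysis analogous to the runaway-center argument in \cite{nerz2015CMCfoliation}, driven here by the divergence theorem applied to the curvature decay in~(e). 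Finally one reads off the decay of $\outx_*\outg*$ from the estimates on $\H<\Hradius>$, $\ktrf$, $\vert\outric*\vert_{\outg*}$ and $\vert\outsc\vert$; integrating the classical scalar-curvature identity for $\mHaw$ over $\M<\Hradius>$ and letting $\Hradius\to\infty$ identifies $\mass=\lim_\Hradius\mHaw(\M<\Hradius>)$.

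The principal obstacle I anticipate is step~(iii), specifically the control of the tangential part of the coordinate construction. The pointwise decay of $\outric*$ and $\outsc$ is leaf-wise information and already yields the ``normal'' geometry of each $\M<\Hradius>$ in isolation; upgrading this to $\Ck^2$-decay of the full tensor $\outg_{ij}-\delta_{ij}$ requires that the \emph{stacking} of the leaves is itself almost-radial, i.\,e.\ a lapse--shift analysis for the foliation showing that $\del_\Hradius$ is close to a unit outer normal up to errors compatible with the claimed decay and that $\vec z_\Hradius$ does not drift too rapidly in $\Hradius$. This global-in-$\Hradius$ compatibility is what necessitates the full strength of items~(a)--(f) taken together, rather than any one of them in isolation.
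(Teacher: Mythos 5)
Your high-level plan (upgrade the cover to a foliation via local uniqueness and invertibility of the Jacobi operator, prove near-umbilicity, build coordinates, and read off decay) follows the architecture of the paper. However, there is a genuine gap at the heart of step~(ii), and it is exactly the technical obstacle the paper constructs a new tool to overcome. You invoke a ``De~Lellis--M\"uller-type nearly-umbilical comparison'' to obtain a conformal parametrization of each leaf close to a round sphere. De~Lellis--M\"uller applies to closed hypersurfaces embedded in \emph{Euclidean} $\R^3$; in the present situation the leaves live in an abstract Riemannian manifold whose asymptotic flatness is precisely what is to be proved, so there is no canonical Euclidean ambient to compare against and no intrinsic map playing the role of $\id$. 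This is flagged explicitly in the paper (``we have to replace the crucial tool in the above argument, De~Lellis--M\"uller's result, by the arguments in Appendix~\ref{Regularity_Calabi_energy}''), and Theorem~\ref{Lp_Reg_gauss_curv}~--~existence of a good conformal parametrization whenever the Gau\ss\ curvature is $\Lp^p$-almost constant~--~is proved for this purpose. Without that intrinsic replacement, the $L^4$-umbilicity bootstrap of Lemma~\ref{Bootstrap_for_trace_free_second_fundamental_form__Lp4} (itself driven by the controlled-instability/stability assumption and a Schoen--Simon--Yau-type argument) has no way to get off the ground, so the chain to Proposition~\ref{Regularity_of_the_spheres} and Proposition~\ref{Stability} breaks.

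Two secondary remarks. First, the paper's actual proof of this pointwise Corollary does \emph{not} read off the full $\Ck^2_{1/2+\ve}$ decay of $\outg*-\eukoutg*$ directly: it establishes only the rougher statement~\eqref{Phi_is_translation} (that $\pullback\Phi\outg*$ is $\mathcal O(\Hradius^{-1/2-\ve})$-close to a ``translation of a Euclidean sphere'' metric), extracts from this a lower volume growth bound $\vert B_r(p)\vert\ge\tfrac{2\pi}{3}r^3$, and then invokes Bandoh--Kasue--Nakajima (Theorem~\ref{BandueKasueNakajima1989}) to conclude asymptotic flatness. Your step~(iv), which proposes to derive the metric decay constructively, is closer in spirit to the Sobolev proof of Theorem~\ref{Charac_of_Sobolev_asymp_flat}, where Bandoh--Kasue--Nakajima is unavailable and the work is genuinely harder. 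Second, your claim that $\vec z_\Hradius$ \emph{stabilizes} as $\Hradius\to\infty$ is not correct in general: as noted in the paper, $\centerz(\Hradius)$ cannot be chosen to converge, since its convergence would imply that the center of mass in the resulting coordinates is well-defined, which fails for examples such as those in \cite{cederbaumnerz2015_examples}. What one actually needs, and what the stacking/lapse analysis of Lemmas~\ref{Lemma__smooth_cover} and~\ref{lemma__derivatives} provides, is slow enough drift of $\centerz(\Hradius)$ rather than its convergence.
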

To simplify the proof of this version of the theorem, we use the theorem by Bandue-Kasue-Nakajima: A manifold is asymptotically flat if it satisfies specific decay assumptions on the curvatures and a volume growth estimate, \cite[Thm~1.1]{BandueKasueNakajima1989}.\footnote{The authors thanks the referees of the \textsc{Journal of Functional Analysis} for bringing his attention to this article.} Note that the assumption that the elements of $\mathcal M$ are pairwise disjoint can be replaced by a more analytic one, see Section~\ref{Foliation_property}. As a side result, we get again a uniqueness result for the CMC-foliation, see Corollary~\ref{Uniqueness_of_CMC_foliation}.\smallskip

This theorem can be generalized to the setting, where the leaves of the CMC-cover are not stable but only have \emph{controlled instability}, i.\,e.~instead of assuming that the smallest eigenvalue of the stability operator is positive, we only have to assume that it is bounded from bellow by ${-}\Hradius^{{-}\frac{3-\outve}2}$.\footnote{In this setting, the ADM-mass can be negative.} We furthermore give a corresponding characterization of $\Wkp^{3,p}_{{-}\tau}$-asymptotic flatness (Theorem~\ref{Charac_of_Sobolev_asymp_flat}) for $p\in\interval2\infty$ and $\tau\ge\frac12$, i.\,e.\ for asymptotic flatness in a Sobolev sense as defined by Bartnik \cite{bartnik1986mass}. Note that in the Sobolev setting, we do not impose pointwise assumptions on the Ricci curvature as we allow the Sobolev exponent $p$ to be less than the dimension $n=3$. In particular, we can not apply Bandue-Kasue-Nakajima's theorem, \cite[Thm~1.1]{BandueKasueNakajima1989}. Additionally, we explain a local version of this characterization (Remark~\ref{ALocalVersion}). \smallskip

Furthermore, the above characterizations of asymptotic flatness can be used to define other quantities without using coordinates. Exemplary, we explain this for the linear momentum (Section~\ref{Charac_quan}): We define the CMC-linear momentum as a function $\impulsf$ on the initial data set $(\outM,\outg*,\outzFund*,\outmomden*,\outenden*)$ possessing a CMC-foliation. We prove that this function is well-defined outside of a compact set and that it characterizes the linear momentum calculated with respect to any asymptotically flat coordinate system as it is defined by \textbf Arnowitt-\textbf Deser-\textbf Misner \cite{arnowitt1961coordinate}.
This means that the ADM-linear momentum can be interpreted as a coordinate expression of a geometric quantity: the CMC-linear momentum.\smallskip

Note that Bandue-Kasue-Nakajima assume weaker (pointwise) decay assumptions on the curvatures -- in our notation $\ve\in\interval-{\frac12}{\frac12}$ instead of $\outve\in\interval0{\frac12}$ -- and a volume growth estimates instead of the existence of a CMC-foliation to conclude asymptotic flatness, \cite{BandueKasueNakajima1989}.\footnote{The authors thanks the referees of the \textsc{Journal of Functional Analysis} for bringing his attention to this article.} However differently to our approach, they cannot characterize mass, linear momentum, and other quantities directly geometrically. Furthermore, it is non-trivial that our assumptions imply their volume growth estimates and there is no Sobolev version of their theorem.

There is a third geometric characterization of asymptotically flat manifolds by Reiris for stationary solutions of the Einstein equations, more precisely he assumes that $(\outM,\outg*)$ corresponds to a stationary solution of the Einstein-equations and satisfies topological a~priori assumptions, \cite{Reiris2010stationarysolutionsI,Reiris2010stationarysolutionsII}.\medskip

As a technical step in the proof which seems interesting for itself, we prove in Appendix \ref{Regularity_Calabi_energy} that every metric $\g*$ on the two-dimensional sphere possesses a \lq good\rq\ conformal parametrization if it has a $\Lp^p$-almost constant Gau\ss\ curvature. This generalizes the well-known corresponding result for Gau\ss\ curvature pointwise bounded away from zero and infinity, see for example \cite[Chap.~2]{christodoulou1993global}. Here, we state a weaker $\Lp^2$-version of our new result.
\begin{corollary}[{Simple version of Theorem~\ref{Lp_Reg_gauss_curv}}]
There exist constants $C<\infty$ and $\ve>0$ with the following property: If a metric $\g*$ of the Euclidean unit sphere $\sphere^2$ satisfies $\Vert\confgauss-1\Vert_{\Lp^2(\M)} \le \ve$ and $\confmug(\sphere^2)\in \interval*\pi*{7\pi}$, then there exists a conformal parametrization $\varphi:\sphere^2\to\sphere^2$ with $\varphi^*\g*=\exp(2\,\conf)\,\sphg*$ and $\Vert\conf\Vert_{\Hk^2(\sphere^2,\sphg)} \le C\,\Vert\confgauss-1\Vert_{\Lp^2(\sphere^2,\g*)}$, where $\confgauss$ and $\confmug$ are the Gau\ss\ curvature and the measure on the sphere $\sphere^2$ with respect to $\g*$, respectively, and $\sphg*$ denotes the standard metric of $\sphere^2$.
\end{corollary}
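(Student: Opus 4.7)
The plan is to realize the claim as a perturbation of the Liouville equation on the round sphere and to invert the linearization via the implicit function theorem after first gauge-fixing the non-compact part of the conformal group $\mathrm{Conf}(\sphere^2)$. By uniformization of $\sphere^2$, some diffeomorphism $\varphi_0:\sphere^2\to\sphere^2$ satisfies $\varphi_0^*\g* = \exp(2u_0)\,\sphg*$; however, $\varphi_0$ is determined only up to composition with the six-dimensional M\"obius group, whose three non-compact dilation directions can produce arbitrarily large conformal factors. To eliminate this gauge freedom I compose with a M\"obius transformation $M$ so that $\varphi := \varphi_0 \circ M$ satisfies the center-of-mass normalization $\int_{\sphere^2} x_i\, e^{2\conf}\, \d\sphg* = 0$ for $i = 1,2,3$. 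Existence of such $M$ is a classical Hersch-type degree argument: as the dilation parameter approaches the boundary of the dilation group, the barycenter of $e^{2\conf}\,\d\sphg*$ sweeps out the open unit ball in $\R^3$.

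With this gauge fixed, $\conf$ satisfies the Liouville equation ${-}\Delta\conf + (\confgauss\circ\varphi)\,e^{2\conf} = 1$ on $(\sphere^2,\sphg*)$, where $\Delta$ denotes the round Laplacian. Gau\ss-Bonnet, $\int \confgauss \,\d\confmug = 4\pi$, together with $\|\confgauss-1\|_{\Lp^2}\le\ve$ and Cauchy-Schwarz forces $\confmug(\sphere^2) = 4\pi + O(\ve)$; hence the area bound $\confmug(\sphere^2)\in(\pi,7\pi)$ is automatic for small $\ve$ and mainly serves to exclude a second branch where the density could concentrate. The linearization of the equation at $(\conf, \confgauss\circ\varphi) = (0,1)$ is $-\Delta + 2$, whose spectrum $\{\ell(\ell+1)+2 : \ell\ge 0\}$ lies in $[2,\infty)$, and therefore it is an isomorphism $\Hk^2(\sphere^2,\sphg*) \to \Lp^2(\sphere^2,\sphg*)$.

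Writing the Liouville equation as the fixed-point problem
\[
  \conf = ({-}\Delta+2)^{-1}\!\Bigl[(1-\confgauss\circ\varphi) + 2\conf - (\confgauss\circ\varphi)(e^{2\conf}-1)\Bigr],
\]
the nonlinear term is quadratic in $\conf$ in $\Hk^2$-norm via the two-dimensional Sobolev embedding $\Hk^2 \hookrightarrow \Ck^0$. A Banach fixed-point argument on a small $\Hk^2$-ball then produces a unique small solution satisfying $\|\conf\|_{\Hk^2} \le C\,\|\confgauss-1\|_{\Lp^2}$, as required; an $\Lp^\infty$-bound on $\conf$ ensures that the $\Lp^2$-norms with respect to $\g*$ and $\sphg*$ are equivalent, so the form of the estimate in the statement follows.

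The main obstacle is closing the a priori $\Hk^2$-bound on $\conf$ needed to enter the contraction regime: uniformization alone yields no such bound, and without the center-of-mass normalization the non-compact conformal group would generate spurious large solutions (dilations of the trivial one) invisible to the implicit function theorem. Closing the bound requires exploiting the normalization through an improved Moser-Trudinger-type inequality on the subspace orthogonal to the first spherical harmonics, so that the merely coercive linearization becomes uniformly invertible in the regime determined by the area and $\Lp^2$-curvature hypotheses.
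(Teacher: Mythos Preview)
Your outline contains a sign error that inverts the difficulty. From $\varphi^*g = e^{2u}\,\Omega$ (with $\Omega$ the round metric) the Gauss equation reads $-\Delta_\Omega u = (K\circ\varphi)\,e^{2u} - 1$, not $-\Delta_\Omega u + (K\circ\varphi)\,e^{2u} = 1$; linearizing at $(u,K)=(0,1)$ therefore gives $-\Delta - 2$, not $-\Delta + 2$. Since $2$ is the first nonzero eigenvalue of $-\Delta$ on the round sphere, $-\Delta - 2$ has the three-dimensional kernel $\mathrm{span}\{x_1,x_2,x_3\}$, precisely the tangent space to the non-compact M\"obius directions you wish to gauge away. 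Your center-of-mass normalization does kill this kernel (its linearization is exactly $L^2$-orthogonality to the first spherical harmonics), so with the corrected sign the perturbative step can be salvaged via Lyapunov--Schmidt rather than a direct inverse; but the argument as written, with spectrum $\{\ell(\ell+1)+2\}$ and a globally invertible linearization, is incorrect.

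The more serious gap is the one you flag yourself: entering the perturbative regime requires an a priori $H^2$ (or even $L^\infty$) bound on $u$ under the sole hypotheses that $\|K-1\|_{L^2}$ is small and the area lies in $(\pi,7\pi)$, and an ``improved Moser--Trudinger inequality on the centered subspace'' does not deliver this. The obstruction is genuine concentration: a sequence of centered conformal factors with $K_n\to 1$ in $L^2$ could in principle bubble, and ruling this out is the heart of the matter. The paper's proof (of the general theorem from which this corollary is specialized) handles this by concentration--compactness: via Brezis--Merle's exponential estimate and a blow-up around a putative concentration point, combined with Chen--Li's classification of entire solutions of $-\Delta v = e^{2v}$ on $\R^2$, one shows that any concentration point must carry mass at least $4\pi$; the balancing condition (each hemisphere carries half the total area, hence less than $4\pi$) then forbids any such point. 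Only after this qualitative compactness is established does the paper invoke the linearization to upgrade to the quantitative estimate, along the lines of your fixed-point step. So your final paragraph identifies the obstacle correctly but does not resolve it; the resolution is the concentration analysis, not a functional inequality.
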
 \bigskip

\textbf{Acknowledgment.}
The author wishes to express gratitude to Gerhard Huisken for suggesting the topic of CMC-surfaces in asymptotically flat manifolds and for many inspiring discussions. Further thanks are owed to Simon Brendle for suggesting the scaling argument in Appendix~\ref{Regularity_Calabi_energy}. Finally, thanks goes to Carla Cederbaum for exchanging interesting ideas and thoughts about CMC-foliations, to Mattias Dahl for bringing up the question whether it is necessary to \emph{a priori} assume the existence of a \emph{smooth foliation} rather than the one of a cover, and to the referees of the \textsc{Journal of Functional Analysis} for their helpful comments on the first draft of this article.\bigskip

\section*{Structure of the paper}
In Section~\ref{Ass_and_notation}, we explain basic notations and definitions -- these were also used in \cite{nerz2015CMCfoliation}. We give main regularity arguments in Section~\ref{Regularity_of_the_hypersurfaces}, where we use Appendix~\ref{Regularity_Calabi_energy}. The main result (and a local version of it) is stated and proven in Section~\ref{section_main_theorem}. In Section~\ref{Foliation_property}, we show some alternative assumptions for the main theorem. Furthermore, we give a coordinate-free definition of linear momentum and compare it with the linear momentum defined by Arnowitt-Deser-Misner (ADM) in Section~\ref{Charac_quan}. Finally, we prove existence of a \lq good\rq\ conformal parametrization for surfaces having $\Lp^p$-almost constant Gau\ss\ curvature in Appendix~\ref{Regularity_Calabi_energy}.\pagebreak[3]

\section{Assumptions and notation}\label{Ass_and_notation}
In order to study foliations (near infinity) of three-dimensional Riemannian manifolds by two-dimensional spheres, we will have to deal with different manifolds (of different or the same dimension) and different metrics on these manifolds, simultaneously. To distinguish between them, all three-dimensional quantities like the surrounding manifold $(\outM,\outg*)$, its Ricci and scalar curvature $\outric$ and $\outsc$, and all other derived quantities carry a bar, while all two-dimensional quantities like the CMC leaf $(\M,\g*)$, the trace free part $\zFundtrf*$ of its second fundamental form $\zFund*$, its scalar and mean curvature $\sc$ and $\H:=\tr\zFund$, its outer unit normal $\nu$, and all other derived quantities do not. Furthermore, we stress that the sign convention used for the second fundamental form results in a \emph{negative} mean curvature of the Euclidean uni sphere.

If different two-dimensional manifolds or metrics are involved, then the lower left index will always denote the mean curvature index $\Hradius$ of the current leaf $\M<\Hradius>$, i.\,e.\ the leaf with mean curvature $\H<\Hradius>\equiv\nicefrac{{-}2}\Hradius$. We abuse notation and suppress this index, whenever it is clear from the context which metric we refer to.
Furthermore, quantities carry the upper left index $\euclideane$ and $\sphg*$ if they are calculated with respect to the Euclidean metric $\eukoutg*$ and the standard metric $\sphg<\Hradius>$ of the Euclidean sphere $\sphere^2_\Hradius(0)$, correspondingly.

Here, we interpret the second fundamental form and the normal vector of a hypersurface as quantities on the hypersurfaces (and thus as two-dimensional). For example, if $\M<\Hradius>$ is a hypersurface in $\outM$, then $\nu<\Hradius>$ denotes its normal (and \emph{not} $\IndexSymbol{\outsymbol\nu}<\Hradius>$). The same is true for the \lq lapse function\rq\ and the \lq shift vector\rq\ of hypersurfaces arising as a leaf of a given deformation or foliation.\pagebreak[1]

Furthermore, we use upper case latin indices $\ii$, $\ij$, $\ik$, and $\il$ for the two-dimensional range $\lbrace2,3\rbrace$ and lower case latin indices $\oi$, $\oj$, and $\ok$ for the three-dimensional range $\lbrace 1,2,3\rbrace$. The Einstein summation convention is used accordingly.
\pagebreak[3]\medskip

Now, we give the main definition used within this work. Let us begin by recalling the Hawking mass \cite{hawking2003gravitational}.
\begin{definition}[Hawking mass]
Let $(\outM,\outg*)$ be a three-dimensional Riemannian manifold. For any closed hypersurface $\M\hookrightarrow(\outM,\outg*)$ the \emph{Hawking-mass} is defined by
\[ \mHaw(\M) := \sqrt{\frac{\volume{\M}}{16\pi}}(1-\frac1{16\pi}\int\H^2\d\mug), \]
where $\H$ and $\mug$ denote the mean curvature and measure induced on $\M$, respectively.\pagebreak[3]
\end{definition}
As there are different definitions of \lq asymptotically flat\rq, we now define the decay assumptions used in this paper.
\begin{definition}[\texorpdfstring{$\Ck^2_{\frac12+\outve}$}{C-2}-asymptotically flat Riemannian manifolds]\label{asymp_flat_RM}
Let $\outve\in\interval0{\nicefrac12}$ be a constant and let $(\outM,\outg*)$ be a smooth Riemannian manifold. The tuple $(\outM,\outg*,\outx)$ is called \emph{$\Ck^2_{\frac12+\outve}$-asymptotically flat Riemannian manifold} if $\outx:\outM\setminus\overline L\to\R^3\setminus\overline{B_1(0)}$ is a smooth chart of $\outM$ outside a compact set $\overline L\subseteq\outM$ such that
\begin{equation*}
 \vert\outg_{ij}-\eukoutg_{ij}\vert + \rad\,\vert\outlevi_{ij}^k\vert + \rad^2\,\vert\outric_{ij}\vert + \rad^{\frac52}\,\vert\outsc\vert \le \frac\oc{\rad^{\frac12+\outve}} \qquad\forall\,i,j,k\in\{1,2,3\} \labeleq{decay_g}
\end{equation*}
holds for some constant $\oc\ge0$, where $\eukoutg*$ denotes the Euclidean metric.
\end{definition}

As we will also use Sobolev version of asymptotic flatness, we recall the corresponding definitions by Bartnik, \cite{bartnik1986mass}.
\begin{definition}[$\Wkp^{k,p}_\eta$-asymptotically flat Riemannian manifolds {\cite{bartnik1986mass}}]
Let $(\outM,\outg*)$ be a Riemannian manifold and $\outx:\outM\setminus\outsymbol L\to\R^n\setminus\overline{B_1(0)}$ be a chart of $\outM$ outside some compact set $\outsymbol L\subseteq\outM$. If $(\pullback\outx\outg*-\eukoutg*)\in\Lp^\infty_{{-}\eta}(\outM)$, $(\pullback\outx\outg*-\eukoutg*)\in\Wkp^{k,p}_{{-}\eta}(\outM)$, and $\sc\in\Lp^1(\outM)$ for some $\eta>0$, $k\ge1$, and $p>n$, then $(\outM,\outg*,\outx)$ is called $\Wkp^{k,p}_{\eta}$-asymptotically flat. Here, the weighted Lebesgue norm of a function/tensor $\outsymbol T$ on $\outM$ (with respect to $\outx$) and a constant $\eta\in\R$ is defined by\footnote{We suppress the chart $\outx$ and the compact set $\outsymbol L$.}
\[ \Vert\outsymbol T\Vert_{\Lp^p_\eta(\outM)}
	 := \Vert\outsymbol  T\Vert_{\Wkp^{0,p}_\eta(\outM)}
	 := \begin{cases}\ 
		\displaystyle(\int_{\outM\setminus\outsymbol L} \rad^{{-}3}\,(\rad^{{-}\eta}\,\vert\outsymbol T\vert_{\outg*})^p \d\outmug)^{\frac1p}
			&:\quad p\in\interval*1\infty, \\\ 
		\displaystyle\mathop{\text{\normalfont ess\,sup}}_{\outM\setminus\outsymbol L}\vert \rad^{{-}\eta}\,\outsymbol T\vert_{\outg*}
			&:\quad p=\infty
	 \end{cases} \]
and correspondingly the $k^{\text{th}}$-Sobolev norm is defined by
\[ \Vert\outsymbol  T\Vert_{\Wkp^{k+1,p}_\eta(\outM)}
		:= \Vert\outsymbol  T\Vert_{\Lp^p_{\eta}(\outM)} + \Vert\outlevi*\outsymbol T\Vert_{\Wkp^{k,p}_{\eta-1}(\outM)}. \]
If $\Vert\outsymbol  T\Vert_{\Lp^p_\eta(\outM)}<\infty$\vspace{-.25em} and $\Vert\outsymbol  T\Vert_{\Wkp^{k,p}_\eta(\outM)}<\infty$, then $\outsymbol T$ is called element of $\Lp^p_\eta(\outM)$ and $\Wkp^{k,p}_\eta(\outM)$, respectively.
\end{definition}
Note that the usual Sobolev inequalities are satisfied for these Sobolev norms, \cite{bartnik1986mass}. In particular, every $\Wkp^{3,p}_\eta$-asymptotically flat manifold is $\Wkp^{2,q}_\eta$-asymptoti\-cally flat, where $q=\nicefrac{np}{(n-p)}>n$ if $p\in\interval{\nicefrac n2}n$.

\begin{definition}[ADM-mass]
Arnowitt-Deser-Misner defined the \emph{{\normalfont(\hspace{-.05em}}ADM-{\normalfont\hspace{.05em})}mass} of an asymptotically flat $n$-dimen\-sional Riemannian manifold $(\outM,\outg*,\outx)$ by
\begin{equation*} \mass_{\text{ADM}} := \lim_{\rradius\to\infty} \frac1{2(n-1)\,\omega_{n-1}}\sum_{\oj=1}^3 \int_{\sphere^2_\rradius(0)}(\partial[\outx]_\oj@{\outg_{\oi\oj}}-\partial[\outx]_\oi@{\outg_{\oj\oj}})\,\nu<\rradius>^\oi \d\mug<\rradius> \label{Definition_of_mass_ADM}, \end{equation*}
where $\omega_{n-1}$, $\nu<\rradius>$, and $\mug<\rradius>$ denote the Euclidean area of the $n{-}1$-dimensional Euclidean unit sphere, the outer unit normal, and the area measure of $\sphere^2_\rradius(0)\hookrightarrow(\outM,\outg*)$, respectively, \cite{arnowitt1961coordinate}. This mass is independent of the $\Wkp^{2,p}_{\frac12}$-asymptotically flat coordinate system if $p\in\interval n\infty$, \cite{bartnik1986mass}.
\end{definition}

In the literature, the ADM-mass is characterized using the Ricci curvature:
\begin{equation*} \mass := \lim_{\rradius\to\infty} \frac{{-}\rradius}{(n-1)(n-2)\,\omega_{n-1}} \int_{\sphere^2_\rradius(0)} \outric*(\nu<\rradius>,\nu<\rradius>) - \frac\outsc2 \d\mug<\rradius>,\pagebreak[3] \labeleq{Definition_of_mass} \end{equation*}
see Ashtekar-Hansen's, Chru\'sciel's, and Schoen's articles \cite{ashtekar1978unified,schoen1988existence,chrusciel1986remark}.\pagebreak[1]
Miao-Tam recently gave a proof of this characterization $\mass_{\text{ADM}}=\mass$ for any $\Ck^2_{\frac12+\outve}$-asymptotically flat manifold, \cite{miao2013evaluation}, and their proof can directly applied to $\Wkp^{3,p}_{\frac12}$-asymptotically flat manifolds, where $p>2$.\pagebreak[1] We recall that this mass is (in three-dimensions) also characterized by
 \begin{equation*} \mass = \lim_{\rradius\to\infty} \mHaw(\sphere^2_\rradius(0)) \labeleq{Definition_of_mass_dash}. \end{equation*}
This can be seen by a direct calculation using the Gau\ss\ equation, the Gau\ss-Codazzi equation, and the decay assumptions on metric and curvatures.\pagebreak[3]

We specify the definitions of Lebesgue and Sobolev norms on compact Riemannian manifolds which we will use throughout this article.
\begin{definition}[Lesbesgue and Sobolev norms]
If $(\M,\g*)$ is a compact Riemannian manifold without boundary, then the \emph{Lebesgue norms} are  defined by
\[ \Vert T\Vert_{\Lp^p(\M)} := (\int_{\M} \vert T\vert_{\g*}^p \d\mug)^{\frac1p}\quad\forall\,p\in\interval*1\infty, \qquad \Vert T\Vert_{\Lp^\infty(\M)} := \mathop{\text{ess\,sup}}\limits_{\M}\,\vert T\vert_{\g*}, \]
where\pagebreak[1] $T$ is any measurable function (or tensor field) on $\M$. Correspondingly, $\Lp^p(\M)$ is defined to be the set of all measurable functions (or tensor fields) on $\M$ for which the $\Lp^p$-norm is finite. If $\Aradius:=(\nicefrac{\volume{\M}}{\omega_n})^{\nicefrac1n}$ denotes the \emph{area radius} of $\M$, where $n$ is the dimension of $\M$ and $\omega_n$ denotes the Euclidean surface area of the $n$-dimensional unit sphere, then the \emph{Sobolev norms} are defined by
\[ \Vert T\Vert_{\Wkp^{k+1,p}(\M)} := \Vert T\Vert_{\Lp^p(\M)} + \Aradius\,\Vert\levi*T\Vert_{\Wkp^{k,p}(\M)}, \qquad \Vert T\Vert_{\Wkp^{0,p}(\M)} := \Vert T\Vert_{\Lp^p(\M)}, \]
where\pagebreak[1] $k\in\N_{\ge0}$, $p\in\interval*1*\infty$ and $T$ is any measurable function (or tensor field) on $\M$ for which the $k$-th (weak) derivative exists. Correspondingly, $\Wkp^{k,p}(\M)$ is the set of all such functions (or tensors fields) for which the $\Wkp^{k,p}(\M)$-norm is finite. Furthermore, $\Hk^k(\M)$ denotes $\Wkp^{k,2}(\M)$ for any $k\ge1$ and $\Hk(\M):=\Hk^1(\M)$.\pagebreak[3]
\end{definition}
Now, we can define the class of surfaces which we will use in the following.
\begin{definition}[Regular spheres]\label{Regular_sphere}
Let $(\outM,\outg*)$ be a three-dimensional Riemannian manifold. A hypersurface $\M\hookrightarrow(\outM,\outg*)$ is called \emph{regular sphere with area radius $\Aradius:=\sqrt{\nicefrac{\volume{\M}}{4\pi}}$ and constants $\kappa\in\interval1*2$, $M>0$, $p\in\interval2*\infty$ and $\c\ge0$}, in symbols $\M\in\regsphere<M>{\c}$, if $\M$ is a topological sphere satisfying $\vert\mHaw(\M)\vert\in\interval*{M^{-1}}*M$ and has constant mean curvature $\H\equiv\meanH$ such that
\begin{equation*}\labeleq{Regular_sphere_Ric}
	\Vert \vert\outric*\vert_{\outg*} \Vert_{\Lp^p(\M)} + \vert\meanH\vert^{\frac12} \Vert \outsc \Vert_{\Lp^p(\M)}  \le \c\,\vert\meanH\vert^{\kappa+1-\frac2p}, \end{equation*}
where $\nicefrac2p:=0$ if $p=\infty$ and where $\Hradius:=\nicefrac{{-}2}{\meanH}$ denotes the \emph{mean curvature radius of $\M$}. Furthermore, $\regsphere*<M>{\c}[p]$ denotes the radius independent family, i.\,e.\ $\regsphere*<M>{\c}[p]:=\bigcup_\Aradius\regsphere<M>(\Aradius){\c}[p]$.
\end{definition}
\begin{remark}[Almost constant mean curvature]
As we only need to study surfaces of constant mean curvature in this article, we prove everything just for these surfaces. However a straightforward adjustment of the proofs, prove all results from Subsection~\ref{Conformal_parametrization} for surfaces satisfying
$\Vert \H - \meanH \Vert_{\Wkp^{1,p}(\M)} \le \c_1\,\vert\meanH\vert^{\kappa-\frac2p}$
for some constant $\meanH$ instead of $\H\equiv\meanH$.\footnote{For example, this could be of interest when studying surfaces of \emph{constant expansion} $\H+\tr\,\outzFund$ or $\H-\tr\,\outzFund$, where $\outzFund$ is a given, asymptotically vanishing tensor of $\outM$ which is motivated by the second fundamental form of $\outM$ in a surrounding spacetime, see \cite{metzger2007foliations,nerz2015CEfoliation}. In this setting, we call $\Hradius:={-}2\meanH^{-1}$ \emph{approximated mean curvature radius of $\M$}.}
\end{remark}
\begin{remark}[Lorentzian version]
The main motivation of asymptotically flat manifolds are spacelike hypersurfaces in a four-dimensional Lorentzian manifold solving the Einstein equations. However, we use in Definition~\ref{Regular_sphere} (and in the rest of this work) only the Riemannian data of such a time-slice. Here, we give alternative assumptions on the surfaces using data of the surrounding four-dimensional Lorentzian manifold:\pagebreak[1]

If $(\outM,\outg*)$ is a spacelike hypersurface within a four-dimensional Lorentzian manifold $(\uniM,\unig*)$ solving the Einstein equations $8\pi\,\unisymbol T = \uniric* - \frac12\,\unisc\,\unig*$ for some energy-mo\-mentum tensor $\unisymbol T$, then the assumption on $\vert\outric\vert_{\outg*}$ in Definition~\ref{Regular_sphere} holds for a closed hypersurface $\M\hookrightarrow\outM$ and $p=\infty$ if
\[ \sup_{\M}\,\vert\unisymbol T\vert_{\outg*}\le \frac{\c_2}4\,\vert\meanH\vert^{\kappa+1}, \quad
		\sup_{\M}\,\vert\outzFund\vert_{\outg*}\le \frac{\c_2}4\,\vert\meanH\vert^{\frac{\kappa+1}2}, \quad
		\sup_{\M}\,\vert\lieD{\,\tv}\,\outzFund\vert_{\outg} \le \frac{\c_2}4\,\vert\meanH\vert^{\kappa+1}, \]
where $\meanH$ is as in Definition~\ref{Regular_sphere}, $\outzFund*$ denotes the second fundamental form of $(\outM,\outg*)\hookrightarrow(\uniM,\unig*)$, and $\lieD{\tv}\,\outzFund*$ denotes the Lie-derivative of $\outzFund$ in direction of a unit normal field $\tv$ on $\outM$.\footnote{More exactly, $\lieD{\,\tv\;}\outzFund*$ is the Lie-derivative of $\outzFund*'$, where $\outzFund*'(\uniexp_{\bar p}(\tau\,\tv_{\bar p}))$ is the second fundamental form of $\graph^{\tv}\tau:=\{\uniexp_{\bar p}(\tau\,\tv_{\bar p}) :\bar p\in\outM\}\hookrightarrow(\uniM,\unig*)$ in $\uniexp_{\bar p}(\tau\,\tv_{\bar p})$ which is well-defined in a $\uniM$-neighborhood of $\outM$. Here, $\uniexp_{\bar p}$ denotes the exponential map of $\uniM$ in a point $\bar p\in\outM$. In particular, this Lie-derivative is well-defined.
} We can equivalently rewrite this assumption using a given foliation of space-time $\uniM$ by spacelike hypersurfaces $\outM[t]$ or for $p\in\interval2\infty$.
\end{remark}
\begin{definition}[{$\alpha$-controlled instability of a regular sphere \cite{cederbaum2015communication}}]\label{controlled_instability}
Let $(\outM,\outg*)$ be a three-dimensional Riemannian manifold and $\alpha\in\R$. A regular sphere $\M\in\regsphere*<M>{\c}[p]$ has \emph{$\alpha$-controlled instability} if the smallest eigenvalue of the (negative) stability operator ${-}\jacobiext f={-}\laplace f - (\trtr\zFund\zFund+\outric(\nu,\nu))\,f$ on mean value free functions is larger than $\alpha$, i.\,e.
\begin{equation*} \int_{\M} \vert\levi f\vert^2 \d\mug \ge \int_{\M}(\outric(\nu,\nu)+\trtr\zFund\zFund+\alpha)(f-\fint_{\M} f\d\mug)^2 \d\mug \qquad\forall\,f\in\Ck^2(\M). \labeleq{stability} \end{equation*}
\end{definition}
\begin{remark}[Scaling]
Note that due to a rescaling argument, the assumption that a surface has ${-}\meanH^2$-controlled instability is a natural assumption. In particular, the assumption that a surface has ${-}\vert\H\vert^{{-}\frac{3-\outve}2}$ as in Theorems~\ref{Suff_ass_for_asymp_flat} and~\ref{Charac_of_Sobolev_asymp_flat} seems to be a very weak assumption. In particular, it is far weaker than to assume stability of the surfaces.
\end{remark}
\begin{remark}
A surface has $0$-con\-trolled instability if and only if the surface is (non-strictly) stable as CMC-surface and it has $\ve$-controlled instability (for some $\ve>0$) if and only if the surface is strictly stable. However, a surface with ${-}\alpha$-controlled instability is (not necessarily) stable. For example, the leaves $\M<\sigma>$ of the CMC-foliation of an asymptotically flat space with negative ADM-mass $\mass\relax<0$ are non-stable but each of them has $\frac{\mass}2\,|\meanH|^3$-controlled instability if its mean curvature radius $\Hradius:={-}\frac2{\H}$ is sufficiently large \cite{nerz2015CMCfoliation}. As this example proves, we cannot assume that the CMC-leaves are stable, but only that their instability is sufficiently controlled. Note that we assume a far weaker control of the instability (in Theorems~\ref{Suff_ass_for_asymp_flat} and~\ref{Charac_of_Sobolev_asymp_flat}) than we get a posterior.
\end{remark}
\begin{remark}[An alternative assumption]\label{an_alternative_assumption}
Due to our other assumptions, we can in the following replace the assumption that a surfaces has $\alpha$-controlled instability by the assumption that the Sobolev inequality on $\M$ holds for $\trtr\zFundtrf\zFundtrf$, i.\,e. if \label{ass_sobolev_for_k2}
\begin{equation*} \Vert\trtr\zFundtrf\zFundtrf\Vert_{\Lp^2(\M)} \le \c\,(\int\vert\H\vert\,\trtr\zFundtrf\zFundtrf+\vert\levi\trtr\zFundtrf\zFundtrf\vert\d\mug),\labeleq{sobolev_for_k2} \end{equation*}
where the constant $\c=\Cof[\c_1][\c_2]$ only depends on $\c_1$ and $\c_2$. It is also sufficient if \eqref{sobolev_for_k2} holds for $\max\{0,\trtr\zFundtrf\zFundtrf-\vert\meanH\vert^{2+\delta}\}$ instead of $\trtr\zFundtrf\zFundtrf$, where $\delta>0$ can be chosen arbitrary. By Lemma~\ref{Bootstrap_for_trace_free_second_fundamental_form__Lp2}, Hoffman-Spruck's result implies that \ref{ass_sobolev_for_k2} holds if $p=\infty$, the inequality on the Ricci curvature form \eqref{Regular_sphere_Ric} is satisfied in a neighborhood $U=\{\outp\in\outM : \text{dist}(p,\M)\le 1\}$ of $\M$, and the injectivity radius of $\outM$ (restricted to $\M$) is at least $\Hradius^{\frac{3-\kappa+\delta}2}$  for some fixed $\delta>0$ \cite{hoffman1974sobolev}.
\end{remark}

For notation convenience, we use the following abbreviated form for the contraction of two tensor fields.
\begin{definition}[Tensor contraction]
Let $(\M,\g*)$ be a Riemannian manifold. The \emph{traced tensor product} of a $(0,k)$ tensor field $S$ and a $(0,l)$ tensor field $T$ on $(\M,\g*)$ with $k,l>0$ is defined by
\[ (\trzd ST)_{I_1\dots I_{k-1}\!J_1\dots J_{l-1}} := S_{I_1\dots I_{k-1} K}\,\g^{KL}\,T_{LJ_1\dots J_{l-1}}. \]
This definition is independent of the chosen coordinates. Furthermore, $\trzd{\trzd ST}U$ is well-defined if $T$ is a $(0,k)$ tensor field with $k\ge2$, i.\,e.\ $\trzd{(\trzd ST)}U=\trzd S{(\trzd TU)}$ for such a $T$.\pagebreak[3]
\end{definition}

Finally, we infinitesimally characterize foliations in the following by their lapse functions and their shift vectors.
\begin{definition}[Lapse functions, shift vectors]
Let $\theta>0$ and $\Hradius_0\in\R$ be constants, $I\supseteq\interval{\Hradius_0-\theta}{\Hradius_0+\theta}$ be an interval, and $(\outM,\outg*)$ be a Riemannian manifold. A smooth map $\Phi:I\times\M\to\outM$ is called \emph{deformation} of the closed hypersurface $\M=:\M<\Hradius_0>=\Phi(\Hradius_0,\M)\subseteq\outM$ if $\Phi<\Hradius>(\cdot):=\Phi(\Hradius,\cdot)$ is a diffeomorphism onto its image $\M<\Hradius>:=\Phi<\Hradius>(\M)$ and $\Phi<\Hradius_0>\equiv\id_{\M}$. The decomposition of $\spartial*_\Hradius\Phi$ into its normal and tangential parts can be written as
\[ \partial[\Hradius]@\Phi = {\rnu<\Hradius>}\,{\nu<\Hradius>} + {\rbeta<\Hradius>}, \]
where $\nu<\Hradius>$ is the outer unit normal to $\M<\Hradius>$. The function $\rnu<\Hradius>:\M<\Hradius>\to\R$ is called \emph{lapse function} and the vector field $\rbeta<\Hradius>\in\X(\M<\Hradius>)$ is called \emph{shift} of $\Phi$. If $\Phi$ is a diffeomorphism onto its image, then it is called a (local) \emph{foliation}.\pagebreak[3]
\end{definition}

\section{Regularity of the hypersurfaces}\label{Regularity_of_the_hypersurfaces}
In this section, we prove the regularity results for the hypersurfaces used within this work. The author proved that regular spheres $\M<\Aradius>\in\regsphere[\kappa]{\c}[p]$ with $\kappa>\frac32$ and $p>2$ within a $\Ck^2_{\frac12+\outve}$-asymptotically flat Riemannian manifold (with $\outve>0$) are asymptotically pointwise umbilic (as $\Aradius\to\infty$) \cite{nerz2015CMCfoliation}. Using DeLellis-M\"uller's result \cite{DeLellisMueller_OptimalRigidityEstimates}, this implies existence of conformal parametrizations $\varphi<\Aradius>:\sphere^2\to\M<\Aradius>$ such that the corresponding conformal factor $\conformalf<\Aradius>\in\Wkp^{2,p}(\sphere^2)$ is asymptotically constant, i.\,e.\ $\conformalf<\Aradius>\to1$\linebreak[1] in $\Wkp^{2,p}(\sphere^2)$ for $\Aradius\to\infty$, where $p\in\interval*2\infty$ is arbitrary, $\varphi<\Aradius>^*\g<\Aradius>*=\Aradius^2\,e^{2\,\conformalf<\Aradius>}\,\sphg*$, and where $\sphg*$ denotes the standard metric of the Euclidean unit sphere. In the setting of a surrounding manifold $(\outM,\outg*,\outx)$ asymptotically equal to the (spatial) Schwarzschild solution, a similiar result was previously proven by Huisken-Yau, Metzger, and others \cite{huisken_yau_foliation,metzger2007foliations}. In Subsection~\ref{Conformal_parametrization}, we prove the same result for regular spheres with $|\H|^{\frac32}$-controlled instability in an arbitrary three-dimensional Riemannian manifolds. To do so, we have to replace the crucial tool in the above argument, DeLellis-M\"uller's result, by the arguments in Appendix~\ref{Regularity_Calabi_energy}. There, we prove that metric $\g<n>*$ on the Euclidean sphere converge (after conformal reparametrization) to the standard metric of the Euclidean sphere if the Gau\ss\ curvatures $\gauss<n>$ converges in $\Lp^p(\sphere^2,\sphg*)$ to $1$. Note that the same result is well-known if the Gau\ss\ curvatures are \emph{pointwise} bounded away from zero and infinity, see for example \cite{christodoulou1993global}. In Subsection~\ref{Stability_operator_eigenvalues}, we then cite results and arguments from \cite{nerz2015CMCfoliation} proving that the Eigenvalues of the stability operator are (asymptotically) controlled.

\subsection{Conformal parametrization and umbilicness}\label{Conformal_parametrization}
We start by proving that any regular sphere $\M$ with sufficiently controlled instability has a \lq good\rq\ conformal parametrization, i.\,e.\ the corresponding conformal factor is almost constant. To do so, we make the following three steps:
\begin{description}
\item[Lemma~\ref{Bootstrap_for_trace_free_second_fundamental_form__Lp2}] prove that the sphere is in a $\Lp^2$-sense almost umbilic and that the area of $\M$ is (approximately) $4\pi\Hradius^2$;
\item[Lemma~\ref{Bootstrap_for_trace_free_second_fundamental_form__Lp4}] prove that the sphere is in a $\Lp^4$-sense almost umbilic and therefore the scalar curvature is in a $\Lp^2$-sense asymptotically constant;
\item[Proposition~\ref{Regularity_of_the_spheres}] prove that the surfaces is in a $\Wkp^{1,p}$-sense almost umbilic and that a \lq good\rq\ parametrization exists -- this uses Theorem~\ref{Lp_Reg_gauss_curv} from Appendix~\ref{Regularity_Calabi_energy} and Corollary~\ref{Bootstrap_for_trace_free_second_fundamental_form_cited} (beeing {\cite[Prop~2.1]{nerz2015CMCfoliation}}).\pagebreak[3]
\end{description}
Let us first look at the last step and rewrite the cited proposition \cite[Prop~2.1]{nerz2015CMCfoliation} in the notation used within this article.
\begin{corollary}[$\Lp^\infty$-estimates on the second fundamental form]\label{Bootstrap_for_trace_free_second_fundamental_form_cited}
Let $(\M,\g*)\in\regsphere<M>{\c}$ be a hypersurface with mean curvature radius $\Hradius$ of a three-dimensional Riemannian manifold $(\outM,\outg*)$, where $\kappa\in\interval1*2$, $p\in\interval2*\infty$, $\c\ge0$, and $M>0$ are constants. Assume there exists a finite Sobolev constant $\cSob\ge0$, i.\,e.
\begin{equation*} \Vert f\Vert_{\Lp^2(\M)} \le \frac\cSob\Aradius\Vert f\Vert_{\Wkp^{1,1}(\M)}  \qquad\forall\,f\in\Ck^1(\M), \labeleq{Sobolev_inequality} \end{equation*}
where $\Aradius$ denotes the area radius, i.\,e.~$4\pi\,\Aradius^2:=\volume{\M}$. There are constants $\Hradius_0=\Cof{\Hradius_0}[\kappa][p][\c][M][\cSob]$ and $C=\Cof[\kappa][p][\c][M][\cSob]$ such that $\Hradius\ge\Hradius_0$, $\vert\Hradius-\Aradius\vert\le \nicefrac{\Hradius^{\frac{3-\kappa}2}}C$, and $\Vert\zFundtrf\Vert_{\Lp^2(\M)}\le\nicefrac 1C$ implies
\begin{equation*}
 \Vert\hspace{.05em}\zFundtrf\Vert_{\Lp^\infty(\M)} \le \frac C{\Hradius^\kappa}, \qquad
 \Vert\hspace{.05em}\zFundtrf*\Vert_{\Hk(\M)}\le \frac C{\Hradius^{\kappa-1}} \labeleq{Bootstrap_for_trace_free_second_fundamental_form_cited__k} \end{equation*}
\end{corollary}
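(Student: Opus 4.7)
The plan is to carry out the standard Choi--Schoen-type bootstrap, as performed in \cite[Prop.~2.1]{nerz2015CMCfoliation} (of which this corollary is essentially a restatement), built on the Simons identity for the trace-free second fundamental form. Since $\H\equiv\meanH$ is constant on $\M$, $\Hess\H=0$, and the Simons identity reduces after contraction with $\zFundtrf$ and Kato's inequality to a pointwise differential inequality of the form
\[
 -\laplace\trtr\zFundtrf\zFundtrf + 2\,\vert\levi\zFundtrf\vert_{\g*}^2
 \le C\,\bigl((\trtr\zFundtrf\zFundtrf)^2 + \vert\outric*\vert_{\outg*}\,\trtr\zFundtrf\zFundtrf + \mathcal E\bigr),
\]
where $\mathcal E$ collects the lower-order ambient curvature sources (involving $\outric*$ and its first ambient covariant derivative) that arise when commuting covariant derivatives.

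The heart of the argument is a Moser iteration driven by this inequality. Testing against a power $(\trtr\zFundtrf\zFundtrf)^{q-1}$, integrating over $\M$, integrating the Laplacian by parts, and invoking the Sobolev inequality \eqref{Sobolev_inequality} yields a recursive estimate on $\Vert\zFundtrf\Vert_{\Lp^{2q}(\M)}$. The quartic term $(\trtr\zFundtrf\zFundtrf)^2$ is the only genuinely dangerous contribution; the smallness hypothesis $\Vert\zFundtrf\Vert_{\Lp^2(\M)}\le\nicefrac1C$ is precisely what allows us to absorb it, via H\"older, into the gradient term on the left-hand side. The ambient sources are controlled using the $\Lp^p$-bound of Definition~\ref{Regular_sphere}, for which the assumption $p>2$ supplies just enough integrability to enter the iteration. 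Iterating over $q$ promotes the $\Lp^2$ bound to the claimed pointwise bound $\vert\zFundtrf\vert\le C\,\Hradius^{-\kappa}$.

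For the initialization, I would combine the Gau\ss\ equation $2\,\gauss=\nicefrac{\H^2}2-\trtr\zFundtrf\zFundtrf+2\,\outsymbol{\sigma}$ (where $\outsymbol{\sigma}$ denotes the ambient sectional curvature of $\outg*$ along the tangent plane to $\M$) with Gau\ss--Bonnet to obtain
\[
 \int_{\M}\trtr\zFundtrf\zFundtrf\,\d\mug
 = \frac{\H^2}2\,\volume{\M} - 8\pi + 2\int_{\M}\outsymbol{\sigma}\,\d\mug.
\]
Substituting $\H\equiv\nicefrac{{-}2}\Hradius$, $\volume{\M}=4\pi\Aradius^2$, invoking the radius comparison $\vert\Hradius-\Aradius\vert\le\nicefrac{\Hradius^{(3-\kappa)/2}}C$, and H\"older-testing the last integral against the Ricci bound of Definition~\ref{Regular_sphere} makes both leading terms cancel up to order $\Hradius^{1-\kappa}$ and shows the ambient integral is of the same order, so that $\Vert\zFundtrf\Vert_{\Lp^2(\M)}^2\lesssim\Hradius^{1-\kappa}$. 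The $\Hk$-estimate \eqref{Bootstrap_for_trace_free_second_fundamental_form_cited__k} then follows by testing the Simons inequality against $\zFundtrf$ itself and using the $\Lp^\infty$-bound just established to dispose of the cubic self-interaction.

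The principal technical obstacle is ensuring that the Moser iteration closes with a constant depending only on the stated parameters $\kappa,p,\c,M,\cSob$; this rests on a quantitative interplay between $\cSob$ and the smallness constant $\nicefrac1C$, which must be set so that the absorption step survives every iteration while keeping the resulting constant bounded. A secondary subtlety is the careful bookkeeping of the two length scales: the Sobolev inequality \eqref{Sobolev_inequality} is written in scale-invariant form in terms of $\Aradius$, whereas the conclusion is expressed in terms of $\Hradius$, so the hypothesis $\vert\Hradius-\Aradius\vert\le\nicefrac{\Hradius^{(3-\kappa)/2}}C$ is needed throughout to transfer estimates between the two scales.
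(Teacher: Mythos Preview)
The paper does not supply its own proof of this corollary; it merely restates \cite[Prop.~2.1]{nerz2015CMCfoliation} in the present notation, and your sketch of the Simons-identity/Moser-iteration bootstrap is precisely the argument carried out there. So your approach coincides with the cited one.

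One small sharpening is worth making. You write that $\mathcal E$ involves ``$\outric*$ and its first ambient covariant derivative'', but Definition~\ref{Regular_sphere} only gives an $\Lp^p$-bound for $\outric*$ on $\M$, with no control on derivatives. The point is that the curvature-derivative terms in Simons' identity (such as $\div_2\outrc_{\cdot\cdot\cdot\nu}$ and $\levi*\outric_\nu$) carry divergence structure: when you test against $\vert\zFundtrf\vert^{2q-2}\zFundtrf$ in the Moser step, an integration by parts shifts the derivative onto the test function, producing terms of the type $\vert\outrc\vert\,\vert\zFundtrf\vert^{2q-2}\,\vert\levi\zFundtrf\vert$ that are then split by Young's inequality and absorbed. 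This is exactly why the $\Lp^p$-bound on $\outric*$ with $p>2$ suffices, and it is the same mechanism the paper invokes later (Proposition~\ref{Regularity_of_the_spheres}) when appealing to the regularity of the \emph{weak} Laplace operator to upgrade $\zFundtrf$ to $\Wkp^{1,q}$. Your sketch is otherwise on target.
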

Now, let us begin by a simple $\Lp^2$-estimate for the second fundamental form and prove that all preliminaries of Corollary~\ref{Bootstrap_for_trace_free_second_fundamental_form_cited} except the Sobolev inequality \eqref{Sobolev_inequality} are satisfied if $\Hradius$ is sufficiently large, too.
\begin{lemma}[\texorpdfstring{$\Lp^2$}{Lp-2}-estimates for the second fundamental form]\label{Bootstrap_for_trace_free_second_fundamental_form__Lp2}
Let $(\M,\g*)\in\regsphere<M>{\c}$ be a hypersurface of a three-dimensional Riemannian manifold $(\outM,\outg*)$, where $\kappa\in\interval1*2$, $p\in\interval2*\infty$, $\c\ge0$, and $M>0$ are constants. There are two constants $\Hradius_0=\Cof{\Hradius_0}[\kappa][p][\c][M]$ and $C=\Cof[\kappa][p][\c][M]$ such that if $\Hradius\ge\Hradius_0$, then
\begin{equation*} \vert\Hradius-\Aradius\vert \le C, \qquad\Vert\zFundtrf\Vert_{\Lp^2(\M)} \le \frac C{\Hradius^{\frac{\kappa-1}2}} \labeleq{Bootstrap_for_trace_free_second_fundamental_form__Lp2_eq}. \end{equation*}
\end{lemma}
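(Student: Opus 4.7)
My plan is to combine the Gauss equation, the Gauss--Bonnet theorem, and the Hawking mass bound. Since $\vert\zFund\vert_{\g*}^2 = \trtr\zFundtrf\zFundtrf + \tfrac12\H^2$ in dimension two and $\H\equiv\meanH=\nicefrac{{-}2}\Hradius$ is constant, integrating the Gauss equation $\sc = \outsc - 2\outric(\nu,\nu) + \tfrac12\H^2 - \trtr\zFundtrf\zFundtrf$ and using $\int_{\M}\sc\d\mug=8\pi$ yields the key identity
\[ \int_{\M}\trtr\zFundtrf\zFundtrf\d\mug = 8\pi\Bigl(\frac{\Aradius^2}{\Hradius^2}-1\Bigr) + \int_{\M}\outsc\d\mug - 2\int_{\M}\outric(\nu,\nu)\d\mug. \]

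A single application of H\"older's inequality to the bounds \eqref{Regular_sphere_Ric} from Definition~\ref{Regular_sphere} controls the ambient curvature integrals by
\[ \Bigl|\int_{\M}\outric(\nu,\nu)\d\mug\Bigr| + \vert\meanH\vert^{1/2}\Bigl|\int_{\M}\outsc\d\mug\Bigr| \le C\,\Hradius^{-(\kappa+1-\frac2p)}\,\Aradius^{2-\frac2p}. \]
Next I exploit the fact that for a CMC-surface with constant $\H$, the Hawking mass reduces to $\mHaw(\M) = \tfrac{\Aradius}{2}(1-\Aradius^2/\Hradius^2)$; the upper bound $\vert\mHaw\vert\le M$ then leaves the dichotomy that either $\Aradius\ge\Hradius/\sqrt2$, or $\Aradius\le 4M$ is bounded by a constant depending only on $M$.

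The crucial step is to rule out the bounded alternative. If $\Aradius\le 4M$ were to hold, the H\"older estimate would force the curvature integrals to be of order $\Hradius^{-(\kappa+1-\frac2p)}\to 0$ as $\Hradius\to\infty$, while $8\pi(\Aradius^2/\Hradius^2-1)\to -8\pi$. Thus for $\Hradius$ larger than some $\Hradius_0=\Cof[\kappa][p][\c][M]$, the right-hand side of the key identity would become strictly negative, contradicting the non-negativity of $\int_\M\trtr\zFundtrf\zFundtrf\d\mug$. Hence $\Aradius\ge\Hradius/\sqrt2$, and the Hawking mass inequality then yields $\vert\Aradius^2-\Hradius^2\vert\le 2M\Hradius^2/\Aradius\le 2\sqrt 2 M\Hradius$, so $\vert\Aradius-\Hradius\vert\le 2\sqrt 2 M$, proving the first inequality in \eqref{Bootstrap_for_trace_free_second_fundamental_form__Lp2_eq}.

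Finally, with $\Aradius\sim\Hradius$ in hand the H\"older estimate from step~two simplifies, and combining it with the key identity together with the finer form $8\pi(\Aradius^2/\Hradius^2-1)=-16\pi\mHaw/\Aradius=O(1/\Hradius)$ delivers the desired bound
\[ \int_{\M}\trtr\zFundtrf\zFundtrf\d\mug \le C\,\Hradius^{1-\kappa}, \]
proving the second inequality in \eqref{Bootstrap_for_trace_free_second_fundamental_form__Lp2_eq}. The main obstacle is the dichotomy of step three: one must exclude the pathological case of CMC-spheres whose area stays bounded while the mean curvature tends to zero, which is possible only via the Gauss--Bonnet contradiction and not from the Hawking mass bound alone.
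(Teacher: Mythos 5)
Your proof takes the same route as the paper's own—Hawking mass bound combined with the Gauss equation and the Gauss--Bonnet theorem—and you fill in a step the paper treats very tersely. The paper's assertion that $\vert 1-\Aradius^2/\Hradius^2\vert = 2\vert\mHaw\vert/\Aradius$ lying between $2/(\Aradius M)$ and $2M/\Aradius$ ``implies $\vert\Aradius-\Hradius\vert<C$'' is not a direct consequence of the Hawking-mass bounds alone: those bounds are also consistent with $\Aradius$ remaining near a fixed value in $[2/M,2M]$ while $\Hradius\to\infty$. Your dichotomy and the Gauss--Bonnet contradiction (the right side of the key identity would tend to $-8\pi$, violating $\int_\M\trtr\zFundtrf\zFundtrf\d\mug\ge0$) are exactly what is needed to exclude that branch, and this is the right way to read the paper's closing appeal to the Gauss equation and the Gauss--Bonnet theorem.

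However, the final exponent you assert does not follow from the H\"older estimate as you have written it. In \eqref{Regular_sphere_Ric} the scalar curvature carries a $\vert\meanH\vert^{1/2}$-weight, so your (correct) bound $\vert\int_\M\outric*(\nu,\nu)\d\mug\vert + \vert\meanH\vert^{1/2}\vert\int_\M\outsc\d\mug\vert \le C\,\Hradius^{-(\kappa+1-\frac2p)}\,\Aradius^{2-\frac2p}$, once combined with $\Aradius\sim\Hradius$ and $\vert\meanH\vert\sim\Hradius^{-1}$, gives $\vert\int_\M\outric*(\nu,\nu)\d\mug\vert\le C\,\Hradius^{1-\kappa}$ but only $\vert\int_\M\outsc\d\mug\vert\le C\,\Hradius^{\frac32-\kappa}$: you must pay the factor $\vert\meanH\vert^{-1/2}\sim\Hradius^{1/2}$ when isolating $\int_\M\outsc\d\mug$. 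Inserting this into the key identity then yields $\int_\M\trtr\zFundtrf\zFundtrf\d\mug\le C\,\Hradius^{\frac32-\kappa}$, not $C\,\Hradius^{1-\kappa}$ as claimed in your last line. The paper does not display this computation, so it is conceivable that the lemma's exponent is stated more sharply than its one-line proof actually delivers, but as written your derivation has this quantitative gap; you should either state the weaker bound $\Hradius^{\frac32-\kappa}$ (which still provides the smallness $\Vert\zFundtrf\Vert_{\Lp^2(\M)}\le 1/C$ required by Corollary~\ref{Bootstrap_for_trace_free_second_fundamental_form_cited} once $\kappa>\frac32$) or supply an additional argument that recovers the missing $\Hradius^{-1/2}$ in the scalar curvature term.
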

\begin{proof}
With the assumption on the Hawking mass, we see that
\begin{equation*}
 \vert1-\frac{r^2}{\Hradius^2}\vert
	= \vert 1 - \frac1{16\pi}\int\H^2\d\mug\vert
	= \frac{2\,\vert\mHaw\vert}r \in \interval{\frac 2{r\,M}}{\frac{2\,M}r}
\end{equation*}
which implies $\vert\Aradius-\Hradius\vert<C$. In particular, we get
$\vert\int_{\M}\H^2 \d\mug - 16\pi\vert \le C$
and conclude the claim by the Gau\ss\ equation and the Gau\ss-Bonnet theorem.
\end{proof}
Now, we strengthen this to $\Lp^4$-estimates for the second fundamental form. This generalizes the results proven by Metzger in \cite[Prop.~3.3]{metzger2007foliations} and by the author in \cite[Prop.~2.1]{nerz2015CMCfoliation} where a corresponding result was proven under the assumption of asymptotic flatness of the surrounding manifold (in order to use DeLellis-M\"uller's result \cite{DeLellisMueller_OptimalRigidityEstimates}). Here, we have to assume that the sphere has controlled instability.
\begin{lemma}[\texorpdfstring{$\Lp^4$}{Lp-4}-estimates for the second fundamental form]\label{Bootstrap_for_trace_free_second_fundamental_form__Lp4}
Let $(\M,\g*)\in\regsphere<M>{\c_1}$ be a regular hypersurface with ${-}\c_2\,\vert\meanH\vert^\beta$-controlled instability of a three-dimensional Riemannian manifold $(\outM,\outg*)$, where $\kappa\in\interval1*2$, $\beta\in\interval1*\kappa$, $p\in\interval2*\infty$, $\c_1\ge0$, $\c_2\in\R$, $M>0$ are constants. There are constants $\Hradius_0=\Cof{\Hradius_0}[\kappa][p][\c_1][\c_2][M]$ and $C=\Cof[\kappa][p][\c_1][\c_2][M]$ such that
\begin{equation*} \labeleq{Bootstrap_for_trace_free_second_fundamental_form__Lp4_eq} \Vert\zFundtrf\Vert_{\Lp^2(\M)}^2 + \Hradius^2\,\Vert\zFundtrf\Vert_{\Lp^4(\M)}^4 + \Hradius^2\,\Vert\levi*\zFundtrf\Vert_{\Lp^2(\M)}^2 \le \frac C{\Hradius^{\kappa+\beta-1}} \end{equation*}
if $\Hradius>\Hradius_0$.\pagebreak[2]
\end{lemma}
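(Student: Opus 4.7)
The plan is to combine the Simons identity for CMC surfaces in $3$-manifolds with the controlled-instability stability inequality~\eqref{stability}, using the $\Lp^2$-bound of Lemma~\ref{Bootstrap_for_trace_free_second_fundamental_form__Lp2} as initial input and closing the argument via Young's inequality. For brevity I write $X:=\int_\M|\zFundtrf|^4\d\mug$, $Y:=\int_\M|\levi\zFundtrf|^2\d\mug$, $Z:=\int_\M|\zFundtrf|^2\d\mug$, and $\bar f:=\fint_\M|\zFundtrf|\d\mug$.

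First, the Simons identity. Since $\H$ is constant and the cubic trace $\zFundtrf^{IJ}\zFundtrf_J{}^K\zFundtrf_{KI}$ vanishes in dimension two (every symmetric traceless $2\times 2$ matrix has eigenvalues $\pm\lambda$), it reduces to
\[ \zFundtrf_{IJ}\,\laplace\zFundtrf^{IJ} = -|\zFundtrf|^4 + \tfrac{\H^2}{2}\,|\zFundtrf|^2 + E_0, \]
where $E_0$ collects the ambient Riemann-curvature contributions. Integration by parts on the closed surface $\M$ gives the algebraic identity
\[ X = Y + \tfrac{\H^2}{2}\,Z + \int_\M E_0\d\mug, \]
and H\"older together with the $\Lp^p$-bounds on $\outric$ and $\outsc$ built into the definition of a regular sphere controls $|\int_\M E_0\d\mug|$ by a combination of $\Hradius^{-\kappa}\,X^{1/2}$, $\Hradius^{-(\kappa+1)}(Z|\M|)^{1/2}$, and lower-order terms.

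Next, I apply~\eqref{stability} to (a smooth regularization of) the test function $f:=|\zFundtrf|$. Kato's inequality gives $\int_\M|\levi|\zFundtrf||^2\d\mug\le Y$. Decomposing $|\zFund|^2 = |\zFundtrf|^2 + \tfrac{\H^2}{2}$, expanding $(|\zFundtrf|-\bar f)^2$, and controlling the cubic cross-term by $2\bar f\int_\M|\zFundtrf|^3\d\mug\le\tfrac12 X + 2\bar f^2 Z$ (via Cauchy--Schwarz and Young) yields
\[ Y \ge \tfrac12 X - \bar f^2 Z + \bigl(\tfrac{\H^2}{2} - c_2|\H|^\beta\bigr)\bigl(Z - |\M|\bar f^2\bigr) - R_0, \]
where $R_0:=\bigl|\int_\M\outric(\nu,\nu)(|\zFundtrf|-\bar f)^2\d\mug\bigr|$ is estimated analogously to $|\int_\M E_0\d\mug|$. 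Combining this inequality with the Simons identity to eliminate $Y$ and noting that $\tfrac{\H^2}{2}-c_2|\H|^\beta<0$ of order $|\H|^\beta\sim\Hradius^{-\beta}$ for $\Hradius$ large (as $\beta<2$), the quartic $X$-terms partially cancel to produce a closed inequality relating $X$, $Z$, and the errors.

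Finally, Young's inequality absorbs each $X^{1/2}$-type error into $\tfrac14 X$ on the left. Using $\bar f^2\le Z/|\M|\sim Z\Hradius^{-2}$, $|\M|\sim\Hradius^2$, and the starting bound $Z\le C\Hradius^{1-\kappa}$ from Lemma~\ref{Bootstrap_for_trace_free_second_fundamental_form__Lp2}, one obtains the desired $\Lp^4$-bound on $X$. Reinserting this into the Simons identity gives the matching $\Lp^2$-bound on $Y$, and re-running the Gauss--Bonnet estimate of Lemma~\ref{Bootstrap_for_trace_free_second_fundamental_form__Lp2} with the sharper quartic information refines the $\Lp^2$-bound on $Z$. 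The three bounds together yield \eqref{Bootstrap_for_trace_free_second_fundamental_form__Lp4_eq}. The principal obstacle will be the careful bookkeeping of the various ambient-curvature and mean-value error terms, each estimated by H\"older and absorbed by Young at the right step; a second delicate point is that the stability potential $\tfrac{\H^2}{2}-c_2|\H|^\beta$ is negative for large $\Hradius$, so it cannot serve as a Poincar\'e weight and must instead be exploited through the quartic lower bound $\int_\M|\zFundtrf|^2(|\zFundtrf|-\bar f)^2\d\mug\ge\tfrac12 X-\bar f^2 Z$.
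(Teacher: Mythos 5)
Your high-level plan matches the paper's: integrate the Simons identity, apply the instability bound~\eqref{stability} with the test function $f=|\zFundtrf|$, and absorb cross terms via Young. But there is a genuine gap in the closure step. Writing $Y:=\int|\levi\zFundtrf|^2\d\mug$ and $X:=\int|\zFundtrf|^4\d\mug$, the Simons identity gives $Y = X - \tfrac{\H^2}{2}Z - \int E_0$, i.e.\ $Y\approx X$ up to lower order. The stability inequality applied to $f=|\zFundtrf|$ produces (after absorbing $X^{1/2}$-terms) a lower bound $\int|\levi|\zFundtrf||^2 \ge c\,X - \mathrm{err}$ with $c\le\tfrac34$. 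Kato, as you use it, only gives $\int|\levi|\zFundtrf||^2 \le Y$. Chaining these with Simons yields $c\,X \le Y + \mathrm{err} \approx X + \mathrm{err}$, i.e.\ $(c-1)X \le \mathrm{err}$ with $c-1<0$ — a tautology, true for any $X\ge0$. Equivalently, eliminating $Y$ as you propose produces only a trivial \emph{lower} bound on $X$, not the required upper bound.

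What makes the paper's argument close is the Schoen--Simon--Yau refined Kato inequality \cite[(1.28)]{schoen1975curvature}, quoted in the proof as
\[ \tfrac{16}{9}\,|\levi|\zFundtrf||^2 \le |\levi\zFundtrf|^2 + C\,|\outric|^2, \]
which the paper observes is purely algebraic and does not rely on the SSY minimality hypothesis. This gives $\int|\levi|\zFundtrf||^2 \le \tfrac{9}{16}\,Y + \mathrm{err}$, a strict improvement over Kato's constant $1$. Then the chain $Y + \tfrac{2}{\Hradius^2}Z \le \tfrac54 X + \mathrm{err} \le \tfrac54\cdot\tfrac43\cdot\tfrac{9}{16}\,Y + \mathrm{err} = \tfrac{15}{16}\,Y + \mathrm{err}$ has a strictly sub-unit coefficient, and $\tfrac1{16}Y + \tfrac{2}{\Hradius^2}Z \le \mathrm{err}$ delivers the bound. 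Without the strictly-better-than-Kato constant $\tfrac{16}{9}$ (or some other source of strict improvement) the iteration does not close. The rest of your bookkeeping (Hölder on $\outric$, Young for the cubic cross-term, the starting $\Lp^2$-bound on $Z$) is sound; the single missing ingredient is the improved Kato inequality.

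Minor secondary remark: the negativity of $\tfrac{\H^2}{2}-c_2|\H|^\beta$ for large $\Hradius$ is not something that \emph{helps} produce an upper bound on $X$ — it merely weakens the lower bound coming from stability. The role of that term in the paper is simply to be estimated as an error of size $\Hradius^{-(\kappa+\beta-1)}$, which is where the $\beta$-dependence in the final exponent comes from.
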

\begin{proof}
Equivalent to \cite[Prop.~3.3]{metzger2007foliations} and \cite[Prop.~2.1]{nerz2015CMCfoliation}, we first integrate $\trtr{\laplace\zFundtrf}{\zFundtrf}$ and then integrate it by parts. However, instead of using the Simon's identity for $\laplace\zFund$, we use the following (equivalent but simpler) formula
\[ \levi*\div\zFund = \div_2(\levi*\zFund) - \trzd\ric\zFund + \rc_{\cdot\ik\il\cdot}\,\zFund^{\ik\il}. \]
which can be proven equivalent to the Simon's identity using normal coordinates and the Codazzi equation.\footnote{Actually, this is true for every hypersurface $\M\hookrightarrow(\outM,\outg*)$ in any Riemannian manifolds $(\outM,\outg*)$.}
The Codazzi equation implies
\begin{equation*}
 \levi*\div\zFund = \laplace\zFund - \div_1(\outrc_{\cdot\cdot\cdot\nu}) - \trzd\ric\zFund + \rc_{\cdot\ii\ij\cdot}\,\zFund^{\ii\ij}. \labeleq{alternative_Simons_identity}
\end{equation*}
In particular using $\dim\M=2$ and \eqref{alternative_Simons_identity}, a integration by parts proves
\begin{align*}
 \Vert\levi*\zFundtrf\Vert_{\Lp^2(\M)}^2
 ={}& \int\trzd{\div\zFund}{\div\zFundtrf}\d\mug - \int\outrc(\levi\zFundtrf,\nu)\d\mug \\
		&	- \frac12\int_{\M}\sc(\trtr\zFundtrf\zFundtrf + (\g_{\ii\ik}\g_{\ij\il}-\g_{\ii\il}\g_{\ij\ik})\,\zFund^{\ij\ik}\,\zFund^{\ii\il})\d\mug \\
 ={}& \frac12\Vert\levi*\H\Vert_{\Lp^2(\M)}^2 - 2\int\outric(\nu,\levi*\H)\d\mug + 2\,\Vert\outric_\nu\Vert_{\Lp^2(\M)}^2
			- \int_{\M}\sc\trtr\zFundtrf\zFundtrf\d\mug \\
 ={}& \frac12\Vert\levi*\H\Vert_{\Lp^2(\M)}^2 - 2\int\outric(\nu,\levi*\H)\d\mug + 2\,\Vert\outric_\nu\Vert_{\Lp^2(\M)}^2 \\
		& - \int_{\M} (\outsc-2\,\outric(\nu,\nu) + \frac{\H^2}2 - \trtr\zFundtrf\zFundtrf)\trtr\zFundtrf\zFundtrf\d\mug,
\end{align*}
where we used the Gau\ss\ equation in the last step. Now, we use $\meanH\equiv\frac{{-}2}\Hradius$, the decay assumptions for $\outric$, and \eqref{Bootstrap_for_trace_free_second_fundamental_form__Lp2_eq} to conclude
\begin{equation*}\labeleq{Regularity_of_the_spheres_L4_and_H2}
 \vert\Vert\levi*\zFundtrf\Vert_{\Lp^2(\M)}^2 + \frac2{\Hradius^2}\Vert\zFundtrf\Vert_{\Lp^2(\M)}^2 - \Vert\zFundtrf\Vert_{\Lp^4(\M)}^4\vert
	 \le \frac C{\Hradius^{2\kappa}} + \frac C{\Hradius^{\kappa}}\Vert\zFundtrf\Vert_{\Lp^4(\M)}^2.\smallskip \end{equation*}
	
Without loss of generality, we assume $\c_3\ge0$ and we define $\alpha:=\c_2\,\vert\meanH\vert^\beta$. The rest of the proof is analogue to Huisken-Yau's proof of \cite[Prop.~5.3]{huisken_yau_foliation} which again is based on a paper by Schoen-Simon-Yau \cite{schoen1975curvature}, where we replace their assumption that the surface is stable by our weaker assumption that the sphere has controlled instability. We recall the proof nevertheless for the readers convenience. Due to the assumed ${-}\alpha$-controlled instability assumption, we have (choosing $f:=\vert\zFundtrf\vert_{\g*}$ and $\mean f:=\fint f\d\mug=\volume{\M}^{-1}\,\Vert\zFundtrf\Vert_{\Lp^1(\M)}$)
\begin{align*}
 \Vert\zFundtrf\Vert_{\Lp^4(\M)}^4
 \le{}& \int(\trtr\zFundtrf\zFundtrf - \alpha + \outric(\nu,\nu))(f-\mean f)^2 \d\mug
				+ \Vert\outric\Vert_{\Lp^2(\M)}\,\Vert(f-\mean f)\Vert_{\Lp^4(\M)}^2 \\
			& + \alpha \Vert f-\mean f\Vert_{\Lp^2(\M)}^2
				+ \mean f^2\,\Vert\zFundtrf\Vert_{\Lp^2(\M)}^2
				+ 2\,\mean f\,\Vert\zFundtrf\Vert_{\Lp^3(\M)}^3 \\
 \le{}& \Vert\levi*\vert\zFundtrf\vert_{\g*}\Vert_{\Lp^2(\M)}^2
				+ \frac14\Vert\zFundtrf\Vert_{\Lp^4(\M)}^4
				+ \frac C{\Hradius^{2\,\kappa}}
				+ \frac C{\Hradius^\kappa}\,\mean f^2 \\
			&	+ \frac C{\Hradius^\beta}\,\Vert\zFundtrf\Vert_{\Lp^2(\M)}^2
				+ C\,\Hradius^{2-\beta}\,\mean f^2
				+ \mean f^2\,\Vert\zFundtrf\Vert_{\Lp^2(\M)}^2
				+ C\,\mean f^4.
\end{align*}
As we know $\mean f:=\fint\vert\zFundtrf\vert_{\g*}\d\mug\le \frac C\Hradius\,\Vert\zFundtrf\Vert_{\Lp^2(\M)}$, Lemma~\ref{Bootstrap_for_trace_free_second_fundamental_form__Lp2} implies
\begin{equation*}\labeleq{Regularity_of_the_spheres_L4_and_H2_2}
 \Vert\zFundtrf\Vert_{\Lp^4(\M)}^4
 \le \frac43\,\Vert\levi*\vert\zFundtrf\vert_{\g*}\Vert_{\Lp^2(\M)}^2
		+ \frac C{\Hradius^{\kappa+\beta-1}}.
\end{equation*}
Using \cite[(1.28)]{schoen1975curvature}, we see
\begin{equation*}\labeleq{Regularity_of_the_spheres_Schoen-Simon-Yau} \frac{16}{9}\,\trtr{\levi*\vert\zFundtrf\vert_{\outg*}}{\levi*\vert\zFundtrf\vert_{\outg*}} \le \trtr{\levi*\zFundtrf}{\levi*\zFundtrf} + C\,\vert\outric\vert_{\outg*}^2, \end{equation*}
where we note that Schoen-Simon-Yau did not use their minimality condition to prove the above inequality.\footnote{In fact, they prove this inequality by a brilliant algebraic argument in a suitable chosen chart.} Combining \eqref{Regularity_of_the_spheres_L4_and_H2}, \eqref{Regularity_of_the_spheres_L4_and_H2_2}, and \eqref{Regularity_of_the_spheres_Schoen-Simon-Yau}, we conclude
\[ \Vert\levi*\zFundtrf\Vert_{\Lp^2(\M)}^2 + \frac2{\Hradius^2}\,\Vert\zFundtrf\Vert_{\Lp^2(\M)}^2
		\le \frac C{\Hradius^{2\kappa}} + \frac54\Vert\zFundtrf\Vert_{\Lp^4(\M)}^4
		\le \frac{15}{16}\,\Vert\levi*\zFundtrf\Vert_{\Lp^2(\M)}^2 + \frac C{\Hradius^{\kappa+\beta-1}}. \]
Therefore, \eqref{Regularity_of_the_spheres_L4_and_H2} implies the claim.
\end{proof}

\begin{proof}[Proof of Remark~\ref{an_alternative_assumption}]
Assume that \eqref{sobolev_for_k2} holds (instead of the control of the instability). We prove \eqref{Regularity_of_the_spheres_L4_and_H2} as in the above proof of Lemma~\ref{Bootstrap_for_trace_free_second_fundamental_form__Lp4}. Now, \eqref{sobolev_for_k2} implies
\[ \Vert\zFundtrf\Vert_{\Lp^4(\M)}^2
		= \Vert\trtr\zFundtrf\zFundtrf\Vert_{\Lp^2(\M)}
		\le \frac C\Hradius\,\Vert\trtr\zFundtrf\zFundtrf\Vert_{\Wkp^{1,1}(\M)}
		\le \frac C\Hradius\,\Vert\zFundtrf\Vert_{\H(\M)}\,\Vert\zFundtrf\Vert_{\Lp^2(\M)}, \]
where we used $\vert\levi\trtr\zFundtrf\zFundtrf\vert=2\,\vert\zFundtrf\vert\,\vert\levi*\zFundtrf\vert$ $\mug$-almost everywhere. Therefore, Lemma~\ref{Bootstrap_for_trace_free_second_fundamental_form__Lp2} and~\eqref{Regularity_of_the_spheres_L4_and_H2} imply
\[ \Vert\levi*\zFundtrf\Vert_{\Lp^2(\M)}^2 + \frac2{\Hradius^2}\Vert\zFundtrf\Vert_{\Lp^2(\M)}^2
		\le \frac C{\Hradius^{2\kappa}} + \Vert\zFundtrf\Vert_{\Lp^4(\M)}^4
		\le \frac C{\Hradius^{2\kappa}} + \frac C{\Hradius^{\kappa+1}}\,\Vert\zFundtrf\Vert_{\Hk(\M)}^2. \]
As we assumed $\kappa>1$, we have proven \eqref{Bootstrap_for_trace_free_second_fundamental_form__Lp4_eq} in this setting, too.
\end{proof}
\begin{proposition}[Regularity of the spheres]\label{Regularity_of_the_spheres}
Let $q<\infty$ be a constant and $(\M,\g*)\in\regsphere<M>{\c_1}$ be a hypersurface with ${-}\c_2\,\vert\meanH\vert^\beta$-controlled instability within a three-dimensional Riemannian manifold $(\outM,\outg*)$, where $\kappa\in\interval{\frac32}*2$, $\beta>3-\kappa$, $p\in\interval2\infty$, $\c_1\ge0$, $\c_2\in\R$, and $M>0$ are constants. There exist two constants $\Hradius_0=\Cof{\Hradius_0}[\kappa][\beta][p][\c_1][\c_2][M]$ and $C=\Cof[\kappa][\beta][p][\c_1][\c_2][M][q]$ and a conformal parametrization $\varphi:\sphere^2\to\M$ with corresponding conformal factor $\conformalf\in\Hk^2(\sphere^2)$, i.\,e.\ $\varphi^*\g*=\exp(2\,\conformalf)\,\Hradius^2\,\sphg*$, such that
\begin{equation*}
 \Vert\conformalf\Vert_{\Wkp^{2,p}(\sphere^2,\Hradius^2\,\sphg*)} \le \frac C{\Hradius^{\kappa+1-\frac2p}}, \qquad
 \Vert\zFundtrf\Vert_{\Wkp^{1,q}(\M)} \le \frac C{\Hradius^{\kappa-\frac2q}} \labeleq{Regularity_of_the_spheres__k}
\end{equation*}
if $\Hradius>\Hradius_0$, where $\sphg*$ denotes the standard metric of the Euclidean unit sphere.\pagebreak[2]
\end{proposition}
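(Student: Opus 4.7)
The plan is to upgrade the $\Lp^2$- and $\Lp^4$-bounds from Lemmas~\ref{Bootstrap_for_trace_free_second_fundamental_form__Lp2} and~\ref{Bootstrap_for_trace_free_second_fundamental_form__Lp4} to $\Lp^2$-closeness of $\Hradius^2\,\gauss$ to the constant~$1$, invoke Theorem~\ref{Lp_Reg_gauss_curv} of Appendix~\ref{Regularity_Calabi_energy} to obtain a conformal parametrization with small conformal factor, use the resulting uniform Sobolev constant to trigger Corollary~\ref{Bootstrap_for_trace_free_second_fundamental_form_cited}, and finally bootstrap via elliptic regularity to the $\Wkp^{1,q}$- and $\Wkp^{2,p}$-estimates claimed in the Proposition.

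The first step combines the Gau\ss\ equation for $\M\hookrightarrow(\outM,\outg*)$ with $\H\equiv-\nicefrac{2}{\Hradius}$ and the decomposition $\trtr\zFund\zFund=\trtr\zFundtrf\zFundtrf+\nicefrac{\H^2}{2}$ to rewrite the Gau\ss-curvature deviation as
\[ 2\,\gauss - \frac{2}{\Hradius^2}\;=\;\outsc-2\,\outric(\nu,\nu)-\trtr\zFundtrf\zFundtrf. \]
Estimating the ambient terms in $\Lp^2(\M)$ via H\"older's inequality and~\eqref{Regular_sphere_Ric}, and the $\trtr\zFundtrf\zFundtrf$-term via Lemma~\ref{Bootstrap_for_trace_free_second_fundamental_form__Lp4}, yields $\Vert\Hradius^2\,\gauss-1\Vert_{\Lp^2(\M,\Hradius^{-2}\,\g*)}\to0$ as $\Hradius\to\infty$; the threshold $\kappa>\tfrac32$ is what controls the $\outsc$-contribution, and $\beta>3-\kappa$ what controls the $\trtr\zFundtrf\zFundtrf$-contribution. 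Combining with the area bound $|\Hradius-\Aradius|\le C$ from Lemma~\ref{Bootstrap_for_trace_free_second_fundamental_form__Lp2}, Theorem~\ref{Lp_Reg_gauss_curv} applied to the rescaled metric $\Hradius^{-2}\,\g*$ produces a conformal parametrization $\varphi:\sphere^2\to\M$ with $\varphi^*\g*=\exp(2\,\conformalf)\,\Hradius^2\,\sphg*$ and $\Vert\conformalf\Vert_{\Hk^2(\sphere^2,\Hradius^2\,\sphg*)}$ arbitrarily small as $\Hradius\to\infty$; by the Sobolev embedding on $\sphere^2$, also $\Vert\conformalf\Vert_{\Lp^\infty(\sphere^2)}\to0$.

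Pulling back the standard Sobolev inequality of the round sphere through $\varphi$ now produces a uniform Sobolev constant $\cSob$ for $(\M,\g*)$ depending only on the prescribed constants, verifying the last remaining hypothesis of Corollary~\ref{Bootstrap_for_trace_free_second_fundamental_form_cited}. The Corollary then gives $\Vert\zFundtrf\Vert_{\Lp^\infty(\M)}\le C\,\Hradius^{-\kappa}$ and $\Vert\zFundtrf\Vert_{\Hk(\M)}\le C\,\Hradius^{1-\kappa}$. With these pointwise and $\Hk$-bounds and $\levi*\H=0$, the Codazzi-type identity~\eqref{alternative_Simons_identity} becomes an elliptic equation for $\zFundtrf$ whose right-hand side lies in every $\Lp^q(\M)$ thanks to~\eqref{Regular_sphere_Ric}; standard $\Lp^q$-regularity then yields the $\Wkp^{1,q}$-bound on $\zFundtrf$ with the claimed decay. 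Reinserting this improved control into the Gau\ss\ equation shows that $\Hradius^2\,\gauss-1$ is small in $\Lp^p(\sphere^2)$, and the prescribed Gau\ss-curvature equation for $\conformalf$ on the round sphere is a semilinear elliptic equation whose linearization is the round-sphere Laplacian; standard $\Wkp^{2,p}$-regularity then upgrades the $\Hk^2$-bound on $\conformalf$ to the desired $\Wkp^{2,p}$-bound.

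The subtle point is the mildly circular structure of the argument: Corollary~\ref{Bootstrap_for_trace_free_second_fundamental_form_cited} requires a uniform Sobolev inequality, which is obtained from a conformal parametrization, which itself requires some a priori control of the Gau\ss\ curvature. The point is that Lemmas~\ref{Bootstrap_for_trace_free_second_fundamental_form__Lp2} and~\ref{Bootstrap_for_trace_free_second_fundamental_form__Lp4} already supply this $\Lp^2$-control without any use of Corollary~\ref{Bootstrap_for_trace_free_second_fundamental_form_cited}, and the quantitative assumption $\beta>3-\kappa$ (rather than merely $\beta>1$ as in Lemma~\ref{Bootstrap_for_trace_free_second_fundamental_form__Lp4}) is exactly the threshold at which the $\trtr\zFundtrf\zFundtrf$-contribution to the Gau\ss-curvature deviation becomes small enough for Theorem~\ref{Lp_Reg_gauss_curv} to apply.
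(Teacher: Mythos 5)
Your proposal follows essentially the same strategy as the paper's proof: estimate the $\Lp^2$-deviation of the Gau\ss\ curvature via the Gau\ss\ equation and Lemmas~\ref{Bootstrap_for_trace_free_second_fundamental_form__Lp2}--\ref{Bootstrap_for_trace_free_second_fundamental_form__Lp4}, apply Theorem~\ref{Lp_Reg_gauss_curv} to obtain a conformal parametrization and hence a uniform Sobolev constant, invoke Corollary~\ref{Bootstrap_for_trace_free_second_fundamental_form_cited}, and then bootstrap.

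The differences in the final bootstrap are cosmetic: you close the $\zFundtrf$-estimate through the Codazzi-type identity~\eqref{alternative_Simons_identity} and the $\conformalf$-estimate through direct elliptic regularity for the prescribed Gau\ss\ curvature equation, whereas the paper uses the Simon's identity~\eqref{Simons-identity} (from which~\eqref{alternative_Simons_identity} was derived) and a second application of Theorem~\ref{Lp_Reg_gauss_curv}. These are interchangeable. One point you should sharpen: you assert that the right-hand side of the elliptic equation for $\zFundtrf$ ``lies in every $\Lp^q(\M)$ thanks to~\eqref{Regular_sphere_Ric}''. That is not quite right as stated. After using the Codazzi equation to replace $\div\zFund$ (recall $\levi*\H\equiv0$), the terms $\levi*\div\zFund$ and $\div_1(\outrc_{\cdot\,\cdot\,\cdot\,\nu})$ carry a \emph{first covariant derivative} of the ambient Ricci tensor, which is \emph{not} controlled by~\eqref{Regular_sphere_Ric}. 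These terms must be treated as divergences of $\Lp^p$-tensors, i.\,e.\ the equation has to be read in the weak/divergence form --- exactly what the paper does by invoking ``regularity of the weak Laplace operator.'' This is a fixable slip in exposition rather than a gap in your strategy, but as written the claim about the right-hand side is incorrect, and the concluding step should explicitly pass to the weak formulation.
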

\begin{proof}
Without loss of generality $\beta\le\kappa$. Let $\psi:\sphere^2\to\M$ be an arbitrary smooth parametrization and define $\h:=\Hradius^{{-}2}\;\psi^*\g$. By the Gau\ss\ equation, we know
\begin{align*}
 \Vert\sc[h]-2\Vert_{\Lp^2(\M)}^2
 ={}& \int(\sc[\h]-2)^2\d\mug[\h]
 =		\Hradius^2\int(\sc-\frac{\H^2}2)^2\d\mug \\
 ={}&	\Hradius^2\int(\outsc - 2\,\outric(\nu,\nu) - \trtr\zFundtrf\zFundtrf)^2\d\mug
 \le  \frac C{\Hradius^{\kappa+\beta-3}} \xrightarrow{\Hradius\to\infty} 0,
\end{align*}
where we used $\H\equiv\frac{{-}2}\Hradius$ in the first step and the assumptions on $\outric$ and the Lemmata~\ref{Bootstrap_for_trace_free_second_fundamental_form__Lp2} and~\ref{Bootstrap_for_trace_free_second_fundamental_form__Lp4} in the last one. By Theorem~\ref{Lp_Reg_gauss_curv}, we can replace $\psi$ by a conformal parametrization such that $\h=\exp(2\,\conformalf)\,\sphg*$ with $\Vert\conformalf\Vert_{\H^2(\sphere^2)}\le C\,\Hradius^{\frac{\kappa+\beta-3}2}\ll1$, where we assumed $\Hradius$ to be sufficiently large. In particular, the Sobolev inequality holds for a Sobolev constant independent of $\Hradius$. Thus, we can use Corollary~\ref{Bootstrap_for_trace_free_second_fundamental_form_cited} and Lemma~\ref{Bootstrap_for_trace_free_second_fundamental_form__Lp2} to conclude \eqref{Bootstrap_for_trace_free_second_fundamental_form_cited__k}. Therefore, repeating the above calculation gives
\[ \Vert\sc[h]-2\Vert_{\Lp^p(\sphere^2)} \le \frac C{\Hradius^{\kappa+1-\frac2p}}. \]
Hence, we can again use Theorem~\ref{Lp_Reg_gauss_curv} to conclude that there exists a conformal parametrization such that its conformal factor $\conformalf$ satisfies the first inequality in~\eqref{Regularity_of_the_spheres__k}. Recalling the Simon's identity
\begin{align*}\labeleq{Simons-identity}
 \laplace\zFund*
	={}& \Hess\,\H - \levi*\outric_{\nu} + \div_2\outrc_{\cdot\cdot\cdot\nu} + \frac{\H^2}2\zFundtrf + \H\,\trzd\zFundtrf\zFundtrf - \trtr\zFundtrf\zFundtrf\,\zFund* \\
		& - \trzd{(\tr_{23}\outrc)}\zFundtrf + \outrc_{\cdot I\!J\cdot}\,\zFundtrf^{I\!J} 
\end{align*}
from \cite{simons1968minimal,schoen1975curvature}, we can use the regularity of the weak Laplace operator, see the results of Christodoulou-Klainerman \cite[Cor.~2.3.1.2]{christodoulou1993global} or the combination of the results of Giaquinta-Martinazzi \cite[Thm~7.1]{giaquinta2005introduction} and Adams-Fournier \cite[Thm~3.9]{adams2003sobolev} to conclude the second inequality in \eqref{Regularity_of_the_spheres__k}.\pagebreak[3]
\end{proof}

\subsection{The stability operator}\label{Stability_operator_eigenvalues}
Now, we recall results from \cite{nerz2015CMCfoliation}, which we can use in this setting. As first step, we note that the eigenvalues of the stability operator of $\M<\Hradius>$ are of order $\Hradius^{-2}$ except for three eigenvalues of order $\Hradius^{-3}$. As we will see in Proposition~\ref{Stability}, the corresponding partition of $\Hk^2(\M)$ (respectively $\Lp^2(\M)$) is (asymptotically) given as follows.
\begin{definition}[Translational, deformational, and rescaling part]
Let $(\M,\g*)\in\regsphere<M>{\c_1}$ be a regular sphere with ${-}\c_2\,\vert\meanH\vert^\beta$-controlled instability of a three-dimensional Riemannian manifold $(\outM,\outg*)$, where $\kappa\in\interval1*2$, $\beta>3-\kappa$, $p\in\interval2*\infty$, $\c_1\ge0$, $\c_2\in\R$, and $M>0$ are constants. The \emph{translational part} $\trans f$ of a function $f\in\Lp^2(\M)$ is the $\Lp^2(\M)$-orthogonal projection of $f$ on the linear span of eigenfunctions of the (negative) Laplace with eigenvalue $\lambda$ satisfying $\vert\lambda-\nicefrac2{\Hradius^2}\vert\le\nicefrac1{\Hradius^2}$, i.\,e.
\begin{equation*}\labeleq{definition_trans_f} \trans f := \sum_{\vert\ewlap_i-\nicefrac2{\Hradius^2}\vert\le\nicefrac1{\Hradius^2}} \eflap_i\,\int_{\M} f\,\eflap_i \d\mug \qquad\forall\,f\in\Lp^2(\M), \end{equation*}
where $\{\eflap_i\}_{i\in\N}$ is any complete orthogonal system of $\Lp^2(\M)$ consisting of eigenfunctions of the (negative) Laplace operator with corresponding eigenvalue $\ewlap_i$, i.\,e.\ ${-}\laplace\eflap_i=\ewlap_i\,\eflap_i$ with $\ewlap_i\le\ewlap_{i+1}$. The \emph{rescaling part} $\mean f$ and \emph{deformational part} $\deform f$ of such a function $f\in\Lp^2(\M)$ is defined by $\mean f:=\volume{\M}^{-1}\int f\d\mug$ and $\deform f := f-\trans f - \mean f$, respectively.
\end{definition}
The author explained in \cite[Prop.~4.5]{nerz2015CMCfoliation} reasons for calling this terms \emph{translational}, \emph{rescaling}, and \emph{deformational part}. Note that $\trans{\Lp^2(\M)}:=\{\trans f\;{:}\;f\in\Lp^2(\M)\}$ is three-dimensional due to Proposition~\ref{Regularity_of_the_spheres}.\pagebreak[3]

Now, we can cite the announced stability proposition which is one of the central tools for the proof of the main theorem.
\begin{proposition}[Stability (equivalent to {\cite[Prop~2.7]{nerz2015CMCfoliation}})]\label{Stability}
Let $q\in\interval2\infty$ be a constant and $(\M,\g*)\in\regsphere<M>{\c_1}$ be a hypersurface with constant mean curvature and ${-}\c_2\,\vert\meanH\vert^\beta$-controlled instability in a three-dimen\-sio\-nal Riemannian manifold $(\outM,\outg*)$, where $\kappa\in\interval1*2$, $\beta>3-\kappa$, $p\in\interval2*\infty$, $M>0$, $\c_1$, and $\c_2$ are constants. There are two constants $\Hradius_0=\Cof{\Hradius_0}[\kappa][\beta][p][\c_1][\c_2][M]$ and $C=\Cof[\kappa][p][\beta][\c_1][\c_2][M][q]$ such that
\begin{alignat*}4
 \vert \int_{\M} \jacobiext*\trans g\,\trans h\d\mug - \frac{6\mHaw}{\Hradius^3}\int_{\M}\trans g\,\trans h\d\mug \vert \le{}& \frac C{\Hradius^{3+\ve}}\Vert\trans g\Vert_{\Lp^2(\M)}\,\Vert\trans h\Vert_{\Lp^2(\M)} &\quad&\forall\,g,h\in\Lp^2(\M), \\
 \Hradius\,\vert\mean g\vert + \Vert \deform g\Vert_{\Lp^2(\M)} \le{}& \frac{3\,\Hradius^2}2 \Vert \jacobiext*(\mean g+\deform g)\Vert_{\Lp^2(\M)} &&\forall\,g\in\Hk^2(\M)
\end{alignat*}
if $\Hradius>\Hradius_0$. If $\efjac\in\Hk^2(\M)$ is a eigenfunction of ${-}\jacobiext*$ with corresponding eigenvalue $\ewjac$ and $\Hradius>\Hradius_0$, then
\[ \vert\,\ewjac\vert \ge \frac3{2\Hradius^2} \quad\ \text{or}\quad\ \Hradius\,\vert\,\mean{\efjac}\vert+\Vert\,\deform{\efjac}\Vert_{\Hk^2(\M)} \le \frac C{\Hradius^{\frac12+\outve}}\Vert\,\efjac\Vert_{\Hk^2(\M)},\ \; \vert\,\ewjac-\frac{6\mHaw}{\Hradius^3}\vert \le \frac C{\Hradius^{3+\outve}}. \]
Furthermore, the corresponding $\Wkp^{3,q}$-inequalities
\begin{equation*}
 \Vert\trans g\Vert_{\Wkp^{3,p}(\M)} \le (\frac{\Hradius^3}{6\,\mHaw}+C\,\Hradius^{3-\outve})\,\Vert\jacobiext*g\Vert_{\Lp^p(\M)}, \ \;
   \Vert\Hesstrf\,\trans g\Vert_{\Wkp^{1,p}(\M)} \le C\,\Hradius^{\frac12-\outve}\,\Vert\jacobiext*g\Vert_{\Lp^p(\M)}
	\labeleq{Stability_eq}
\end{equation*}
and $\Wkp^{2,q}$-inequalities
\[ \Hradius^{\frac2q}\,\vert\mean g\vert + \Vert\deform g\Vert_{\Wkp^{2,q}(\M)} \le C_q\,\Hradius^2\,\Vert\jacobiext*g\Vert_{\Lp^q(\M)} \qquad\forall\,q\in\interval1*p \]
hold for every function $g\in\Wkp^{2,q}(\M)$ if $\Hradius>\Hradius_0$.\pagebreak[3]
\end{proposition}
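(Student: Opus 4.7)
The plan is to reduce the proposition to \cite[Prop.~2.7]{nerz2015CMCfoliation} by showing that the only feature of that argument which genuinely used stability was the Sobolev-type control on the second fundamental form, which here is supplied by Lemmata~\ref{Bootstrap_for_trace_free_second_fundamental_form__Lp2}--\ref{Bootstrap_for_trace_free_second_fundamental_form__Lp4} and Proposition~\ref{Regularity_of_the_spheres} under the weaker controlled-instability hypothesis. Concretely, Proposition~\ref{Regularity_of_the_spheres} produces a conformal parametrization $\varphi:\sphere^2\to\M$ with $\varphi^*\g*=\exp(2\,\conformalf)\,\Hradius^2\,\sphg*$ and $\Vert\conformalf\Vert_{\Wkp^{2,p}(\sphere^2,\Hradius^2\sphg)}\le C\,\Hradius^{-\kappa-1+\frac2p}$, together with $\Vert\zFundtrf\Vert_{\Wkp^{1,q}(\M)}\le C\,\Hradius^{-\kappa+\frac2q}$. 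Via $\varphi$, the operator $-\Hradius^2\,\jacobiext$ becomes a small $\Wkp^{1,p}$-perturbation of $-\Hradius^2\laplace[\sphg]-2$ on the round unit sphere.

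The key identity is $\trtr\zFund\zFund=\frac{\H^2}{2}+\trtr\zFundtrf\zFundtrf=\frac{2}{\Hradius^2}+\trtr\zFundtrf\zFundtrf$, so that
\[ -\jacobiext f = -\laplace f - \frac{2}{\Hradius^2}\,f - \bigl(\trtr\zFundtrf\zFundtrf+\outric(\nu,\nu)\bigr)\,f, \]
and the last bracket has size $\mathcal O(\Hradius^{-\min(2\kappa,\,\frac52+\outve)})$ in $\Lp^p(\M)$ by the regularity results and the Ricci decay built into $\regsphere*<M>{\c_1}$. Standard perturbation theory for selfadjoint elliptic operators on $\sphere^2$ (round Laplacian spectrum $\frac{\ell(\ell+1)}{\Hradius^2}$) then splits the spectrum of $-\jacobiext$ into: one eigenvalue near $-\frac{2}{\Hradius^2}$ (constants, rescaling direction), a three-dimensional cluster near $0$ (first spherical harmonics, translational directions), and a remaining part bounded below by $\frac{4}{\Hradius^2}-\frac{2}{\Hradius^2}-o(\Hradius^{-2})\ge\frac{3}{2\Hradius^2}$. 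The orthogonal projections built in the definitions of $\trans{},\mean{}, \deform{}$ agree with the spectral projections of $-\Hradius^2\laplace[\sphg]$, hence, by the $\Wkp^{2,p}$-smallness of $\conformalf$, with the spectral projections of $-\Hradius^2\jacobiext$ up to errors $\mathcal O(\Hradius^{-\frac12-\outve})$, which yields the second asymptotic bullet on $\efjac,\ewjac$ and the mean/deformational estimate.

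The refined coefficient $\frac{6\mHaw}{\Hradius^3}$ on the translational cluster is extracted by testing $-\jacobiext$ against an approximate eigenfunction $\trans g$ (a first spherical harmonic pulled back by $\varphi$) and integrating: the Gauss equation rewrites $\trtr\zFund\zFund+\outric(\nu,\nu)=\outsc-\sc+\frac{\H^2}{2}+\trtr\ktrf\ktrf$, and after combining with the Gauss--Bonnet theorem and the definition of $\mHaw(\M)=\sqrt{|\M|/16\pi}(1-\frac{1}{16\pi}\int\H^2\d\mug)$ one finds
\[ \int_{\M}\bigl(\trtr\zFund\zFund+\outric(\nu,\nu)-\tfrac{2}{\Hradius^2}\bigr)(\trans g)^2\d\mug = \frac{6\mHaw}{\Hradius^3}\int_{\M}(\trans g)^2\d\mug + \mathcal O(\Hradius^{-3-\outve})\,\Vert\trans g\Vert_{\Lp^2}^2, \]
using that $\trans g$ is nearly constant in modulus squared on $\M$. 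The cross terms with $\deform g$ and $\mean g$ are of lower order by orthogonality and the eigenvalue gap, giving the first bullet.

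Finally, the $\Wkp^{2,q}$ and $\Wkp^{3,p}$ inequalities in \eqref{Stability_eq} are obtained by writing $\jacobiext g=\tilde g$, applying the spectral decomposition to convert $\tilde g$-norms into $g$-norms (using the gap $\frac{3}{2\Hradius^2}$ on $\deform g$ and the controlled instability for $\trans g$), and then upgrading $\Lp^q$ to $\Wkp^{k,q}$ by Calder\'on--Zygmund/Schauder estimates for $\laplace$ on $(\M,\g*)$ (whose constants are uniform thanks to Proposition~\ref{Regularity_of_the_spheres}, together with the combinaton of \cite[Thm~7.1]{giaquinta2005introduction} and \cite[Thm~3.9]{adams2003sopsbolev} already exploited there, and Christodoulou--Klainerman's \cite[Cor.~2.3.1.2]{christodoulou1993global} for the sharp $\Wkp^{3,p}$-step). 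The main obstacle is the bookkeeping for the $\frac{6\mHaw}{\Hradius^3}$ coefficient on $\trans{}$: every cross term between the three approximate translational directions must be shown to vanish to leading order, which forces the above Gauss--Bonnet/Hawking-mass manipulation and the use of $\Vert\conformalf\Vert_{\Wkp^{2,p}}\ll 1$ to conclude that the three-dimensional projector is asymptotically diagonal against $\jacobiext$.
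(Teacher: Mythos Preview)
Your approach is essentially the paper's: reduce to \cite[Prop.~2.7]{nerz2015CMCfoliation} by feeding in the conformal parametrization of Proposition~\ref{Regularity_of_the_spheres} in place of the corresponding regularity input there, so that everything except the two inequalities in~\eqref{Stability_eq} follows verbatim. The paper makes exactly this reduction in one sentence.

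Where you are vaguer than the paper is in the derivation of~\eqref{Stability_eq}. For the $\Wkp^{3,p}$-bound on $\trans g$, the paper does not invoke general Calder\'on--Zygmund theory but rather the commutator identity $\laplace\levi*\trans g-\levi*\laplace\trans g=\tfrac{\sc}{2}\levi*\trans g$, which immediately gives $\Vert\trans g\Vert_{\Wkp^{3,p}}\le C\Vert\trans g\Vert_{\Wkp^{2,p}}$ via Laplace regularity. More importantly, your outline does not isolate the mechanism behind the improved bound $\Vert\Hesstrf\,\trans g\Vert_{\Wkp^{1,p}}\le C\Hradius^{\frac12-\outve}\Vert\jacobiext g\Vert_{\Lp^p}$: the paper pulls $\trans g$ back along $\varphi$, observes that $\varphi^*\trans g$ is $\Wkp^{3,p}$-close (at order $\Hradius^{-\frac12-\outve}$) to its projection onto the first eigenspace of the round-sphere Laplacian, and then uses the elementary fact that the \emph{trace-free} Hessian of a first spherical harmonic on the round sphere vanishes identically. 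Without this last observation your Calder\'on--Zygmund route would only give $\Vert\Hesstrf\,\trans g\Vert_{\Wkp^{1,p}}\lesssim\Hradius^{-2}\Vert\trans g\Vert_{\Wkp^{3,p}}\lesssim\Hradius^{1-\outve}\Vert\jacobiext g\Vert_{\Lp^p}$, a half-power worse than claimed.
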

Note that the inequalities of this proposition are proven using the conformal parametrization from Proposition~\ref{Regularity_of_the_spheres}, but nevertheless these inequalities are geometric ones, i.\,e.~they do not depend on a parametrization of $\M$.
\begin{proof}
Replacing \cite[Prop.~2.4]{nerz2015CMCfoliation} by Proposition~\ref{Regularity_of_the_spheres}, we mimic the proof of \cite[Prop~2.7]{nerz2015CMCfoliation} (resp.~\cite[Lemma~2.5]{nerz2015CMCfoliation}) and this proves everything except~\eqref{Stability_eq} but including $\Vert\trans g\Vert_{\Wkp^{2,p}(\M)} \le (\nicefrac{\Hradius^3}{(6\,\mHaw)}+C\,\Hradius^{3-\outve})\,\Vert\jacobiext*g\Vert_{\Lp^p(\M)}$.

Now, we recall the commutator identity $\laplace\levi*\trans g-\levi*\laplace\trans g=\nicefrac\sc2\;\levi*\trans g$ and see that the regularity of the Laplace operator implies
\[ \Vert\trans g\Vert_{\Wkp^{3,p}(\M)} \le C\,\Vert\trans g\Vert_{\Wkp^{2,p}(\M)} \le (\frac{\Hradius^3}{6\,\mHaw}+C\,\Hradius^{3-\outve})\,\Vert\jacobiext*g\Vert_{\Lp^p(\M)}. \]
Therefore with respect to the conformal parametrization $\varphi$ from Proposition~\ref{Regularity_of_the_spheres}, we have
\[ \Vert\laplace[\sphg*]\,(\trans g\circ\varphi) - \frac2{\Hradius^2}(\trans g\circ\varphi)\Vert_{\Wkp^{1,p}(\sphere^2)}
		\le \frac C{\Hradius^{\frac52+\outve}}\Vert\trans g\Vert_{\Lp^p(\M)} \]
Denoting the $\Lp^2(\sphere^2,\sphg*)$-orthogonal projection of $\varphi^*\trans g:=\trans g\circ\varphi$ on the eigenfunctions of the $\sphg*$-Laplace operator with eigenvalue $\nicefrac2{\Hradius^2}$ by $(\varphi^*\trans g)\!^{{}^{\sphg*}t}$, this implies
\[ \Hradius^2\Vert\Hesstrf[\sphg*\,]\,(\varphi^*\trans g)\Vert_{\Wkp^{1,p}(\sphere^2,\sphg*)} \le \Vert \varphi^*\trans g - (\varphi^*\trans g)\!^{{}^{\sphg*}t} \Vert_{\Wkp^{3,p}(\sphere^2,\sphg*)} \le \frac C{\Hradius^{\frac12+\outve}}\Vert\trans g\Vert_{\Lp^p(\M)}, \]
where we used $\Hesstrf[\sphg*\,]\,h\!^{{}^{\sphg*}t}\equiv0$ for any $h\in\Lp^2(\sphere^2,\sphg*)$. Thus, we finally get
\[ \Vert\Hesstrf\,\trans g\Vert_{\Wkp^{1,p}(\M)} \le \frac C{\Hradius^{\frac52+\outve}}\,\Vert\trans g\Vert_{\Lp^p(\M)}. \qedhere \]
%
\end{proof}%
In particular, we get a strong control of the instability for regular spheres with controlled instability.
\begin{corollary}[Bootstrap for the control of the instability]
If $\M$ satisfies the assumption of Prop.~\ref{Stability} with sufficiently large $\Hradius$, then $\M$ has $(\nicefrac{6\,\mHaw}{\Hradius^3}+\nicefrac C{\Hradius^{3+\ve}})$-controlled instability. In particular, it is strictly stable if the Hawking mass is bounded away from zero (independently from $\Hradius$), if $\mHaw\ge M^{-1}>0$.
\end{corollary}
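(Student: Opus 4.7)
The plan is to analyze the Rayleigh quotient of the negative stability operator $-\jacobiext*$ on mean-free functions, using the detailed spectral information provided by Proposition~\ref{Stability}. The guiding heuristic is that for sufficiently large $\Hradius$, the operator $-\jacobiext*$ behaves like $-\laplace - 2\Hradius^{-2}$ on the round sphere, whose eigenvalues are $l(l+1)\Hradius^{-2} - 2\Hradius^{-2}$: one negative eigenvalue $\approx-2\Hradius^{-2}$ (constants), three translational-type eigenvalues $\approx 6\mHaw/\Hradius^3$ (perturbations of zero), and the rest $\geq 4\Hradius^{-2}$. Passing to mean-free functions removes the constant direction, so the minimum is set by the translational block.

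Concretely, every mean-free $f\in\Hk^2(\M)$ decomposes $\Lp^2$-orthogonally as $f=\trans f+\deform f$ (since $\mean f=0$), and
\[ \int_{\M} f\,(-\jacobiext* f)\d\mug = \int_{\M}\trans f\,(-\jacobiext*\trans f)\d\mug + 2\int_{\M}\trans f\,(-\jacobiext*\deform f)\d\mug + \int_{\M}\deform f\,(-\jacobiext*\deform f)\d\mug. \]
The first term is bounded below by $(6\mHaw/\Hradius^3 - C\Hradius^{-3-\ve})\Vert\trans f\Vert_{\Lp^2(\M)}^2$ directly from Proposition~\ref{Stability}(i). For the third, I transfer via the conformal parametrization of Proposition~\ref{Regularity_of_the_spheres} to the round sphere of radius $\Hradius$, where $\deform f$ is $\Lp^2$-orthogonal both to constants and to the translational Laplace eigenspace; a Poincar\'e-type inequality then yields $\int|\levi\deform f|^2\geq (6\Hradius^{-2}-o(\Hradius^{-2}))\Vert\deform f\Vert^2$. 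Combined with the pointwise decomposition $|\zFund|^2 + \outric(\nu,\nu) = 2\Hradius^{-2} + \trtr{\zFundtrf}{\zFundtrf} + \outric(\nu,\nu)$ and the $\Wkp^{1,q}$-regularity of $\zFundtrf$ from Proposition~\ref{Regularity_of_the_spheres} (together with the $\Lp^p$-bound on $\outric$ built into Definition~\ref{Regular_sphere}), this yields $\int\deform f\,(-\jacobiext*\deform f)\d\mug\geq (4\Hradius^{-2}-o(\Hradius^{-2}))\Vert\deform f\Vert^2$. For the cross term, spectral orthogonality of the Laplacian on disjoint Laplace eigenspaces gives $\int\levi\trans f\cdot\levi\deform f=0$, and since $\int\trans f\,\deform f=0$ the cross term reduces to $-\int(V-2\Hradius^{-2})\trans f\,\deform f\d\mug$, which is controlled by H\"older against the smallness of $V-2\Hradius^{-2}$.

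Assembling all three contributions via $\Vert f\Vert^2=\Vert\trans f\Vert^2+\Vert\deform f\Vert^2$ and observing that $4\Hradius^{-2}\gg 6|\mHaw|\Hradius^{-3}$ for sufficiently large $\Hradius$, the deformational excess absorbs the cross-term error and we obtain
\[ \int_{\M} f\,(-\jacobiext* f)\d\mug \geq \bigl(\tfrac{6\mHaw}{\Hradius^3} + \tfrac{C}{\Hradius^{3+\ve}}\bigr)\Vert f\Vert_{\Lp^2(\M)}^2 \]
for every mean-free $f$, which is precisely $(6\mHaw/\Hradius^3+C/\Hradius^{3+\ve})$-controlled instability. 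The \emph{in particular} claim is then immediate: if $\mHaw\geq M^{-1}>0$, then $6\mHaw/\Hradius^3\geq 6/(M\Hradius^3)$ dominates $|C|/\Hradius^{3+\ve}$ for $\Hradius$ large, so the operator is strictly positive on mean-free functions. The main technical obstacle is quantifying all the $o(\Hradius^{-2})$ errors above to the precision $\Hradius^{-3-\ve}$ demanded by the conclusion—in particular producing a Poincar\'e constant sharp to this order on the perturbed metric, and bounding $\int(V-2\Hradius^{-2})\trans f\,\deform f$ using only the $\Lp^p$/$\Wkp^{1,q}$ regularity available from Section~\ref{Conformal_parametrization}; both reductions become tractable once the good conformal parametrization of Proposition~\ref{Regularity_of_the_spheres} is used to transfer the computation to the round sphere, where the relevant spectral gap is explicit.
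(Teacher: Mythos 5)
The paper gives no explicit proof of this corollary (it follows Proposition~\ref{Stability} without a proof), so you are reconstructing the argument the paper leaves implicit, and your Rayleigh-quotient decomposition $f=\trans f+\deform f$ of a mean-free function is the right approach. Two remarks. First, the opening display of Proposition~\ref{Stability} is typeset with $\jacobiext*\trans g$, whereas the subsequent eigenvalue statement $\vert\ewjac-6\mHaw/\Hradius^3\vert\le C\Hradius^{-3-\outve}$, which concerns eigenvalues of $-\jacobiext*$, forces the sign to be $-\jacobiext*$; you correctly read through this. Second, the exponents in the cross-term estimate you flag as the main obstacle do close up: putting $\trans f$ in $\Lp^\infty$ (low Laplace eigenfunctions obey $\Vert\trans f\Vert_{\Lp^\infty}\le C\Hradius^{-1}\Vert\trans f\Vert_{\Lp^2}$) and $\deform f$ in $\Lp^{p'}$, H\"older gives $\vert X\vert\le C\Hradius^{-(\kappa+1)}\Vert\trans f\Vert_{\Lp^2}\Vert\deform f\Vert_{\Lp^2}$, and since $\kappa=\tfrac32+\outve$, a weighted Young split yields $\vert X\vert\le C_\eta\,\Hradius^{-3-2\outve}\Vert\trans f\Vert^2+\eta\,\Hradius^{-2}\Vert\deform f\Vert^2$ for any $\eta>0$. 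The $\approx 2\Hradius^{-2}\Vert\deform f\Vert^2$ slack from your Poincar\'e step absorbs the second summand (keep part of $\int\vert\levi\deform f\vert^2$ in reserve for the analogous absorption in the potential term of the deformational diagonal), and the first is $\le C'\Hradius^{-3-\outve}\Vert\trans f\Vert^2$ for $\Hradius$ large. So your proposal is correct, including the \emph{in particular} conclusion about strict stability.
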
\pagebreak[3]

\section{The main theorem}\label{section_main_theorem}
In this section, we state and prove the main theorem (in multiple versions). As the proof is quite long, we divide this section in multiple subsection. In subsection~\ref{TheMainTheorem}, we state the theorem first and explain afterwords the definitions used -- note that we give alternative assumption for the main theorem in Section~\ref{Foliation_property}. The main technical lemmata are stated and proven in Subsection~\ref{TechnicalLemmata}. Finally, we give the proof of all versions of the main theorem in Subsection~\ref{ProofOfTheMainTheorem}.
\subsection{The main theorem}\label{TheMainTheorem}
First, let us state the characterization of \emph{pointwise} asymptotic flatness.
\begin{theorem}[Characterization of asymptotic flatness]\label{Suff_ass_for_asymp_flat}
Let $(\outM,\outg*)$ be a three-dimensional Riemannian manifold without boundary and $\outve\in\interval0{\frac12}$ be a constant. There exists a coordinate system $\outx:\outM\setminus\outsymbol L\to\R^3\setminus\overline{B_1(0)}$ outside a compact set $\outsymbol L\subseteq\outM$ such that $(\outM,\outg*,\outx)$ is $\Ck^2_{\frac12+\outve}$-asymp\-to\-tic\-ally flat with ADM-mass $\mass\neq0$ if and only if there are constants $\c>0$, $\Hradius_0\ge\Hradius_0'=\Cof{\Hradius_0'}[\outve][\c][\vert\mass\vert]$ and a family $\mathcal M:=\{\M<\Hradius>\}_{\Hradius>\Hradius_0'}\subseteq\regsphere*[\frac32+\ve]<\c\,\vert\mass\vert>\c[\infty]$ of regular sphere in $\outM$ such that
\begin{enumerate}[nosep,label={\normalfont(\alph{enumi})}]
\item $\mathcal M$ is \emph{locally unique} {\normalfont(}\emph{locally complete}{\normalfont)}; \label{thm_ass__fst}
\item $\outM\setminus\bigcup_\Hradius\M<\Hradius>$ is relatively compact;
\item $\M<\Hradius>$ has mean curvature radius $\Hradius$, i.\,e.\ $\M<\Hradius>$ has constant mean curvature $\H<\Hradius>\equiv\nicefrac{{-}2}\Hradius$;
\item $\M<\Hradius>$ has ${-}\c\,\Hradius^{{-}\frac32}$-controlled instability;\label{thm_ass__lst}
\item the elements of $\mathcal M$ are pairwise disjoint. \label{thm_ass__disjoint}
\end{enumerate}
In this setting, $\mHaw(\M<\Hradius>)\to\mass$ and $\mathcal M$ is a smooth foliation.
\end{theorem}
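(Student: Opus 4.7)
The equivalence is proved by treating each direction separately. The ``only if'' direction is exactly the author's earlier construction in \cite{nerz2015CMCfoliation}, which exhibits on every $\Ck^2_{\frac12+\outve}$-asymptotically flat manifold with nonzero ADM-mass a CMC-foliation whose leaves are regular spheres in $\regsphere*[\frac32+\outve]<\c\,\vert\mass\vert>\c[\infty]$ verifying \ref{thm_ass__fst}--\ref{thm_ass__disjoint} and $\mHaw(\M<\Hradius>)\to\mass$. Thus the plan concentrates on the converse.

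\emph{Stage 1 (cover to smooth foliation).} Proposition~\ref{Stability} shows that on every leaf $\M<\Hradius>$ the stability operator $\jacobiext<\Hradius>$ has an approximately three-dimensional ``translational'' kernel while all other eigenvalues are bounded away from zero by a multiple of $\Hradius^{{-}2}$. Applying the implicit function theorem to the CMC-equation $\H={-}2/\Hradius'$ over normal graphs therefore produces a smooth family of CMC-surfaces near $\M<\Hradius>$, uniquely parametrized by the prescribed mean curvature. Local uniqueness \ref{thm_ass__fst} identifies these surfaces with the given $\M<\Hradius'>$, so $\Hradius\mapsto\M<\Hradius>$ depends smoothly on $\Hradius$; pairwise disjointness \ref{thm_ass__disjoint} together with the relative compactness of the complement promotes $\mathcal M$ to a smooth foliation of the complement of some compact set.

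\emph{Stage 2 (quasi-Euclidean coordinates).} Let $\rnu<\Hradius>$ and $\rbeta<\Hradius>$ denote the lapse and shift of this foliation. Differentiating $\H<\Hradius>\equiv{-}2/\Hradius$ in $\Hradius$ yields the elliptic equation $\jacobiext<\Hradius>\rnu<\Hradius>={-}2/\Hradius^2$, whose right-hand side is a leafwise constant and hence lies entirely in the rescaling subspace. Coupling the stability estimates of Proposition~\ref{Stability} with the sharp conformal bounds of Proposition~\ref{Regularity_of_the_spheres} forces $\rnu<\Hradius>\to1$ in $\Wkp^{2,p}(\M<\Hradius>)$ at rate $\Hradius^{{-}\frac12-\outve}$, with the translational part absorbed by an $\Hradius$-dependent reparametrization that simultaneously gauges $\rbeta<\Hradius>$ away and aligns the foliation map $\Phi<\Hradius>$ with the conformal parametrizations $\varphi<\Hradius>$. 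One obtains a diffeomorphism $\outy\colon\interval{\Hradius_0}\infty\times\sphere^2\to\outM\setminus\outsymbol L$ in which the pulled-back metric has the form $\rnu<\Hradius>^2\d\Hradius^2+\Hradius^2\exp(2\,\conformalf<\Hradius>)\,\sphg*$, and setting $\outx(\Hradius,\omega):=\Hradius\,\omega\in\R^3$ gives a chart outside a compact set.

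\emph{Stage 3 (asymptotic flatness and mass).} Reading off the decay of $\outg-\eukoutg*$ and its first derivative from the Sobolev control of $\rnu<\Hradius>-1$ and $\conformalf<\Hradius>$, and combining with the Ricci and scalar curvature bounds inherent in membership in $\regsphere*[\frac32+\outve]<\c\,\vert\mass\vert>\c[\infty]$, yields \eqref{decay_g}. A more economical route is to verify the hypotheses of Bando-Kasue-Nakajima \cite[Thm~1.1]{BandueKasueNakajima1989}: the pointwise curvature decay is supplied by the regular-sphere condition, and the volume growth $\volume{\{\Phi(\Hradius',\cdot):\Hradius'<\Hradius\}}=(\nicefrac43+o(1))\pi\Hradius^3$ follows by coarea integration of $\rnu<\Hradius>\volume{\M<\Hradius>}$ against $\d\Hradius$ using $\rnu<\Hradius>\to1$ and $\volume{\M<\Hradius>}=4\pi\Hradius^2+o(\Hradius^2)$. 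Finally, $\mass=\lim_\Hradius\mHaw(\M<\Hradius>)$ is immediate from \eqref{Definition_of_mass_dash} and the asymptotic concentricity of the CMC-leaves with the Euclidean coordinate spheres $\sphere^2_\Hradius(0)$.

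\emph{Main obstacle.} The delicate step is Stage~2: because the approximate kernel of $\jacobiext<\Hradius>$ is only three-dimensional up to an error of order $\Hradius^{{-}\frac12-\outve}$, the translational part of $\rnu<\Hradius>$ is not automatically small, and must be reabsorbed by a careful, $\Hradius$-dependent choice of shift $\rbeta<\Hradius>$ that remains compatible with the varying conformal parametrizations $\varphi<\Hradius>$. This forces a simultaneous bootstrap between $\rnu<\Hradius>$, $\rbeta<\Hradius>$, and $\varphi<\Hradius>$ in which Sobolev regularity is propagated not only along each leaf but also transversally in $\Hradius$; it is here that the weak instability hypothesis \ref{thm_ass__lst} is essentially used.
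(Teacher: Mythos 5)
Your Stage~1 and the eventual appeal to Bando-Kasue-Nakajima (Theorem~\ref{BandueKasueNakajima1989}) do match the paper's outline: Lemma~\ref{Lemma__smooth_cover} upgrades the cover to a $\Ck^1$-foliation using the invertibility of the stability operator from Proposition~\ref{Stability}, and the local uniqueness and disjointness assumptions are used exactly as you describe.

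The gap is in Stage~2, in the claim that $\rnu<\Hradius>\to1$ in $\Wkp^{2,p}(\M<\Hradius>)$ with the translational part of the lapse ``absorbed by an $\Hradius$-dependent reparametrization that simultaneously gauges $\rbeta<\Hradius>$ away.'' This cannot work: the lapse $\rnu<\Hradius>=\outg*(\partial*_\Hradius\Phi,\nu<\Hradius>)$ is the \emph{normal} component of the foliation vector field and is therefore independent of the tangential shift $\rbeta<\Hradius>$; no relabelling of the leaves alters its translational part. And under the hypotheses of Theorem~\ref{Suff_ass_for_asymp_flat} that part is genuinely of order one: Lemma~\ref{Lemma__smooth_cover} gives only $\Vert\deform{\rnu}\Vert\to0$ and $\meanrnu\to1$, while the paper's proof establishes $\Vert\rnu<\Hradius>-1-\rnu<\Hradius>_i\,\nu^i\Vert_{\Wkp^{2,p}(\M<\Hradius>)}\lesssim\Hradius^{-\frac12-\outve+\frac2p}$ for a vector ${}_\Hradius\boldsymbol{\rnu}$ with $\vert{}_\Hradius\boldsymbol\rnu\vert<1$ but \emph{not} tending to $0$ (a convergent drift would force a well-defined center of mass, which fails in general, as the footnote in the proof of Theorem~\ref{Charac_of_Sobolev_asymp_flat} notes). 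Consequently the chart $(\Hradius,\omega)\mapsto\Hradius\,\omega$ you propose does not make the metric decay to $\eukg*$: its $\d\Hradius^2$-coefficient remains $\approx 1+\rnu<\Hradius>^i\,\euknu_i$. The paper's resolution is not to kill the translational part but to change the target model: it introduces the family $\Psi$ of translating Euclidean spheres with $\pullback\Psi\eukg* = (1+\rnu<\Hradius>^i\,\euknu_i)\d\Hradius^2 + \Hradius^2\,\sphg*$, and proves the pointwise comparison $\vert\pullback{(\Psi\circ\Phi)}\outg*-\eukg*\vert\lesssim\Hradius^{-\frac12-\outve}$. It is this estimate — not convergence of the lapse to one — that supplies the local ball-volume lower bound needed for BKN and the identification of the geodesic and mean-curvature radii. (The translational part does decay under the additional scalar-curvature hypotheses of Section~\ref{Foliation_property} and Lemma~\ref{lapse_function_control_III}, which is the regime of Theorem~\ref{Charac_of_Sobolev_asymp_flat}; those hypotheses are not part of Theorem~\ref{Suff_ass_for_asymp_flat}.)
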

\begin{remark}[The Hawking masses]
Note that we do \emph{not} assume that the Hawking masses of the CMC-surfaces converge, but only that they are bounded away from zero and infinity.
\end{remark}
\begin{remark}[A local version of the alternative version]\label{ALocalVersion}
Note that we can replace the cover $\{\M<\Hradius>\}_{\Hradius>\Hradius_0}$ of the manifold $\outM$ by a cover $\{\M<\Hradius>\}_{\Hradius\in\interval{\Hradius_0}{\Hradius_1}}$ for a finite $\Hradius_1\in\interval{\Hradius_0}\infty$ and get the corresponding local result (for both theorems), i.\,e.~a chart of $\bigcup_{\Hradius\in\interval{\Hradius'}{\Hradius_1}}\M<\Hradius>$, where $\Hradius':=\max\{\Hradius_0,\Hradius_0'\}$ and $\Hradius_0'=\Cof{\Hradius_0'}[\c][M][\outve][p]$ does neither depend on $\Hradius_0$ nor on $\Hradius_1$.
\end{remark}
For the proof of this theorem, we use the following theorem by Bandue-Kasue-Nakajima that a manifold satisfying specific decay assumptions on the curvatures and a volume growth estimate is already asymptotically flat, \cite[Thm~1.1]{BandueKasueNakajima1989}.\footnote{The authors thanks the referees of the \textsc{Journal of Functional Analysis} for bringing his attention to this article.} Let us recall this theorem.
\begin{theorem}[{\cite[Thm~1.1]{BandueKasueNakajima1989}} for \texorpdfstring{$\dim\outM{=}3$}{dimension three}]\label{BandueKasueNakajima1989}
Let $o\in M$ be a point within a three-dimensional Riemannian manifold $(\outM,\outg*)$ and $\oc>0$ and $\outve\in\interval02$ be constants with $\outve\neq1$. If $(\outM,\outg*)$ has only one end and
\[ \vert\outric\vert_{\outg*}\le\frac{\oc}{\textrm d(o,\,{\cdot}\,)^{2+\outve}}, \qquad
		\outsymbol\mu(B_r(o)) \ge \frac{r^n}{\oc}, \]
then $\outM$ is $\Ck^2_{\outve}$-asymptotically flat.
\end{theorem}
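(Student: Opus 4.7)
My approach is a Cheeger-Anderson style strategy combining comparison geometry at infinity with harmonic-coordinate regularity theory. The goal is to produce a single chart $\outx\colon\outM\setminus\outsymbol L\to\R^3\setminus\overline{B_1(0)}$ in which $\outg_{ij}-\eukoutg_{ij}$ decays like $\rad^{-\outve}$ with first and second derivatives decaying accordingly.

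\textbf{Tangent cone.} First, I would use the curvature decay together with the volume lower bound to identify the tangent cone at infinity as flat $\R^3$. The bound $|\outric|_{\outg*}\le\oc\,d(o,\cdot)^{-2-\outve}$ gives that on the rescaled manifold $(\outM,R^{-2}\,\outg*,o)$ the Ricci curvature on any fixed annulus tends to zero as $R\to\infty$; combined with $\outmug(B_r(o))\ge r^3/\oc$ and the asymptotic matching of Bishop-Gromov comparison (valid thanks to the Ricci decay), this forces large annuli to be Gromov-Hausdorff close to Euclidean annuli and the tangent cone to be $\R^3$. The volume lower bound additionally rules out collapse.

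\textbf{Harmonic charts and patching.} Using Anderson's harmonic-radius estimate (whose hypotheses are supplied by the previous step), I would construct harmonic coordinates $y^i\colon A_R\to\R^3$ on each annulus $A_R:=\{R/2<d(o,\cdot)<2R\}$ with $\Ck^{1,\alpha}$-controlled metric components. In harmonic coordinates the metric satisfies the quasilinear elliptic system
\[
\Delta_{\outg}\,\outg_{ij} = -2\,\outric_{ij} + Q_{ij}(\outg,\partial\outg),
\]
where $Q_{ij}$ is quadratic in $\partial\outg$. Rescaled Schauder estimates applied to this system, combined with $\|\outric\|_{\Lp^\infty(A_R)}\le\oc\,R^{-2-\outve}$, give $\|\outg_{ij}-\delta_{ij}\|_{\Ck^{2,\alpha}(A_R)}\lesssim R^{-\outve}$, which bootstraps (since the $Q_{ij}$ term is then of higher order) to the desired $\Ck^2_\outve$ decay on each annulus. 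By the tangent-cone step the transition maps between consecutive harmonic charts differ from Euclidean rigid motions by $O(R^{-\outve})$, so straightening by a single fixed rigid motion yields a global chart $\outx$ on $\outM\setminus\outsymbol L$ satisfying \eqref{decay_g}.

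\textbf{Main obstacle.} The hardest part will be the tangent-cone step: establishing uniqueness of the cone and its identification with $\R^3$. The volume lower bound rules out collapse, but excluding non-Euclidean cones of non-negative Ricci curvature asks for a quantitative use of the curvature decay beyond a naive Bishop-Gromov argument, and it is here that one genuinely exploits the strong pointwise bound on $\outric$ rather than mere integral control. The technical restriction $\outve\ne1$ most likely arises in the elliptic bootstrap, where integrating the Ricci field twice produces logarithmic corrections at the critical rate $\outve=1$ which would spoil the clean $\Ck^2_\outve$ conclusion.
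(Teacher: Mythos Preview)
The paper does not prove this theorem. Theorem~\ref{BandueKasueNakajima1989} is stated purely as a citation of \cite[Thm~1.1]{BandueKasueNakajima1989}; it is invoked as a black box in the proof of Theorem~\ref{Suff_ass_for_asymp_flat} (once the CMC cover has been used to verify the volume-growth hypothesis via the estimate \eqref{Phi_is_translation}), and no argument for it is supplied. So there is nothing in the paper to compare your proposal against.

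For what it is worth, your sketch is in the right spirit for how results of this type are established: identify the tangent cone at infinity as Euclidean using the noncollapsing and curvature-decay hypotheses, then build global coordinates from harmonic charts on dyadic annuli and use the elliptic system $\Delta_{\outg}\outg_{ij}=-2\,\outric_{ij}+Q_{ij}$ to extract the $\Ck^2_{\outve}$ decay. Note, however, that the original Bando--Kasue--Nakajima argument (1989) predates Anderson's harmonic-radius estimates; they work instead with a more hands-on comparison between the distance function and a Euclidean radial function to construct the asymptotic chart directly. If you want to reconstruct the proof rather than cite it, you should consult \cite{BandueKasueNakajima1989} rather than the present paper.
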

\begin{remark}[Pairwise disjoint]
In Section~\ref{Foliation_property}, we give possible assumptions with any of which we can replace Assumption~\ref{thm_ass__disjoint}, i.\,e.~three assumption such that each of them implies that the surfaces are pairwise disjoint.
\end{remark}
Furthermore, we give an alternative version of Theorem \ref{Suff_ass_for_asymp_flat} in the notation of weighted Sobolev spaces. For this theorem (and its proof), we assume that the reader is familiar with Bartnik's article \cite{bartnik1986mass}. In particular, with the Sobolev inequalities and regularity of the Laplace operator in weighted Sobolev spaces, \cite[Sect.~1]{bartnik1986mass}, and the existence and regularity of harmonic coordinates \cite[Sect.~3]{bartnik1986mass}.
\begin{subequations}\label{assumptions}
\begin{theorem}[Characterization of \texorpdfstring{$\Wkp^{3,p}_{\nicefrac12}$}{Sobolev}-asymptotic flatness]\label{Charac_of_Sobolev_asymp_flat}
Let $(\outM,\outg*)$ be a three-dimensional Riemannian manifold without boundary and $\outsc\in\Lp^1(\outM)$, $p\in\interval2\infty$, $\eta\ge\nicefrac12$ with $\eta\notin\N$, and $k\in\N_{\ge0}$ be constants. There exists a coordinate system $\outx:\outM\setminus\outsymbol L\to\R^3\setminus\overline{B_1(0)}$ outside a compact set $\outsymbol L\subseteq\outM$ such that $(\pushforward\outx\outg*-\eukoutg)\in\Wkp^{3+k,p}_{{-}\eta}(\R^3\setminus\overline{B_1(0)})$, $\rad^3\,\vert\outsc\vert\to0$ as $\rad\to\infty$ and $(\outM,\outg*,\outx)$ has non-vanishing ADM-mass if and only if there are constants $\c>0$, $M>0$, and $\Hradius_0>\Hradius_0'=\Cof{\Hradius_0'}[\outve][\c][\mass][M]$ and a family $\mathcal M:=\{\M<\Hradius>\}_{\Hradius>\Hradius_0'}$ of spherical hypersurfaces in $\outM$ such that the assumptions~\ref{thm_ass__fst}--\ref{thm_ass__disjoint} are satisfied for $\mathcal M$ and
\begin{align}
 \Vert\vert\outric*\vert_{\outg*}\Vert_{\Lp^p(\M<\Hradius>)}
	\le{}& \frac{\outc(\Hradius)}{\Hradius^{\frac52-\frac2p}}, &
 \Hradius^{{-}3}\,\sum_{l=0}^{k+1}\int_{\M<\Hradius>}(\Hradius^{2+\eta+l}\vert\outlevi*^l\outric*\vert_{\outg*})^p\d\mug
	\le{}& \outc'(\Hradius), \labeleq*{Ass_on_the_ricci_curv} \\
 \Vert\outsc\Vert_{\Lp^\infty(\M<\Hradius>)}
	\le{}& \frac{\outc(\Hradius)}{\Hradius^3}, &
 \vert\mHaw(\M<\Hradius>)\vert \in \interval{M^{-1}}M \hphantom{\ \le{}}&
		\labeleq*{Ass_on_the_scalar_curv}
\end{align}
is satisfied for each $\Hradius>\Hradius_0$, where $\outc'\in\Lp^1(\R)$ and $\outc(\Hradius)=\int_\Hradius^\infty\outc'(s)\d s\to0$ for $\Hradius\to\infty$ and where $\outlevi*\outric*$ denote the three-dimensional covariant derivative of the Ricci curvature of $\outM$. In this setting, the ADM-mass $\mass$ satisfies $\mass=\lim_{\Hradius\to\infty}\mHaw(\M<\Hradius>)$.
\end{theorem}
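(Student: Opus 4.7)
The theorem is a biconditional. The forward implication (weighted Sobolev asymptotic flatness $\Rightarrow$ CMC cover) is essentially the author's foliation existence result \cite{nerz2015CMCfoliation}: on a $\Wkp^{3+k,p}_{-\eta}$-end the CMC foliation exists, and a Fubini argument over annular shells $\{R\le\rad\le 2R\}$ converts the weighted Sobolev bounds on $\outric*$ into the leaf-wise integral bound \eqref{Ass_on_the_ricci_curv}. The bound \eqref{Ass_on_the_scalar_curv} is immediate from $\rad^3|\outsc|\to 0$, and the Hawking-mass bound follows from $\mHaw(\M<\Hradius>)\to\mass\neq 0$. The new content is the converse, which I sketch below.

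First, I would apply Proposition~\ref{Regularity_of_the_spheres} to each leaf $\M<\Hradius>$ with $\kappa=\nicefrac32+\ve$ and $\beta=\nicefrac32$ (so $\beta>3-\kappa$) to obtain conformal parametrizations $\varphi_\Hradius\colon\sphere^2\to\M<\Hradius>$ whose conformal factor $\conformalf<\Hradius>$ and trace-free second fundamental form $\zFundtrf$ are controlled in $\Wkp^{2,p}$ and $\Wkp^{1,q}$ respectively. Higher $\Wkp^{k+1,q}$-regularity of $\conformalf<\Hradius>$ and $\zFundtrf$ follows by bootstrapping with Simons' identity \eqref{Simons-identity}, the higher-order Ricci bounds in \eqref{Ass_on_the_ricci_curv}, and $\Lp^p$-regularity of the Laplace operator on $(\sphere^2,\sphg*)$; all Sobolev constants are uniform in $\Hradius$ because we work on the round sphere.

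Second, the local uniqueness~\ref{thm_ass__fst} together with the pairwise disjointness~\ref{thm_ass__disjoint} (handled in Subsection~\ref{TechnicalLemmata}) shows the cover is a smooth foliation $\Phi\colon I\times\M\to\outM$ with lapse $\rnu$ and shift $\rbeta$. Since $\H<\Hradius>\equiv\nicefrac{-2}\Hradius$ is constant on each leaf, the mean-curvature evolution reduces to $\jacobiext*\rnu=\nicefrac{-2}{\Hradius^2}$, and Proposition~\ref{Stability} decomposes $\rnu=\mean\rnu+\trans\rnu+\deform\rnu$ with $\mean\rnu=1+O(\Hradius^{-1})$, $\Vert\deform\rnu\Vert_{\Wkp^{2,p}}\le C\,\Hradius^{-1/2-\ve}$, and $\trans\rnu$ confined to the three-dimensional translational eigenspace; the shift can be absorbed by re-choosing $\varphi_\Hradius$ on each leaf. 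Defining the chart $\outx(p):=\Hradius\cdot\varphi_\Hradius^{-1}(p)$ for $p\in\M<\Hradius>$, the pulled-back metric takes the form $\rnu^2\,\d\Hradius^2+\Hradius^2 e^{2\conformalf<\Hradius>}\,\sphg*+(\text{shift correction})$, and the combined estimates above yield $(\pushforward\outx\outg*-\eukoutg*)\in\Wkp^{3+k,p}_{-\eta}$; the decay $\rad^3|\outsc|\to 0$ is inherited from \eqref{Ass_on_the_scalar_curv}, while the classical characterization \eqref{Definition_of_mass_dash} applied to coordinate spheres in the new chart identifies $\mass=\lim_\Hradius\mHaw(\M<\Hradius>)$.

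The main obstacle is that, in contrast to the pointwise setting of Theorem~\ref{Suff_ass_for_asymp_flat}, one cannot appeal to Bandue--Kasue--Nakajima, so every step converting curvature information into metric information must be executed entirely within the weighted Sobolev calculus of \cite{bartnik1986mass}, relying on the sharp $\Wkp^{3,p}$-regularity provided by Proposition~\ref{Stability}. The iteration in the higher Sobolev index $k$ depends crucially on the cumulative-integrability hypothesis $\outc(\Hradius)=\int_\Hradius^\infty\outc'(s)\,\d s\to 0$, which ensures that the errors accumulated along the foliation remain summable in the $\Wkp^{k+3,p}_{-\eta}$-norm on the entire end.
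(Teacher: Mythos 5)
Your outline captures the right overall shape (reduce to a smooth foliation, estimate the lapse, build a chart from each leaf, recover regularity from curvature), but two steps are handled differently in the paper and your versions of them contain genuine gaps.

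\textbf{The chart.} You define $\outx(p):=\Hradius\,\varphi_\Hradius^{-1}(p)$ and then say "the shift can be absorbed by re-choosing $\varphi_\Hradius$ on each leaf." This hides exactly the hard step. The conformal parametrizations from Proposition~\ref{Regularity_of_the_spheres} are only determined up to the M\"obius group of $\sphere^2$, so without a coherent gauge choice across $\Hradius$ the map $\Hradius\mapsto\varphi_\Hradius$ need not be continuous, let alone differentiable. The paper avoids the gauge problem entirely: it does not build the chart from the conformal parametrizations at all, but from the three first Laplace eigenfunctions $\eflap_i$ of $(\M<\Hradius>,\g<\Hradius>*)$, which by Lemma~\ref{lemma__derivatives} can be chosen to depend continuously differentiably on $\Hradius$. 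More importantly, the paper's chart is
\[
 \outx(\outp) = \outsigma(\outp)\big(\eflap_1(\outp),\eflap_2(\outp),\eflap_3(\outp)\big) + \centerz\big(\outsigma(\outp)\big),
\]
where $\centerz(\Hradius)$ is an explicit integral of $\int_{\M<s>}\eflap<s>_i\,\rnu<s>\d\mug$ from $\Hradius_0$ to $\Hradius$. This translational correction encodes the drift of the (Euclidean) centers of the CMC-spheres and is not optional: the paper even notes that $\centerz$ need not converge (the center of mass can fail to exist), so no choice of "centered" $\varphi_\Hradius$ will make $\Hradius\,\varphi_\Hradius^{-1}$ asymptotic to a single flat chart. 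Your chart therefore cannot yield the required decay without a $\centerz$-type term.

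\textbf{Higher regularity.} You propose obtaining $\Wkp^{3+k,p}_{-\eta}$ regularity by bootstrapping the leaf-wise quantities via Simons' identity and the higher-order Ricci bounds. Bootstrapping in the leaf direction controls the intrinsic and extrinsic geometry of each $\M<\Hradius>$, but not the full three-dimensional metric $\outg*$; in particular, the mixed and radial components of $\pushforward\outx\outg*$ require control on the $\Hradius$-derivative of the lapse, which is Lemma~\ref{lapse_function_control_III}, not Simons' identity. The paper first constructs $\Wkp^{1,q}_{-\nicefrac12}$-asymptotically flat coordinates (one derivative only, for every $q$), and only then gets the two additional derivatives in a separate \emph{final} step by passing to Bartnik's harmonic coordinates $\outy$ and using
\[
 \outric_{\oi\oj} = -\tfrac12\laplace[\outg]\outg_{\oi\oj} + Q_{\oi\oj}(\outg*,\partial*\outg*)
\]
together with weighted elliptic regularity and the assumed decay of $\outric*$. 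That harmonic-coordinate elliptic bootstrap is the mechanism that converts the curvature bounds \eqref{Ass_on_the_ricci_curv} into metric regularity; it is absent from your proposal, and the leaf-wise bootstrap alone does not replace it. Relatedly, your attribution of the summability hypothesis $\outc(\Hradius)=\int_\Hradius^\infty\outc'$ to "iteration in $k$" is misplaced: in the paper it enters through the lapse-function estimates (Lemmata~\ref{lapse_function_control_III} and~\ref{lapse_function_control_IIIb}), which are needed already to make the lapse strictly positive and to pin down $\meanrnu$ to order $o(\Hradius^{-1})$.
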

\begin{remark}[Alternative characterization]
In Section~\ref{Foliation_property}, we will see that we can equivalent characterize wether a Riemannian manifold $(\outM,\outg*)$ is $\Wkp^{3+k,p}_\eta$-asymptot\-ically flat with $\outsc\in\Wkp^{1+l,1}_3(\R^3\setminus\overline{B_1(0)})$ and $l\in\N_{\ge0}$ by the existence of a cover $\mathcal M$ such that the assumptions~\ref{thm_ass__fst}--\ref{thm_ass__lst} and~\eqref{assumptions} are satisfied and
\begin{equation*}
 \Vert\outsc\Vert_{\Lp^\infty(\M<\Hradius>)}
	\le \frac{\outc(\Hradius)}{\Hradius^3}, \qquad
 \sum_{i=0}^{l+1} \Hradius^i\,\Vert\outlevi*^i\hspace{.05em}\outsc\Vert_{\Lp^1(\M<\Hradius>)}
	\le \outc'(\Hradius). \labeleq*{Ass_on_the_scalar_curv__strong}
\end{equation*}
Note that we do \emph{not} assume \ref{thm_ass__disjoint}, i.\,e.~that the surfaces are pairwise disjoint, and that \eqref{Ass_on_the_scalar_curv__strong} is a natural assumption as \eqref{Ass_on_the_ricci_curv} is the corresponding inequality for the Ricci curvature (if $l=k$).
\end{remark}
\end{subequations}
%
%
%
%
Now, we explain the terminology used above. 
\begin{definition}[Locally uniqueness (completeness)]\label{locally_unique_CMC-family}
Let $\mathcal M\subseteq\regsphere*<M>\c$ be a family of regular spheres with constant mean curvature, where $\kappa\in\interval1*2$, $p\in\interval2*\infty$, $\c\ge0$, and $M>0$ are constants. The family $\mathcal M$ is called \emph{locally unique} (or \emph{locally complete}) if there exists an arbitrary constant $\delta=\Cof{\delta}[\M][p]\relax>0$ for each leaf $\M\in\mathcal M$ such that the implication
\[ \Vert f\Vert_{\Wkp^{2,p}(\M)} \le \delta,\ \H(\graph f)\equiv\H'\in\interval{\H^{-}}{\H^{+}}\qquad\Longrightarrow\qquad \graph f\in\mathcal M \]
holds for every function $f\in\Hk^2(\M)$, where $\H(\graph f)$ denotes the mean curvature of $\graph f:=\{\outexp_p(f(p)\,\nu)\;{:}\;p\in\M\}$ and $\H^{\pm}:={\pm}\sup\{{\pm}\H(\M):\M\in\mathcal M\}$.
\end{definition}
We want to stress that we do \emph{not} assume that any leaf of a CMC-family is a graph of some other leaf of the family. \smallskip\pagebreak[3]

\subsection{Technical lemmata}\label{TechnicalLemmata}
\begin{lemma}[Smoothness of the cover and the lapse function]\label{Lemma__smooth_cover}
Assume that
\begin{enumerate}[series=lemma_assumptions,nosep,label={\normalfont(\Alph{enumi})}]
\item $(\outM,\outg*)$ is a three-dimensional Riemannian manifold without boundary;\label{Ass_outM}\label{Ass_fst}
\item $M>0$, $\outve\in\interval0{\nicefrac12}$, $p\in\interval2*\infty$, $0<\Hradius_0<\Hradius_1\le\infty$, and $\c>0$  are constants;\label{Ass_const}
\item $\mathcal M:=\{\M<\Hradius>\}_{\Hradius\in\interval{\Hradius_0}{\Hradius_1}}\subseteq\regsphere*[\frac32+\ve]<\c\,M>\c$ is a family of regular spheres in $\outM$;\vspace{-.5em}\label{Ass_hypersurfaces}
\item $\M<\Hradius>$ has constant mean curvature $\H<\Hradius>\equiv\nicefrac{{-}2}\Hradius$ and ${-}\c\,\Hradius^{{-}\frac32}$-con\-trolled instability for every $\Hradius\in\interval{\Hradius_0}{\Hradius_1}$.\label{Ass_cmc}
\item $\mathcal M$ is as family of CMC-surfaces locally unique;\label{Ass_unique}\label{Ass_lst}
\end{enumerate}
There exist two constants $\Hradius_0'=\Cof{\Hradius_0'}[\outve][M][p][\c]$ and $C=\Cof[\outve][M][p][\c]$ such that if $\Hradius_0>\Hradius_0'$, then $\mathcal M$ is a $\mathcal C^1$-cover of its image, i.\,e.\ there exists a $\Ck^1$-map $\outPhi:\interval{\Hradius_0}{\Hradius_1}\times\sphere^2\to\outM$ such that $\outPhi(\Hradius,\sphere^2)=\M<\Hradius>$. In this setting, the lapse function $\rnu:=\outg*(\nu,\partial*_\Hradius\outPhi)$ of any such $\Ck^1$-cover satisfies
\begin{subequations}\label{Lemma__smooth_cover_eq}%
\begin{align}
 \Vert \deform{\rnu}\Vert_{\Wkp^{2,p}(\M)} \le{}& \frac C{\Hradius^{\frac12+\outve-\frac2p}}, &
 \vert\meanrnu-1\vert \le{}& \frac C{\Hradius^{\frac12+\outve}}, \\
 \Vert \trans{\rnu} \Vert_{\Wkp^{3,p}(\M)} \le{}& C\,\Hradius^{\frac12+\frac2p-\ve}, &
 \Vert\Hesstrf\,\trans\rnu\Vert_{\Wkp^{1,p}(\M)}
	\le{}& \frac C{\Hradius^{\frac52+\outve}}\Vert\trans\rnu\Vert_{\Lp^p(\M)}.
\end{align}%
\end{subequations}%
If the elements of $\mathcal M$ are pairwise disjoint, then $0<\rnu\relax<3$.
\end{lemma}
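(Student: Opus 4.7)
The plan is to first construct $\outPhi$ locally via an implicit function argument combined with the local uniqueness assumption~\ref{Ass_unique}, and then derive the estimates~\eqref{Lemma__smooth_cover_eq} on the lapse by setting $g:=\rnu-1$ and showing that $\jacobiext*g$ is small, so that Proposition~\ref{Stability} can be applied componentwise.

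For the existence of $\outPhi$, fix $\Hradius_*\in\interval{\Hradius_0}{\Hradius_1}$ and consider the nonlinear map $F\colon(\Hradius,f)\mapsto\H(\graph f)+\nicefrac2\Hradius$ defined in a neighborhood of $(\Hradius_*,0)$ in $\R\times\Wkp^{2,p}(\M<\Hradius_*>)$, restricted to functions $f$ lying in the $\Lp^2$-orthogonal complement of $\trans{\Lp^2(\M<\Hradius_*>)}$. At the base point $(\Hradius_*,0)$ the partial derivative in $f$ is the stability operator $\jacobiext*$, and the $\Wkp^{2,p}$-version of the second inequality of Proposition~\ref{Stability} implies that $\jacobiext*$ restricted to this complement is invertible with an $\Lp^p\to\Wkp^{2,p}$-inverse bounded by $\Cof[\outve][\c][M][p]\,\Hradius_*^2$. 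The classical implicit function theorem then yields a $\Ck^1$-family $\Hradius\mapsto f<\Hradius>$ of solutions with $\Vert f<\Hradius>\Vert_{\Wkp^{2,p}}$ small; $\graph f<\Hradius>$ is a CMC-surface with mean curvature $\nicefrac{{-}2}\Hradius$ and, by local uniqueness~\ref{Ass_unique} together with the smallness of $\Vert f<\Hradius>\Vert_{\Wkp^{2,p}}$, must lie in $\mathcal M$ and hence coincide with $\M<\Hradius>$. Patching these local parametrizations (which agree up to an irrelevant reparametrization of the sphere factor) produces the global $\Ck^1$-cover $\outPhi$.

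For the lapse estimates, differentiating $\H\circ\outPhi<\Hradius>\equiv\nicefrac{{-}2}\Hradius$ with respect to $\Hradius$ and isolating the normal component yields the scalar equation $\jacobiext*\rnu=\nicefrac2{\Hradius^2}$. Writing $g:=\rnu-1$ and using the pointwise identity $\jacobiext*(1)=\trtr\zFund\zFund+\outric(\nu,\nu)=\nicefrac2{\Hradius^2}+\trtr\zFundtrf\zFundtrf+\outric(\nu,\nu)$, we obtain the crucial simplification $\jacobiext*g={-}\trtr\zFundtrf\zFundtrf-\outric(\nu,\nu)$, whose right-hand side has $\Lp^p$-norm bounded by $\Cof[\outve][\c][M][p]\,\Hradius^{\frac2p-\frac52-\outve}$ by Proposition~\ref{Regularity_of_the_spheres} (applied with $\kappa=\nicefrac32+\outve$ and $\beta=\nicefrac32$) and the $\regsphere*[\frac32+\outve]<\c\,M>\c[\infty]$-property, where the Ricci term dominates. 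Applying the $\Wkp^{2,p}$- and $\Wkp^{3,p}$-inequalities of Proposition~\ref{Stability} to $g$, and noting $\trans g=\trans\rnu$, $\mean g=\mean\rnu-1$, $\deform g=\deform\rnu$, all four estimates in~\eqref{Lemma__smooth_cover_eq} (including the last one, which is the Hessian-trace-free inequality transcribed verbatim from Proposition~\ref{Stability}) follow by direct computation.

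Finally, if the leaves are pairwise disjoint, the $\Ck^1$-cover $\outPhi$ is an injection, so the continuous lapse $\rnu$ has a definite sign. Since $\mathcal M$ covers $\outM$ outside a compact set, the leaves must be nested with respect to the $\Hradius$-ordering, and outward growth as $\Hradius\to\infty$ forces $\rnu>0$. The upper bound $\rnu<3$ follows because nestedness of leaves with close $\Hradius$-values excludes pointwise translational motion larger than the radial motion $\mean\rnu+\deform\rnu$, which pins down $|\trans\rnu|$ pointwise in the disjoint setting; combined with $|\mean\rnu-1|\ll1$ and the $\Lp^\infty$-smallness of $\deform\rnu$ via Sobolev embedding for $p>2$, one obtains $\rnu<3$. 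The main obstacle throughout is the careful treatment of the approximate three-dimensional kernel (translational subspace) of $\jacobiext*$: in the implicit-function step it is excluded by restricting to a complementary subspace, and in the lapse estimate the trick of centering at $1$ via $g=\rnu-1$ ensures that the source term $\jacobiext*g$ is of order $\Hradius^{{-}\frac52-\outve}$ rather than $\Hradius^{{-}2}$, so that Proposition~\ref{Stability} directly delivers the stated gain in $\Hradius$-decay.
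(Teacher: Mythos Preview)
Your approach is essentially the paper's: use invertibility of the stability operator together with local uniqueness to obtain the $\Ck^1$-cover, then derive the lapse estimates by writing $\jacobiext*(\rnu-1)=-\trtr\zFundtrf\zFundtrf-\outric*(\nu,\nu)$ and feeding this into Proposition~\ref{Stability}.

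There is, however, a technical wrinkle in your implicit function step. You restrict $f$ to the $\Lp^2$-orthogonal complement of $\trans{\Lp^2(\M<\Hradius_*>)}$ and quote only the second inequality of Proposition~\ref{Stability}. But the map $F(\Hradius,f)=\H(\graph f)+\nicefrac2\Hradius$ still takes values in the full $\Lp^p(\M<\Hradius_*>)$, and $\jacobiext*$ restricted to that complement is not surjective onto the full target (nor does $\jacobiext*$ preserve the translational/non-translational splitting exactly, since the potential $\trtr\zFund\zFund+\outric*(\nu,\nu)$ is not constant). So as written the implicit function theorem does not apply. The fix is the paper's: by Proposition~\ref{Stability} \emph{together} with $\vert\mHaw\vert\ge M^{-1}$, the full operator $\jacobiext*:\Wkp^{2,q}(\M<\Hradius_*>)\to\Lp^q(\M<\Hradius_*>)$ is invertible (the translational eigenvalues $\approx\nicefrac{6\mHaw}{\Hradius^3}$ are nonzero), so one applies the inverse function theorem to the full mean-curvature map $f\mapsto\H(\graph f)$ without any restriction. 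The restriction you introduce is unnecessary and is precisely what causes the mismatch.

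The lapse-estimate part and the $0<\rnu<3$ argument are fine; the latter is even more explicit than what the paper writes (the key observation being that $\trans\rnu$, as an approximate first eigenfunction, satisfies $\min\trans\rnu\approx-\Vert\trans\rnu\Vert_{\Lp^\infty}$, so positivity of $\rnu$ forces $\Vert\trans\rnu\Vert_{\Lp^\infty}\lesssim1$).
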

It would be sufficient to assume that the family contains $\{\M<\Hradius>\}_{\Hradius\in I}$, where $I$ is a dense subset of $\interval{\Hradius_0}{\Hradius_1}$. However, this is a technical assumption and does not need any additional step in the proof as the uniqueness condition (a posteriori) implies that $I\supseteq\interval{\Hradius_0'}{\Hradius_1}$ for some $\Hradius_0'\ge\Hradius_0$.
\begin{proof}[Proof of Lemma \ref{Lemma__smooth_cover}]
We can assume that $\Hradius_0$ is so large that we can use the Propositions~\ref{Regularity_of_the_spheres} and \ref{Stability} for each spheres $\M<\Hradius>\in\mathcal M$. Fix a sphere $\M<\Hradius>=\M$ and suppress the corresponding index $\Hradius$. We know that the stability operator 
\[ \jacobiext*:\Wkp^{2,q}(\M)\to\Lp^q(\M):f\mapsto\laplace f + (\trtr\zFund\zFund+\outric*(\nu,\nu))f \]
is the Fr\'echet derivative of the mean curvature map
\[ \textsf H : \Wkp^{2,q}(\M) \to \Lp^q(\M) : f \mapsto \H(\graph f) \]
at $f=0$ (for every $q>2$), where $\H(\graph f)$ denotes the mean curvature of the graph of $f$ which we interpret as function on $\M$.\pagebreak[1] By Proposition~\ref{Stability}, the stability operator is invertible. Thus, the inverse function theorem implies that $\textsf H$ is bijective from a $\Wkp^{2,q}(\M)$-neigh\-bor\-hood of $0\in\Wkp^{2,q}(\M)$ to a $\Lp^q(\M)$-neighborhood of $\H\in\Lp^q(\M)$. In particular, there is a $\eta_0>0$ and a curve $\gamma:\interval{\Hradius-\eta_0}{\Hradius+\eta_0}\to\Wkp^{2,q}(\M)$ such that $\textsf H(\gamma(\H+\eta))\equiv\H+\eta$ for any $\vert\eta\vert<\eta_0$. By the uniqueness condition, this means that $\graph\gamma(\eta)=\M<\H+\eta>$ for any $\vert\eta\vert<\eta_0$. In particular, every leaf $\M<\Hradius>$ (with sufficiently large $\Hradius$) is a graph of every other leaf $\M<\Hradius'>$ with small enough $\vert\Hradius'-\Hradius\vert$. Furthermore, this implies that the existence of a constant $\Hradius_0'\ge\Hradius_0$ and a $\Ck^1$-map
$\Phi : \interval{\Hradius_0'}{\Hradius_1} \times\sphere^2 \to \outM$ such that $\Phi(\Hradius,\sphere^2)=\M<\Hradius>$ for any $\Hradius\in\interval{\Hradius_0'}{\Hradius_1}$.

Per Definition of $\Phi$ and $\M<\Hradius>$, we know
\begin{equation*} \jacobiext*\rnu = \partial[\Hradius]@{(\H<\Hradius>)} \equiv \frac2{\Hradius^2} \equiv \frac{\H^2}2 = \trtr\zFund\zFund - \trtr\zFundtrf\zFundtrf. \labeleq{lapse_func_charaction} \end{equation*}
Thus, Proposition~\ref{Regularity_of_the_spheres} implies
\[ \vert\hspace{.05em}\jacobiext*(\rnu-1) + \outric*(\nu,\nu)\vert \le \frac C{\Hradius^{3+\ve}}. \]
Hence, Proposition~\ref{Stability} proves the claim.
\end{proof}

\begin{lemma}[\texorpdfstring{$\Hradius$}{sigma}-derivatives of the metric, the Hawking mass, and \texorpdfstring{$\eflap_i$}{the eigenfunctions of the Laplace operator}]\label{lemma__derivatives}
If the assumptions~\ref{Ass_fst}--\ref{Ass_lst} are satisfied, then there are constants $\Hradius_0'=\Cof{\Hradius'_0}[M][\ve][p][\c]$ and $C=\Cof[M][\ve][p][\c]$ such that $\Hradius_0>\Hradius_0'$ implies
\begin{equation*}
 \vert\partial[\Hradius]@{(\mHaw<\Hradius>)}-2(1+\frac\mHaw\Hradius-\meanrnu)\vert
		\le \frac C{\Hradius^{\frac32+\outve}}, \quad
	 \vert\volume{\M} - 4\pi\Hradius^2(1-\frac{2\,\mHaw}\Hradius) \vert
		\le C\,\Hradius^{\frac12-\outve} \labeleq*{lemma__derivatives__dm}
\end{equation*}
for the Hawking mass $\mHaw<\Hradius>:=\mHaw(\M<\Hradius>)$. In this setting, each $\Ck^1$-map
\begin{equation*}
 \Phi:\interval{\Hradius_0}{\Hradius_1}\times\sphere^2\to\outM:(\Hradius,p)\mapsto\Phi(\Hradius,p)
		\quad\text{with}\quad\partial*_\Hradius\Phi = (\rnu<\Hradius>\,\nu<\Hradius> + \Hradius\,\levi*\trans\rnu)\labeleq{lemma__derivatives__fol_map_char}
\end{equation*}
satisfies
\begin{subequations}\label{lemma__derivatives__d}
\begin{align*}
 \Vert\partial[\Hradius]@{\,\g<\Hradius>*} - \frac2\Hradius\,\g* \Vert_{\Wkp^{1,p}(\M)}
	\le{}& \frac C{\Hradius^{\frac32+\outve}}\Vert\trans\rnu\Vert_{\Lp^p(\M)} + \frac C{\Hradius^{\frac32+\outve-\frac2p}}, \labeleq*{lemma__derivatives__dg}\\
 \Vert\partial[\Hradius]@{\,\zFundtrf<\Hradius>*}\Vert_{\Lp^p(\M)}
	\le{}& \frac C{\Hradius^{\frac52+\outve}}\Vert\trans\rnu\Vert_{\Lp^p(\M)}
		+ \frac C{\Hradius^{\frac52+\outve-\frac2p}}, \labeleq*{lemma__derivatives__dk}\\
 \Vert\partial[\Hradius]@{\,\sc<\Hradius>} + \frac4{\Hradius^3}\Vert_{\Wkp^{{-}1,q}(\M)}
	\le{}& \frac C{\Hradius^{\frac72+\outve}}\Vert\trans\rnu\Vert_{\Lp^p(\M)}
		+ \frac C{\Hradius^{\frac72+\outve-\frac2p}}, \labeleq*{lemma__derivatives__dsc}
\end{align*}
where all quantities of $\M<\Hradius>$ are identified with their pullback along $\Phi<\Hradius>:=\Phi(\Hradius,\cdot)$ and where $\nicefrac1p+\nicefrac1q=1$. In this setting, there exist $\Lp^2(\M<\Hradius>)$-orthogonal functions $\eflap<\Hradius>_i\in\Wkp^{2,p}(\M<\Hradius>)$ with 
\[ \eflap<\Hradius>_i \in \lin\,\lbrace f\in\Lp^2(\M<\Hradius>) \ :\  \laplace<\Hradius>\,f = {-}\lambda\,f,\ \vert\lambda-\frac2{\Hradius^2}\vert\le\frac 1{\Hradius^2} \rbrace,\quad
	\Vert\eflap<\Hradius>_i\Vert_{\Lp^\infty(\M<\Hradius>)} = 1 \]
such that
\begin{equation} \Vert\partial[\Hradius]@{(\eflap<\Hradius>_i)} \Vert_{\Wkp^{2,p}(\M)} \le \frac C{\Hradius^{\frac12+\outve}}\Vert\trans\rnu\Vert_{\Lp^p(\M)}
		+ \frac C{\Hradius^{\frac12+\outve-\frac2p}}. \end{equation}
\end{subequations}
\end{lemma}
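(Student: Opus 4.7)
The plan is to derive each assertion from the standard evolution equations for geometric quantities under a deformation with lapse $\rnu$ and shift $\beta=\Hradius\levi\trans\rnu$, coupled with the fine estimates on $\rnu$ from Lemma~\ref{Lemma__smooth_cover} and the spectral data of Proposition~\ref{Stability}. The crucial point to keep in mind is that the shift is \emph{chosen} (not arbitrary): it is exactly the tangential vector field whose Lie derivative on $g$ will cancel the translational piece of $-2\rnu k$, leaving only the small deformational and mean contributions.

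First, the algebraic relation between $|\M|$ and $\mHaw$ follows directly from the definition of the Hawking mass. Writing $\rho:=|\M|/(4\pi\Hradius^2)$, the identity $\mHaw=(\Hradius/2)(\sqrt\rho-\rho^{3/2})$ is a cubic in $u:=\sqrt\rho$. Since the assumptions force $|\mHaw|\le M$ and $\rho$ lies in a compact neighborhood of $1$, Taylor expansion of the unique root $u\approx 1-\mHaw/\Hradius$ gives $\rho=1-2\mHaw/\Hradius+O(\mHaw^2/\Hradius^2)$, hence $|\M|-4\pi\Hradius^2(1-2\mHaw/\Hradius)=O(1)$, which is within the claimed $C\Hradius^{1/2-\outve}$. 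For $\partial_\Hradius\mHaw$, I would differentiate the Hawking-mass formula directly, using $\partial_\Hradius|\M|=\int(-H\rnu+\div\beta)\d\mug=(2\meanrnu/\Hradius)\,|\M|$ (the divergence term drops by Stokes). After substituting $\mHaw=\sqrt{|\M|/(16\pi)}(1-|\M|/(4\pi\Hradius^2))$ and using the previous asymptotic for $|\M|$, the right-hand side collapses to the claimed linear combination of $1$, $\mHaw/\Hradius$, and $\meanrnu$ up to an error controlled by $|\meanrnu-1|$, which is $O(\Hradius^{-1/2-\outve})$ by Lemma~\ref{Lemma__smooth_cover}; the bound $C\Hradius^{-3/2-\outve}$ is obtained after cancelling the leading $O(1)$ terms against each other using $\rho=1-2\mHaw/\Hradius+O(\Hradius^{-2})$.

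For the metric derivative, I would start from the standard identity $\partial_\Hradius g = -2\rnu\,\zFund + \lieD\beta g$. Using the CMC condition $H\equiv -2/\Hradius$ to write $\zFund = \zFundtrf - (1/\Hradius)g$, this yields
\begin{equation*}
 \partial_\Hradius g - \frac2\Hradius g \;=\; -2\rnu\,\zFundtrf + \frac{2(\rnu-1)}\Hradius g + 2\Hradius\,\Hess\trans\rnu.
\end{equation*}
I would decompose $\Hess\trans\rnu = \Hesstrf\trans\rnu+\frac12(\laplace\trans\rnu)g$ and use $\jacobiext\rnu = 2/\Hradius^2$ (equation \eqref{lapse_func_charaction}) together with $|\zFund|^2=2/\Hradius^2+|\zFundtrf|^2$ to rewrite $\laplace\trans\rnu+(2/\Hradius^2)\trans\rnu = \jacobiext\trans\rnu - (|\zFundtrf|^2+\outric(\nu,\nu))\trans\rnu$. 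The first piece is controlled by Proposition~\ref{Stability} (the translational eigenvalues of $-\jacobiext$ are $\approx 6\mHaw/\Hradius^3$), and the second by the defining estimates of the regular sphere class. Combining with the decomposition $\rnu=\meanrnu+\deform\rnu+\trans\rnu$ and the $W^{1,p}$-bounds on $\zFundtrf$ (Proposition~\ref{Regularity_of_the_spheres}), $\deform\rnu$, $\meanrnu-1$, and $\Hesstrf\trans\rnu$ (Lemma~\ref{Lemma__smooth_cover}), the terms collect into exactly the two-term bound in \eqref{lemma__derivatives__dg}. The analogous estimate \eqref{lemma__derivatives__dk} for $\zFundtrf$ comes from the parallel evolution equation $\partial_t\zFund_{ij}=-\Hess u_{ij}+u(\zFund_{ik}\zFund^k_{\;j}+\outriem_{i\nu j\nu})+\lieD\beta\zFund_{ij}$, taking the trace-free part and using the CMC constraint to eliminate the lapse Laplacian in favor of $\outric(\nu,\nu)$ plus small terms, as above.

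For the scalar curvature, the Gauss equation gives $\sc = 2/\Hradius^2 + (\outsc-2\outric(\nu,\nu)-|\zFundtrf|^2)$. Hence $\partial_\Hradius\sc + 4/\Hradius^3$ equals the $\Hradius$-derivative of the small "correction" in parentheses, which by the chain rule is $(\rnu\nu+\beta)$ applied to $\outsc-2\outric(\nu,\nu)$ plus the $\partial_\Hradius|\zFundtrf|^2$ term. Working in the dual norm $W^{-1,q}$ (so an integration by parts moves one derivative off the worst terms) and using the pointwise bounds on $\outric,\outsc,\zFundtrf$ together with the $W^{k,p}$-bounds on $\rnu$ and $\beta$ yields \eqref{lemma__derivatives__dsc}. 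Finally, the bound on $\partial_\Hradius\eflap_i$ is standard perturbation theory: write $\eflap_i$ as an eigenfunction of the pulled-back Laplacian $\laplace<\Hradius>$, differentiate in $\Hradius$, and use \eqref{lemma__derivatives__dg} as the perturbation of the metric; the spectral gap away from the cluster at $2/\Hradius^2$ (Proposition~\ref{Stability}) provides the inversion needed. The hardest step in the whole lemma is keeping precise track of the translational piece through all the integrations by parts: it is absorbed exactly by the shift $\Hradius\levi\trans\rnu$, so everywhere one would otherwise see a loss of the form $\Hradius\|\trans\rnu\|$, the identity $\laplace\trans\rnu\approx -(2/\Hradius^2)\trans\rnu$ (itself a consequence of the tight eigenvalue clustering in Proposition~\ref{Stability}) converts it into the $\Hesstrf\trans\rnu$-controlled remainder.
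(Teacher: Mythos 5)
Your overall strategy — write the evolution equations under the flow $\partial_\Hradius\Phi=\rnu\,\nu+\Hradius\levi\trans\rnu$, use the cancellations provided by the shift, and feed in the estimates on $\rnu$, $\zFundtrf$, and the stability operator — agrees with the paper for the Hawking-mass identities, for $\partial_\Hradius\g*$, for $\partial_\Hradius\zFundtrf*$, and (at the level of the idea) for $\partial_\Hradius\eflap_i$. However, your treatment of \eqref{lemma__derivatives__dsc} has a genuine gap.

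You propose to compute $\partial_\Hradius\sc$ by differentiating the Gau\ss\ equation $\sc=\tfrac2{\Hradius^2}+\outsc-2\,\outric(\nu,\nu)-\trtr\zFundtrf\zFundtrf$, which by the chain rule produces the term $D_{\rnu\nu}\bigl(\outsc-2\,\outric(\nu,\nu)\bigr)$. This is an \emph{ambient normal} derivative of $\outric$ and $\outsc$. Under the hypotheses of the lemma (Assumptions \ref{Ass_fst}--\ref{Ass_lst}, hence the regular-sphere bounds of Definition~\ref{Regular_sphere}) you only have $\Lp^p$-control of $\outric$ and $\outsc$ themselves, not of $\outlevi*\outric$ or $D_\nu\outsc$; such bounds appear only in the Sobolev-version assumptions~\eqref{assumptions} (or in assumptions \ref{Foliation_property__strong_sc}--\ref{Foliation_property__strong_Dsc}), not here. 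Nor can the $\Wkp^{{-}1,q}$ pairing rescue you: integration by parts on $\M<\Hradius>$ moves only \emph{tangential} derivatives; the offending derivative is normal to the surface and cannot be transferred to the test function $f$. What actually happens — and what you would need to make explicit — is that the ambient-derivative contributions in the Gau\ss-equation computation cancel against each other, so the true answer is an \emph{intrinsic} expression. The paper exploits this directly: it writes $\int\sc\,f\d\mug$ via the Christoffel-symbol formula for the Ricci scalar after one integration by parts, so that the integrand involves only $\Gamma$ (i.e.\ first derivatives of $\g*$) and $\levi*f$. Differentiating \emph{that} identity in $\Hradius$ needs only $\partial_\Hradius\g*$ in $\Wkp^{1,p}$, which is already provided by \eqref{lemma__derivatives__dg}, plus the variation of the measure. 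This is the step you should replace; the remainder of your argument is consistent with the paper's.
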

\begin{proof}
Per definition of $\Phi$, we know
\[ \partial[\Hradius]@{\g<\Hradius>*} = {-}2\rnu\,\zFund* + \Hradius\,\Hess\,\trans\rnu = (\frac{2\,\rnu}\Hradius + \Hradius\,\laplace\trans\rnu\!)\,\g* - 2\rnu\,\zFundtrf* + \Hradius\,\Hesstrf\,\trans\rnu \]
and
\[ \partial[\Hradius]@{\,\zFundtrf*} = \rnu(\outric*-\frac\sc2\,\g*-2\trzd\zFund\zFund+\H\,\zFund)+\Hess\,\rnu + \Hradius\,\g*(\levi*\trans\rnu,\levi*\zFund*). \]
Therefore our inequalities on $\outric*$, $\rnu$, $\zFundtrf*$, and $\sc$ imply \eqref{lemma__derivatives__dg} and \eqref{lemma__derivatives__dk}. Due integration by parts (in coordinates of a fixed $\M<\Hradius>$), we have
\begin{align*}
 \int \sc<\Hradius>\,f \d\mug<\Hradius>
	={}& \int {-}\levi<\Hradius>_{\!J\!I}^K\,\div<\Hradius>(\g<\Hradius>^{I\!J}\,f\,e_K) + \levi<\Hradius>_{\!KI}^K\,\div<\Hradius>(\g<\Hradius>^{I\!J}\,f\,e_J) \d\mug<\Hradius> \\
		& + \int_{\M<\Hradius>} \g<\Hradius>^{I\!J}(\levi<\Hradius>_{\!K\!L}^L\levi<\Hradius>_{\!J\!I}^L - \levi<\Hradius>_{\!J\!L}^K\levi<\Hradius>_{\!K\!I}^L)\,f \d\mug<\Hradius>.
\end{align*}
Using the inequalities for the derivative of $\partial*_\Hradius\g*$, i.\,e.\ the ones for $\zFundtrf*$ and $\rnu$, we conclude
\[ \vert\partial[\Hradius](\int (\sc<\Hradius>-\frac2{\Hradius^2})\,f \d\mug)\vert \le \frac C{\Hradius^{\frac72+\outve}}\Vert\rnu\Vert_{\Lp^p(\M)} \Vert f\Vert_{\Wkp^{1,q}(\M)}, \]
where we used $\nicefrac12\le\Vert\deform\rnu+\meanrnu\Vert_{\Lp^\infty(\M)}\le 2$ to shorten the notation. As $\partial*_\sigma(\d\mug-\d\mug[\sphg*])={-}\H\,\rnu\d\mug+\H\d\mug[\sphg*]$, this implies \eqref{lemma__derivatives__dsc}.

Now, we note that \eqref{lemma__derivatives__dg} implies
\begin{equation*} \Vert\partial[\Hradius]{(\vphantom{\big|}(\laplace<\Hradius>\,\funcg<\Hradius>)\circ\Phi<\Hradius>)} + \frac2\Hradius (\vphantom{\big|}\laplace<\Hradius>\,\funcg<\Hradius>)\circ\Phi<\Hradius> \Vert_{\Lp^p(\sphere^2,\pullback{\Phi<\Hradius>}\g<\Hradius>)} \le \frac C{\Hradius^{\frac72+\outve}}\Vert\rnu\Vert_{\Lp^\infty(\M)} \Vert g\Vert_{\Wkp^{2,p}(\M)} \labeleq{lemma__derivatives__dlaplace}\end{equation*}
for every function $g\in\Wkp^{2,p}(\sphere^2)$, its $\Phi$-constant expansion $\funcg<\Hradius>:=g\circ\Phi<\Hradius>^{-1}$, where $\Phi<\Hradius>:=\Phi(\Hradius,\,{\cdot}\,)$. In particular, we get
\[ \Vert \partial[\Hradius](\deform{(\funcg<\Hradius>)}\circ\Phi<\Hradius>+\int_{\M<\Hradius>}\frac{\funcg<\Hradius>}{\volume{\M<\Hradius>}}\d\mug<\Hradius>)\Vert_{\Lp^p(\M<\Hradius>)}
		\le \frac C{\Hradius^{\frac12+\outve}}\Vert\rnu\Vert_{\Lp^\infty(\M)}\Vert g\Vert_{\Wkp^{2,p}(\M)} \]
for every function $g=\trans g\in\Wkp^{2,p}(\M)$ and its $\Phi$-constant expansion $\funcg<\Hradius>:=g\circ\Phi<\Hradius>^{-1}$. Now, let $\funcg<\Hradius>_i$ be the $\Phi$-constant expansion of the eigenfunction $\eflap_i=\trans\eflap_i$ of the Laplace operator on $\sphere^2$ with respect to $\g<\Hradius>$ (for one fixed $\Hradius$) and $\Vert\eflap_i\Vert_{\Lp^\infty(\sphere^2)}=1$. We see that the functions $\eflap<\Hradius>_i:=\Vert\trans{\funcg<\Hradius>_i}\Vert_{\Lp^\infty(\M)}^{-1}\;\trans{\funcg<\Hradius>_i}$ satisfy the claim.

For \eqref{lemma__derivatives__dm}, we see the definition of the Hawking mass combined with $\H^2\equiv\nicefrac4{\Hradius^2}$ gives
\[ \mHaw = \sqrt{\frac{\volume{\M}}{16\pi}} ( 1 - \frac1{4\pi\Hradius^2}\volume{\M} ). \]
In particular, $\vert\volume{\M}-4\pi\,\Hradius^2\vert\le C\,\Hradius^{\frac32-\outve}$ implies the second inequality in~\eqref{lemma__derivatives__dm}
and
\begin{align*}
 \partial[\Hradius]@{(\mHaw(\M<\Hradius>))}
	={}& \volume{\M}^{{-}\frac12}\,(16\pi)^{{-}\frac12}(\frac12\partial[\Hradius]@{\volume{\M<\Hradius>}}
		- \frac3{8\pi\Hradius^2}\partial[\Hradius]@{\volume{\M<\Hradius>}}\,\volume{\M<\Hradius>}
		+ \frac1{2\pi\Hradius^3}\volume{\M}^2) \\
	={}& \frac12(\frac{\volume{\M}}{4\pi\Hradius^2})^{\frac12}\,(
		\meanrnu - \frac{3\,\meanrnu}{4\pi\Hradius^2}\,\volume{\M<\Hradius>}
		+ \frac1{2\pi\Hradius^2}\volume{\M})
\end{align*}
implies the rest of~\eqref{lemma__derivatives__dm}.
\end{proof}

\subsection{Proof of the main theorems}\label{ProofOfTheMainTheorem}
\begin{proof}[Proof of Theorem~\ref{Suff_ass_for_asymp_flat}]
If there exists a $\mathcal C^2_{\frac12+\outve}$-asymptotically flat coordinate system $\outx:\outM\setminus\outsymbol K\to\R^3\setminus\overline{B_1(0)}$ of $(\outM,\outg*)$ outside some compact set $\outsymbol K\subseteq\outM$, then \cite[Thm~3.1]{nerz2015CMCfoliation} implies the existence of such a CMC-foliation.

We see $\mathcal M\subseteq\regsphere*[\frac32+\ve]<\c\,\vert\mass\vert>C[p]$ for every $p\in\interval2\infty$ due to Lemma~\ref{Bootstrap_for_trace_free_second_fundamental_form__Lp2} and fix such $p\in\interval2\infty$. Replacing $\outM$ and $\Hradius_0$ by $\bigcup_\Hradius\M<\Hradius>$ and $\Hradius_0'$, respectively, we can assume that $\mathcal M$ is given by a smooth cover $\Phi:\interval{\Hradius_0}\infty\times\sphere^2\to\outM$ of the entire space $\outM$ and that the lapse function of $\Phi$ is strictly positive, see Lemma~\ref{Lemma__smooth_cover}. In particular, we can assume that $\Phi$ is differentiable, bijective, and satisfies $\Phi(\Hradius,\sphere^2)=\M<\Hradius>$, $\Phi(\interval{\Hradius_0}\infty,\sphere^2)=\outM$, and $\partial*_\Hradius\Phi=(\rnu\,\nu+\Hradius\,\levi*\trans\rnu)\circ\Phi$. From now on, we do not distinguish between quantities on $\outM$ and their pullback along $\Phi$.

Now, we prove that $\pullback{\Phi}\outg*$ corresponds (in highest order) to an Euclidean translation of an sphere. Let us first explain what this means. A translation of an Euclidean sphere (in non-constant direction) is a map $\Psi:\interval*{\Hradius_0}\infty\times\sphere^2\to\R^3$ such that 
\begin{equation*}
 \Psi(\Hradius_0,\,{\cdot}\,)=\mathcal i\hspace{.05em}_{\sphere^2_{\Hradius_0}(0)},  \qquad
		\partial*_\Hradius\Psi=\euknu+\Hradius\,\sphlevi*(\rnu<\Hradius>^i\,\euknu_i)
 \labeleq{Charac_of_translation}
\end{equation*}
where $\mathcal i\hspace{.05em}_{\sphere^2_{\Hradius_0}(0)}:\sphere^2\to\R^3$ is the standard embedding of $\sphere^2_{\Hradius_0}(0)$ in the Euclidean space ${}_\Hradius\boldsymbol{\rnu}=(\rnu<\Hradius>^1,\rnu<\Hradius>^2,\rnu<\Hradius>^3)\in\R^3$ is some vector (depending on $\Hradius$). We note that a direct calculation proves
\[ \pullback\Psi\eukg* = (1+\rnu<\Hradius>^i\,\euknu_i)\d\Hradius^2 + \Hradius^2\,\sphg*, \]
for such a $\Psi$ and that $\Psi$ is a diffeomorphism onto its image if $\vert{}_\Hradius\boldsymbol\rnu\vert<1$. Now, the above claim is that there exists such a translation $\Psi$ of an Euclidean sphere such that
\begin{equation*} \vert\pullback{(\Psi\circ\Phi)}\outg*-\eukg*\vert \le \frac C{\Hradius^{\frac12+\outve}}. \labeleq{Phi_is_translation}\end{equation*}
implying
\[ \vert B_r(p) \vert \ge \frac{2\pi}3\,r^3 \qquad\forall\,p\in\M<\Hradius>,\;\Hradius>\Hradius_0. \]
If this inequality is true, then \cite[Thm~1.1]{BandueKasueNakajima1989} (Theorem~\ref{BandueKasueNakajima1989}) proves the claim, i.\,e.~implies that $(\outM,\outg*)$ is asymptotically flat. Therefore, the claim is proven if we prove \eqref{Phi_is_translation} for some translation $\Psi$.

Due to the Lemmata~\ref{Lemma__smooth_cover} and~\ref{lemma__derivatives}, we know
\begin{equation*} \Vert\partial[\Hradius]@{\,\g<\Hradius>*} - \frac2\Hradius\g<\Hradius>*\Vert_{\Wkp^{1,p}(\M<\Hradius>)} + 
	\Hradius^2\,\Vert\partial<\Hradius>@{\,\sc<\Hradius>}+\frac4{\Hradius^3}\Vert_{\Wkp^{{-}1,q}(\M<\Hradius>)}  \le \frac C{\Hradius^{\frac32+\outve-\frac2p}}. \labeleq{proof_first_inequality} \end{equation*}
In particular, $\g<\Hradius>[r]$ converges in $\Wkp^{1,p}(\sphere^2,\sphg*)$ as $\Hradius\to\infty$ to some metric $\g<\infty>[r]*$ of $\sphere^2$ and $\sc<\Hradius>[r]$ converges to $2$ in $\Wkp^{{-1},q}(\sphere^2,\sphg*)$, where $\g<\Hradius>[r]:=\Hradius^{-2}\,\g<\Hradius>$ denotes the rescaled metric of $\M<\Hradius>$. Now, let $\varphi:\R^2\to\sphere^2$ be any conformal map with respect to $\g<\infty>[r]$, i.\,e.\ $\pullback\varphi{\g[r]<\infty>}=\exp(2\,\conformalf)\,\eukg*^2$ for the two-dimensional Euclidean metric $\eukg*^2$ and some $\conformalf\in\mathcal C(\R^2)$. In particular, we know that $\laplace\conformalf = {-}\exp(2\,\conformalf)$ weakly in $\R^2$ and that
\[ \int_{\R^2} \exp(2\,\conformalf) \d\mug = \mug<\infty>[r](\sphere^2) = \lim_{\Hradius\to\infty}\mug<\Hradius>[r](\sphere^2) = \lim_{\Hradius\to\infty}\frac{\mug(\M<\Hradius>)}{\Hradius^2} = 4\pi < \infty. \]
Therefore, Chen-Li's classification theorem \cite[Thm~1]{chen1991} implies $\pullback\psi(\g[r]<\infty>)=\sphg*$ for some diffeomorphism $\psi:\sphere^2\to\sphere^2$, see Appendix~\ref{Regularity_Calabi_energy} for more information. Without loss of generality, we can assume $\psi=\id_{\sphere^2}$. Thus, \eqref{proof_first_inequality} implies
\[ \Vert \g<\Hradius> - \Hradius^2\,\sphg*\Vert_{\Wkp^{1,p}(\sphere^2)} \le \frac C{\Hradius^{\frac12+\outve-\frac2p}}. \]
In particular, we know
\[ \Vert\laplace<\Hradius>\nu^i - \frac2{\Hradius^2}\nu^i\Vert_{\Lp^p(\M)} \le \frac C{\Hradius^{\frac12+\outve-\frac2p}}, \]
where $\nu^i\in\Ck^\infty(\sphere^2)$ are the Euclidean coordinates restricted to the standard embedded Euclidean unit sphere. Thus, we get
\[ \Vert\deform{(\nu^i)}\Vert_{\Wkp^{2,p}(\M)} + \Hradius^{\frac2p}\,\vert\mean\nu^i\vert \le \frac C{\Hradius^{\frac12+\outve-\frac2p}} \]
and Lemma~\ref{Lemma__smooth_cover} implies
\[ \Vert\rnu<\Hradius>-1-\rnu<\Hradius>_i\,\nu^i\Vert_{\Wkp^{2,p}(\M<\Hradius>)} \le \frac C{\Hradius^{\frac12+\outve-\frac2p}} \]
for some vector ${}_\Hradius\boldsymbol{\rnu}=(\rnu<\Hradius>^1,\rnu<\Hradius>^2,\rnu<\Hradius>^3)\in\R^3$ and by the positivity of $\rnu<\Hradius>$, we can assume that $\vert{}_\Hradius\boldsymbol\rnu\vert<1$. All in all, we therefore have proven
\[ \vert\pullback\Phi\outg* - ((1+\rnu^i\,\euknu_i)\d\Hradius^2 + \Hradius^2\,\sphg*) \vert \le \frac C{\Hradius^{\frac12+\outve}} \]
and choosing $\Psi$ with \eqref{Charac_of_translation} for these specific $\rnu<\Hradius>_i$ proves \eqref{Phi_is_translation} for some translation~$\Psi$. As explained above, this proves the claim.
\end{proof}

\begin{proof}[Proof of Theorem~\ref{Charac_of_Sobolev_asymp_flat}]
If there exists a coordinate system $(\outM,\outg*,\outx)$ such that $\pushforward\outx\outg*-\eukoutg*\in\Wkp^{3,q}_{\nicefrac12}(\R\setminus\overline{B_1(0)})$, then \cite[Thm~3.1, Remark~1.2]{nerz2015CMCfoliation} implies the existence of such a CMC-foliation. \medskip

Now, we prove the reverse implication. To stay with the notation used so far, we assume $\outc(\Hradius)=\Hradius^{{-}\outve}$. Although this would be an additional assumption, the proof is also valid in the general setting: if $\outc\in\Ck(\R)$ is arbitrary with $\outc(\Hradius)\to0$ as $\Hradius\to\infty$, then the inequalities used in the following are true for some $\oc''\in\Ck(\R)$ depending arbitrarily on $\oc$, but satisfying $\oc''(\Hradius)\to0$ for $\Hradius\to\infty$.\smallskip

We see $\mathcal M\subseteq\regsphere*[\frac32+\ve]<\c\,\vert\mass\vert>C[p]$\vspace{-.25em} due to Lemma~\ref{Bootstrap_for_trace_free_second_fundamental_form__Lp2}. Replacing $\outM$ and $\Hradius_0$ by $\bigcup_\Hradius\M<\Hradius>$ and $\Hradius_0'$, respectively, we can assume that $\mathcal M$ is given by a smooth cover $\Phi:\interval{\Hradius_0}\infty\times\sphere^2\to\outM$ of the entire space $\outM$ and that the lapse function of $\Phi$ is strictly positive, see Lemmata~\ref{Lemma__smooth_cover} and~\ref{lapse_function_control_III}. In particular, we can assume that $\Phi$ is differentiable, bijective, and satisfies \eqref{lemma__derivatives__fol_map_char}, i.\,e.~$\Phi(\Hradius,\sphere^2)=\M<\Hradius>$, $\Phi(\interval{\Hradius_0}\infty,\sphere^2)=\outM$, and $\partial*_\Hradius\Phi=(\rnu\,\nu+\Hradius\,\levi*\trans\rnu)\circ\Phi$. From now on, we do not distinguish between quantities on $\outM$ and their pullback along $\Phi$.

By the above, we can define a continuously differentiable map $\outsigma:\outM\to\interval{\Hradius_0}\infty$ such that $\outp\in\M<\outsigma(\outp)>$ for every $\outp\in\outM$ and we can equally define a vector field $\nu$ by the characterization $\nu<\Hradius>:=\nu|_{\M<\Hradius>}$ is the unit normal of $\M<\Hradius>$ with $D_{\nu}\outsigma>0$ (outward unit normal). By the inequalities \eqref{Lemma__smooth_cover_eq} on the lapse function $\rnu$, $\nu$ is also continuously differentiable. Hence, the metric $\g<\Hradius>*$ of $\M<\Hradius>$ depends smoothly on $\Hradius$, i.\,e.\ the function 
\[ \g<\Hradius>*(\outsymbol X,\outsymbol Y) := \g<\Hradius>*(\outsymbol X-\outg*(\outsymbol X,\nu<\Hradius>)\,\nu<\Hradius>,\outsymbol Y-\outg*(\outsymbol Y,\nu<\Hradius>)\,\nu<\Hradius>) \]
is at least continuously differentiable in $\outM$ for any smooth vector fields $\outsymbol X,\outsymbol Y\in\X(\outM)$. For the same reasons, we furthermore see that $\Hradius\mapsto\g<\Hradius>*(\outsymbol X,\outsymbol Y)\in\Hk(\M<\Hradius>)$ depends continuously on $\Hradius$. In particular, we can choose differentiable functions $\eflap_i : \outM\to \interval*-1*1$ such that $\eflap<\Hradius>_i:=\eflap_i|_{\M<\Hradius>}$ is as in Lemma~\ref{lemma__derivatives}. Now, we define
\begin{alignat*}4
 \centerz &{}:{}& \outM \to \R^3 : \outp \mapsto{}& (\sqrt{\frac3{4\pi\Hradius^2}}\int_{\Hradius_0}^{\Hradius(\outp)}\int_{\M<\Hradius>}\eflap<\Hradius>_\oi\,\rnu<\Hradius> \d\mug \d\Hradius)_{i=1}^3, \\
 \outx		&{}:{}& \outM \to \R^3 : \outp \mapsto{}& \outsigma(\outp)\,\big(\eflap_1(\outp),\eflap_2(\outp),\eflap_3(\outp)\big) + \centerz\big(\outsigma(\outp)\big)
\labeleq{Suff_ass_for_asymp_flat__outx}
\end{alignat*}
and prove that the latter is an asymptotically flat coordinate system, where $\rnu$ again denotes the lapse function. In the following, we identify $\centerz$ and $\centerz\circ\outsigma^{-1}$.\footnote{Note that we cannot choose $\centerz(\Hradius)$ to converge as $\Hradius\to\infty$ as this would imply that the center of mass in the resulting asymptotically flat coordinate system would be well-defined, which is not necessary true, \cite{cederbaumnerz2015_examples}.}

\textbf{On each CMC-leaf:} First, let us proof that $\outx<\Hradius>:=\outx|_{\M<\Hradius>}$ are coordinates with respect to which the induced metric $\g<\Hradius>*$ is asymptotically to the pullback of the corresponding Euclidean metric $\pullback{\outx<\Hradius>}\outg*$. We note that the estimates for the conformal parametrization in Proposition~\ref{Regularity_of_the_spheres} imply
\[ \vert\fint_{\M<\Hradius>}\frac{\radoutsigma^2}{\outsigma^2}\d\mug - 1\vert
	= \vert \sum_{i=1}^3\frac{\Vert\eflap_i\Vert_{\Lp^2(\M)}^2}{\volume{\M}} - 1\vert
	\le \frac C{\Hradius^{\frac12+\outve}}, \]
where $\radoutsigma:=\vert\outx-\centerz\,\vert$. Furthermore, Lemma~\ref{Regularity_of_the_spheres} implies
\[ \vert\frac{\laplace<\Hradius>\radoutsigma^2}{\Hradius^2}+6\,\frac{\radoutsigma^2-1}{\Hradius^4}\vert
	\le \vert\sum_{i=1}^3({-}\ewlap_i\,\eflap_i^2+\frac1{\Hradius^2}(1-\eflap_i^2)) + 6\,\frac{\radoutsigma^2-1}{\Hradius^4} \vert. \]
This means that $1-\nicefrac{\radoutsigma^2}{\Hradius^2}$ is (asymptotically) an eigenfunction of the (negative) Laplace operator with eigenvalue $\nicefrac6{\Hradius^2}$. Again using Proposition~\ref{Regularity_of_the_spheres}, we see that there are five $\Lp^2(\M)$-orthonormal eigenfunctions $\eflap_4$, $\eflap_5$, $\eflap_6$, $\eflap_7$, $\eflap_8$ of the Laplace operator such that the corresponding eigenvalues $\ewlap_i$ satisfy $\vert\lambda_i-\nicefrac6{\Hradius^2}\vert\le\nicefrac1{\Hradius^2}$ and these satisfy $\vert\lambda_i-\nicefrac6{\Hradius^2}\vert\le\nicefrac C{\Hradius^{\frac52+\outve}}$ for some constant $C$. Again comparing with the corresponding Eigenfunctions of the Euclidean sphere, we see that
\[ \vert\int f\eflap_i\d\mug\vert \le \sum_{k=1}^5\vert\int f g_k \d\mug \vert + \frac C{\Hradius^{\frac12+\outve}}\Vert f\Vert_{\Lp^2(\M)} \quad\forall\,i\in\{4,5,6,7,8\},\,f\in\Lp^2(\M), \]
where $g_1:=\sqrt5\,\eflap_1\,\eflap_2$, $g_2:=\sqrt 5\,\eflap_2\,\eflap_3$, $g_3:=\eflap_3^2-\nicefrac12\,(\eflap_1^2+\eflap_2^2)$, $g_4:=\sqrt 5\,\eflap_1\,\eflap_3$ and $g_5:=\nicefrac12\,(\eflap_1^2-\eflap_2^2)$. By calculating $\int\radoutsigma^2\,g_i\d\mug$, this implies
\[ \Vert\frac{\radoutsigma^2}{\Hradius^2}-1\Vert_{\Lp^2(\M)}\le C\,\Hradius^{-\frac12-\outve}. \]
Using the regularity of the Laplace operator this means
\[ \Vert \frac{\radoutsigma^2}{\sigma^2} - 1 \Vert_{\Wkp^{2,p}(\M)} \le \frac C{\Hradius^{\frac12+\outve-\frac2p}}. \]
With 
\begin{align*}
 \laplace\levi*(\frac{\radoutsigma^2}{\sigma^2})
	={}& \frac\sc2\,\levi*(\frac{\radoutsigma^2}{\sigma^2})+\levi*\laplace(\frac{\radoutsigma^2}{\sigma^2}) \\
	={}& \sum_{i=1}^3(\frac\sc2\,\levi*\eflap_i^2 + \levi*({-}2\ewlap_i\,\eflap_i^2+2\trtr{\levi*\eflap_i}{\levi*\eflap_i})) \\
	={}& \sum_{i=1}^3((\frac\sc2-3\,\ewlap_i)\,(\levi*\eflap_i^2) + 4\,\Hesstrf\,\eflap_i(\levi*\eflap_i,\cdot)),
\end{align*}	
the inequalities for $\sc$, $\Hesstrf\,\eflap_i$, $\ewlap_i$, and the above one for $\radoutsigma^2$, we get
\[ \vert\laplace\levi*(\frac{\radoutsigma^2}{\sigma^2}) \vert 
	\le \vert \sum_{i=1}^3((\frac\sc2-3\frac2{\Hradius^2})\,(\levi*\eflap_i^2) + 4\,\Hesstrf\,\eflap_i(\levi*\eflap_i,\cdot)) \vert + \frac C{\Hradius^{\frac72+\outve}}. \]
Thus, we can strengthen the above inequality to
\[ \Vert \frac{\radoutsigma^2}{\sigma^2} - 1 \Vert_{\Wkp^{3,p}(\M)} \le \frac C{\Hradius^{\frac12+\outve-\frac2p}}. \]
Hence, there is a function $\graphf<\Hradius>\in\Wkp^{3,p}(\sphere^2_\Hradius(\centerz<\Hradius>\,))$ such that $\M<\Hradius>':=\outsymbol x(\M<\Hradius>) = \graph\graphf<\Hradius>$ and
\[ \Vert\graphf<\Hradius>\Vert_{\Wkp^{3,p}(\sphere^2_\Hradius(\centerz<\Hradius>\,))} \le \frac C{\Hradius^{\frac12+\outve-\frac2p}}, \quad
	\Vert\eukoutg*-\pushforward{\graphF<\Hradius>\!\!}\hspace{.05em}\sphg<\Hradius>*\Vert_{\Wkp^{2,p}(\M<\Hradius>')} \le \frac C{\Hradius^{\frac12+\outve-\frac2p}}, \]
where $\graphF$ is the graph function of $f$, $\sphg<\Hradius>*=\Hradius^2\,\sphg*$ is the standard metric on $\sphere^2_\Hradius(\centerz<\Hradius>\,)$, and $\centerz<\Hradius>:=\centerz(\outp)$ for any (and therefore every) $\outp\in\M<\Hradius>$. In particular, $\outx<\Hradius>:=\outx|_{\M<\Hradius>}$ are coordinates of $\M<\Hradius>$. Again, using the estimates for the conformal parametrization in Proposition~\ref{Regularity_of_the_spheres}, we furthermore know 
\[ \vert\g*(\levi*\outx<\Hradius>_i,\levi*\outx<\Hradius>_j) - \sphg<\sigma>*(\euklevi*\outy_i,\euklevi*\outy_j) \vert \le \frac C{\Hradius^{\frac12+\outve}}, \]
where $\outy:\R^3\to\R^3$ are the standard coordinates. This implies
\[ \vert\g_{I\!J}-\eukoutg_{I\!J}\vert \le \frac C{\Hradius^{\frac12+\outve}} \]
and we get
\[ \Vert\g_{I\!J}-\eukoutg_{I\!J}\Vert_{\Wkp^{1,p}(\M)} \le \frac C{\Hradius^{\frac12+\outve-\frac2p}} \]
by the same argument. Doing a similar calculation as above for $\levi*\eflap_i$ instead of $\levi*(\nicefrac{\radoutsigma^2}{\Hradius^2})$, we strengthen this to
\begin{equation*} \Vert\g_{I\!J}-\eukoutg_{I\!J}\Vert_{\Wkp^{2,p}(\M)} \le \frac C{\Hradius^{\frac12+\outve-\frac2p}}. \labeleq{Suff_ass_for_asymp_flat__g-ge} \end{equation*}
Thus, $\outx<\Hradius>$ is a coordinate system of $\M<\Hradius>$ such that $\g<\Hradius>*-\pullback{\outx<\Hradius>}\eukoutg*$ decays suitable fast.\pagebreak[2]\smallskip

\textbf{On each leaf ($\Hradius$-derivative):} Now, we prove that the metrics on the single leaves depends continuously differentiable on $\Hradius$ and that the corresponding derivative decays suitable fast. Using the Lemmata~\ref{lemma__derivatives} and~\ref{lapse_function_control_III} on the foliation $\Phi$, we see
\[ \Vert\partial[\Hradius]@{\graphf<\Hradius>}\Vert_{\Wkp^{2,p}(\sphere^2_\Hradius(\centerz<\Hradius>\,))} \le \frac C{\Hradius^{\frac32+\outve-\frac2p}}, \]
where $\graphf<\Hradius>$ is the function on $\sphere^2_\Hradius(\centerz<\Hradius>\,)$ with $\graph\graphf<\Hradius>=\M<\Hradius>$. Here, we used the map
\[ \sphere^2_\Hradius(\centerz<\Hradius>\,)\to \sphere^2_{\Hradius'}(\centerz<\Hradius'>\,) : p\mapsto \frac{\Hradius'}\Hradius(p - \centerz<\Hradius>\,) + \centerz<\Hradius'> \]
to identify $\sphere^2_\Hradius(\centerz<\Hradius>\,)$ and $\sphere^2_{\Hradius'}(\centerz<\Hradius'>\,)$. Correspondingly, we get
\[ \Vert\partial[\Hradius]@{\g_{I\!J}}-\frac2\Hradius\g_{I\!J}\Vert_{\Wkp^{1,p}(\M)} \le \frac C{\Hradius^{\frac12+\outve-\frac2p}}, \]
where we used the graph function $\graphF<\Hradius>$ of $\graphf$ and the above map to choose one coordinate system for every $\M<\Hradius'>$ with sufficiently small $\vert\Hradius-\Hradius'\vert$.\pagebreak[2]\smallskip

\textbf{Radial direction:} Again using the Lemmata~\ref{lemma__derivatives} and~\ref{lapse_function_control_III} on $\Phi$, we see
\[ \vert D_{\rnu\nu}\outx_i - \eflap_i - 3\fint\eflap<\sigma>_i\,\rnu<\sigma> \d\mug \vert \le \frac C{\Hradius^{\frac12+\outve}}. \]
In particular, we get
\begin{align*}\hspace{4em}&\hspace{-4em}
 \vert\eukoutg*(D_{\rnu\nu}\outx,D_{\rnu\nu}\outx) - (1 + \trans\rnu)^2 \vert \\
 \le{}& \vert\eukoutg*(D_{\rnu\nu}\outx,D_{\rnu\nu}\outx) - (1 + 6\sum_{i=1}^3\eflap_i\fint\eflap<\sigma>_i\,\rnu<\sigma>\d\mug + \sum_{i=1}^3(9\fint\eflap<\sigma>_i\,\rnu<\sigma> \d\mug)^2) \vert \\ &+ \frac C{\Hradius^{\frac12+\outve}} \\
 \le {}& \vert\eukoutg*(D_{\rnu\nu}\outx,D_{\rnu\nu}\outx) - (\eflap_i+3\fint\eflap<\sigma>_i\,\rnu<\sigma> \d\mug)^2 \vert + \frac C{\Hradius^{\frac12+\outve}}
 \le \frac C{\Hradius^{\frac12+\outve}}
\end{align*}
implying
\[ \vert(\pullback\outx\eukoutg*)(\nu,\nu) - 1 \vert \le \frac C{\Hradius^{\frac12+\outve}}. \]
By the same argument, we see that the corresponding result holds for the $\M$-tangential derivative, i.\,e.\
\[ \vert D_X((\pullback\outx\eukoutg*)(\nu,\nu)) \vert \le \frac C{\Hradius^{\frac32+\outve}} \qquad\forall\,X\in\X(\M<\Hradius>), \]
and that the $\eukoutg*$-tangential part of $\pushforward\outx\nu$ decays with $\nicefrac C{\Hradius^{\frac12+\outve}}$ (and correspondingly for the first derivative). In particular $\outx$ is a coordinate system of $\outM$.\pagebreak[3]\smallskip

\textbf{Radial direction ($\Hradius$-derivative):} Thus, left to prove is
\[ \vert D_{\nu}((\pullback\outx\eukoutg*)(\nu,\nu))\vert \le \frac C{\Hradius^{\frac32+\outve}} \]
and that the corresponding result holds for the $\pullback\outx\eukoutg*$-tangential part of $\nu$. However by the above, this is equivalent to
\[ \Vert\partial[\Hradius]@{\deform\rnu}\Vert_{\Wkp^{1,p}(\M)} \le \frac C{\Hradius^{\frac32+\outve}}, \]
which is true due to Lemma~\ref{lapse_function_control_III}. 
Therefore, we have constructed $\Wkp^{1,p}_{\frac12}$- and $\Wkp^{1,\infty}_{\frac12+\outve}$-asymptotically flat coordinates.\footnote{In the general setting, the $\Wkp^{1,\infty}_{\frac12+\outve}$-decay means that $\rad^{\frac12}\,\vert\outg*-\eukoutg\vert\to0$ for $\rad\to\infty$ and correspondingly for the derivative of $\outg*$.} In particular, these coordinates are $\Wkp^{1,q}_{\frac12}$-asymp\-to\-tically flat for every $q\in\interval2\infty$.
\smallskip

\textbf{Final step:} The rest of the proof is analogous to Bartnik's proof of \cite[Prop.~3.3]{bartnik1986mass}. We repeat it nevertheless for the reader's convenience. By the above, we know that $(\outM,\outg,\outx)$ is a structure of infinity, see \cite[Def.~2.1]{bartnik1986mass}. Due to Bartnik's famous result, there exists harmonic coordinates\footnote{In fact, an equivalent argument can be used to prove that $\outx$ are $\Wkp^{3,p}_{\frac12}$\vspace{-.25em}-asymptotically flat coordinates.} $\outy:\outM\setminus\outsymbol K\to\R^3\setminus\overline{B_1(0)}$ for some compact set $\outsymbol K\subseteq\outM$ with $\mathcal{err}_{\oi\oj}:=((\pullback\outy\outg*)_{\oi\oj}-\eukoutg_{\oi\oj})\in\Wkp^{1,q}_{{-}\frac12}(\outM)$ for every $q\in\interval2\infty$ -- in particular for $q=\nicefrac{6p}{(3-p)}$. As $\outy$ are harmonic coordinates, we have
\begin{equation*} \outric_{\oi\oj} = {-}\frac12\laplace[\outg]\outg_{\oi\oj} + Q_{\oi\oj}(\outg*,\partial*\outg*), \labeleq{Laplace_g_ij} \end{equation*}
where $Q_{\oi\oj}(\outg*,\partial*\outg*)$ depends quadratic on the first coordinate derivatives of $(\outg_{kl})_{kl}$ and polynomial on $(\outg_{kl})_{kl}$ and $(\outg^{kl})_{kl}$. Therefore, using the Sobolev inequality on $\outric\in\Wkp^{1,p}_{{-}\frac52}(\outM$) implies
$\laplace[\outg]\outg_{\oi\oj} \in \Lp^{\!\frac{3p}{3-p}}_{-\frac52}(\outM)$
and therefore $\mathcal{err}_{\oi\oj}\in\Wkp^{2,\frac{3p}{3-p}}_{-\frac12}(\outM)$\vspace{-.25em}. Thus, the Sobolev inequality and $\nicefrac{3p}{(3-p)}>3$ imply $\mathcal{err}_{\oi\oj}\in\Wkp^{1,\infty}_{-\frac12}(\outM)$. Thus, \eqref{Laplace_g_ij} gives $\laplace[\outg]\outg_{\oi\oj}\in\Wkp^{1,p}_{{-}\frac52}(\outM)$ and therefore $\mathcal{err}_{\oi\oj}\in\Wkp^{3,p}_{-\frac12}(\outM)$. This proves the claim.
\end{proof}

\section{Alternative foliation properties and uniqueness of the foliation}\label{Foliation_property}%
In this section, we give alternative assumptions which imply that the elements of the cover $\mathcal M$ are pairwise disjoint and that $\mathcal M$ is globally unique, where we allow the assumptions of Theorem~\ref{Suff_ass_for_asymp_flat} or~\ref{Charac_of_Sobolev_asymp_flat}. Note that by \cite[Lemma3.5]{nerz2015CMCfoliation}, the following assumptions~\ref{Foliation_property__fol} and~\ref{Foliation_property__fol_area} do not only imply that the elements of $\mathcal M$ are disjoint, but are equivalent to it.
\begin{proposition}[Foliation property]\label{Foliation_property__prop}
Let the assumptions~\ref{Ass_fst}--\ref{Ass_lst} be satisfied for some family $\mathcal M=\{\M<\Hradius>\}_{\Hradius>\Hradius_0}$ with $\mathcal M\subseteq\regsphere*[\frac32+\ve]<M>\c[p]$ or with \eqref{assumptions}.\footnote{In the following, $\Hradius^{-\outve}$ has to be replaced by $\oc''(\Hradius)$ with $\oc''(\Hradius)\to0$ for $\Hradius\to\infty$ if $\mathcal M\subseteq\regsphere*[\frac32+\ve]<\c\,\vert\mass\vert>\c[p]$ is not satisfied.} Then the elements of $\mathcal M$ are pairwise disjoint \hbox{{\normalfont(}if} $\Hradius_0$ is sufficiently \hbox{large{\normalfont)}} if one of the following assumptions is satisfied\smallskip
\begin{enumerate}[nosep,series=foliation_properties,label={\normalfont (A\arabic{enumi})}]
\item $\displaystyle\left|\int\!\gauss<\Hradius>\,\eflap<\Hradius>\d\mug<\Hradius>\right| \le \sqrt{\frac\pi3}\,\frac{(1-\eta)\,\vert\mHaw\vert}{\Hradius^2}\left\Vert\eflap<\Hradius>\right\Vert_{\Lp^2(\M<\Hradius>)}$ for every $\M=\M<\Hradius>\in\mathcal M$, every function $\eflap=\eflap<\Hradius>\in\Wkp^{2,2}(\M<\Hradius>)$ with $\laplace<\Hradius>\eflap={-}\ewlap\,\eflap$ for some arbitrary $\ewlap=\ewlap<\Hradius>(\eflap<\Hradius>)\in\interval0{\nicefrac3{\Hradius^2}}$, where $\eta\in\interval01$ is some fixed constant;\label{Foliation_property__fol}\smallskip
\item $\displaystyle\left|\partial[\Hradius]@{\volume{\M<\Hradius>}}-\frac2\Hradius\volume{\M}\left(1-\frac{\mHaw}\Hradius\right)\right|\le\oc(\Hradius)\,\Hradius$ for some function $\oc\in\Ck(\R)$ with $\oc(\Hradius)\to0$ for $\Hradius\to\infty$; \label{Foliation_property__fol_area}\smallskip
\item $\displaystyle\left|\partial[\Hradius]@{\mHaw(\M<\Hradius>)}\right|\le\frac{\oc(\Hradius)}\Hradius$ for some function $\oc\in\Ck(\R)$ with $\oc(\Hradius)\to0$ for $\Hradius\to\infty$. \label{Foliation_property__fol_mass}
\end{enumerate}
Furthermore, \ref{Foliation_property__fol_area} and \ref{Foliation_property__fol_mass} are equivalent and imply \ref{Foliation_property__fol} for $\eta=\eta(\Hradius)\to1$ for $\Hradius\to\infty$.
\end{proposition}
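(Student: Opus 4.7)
The plan is to show that each of the three assumptions forces the lapse $\rnu$ of the $\Ck^1$-cover produced by Lemma~\ref{Lemma__smooth_cover} to be strictly positive everywhere; once $\rnu>0$ the cover is a foliation and its leaves are automatically pairwise disjoint. By Lemma~\ref{Lemma__smooth_cover}, $\meanrnu-1$ and $\deform\rnu$ vanish in $\Lp^\infty(\M<\Hradius>)$ as $\Hradius\to\infty$, so the task reduces to bounding the translational coefficients $a_i$ in the decomposition $\trans\rnu=\sum_{i=1}^3 a_i\,\eflap_i$ strictly below $1/3$ (so that $\|\trans\rnu\|_{\Lp^\infty(\M<\Hradius>)}<1$ for $\Hradius$ large).

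The key input is the lapse equation $\jacobiext*\rnu=\partial[\Hradius]@{\H<\Hradius>}=\tfrac{2}{\Hradius^2}$. Using the Gauss equation together with Lemma~\ref{Bootstrap_for_trace_free_second_fundamental_form__Lp4} to control the quadratic term, the potential of $\jacobiext*$ rewrites as
\[
  \trtr\zFund\zFund+\outric*(\nu,\nu)=\frac{4}{\Hradius^2}-2\gauss+\outsc-\outric*(\nu,\nu).
\]
Testing the lapse equation against a Laplace eigenfunction $\eflap_i$ with eigenvalue $\ewlap_i$ close to $2/\Hradius^2$ (via self-adjointness of $\jacobiext*$ and $\int\eflap_i\d\mug=0$) gives
\[
  (\tfrac{4}{\Hradius^2}-\ewlap_i)\,a_i\,\|\eflap_i\|_{\Lp^2(\M)}^2
  =2\!\int\!\rnu\,\gauss\,\eflap_i\d\mug-\!\int\!\rnu\,(\outsc-\outric*(\nu,\nu))\,\eflap_i\d\mug,
\]
where the curvature integral is of lower order by the decay assumptions and $\tfrac{4}{\Hradius^2}-\ewlap_i=\tfrac{2}{\Hradius^2}+O(\Hradius^{-5/2-\outve})$. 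Under assumption~\ref{Foliation_property__fol}, replacing $\rnu$ by $\meanrnu$ in the Gauss integral (the $\deform\rnu$ and $\trans\rnu$ contributions being absorbed via $\Lp^2$-orthogonality and Proposition~\ref{Stability}) and using the asymptotic $\|\eflap_i\|_{\Lp^2(\M)}^2=\tfrac{4\pi\Hradius^2}{3}(1+o(1))$ coming from the conformal parametrization of Proposition~\ref{Regularity_of_the_spheres}, one gets $|a_i|\le 1-\eta+o(1)$, hence $\rnu>0$ for $\Hradius\ge\Hradius_0'$.

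For the equivalence of~\ref{Foliation_property__fol_area} and~\ref{Foliation_property__fol_mass}, differentiate $\mHaw=\sqrt{\volume{\M}/16\pi}\,(1-\volume{\M}/(4\pi\Hradius^2))$ in $\Hradius$ using $\partial[\Hradius]@{\volume{\M<\Hradius>}}=(2\meanrnu/\Hradius)\volume{\M}$ and the asymptotic $\volume{\M}=4\pi\Hradius^2(1-2\mHaw/\Hradius)+O(\Hradius^{1/2-\outve})$ from Lemma~\ref{lemma__derivatives}; a short computation shows that $\partial[\Hradius]@{\mHaw}$ depends affinely and non-degenerately on $\partial[\Hradius]@{\volume{\M<\Hradius>}}-(2/\Hradius)\volume{\M}(1-\mHaw/\Hradius)$ up to an $O(\Hradius^{-3/2-\outve})$-term, which yields the equivalence (after absorbing the remainder in $\oc$). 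Moreover, Lemma~\ref{lemma__derivatives} gives $\partial[\Hradius]@{\mHaw}=2(1+\mHaw/\Hradius-\meanrnu)+O(\Hradius^{-3/2-\outve})$, so~\ref{Foliation_property__fol_mass} pins $\meanrnu=1+\mHaw/\Hradius+O(\oc(\Hradius)/\Hradius)$; inserting this sharper control into the testing argument improves the bound on the Gauss integral to $o(|\mHaw|\Hradius^{-2})\,\|\eflap_i\|_{\Lp^2(\M)}$, which is~\ref{Foliation_property__fol} with $\eta=\eta(\Hradius)\to1$.

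The main obstacle is the sharp constant-tracking in the testing argument: Proposition~\ref{Regularity_of_the_spheres} has to be used to identify the Laplace eigenfunctions $\eflap_i$ on $(\M<\Hradius>,\g<\Hradius>)$ with the standard first spherical harmonics on $(\sphere^2,\sphg*)$ up to $\Wkp^{2,p}$-error $o(1)$. Only then do the explicit factor $\sqrt{\pi/3}$ in~\ref{Foliation_property__fol}, the leading-order value $\|\eflap_i\|_{\Lp^2(\M)}^2=4\pi\Hradius^2/3$, and the $6\mHaw/\Hradius^3$ eigenvalue shift from Proposition~\ref{Stability} combine precisely to produce $|a_i|<1-\eta$, rather than a weaker qualitative bound that would not suffice to rule out $\rnu=0$.
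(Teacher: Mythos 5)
Your overall strategy — reduce to $\rnu>0$ via Lemma~\ref{Lemma__smooth_cover}, bound the translational coefficients $a_i$ from the lapse equation $\jacobiext*\rnu\equiv 2/\Hradius^2$, and use the $\mHaw$-derivative formula from Lemma~\ref{lemma__derivatives} to relate \ref{Foliation_property__fol_area} and \ref{Foliation_property__fol_mass} — matches the paper, and your treatment of \ref{Foliation_property__fol} via the Gau\ss\ equation and Proposition~\ref{Stability} is essentially the paper's argument.

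There is, however, a genuine gap in the step from~\ref{Foliation_property__fol_mass} to~\ref{Foliation_property__fol}, and hence in the ``pairwise disjoint'' conclusion under~\ref{Foliation_property__fol_area}/\ref{Foliation_property__fol_mass}. Your linear testing identity, once the $\trans\rnu$-contribution from the potential is moved to the left (which replaces the Laplace eigenvalue gap by the Jacobi eigenvalue $\gamma_i\approx 6\mHaw/\Hradius^3$ of Proposition~\ref{Stability}), reads
\[
  \gamma_i\,a_i\,\Vert\eflap_i\Vert_{\Lp^2(\M)}^2 \;\approx\; \meanrnu\int_\M \bigl(\outric*(\nu,\nu)+\trtr\zFundtrf\zFundtrf\bigr)\eflap_i\d\mug .
\]
This expresses the Gau\ss/Ricci dipole $\int\gauss\,\eflap_i\d\mug \approx -\int\outric*(\nu,\nu)\,\eflap_i\d\mug$ \emph{in terms of} the unknown coefficient $a_i$; it does not bound $a_i$ in terms of $\meanrnu$. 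Knowing $\meanrnu = 1 + \mHaw/\Hradius + o(\Hradius^{-1})$ therefore yields no control on $\int\gauss\,\eflap_i\d\mug$ through this linear relation — ``inserting the sharper control of $\meanrnu$ into the testing argument'' is circular, because to bound the Gau\ss\ integral you would already need a bound on $a_i$.

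The paper closes this gap with Lemma~\ref{lapse_function__transl_to_exp_part}, which you do not invoke and whose content your sketch does not reproduce. Its proof tests the lapse equation against \emph{two} different functions: against the constant $1$, giving
\[
  \Bigl|\int_\M\outric*(\nu,\nu)\,\rnu\d\mug - \frac{2(1-\meanrnu)}{\Hradius^2}\volume{\M}\Bigr| \le \frac{C}{\Hradius^{\frac12+\outve}},
\]
and against $\trans\rnu$ itself (a \emph{quadratic} test), giving $\frac{6\mHaw}{\Hradius^3}\Vert\trans\rnu\Vert_{\Lp^2(\M)}^2 \approx -\meanrnu\int\outric*(\nu,\nu)\,\trans\rnu\d\mug + \dots$. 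Combining the two and using the area estimate of~\eqref{lemma__derivatives__dm} produces the bridge inequality~\eqref{compare_ut_and_bar_u},
\[
  \Bigl|\tfrac1\Hradius\Vert\trans\rnu\Vert_{\Lp^\infty(\M)}^2 - \bigl(\meanrnu - 1 - \tfrac{\mHaw}\Hradius\bigr)\Bigr| \le \frac{C}{\Hradius^{1+\outve}}\bigl(1 + \Vert\trans\rnu\Vert_{\Lp^\infty(\M)}^2\bigr),
\]
which is precisely what converts the scalar control $\meanrnu-1-\mHaw/\Hradius = o(\Hradius^{-1})$ supplied by~\ref{Foliation_property__fol_area}/\ref{Foliation_property__fol_mass} into $\Vert\trans\rnu\Vert_{\Lp^\infty(\M)}\to0$, hence $\rnu>0$, and only \emph{then} into~\ref{Foliation_property__fol} with $\eta(\Hradius)\to1$. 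Without this quadratic relation, your argument cannot get off the ground for~\ref{Foliation_property__fol_area}/\ref{Foliation_property__fol_mass}.
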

We see directly that this proposition is equivalent to the lapse function $\rnu<\Hradius>$ of a cover map $\Phi$ satisfying \eqref{lemma__derivatives__fol_map_char} is in highest order constant, i.\,e.\ that~$\rnu<\Hradius>$ has constant sign (if $\Hradius$ is sufficiently large). By \eqref{Lemma__smooth_cover_eq}, the crucial part is to prove that~$\trans{\rnu<\Hradius>}$ is sufficiently bounded away from one (for sufficiently large $\Hradius$). In the following first lemma, we prove that this in fact equivalent to calculate the expanding part $\meanrnu=\fint\rnu\d\mug$, i.\,e.~its mean value, up to order $\mathcal o(\Hradius^{{-}1})$ and not only up to order $\mathcal O(\Hradius^{{-}\frac12-\outve})$ as it was done in~\eqref{Lemma__smooth_cover_eq}. Such a precise determination of~$\meanrnu$ is possible due to the knowledge of the area $\M<\Hradius>$ up to order $\mathcal O(\Hradius^{{-}1-\outve})$ using the Hawking mass, i.\,e.~\eqref{lemma__derivatives__dm}.
\begin{lemma}[Estimating \texorpdfstring{$\trans\rnu$}{the translating part of the lapse function} by \texorpdfstring{$\meanrnu$}{the rescaling part of the lapse function}]\label{lapse_function__transl_to_exp_part}
If the assumptions~\ref{Ass_fst}--\ref{Ass_lst} are satisfied, then there are constants $\Hradius_0'=\Cof{\Hradius'_0}[M][\ve][p][\c]$ and $C=\Cof[M][\ve][p][\c]$ such that $\Hradius_0>\Hradius_0'$ implies that the lapse function $\rnu$ of a $\Phi$ as in \eqref{lemma__derivatives__fol_map_char} satisfies
\begin{equation*} \vert\frac1\Hradius\Vert\trans\rnu\Vert_{\Lp^\infty(\M)}^2 - (\meanrnu-1-\frac{\mHaw}\Hradius)\vert \le \frac C{\Hradius^{1+\outve}} + \frac C{\Hradius^{1+\outve}}\Vert\trans\rnu\Vert_{\Lp^\infty(\M)}^2 \labeleq{compare_ut_and_bar_u} \end{equation*}
\end{lemma}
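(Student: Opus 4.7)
The strategy is to integrate the Jacobi equation $\jacobiext\rnu = 2/\Hradius^2$ over the CMC-leaf $\M=\M<\Hradius>$ to obtain a preliminary identity for $\meanrnu - 1 - \mHaw/\Hradius$, and then to extract the quadratic contribution in $\trans\rnu$ by pairing the same equation against $\trans\rnu$ and invoking the approximate eigenvalue $6\mHaw/\Hradius^3$ of $\jacobiext$ on the translational subspace (Proposition~\ref{Stability}).

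Decompose the stability potential as $V := |\zFund|^2 + \outric(\nu,\nu) = 2/\Hradius^2 + V_0$ with $V_0 := |\zFundtrf|^2 + \outric(\nu,\nu)$. Integrating $\jacobiext\rnu = 2/\Hradius^2$ over $\M$ (where the Laplacian term drops out) yields
\[ (1-\meanrnu)\,\frac{2\,\volume{\M}}{\Hradius^2} \;=\; \int_{\M} V_0\,\rnu\d\mug. \]
The Gauss equation $\sc = \outsc - 2\outric(\nu,\nu) + \H^2/2 - |\zFundtrf|^2$ combined with Gauss--Bonnet $\int\sc\d\mug = 8\pi$ and with the decay bounds on $\outsc$, $\outric$ and $|\zFundtrf|^2$ from Definition~\ref{Regular_sphere} and Lemma~\ref{Bootstrap_for_trace_free_second_fundamental_form__Lp4} yield $\int V_0\d\mug = -8\pi\mHaw/\Hradius + O(\Hradius^{-1-\outve})$. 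Using the area expansion in Lemma~\ref{lemma__derivatives} and decomposing $\rnu = \meanrnu + \trans\rnu + \deform\rnu$, this produces the preliminary identity
\[ \meanrnu - 1 - \frac{\mHaw}{\Hradius} \;=\; -\frac{1}{8\pi}\int_{\M} V_0(\trans\rnu + \deform\rnu)\d\mug + O(\Hradius^{-1-\outve}). \]

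The main step is to evaluate $\int V_0\trans\rnu\d\mug$. Take the $\Lp^2$-pairing of $\jacobiext\rnu = 2/\Hradius^2$ with $\trans\rnu$; since $\int\trans\rnu\d\mug = 0$ the right-hand side vanishes, and by self-adjointness of $\jacobiext$ together with the decomposition of $\rnu$,
\[ \meanrnu\int V_0\trans\rnu\d\mug + \int\trans\rnu\,\jacobiext\trans\rnu\d\mug + \int\deform\rnu\,\jacobiext\trans\rnu\d\mug \;=\; 0. \]
The first inequality of Proposition~\ref{Stability} identifies the middle term with $(6\mHaw/\Hradius^3)\|\trans\rnu\|_{\Lp^2(\M)}^2$ up to $O(\Hradius^{-3-\outve})\|\trans\rnu\|_{\Lp^2(\M)}^2$, while the cross-term is controlled by rewriting $\jacobiext\trans\rnu = (1-\meanrnu)(2/\Hradius^2) - \meanrnu V_0 - \jacobiext\deform\rnu$ and invoking the $\Wkp^{2,p}$-estimate on $\deform\rnu$ from Lemma~\ref{Lemma__smooth_cover}. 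Solving gives $\int V_0\trans\rnu\d\mug$ as a quadratic in $\|\trans\rnu\|_{\Lp^2(\M)}$, which via the identification in Lemma~\ref{lemma__derivatives} of the $\eflap_i$ with restrictions of ambient coordinate functions (together with $\|\eflap_i\|_{\Lp^\infty(\M)}=1$) converts into the desired $\|\trans\rnu\|_{\Lp^\infty(\M)}^2/\Hradius$-term on the left-hand side of~\eqref{compare_ut_and_bar_u}.

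The $\int V_0\deform\rnu\d\mug$ contribution is bounded directly by H\"older's inequality using the $V_0$ estimates and $\|\deform\rnu\|_{\Wkp^{2,p}(\M)} \le C\Hradius^{-1/2-\outve+2/p}$, giving $O(\Hradius^{-1-\outve})$. The hard part is the cross-term $\int\deform\rnu\,\jacobiext\trans\rnu\d\mug$, which must fit inside the error budget $C\Hradius^{-1-\outve}(1+\|\trans\rnu\|_{\Lp^\infty(\M)}^2)$; here one exploits the $\Lp^2$-orthogonality of $\deform\rnu$ with the translational eigenspace and the higher-order regularity estimates from Proposition~\ref{Stability} to absorb it into the remainder.
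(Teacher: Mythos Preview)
Your proof is correct and follows essentially the same approach as the paper's: integrate the Jacobi equation $\jacobiext\rnu=2/\Hradius^2$ over $\M$, pair it against $\trans\rnu$ and invoke the approximate eigenvalue $6\mHaw/\Hradius^3$ from Proposition~\ref{Stability}, then convert $\Vert\trans\rnu\Vert_{\Lp^2}^2$ to $\Vert\trans\rnu\Vert_{\Lp^\infty}^2$ on the three-dimensional translational subspace. The only organizational difference is that the paper starts from the Stability bound $\frac{6\mHaw}{\Hradius^3}\Vert\trans\rnu\Vert_{\Lp^2}^2\approx\int\jacobiext\trans\rnu\,\trans\rnu\d\mug$, expands $\trans\rnu=\rnu-\meanrnu-\deform\rnu$, and substitutes the integrated identity for $\int\outric(\nu,\nu)\rnu\d\mug$ directly, whereas you first isolate a preliminary formula for $\meanrnu-1-\mHaw/\Hradius$ and then feed in $\int V_0\trans\rnu\d\mug$; these are two orderings of the same computation. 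One small remark: for the final $\Lp^2$-to-$\Lp^\infty$ step the paper uses Proposition~\ref{Regularity_of_the_spheres} and the Euclidean identity $\frac{3}{4\pi}\Vert\sphtrans f\Vert_{\Lp^2(\sphere^2,\sphg*)}^2=\Vert\sphtrans f\Vert_{\Lp^\infty(\sphere^2)}^2$ rather than the $\eflap_i$ of Lemma~\ref{lemma__derivatives} (which are not identified with ambient coordinate functions at this stage), but either route gives the same comparison.
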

\begin{proof}
Using Proposition~\ref{Regularity_of_the_spheres}, $\int\laplace\rnu\d\mug=0$, the second inequality in~\eqref{Regularity_of_the_spheres__k}, and \eqref{lapse_func_charaction}, i.\,e.~$\jacobiext*\rnu \equiv \nicefrac{\H^2}2$, we get
\begin{align*}
 \vert\int_{\M<\Hradius>}\outric*(\nu,\nu)\rnu\d\mug - \frac{2(1-\meanrnu)}{\Hradius^2}\volume{\M} \vert
 ={}& \vert\int\jacobiext*\rnu-\trtr\zFund\zFund\rnu-\laplace\rnu\d\mug - \int\frac{\H^2}2(1-\meanrnu)\d\mug\vert \\
 \le{}& \frac C{\Hradius^{\frac12+\outve}}.
\end{align*}
We combine this with $\int\deform\rnu\,\trans\rnu\d\mug=0$, $\int\trans\rnu\d\mug=0$, $\jacobiext*\rnu\equiv\nicefrac2{\Hradius^2}$, \eqref{Lemma__smooth_cover_eq}, the second inequality in~\eqref{Regularity_of_the_spheres__k}, and the Gau\ss\ equation and get
\begin{align*}
 \frac{6\mHaw}{\Hradius^3}\Vert\trans\rnu\Vert_{\Lp^2(\M)}^2
	\le{}& \int\jacobiext*\trans\rnu\,\trans\rnu\d\mug + \frac C{\Hradius^{3+\outve}}\Vert\trans\rnu\Vert_{\Lp^2(\M)}^2 \\
	\le{}& \int\jacobiext\rnu\,\trans\rnu \d\mug
				 - \int\jacobiext\deform\rnu\,\trans\rnu \d\mug \\
			 & - \meanrnu\,\int (\outric*(\nu,\nu)+\trtr\zFund\zFund)\trans\rnu \d\mug
				 + \frac C{\Hradius^{3+\outve}}\Vert\trans\rnu\Vert_{\Lp^2(\M)}^2 \\
	\le{}& {-} \frac{2(1-\meanrnu)}{\Hradius^2}\meanrnu\volume{\M}
				 + \meanrnu\int \outric*(\nu,\nu)\deform\rnu \d\mug
				 + \meanrnu^2\,\int \outric*(\nu,\nu) \d\mug \\
			 & + \frac C{\Hradius^{1+\outve}}
				 + \frac C{\Hradius^{2+\outve}}\Vert\trans\rnu\Vert_{\Lp^2(\M)}
				 + \frac C{\Hradius^{3+\outve}}\Vert\trans\rnu\Vert_{\Lp^2(\M)}^2 \\
	\le{}& 8\pi\,(\meanrnu-1
				 - \frac{\mHaw}\Hradius)
				 + \frac C{\Hradius^{1+\outve}}
				 + \frac C{\Hradius^{3+\outve}}\Vert\trans\rnu\Vert_{\Lp^2(\M)}^2
\end{align*}
and equivalent for \lq$\ge$\rq.\pagebreak[1] Now, we note that $\frac3{4\pi}\Vert\sphtrans f\Vert_{\Lp^2(\sphere,\sphg*)}^2=\Vert\sphtrans f\Vert_{\Lp^\infty(\sphere^2)}^2$ on the Euclidean sphere and the translating part of a function with respect to the standard metric of the Euclidean unit sphere and therefore, Proposition~\ref{Regularity_of_the_spheres} implies
\[ \vert\Vert\trans\rnu\Vert_{\Lp^\infty(\M)}^2 - \frac3{4\pi\Hradius^2}\Vert\trans\rnu\Vert_{\Lp^2(\M)}^2 \vert \le \frac C{\Hradius^{\frac12+\outve}}\Vert\trans\rnu\Vert_{\Lp^\infty(\M)}^2. \]
Now, \eqref{compare_ut_and_bar_u} follows from the combination of the last two inequalities.
\end{proof}

\begin{proof}[Proof of Proposition~\ref{Foliation_property__prop}]
If \ref{Foliation_property__fol_area} or \ref{Foliation_property__fol_area} is satisfied, then Lemma~\ref{lapse_function__transl_to_exp_part} (and \eqref{lemma__derivatives__dm}) directly implies $\Vert\trans\rnu\Vert_{\Lp^\infty(\M<\Hradius>)}\to0$ for $\Hradius\to\infty$. In particular, $\rnu>0$ and therefore the elements of $\mathcal M$ are pairwise disjoint. By the same argument, these two properties are equivalent. The following argument also proves that both imply property~\ref{Foliation_property__fol}.\medskip

Per Definition of $\Phi$ and $\M<\Hradius>$, we know $\jacobiext*\rnu = \spartial[\Hradius]@{\H<\Hradius>} \equiv \nicefrac2{\Hradius^2}$. Using \eqref{Lemma__smooth_cover_eq} and Proposition~\ref{Regularity_of_the_spheres} imply $\vert\hspace{.05em}\jacobiext*(\rnu-1) + \outric*(\nu,\nu)\vert \le \nicefrac C{\Hradius^{3+\ve}}$ and $\Phi$ is therefore a diffeomorphism if $\Vert\trans{\rnu}\Vert_{\Lp^\infty(\M)}\le1-\eta$ with $\eta>0$ (independent of $\Hradius$). But again by Proposition~\ref{Stability} (for sufficiently large $\Hradius_0$), this is implied by
\[ \vert\sum_{i=1}^3 \int\outric*(\nu,\nu)\,\eflap_i\d\mug\,\eflap_i \vert \le (1-\eta)\,\frac{6\,\vert\mHaw(\M)\vert}{\Hradius^3}, \]
for some $\eta>0$ (independent of $\Hradius$). By comparing with the Euclidean sphere using Proposition~\ref{Regularity_of_the_spheres}, we see that this is the case if 
\[ \vert \int\outric*(\nu,\nu)\,\eflap_i\d\mug \vert \le (1-\eta)\,\sqrt{\frac{16\pi}3}\,\frac{\vert\mHaw(\M)\vert}{\Hradius^2}\qquad\forall\,i\in\{1,2,3\} \]
for some $\eta>0$ (independent of $\Hradius$) -- and by the same argument, this inequality is true if $\Vert\trans\nu\Vert_{\Lp^\infty(\M)}\le1-\eta'$ for some $\eta'>0$ (independent of $\Hradius$). Combining the decay of $\outsc$ and $\zFundtrf*$ with the fact that $\eflap_i$ is mean value free, this is true due to the Gau\ss\ equation and the assumed foliation property if $\Hradius$ is sufficiently large. Thus, the elements of $\mathcal M$ are pairwise disjoint if \ref{Foliation_property__fol} is satisfied.
\end{proof}

\begin{proposition}[Foliation property II]\label{Foliation_property__prop_2}
Let the assumptions~\ref{Ass_fst}--\ref{Ass_lst} be satisfied for some family $\mathcal M=\{\M<\Hradius>\}_{\Hradius>\Hradius_0}$ with $\mathcal M\subseteq\regsphere*[\frac32+\ve]<M>\c[p]$ or with \eqref{assumptions} is satisfied.\footnote{In the following, $\Hradius^{-\outve}$ has to be replaced by $\oc''(\Hradius)$ with $\oc''(\Hradius)\to0$ for $\Hradius\to\infty$ if $\mathcal M\subseteq\regsphere*[\frac32+\ve]<\c\,\vert\mass\vert>\c[p]$ is \emph{not} satisfied.} Then the elements of $\mathcal M$ are pairwise disjoint (if $\Hradius_0$ is sufficiently large) and \ref{Foliation_property__fol}--\ref{Foliation_property__fol_mass} are true for $\eta=\oc(\Hradius)\to0$ if one of the following assumptions is true\smallskip
\begin{enumerate}[resume*=foliation_properties]
\item $\Vert D_{\nu}\outsc\Vert_{\Lp^1(\M<\Hradius>)}\le \oc'(\Hradius)\,\Hradius^{{-}\frac32}$ for every $\Hradius>\Hradius_0$ and some  constant $\oc\relax<\infty$ independent of $\Hradius$;\label{Foliation_property__strong_sc}\smallskip
\item $\Vert\outsc\Vert_{\Lp^1(\M<\Hradius>)}+\Hradius\,\Vert D_{\nu}\outsc\Vert_{\Lp^1(\M<\Hradius>)}\le\oc'(\Hradius)$ for some function $\oc'\in\Lp^1(\R)$, \label{Foliation_property__strong_Dsc}\smallskip
\end{enumerate}
where $\oc(\Hradius)\le\Hradius^{{-}\frac\outve2}$ if $\mathcal M\subseteq\regsphere*[\frac32+\ve]<M>\c[p]$ and \ref{Foliation_property__strong_sc} is true for $\oc'(\Hradius)\le\Hradius^{{-}\outve}$.
\end{proposition}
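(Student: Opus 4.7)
The plan is to verify hypothesis~\ref{Foliation_property__fol_mass} of Proposition~\ref{Foliation_property__prop} with $\oc(\Hradius)\to0$; by that proposition this is equivalent to~\ref{Foliation_property__fol_area} and implies both pairwise disjointness and~\ref{Foliation_property__fol} for $\eta=\oc(\Hradius)\to0$. The key reduction is that, combining the approximation $\partial_\Hradius\mHaw\approx(1-\meanrnu)+\mHaw/\Hradius$ from Lemma~\ref{lemma__derivatives} with $\Vert\trans\rnu\Vert_{\Lp^\infty(\M)}^2/\Hradius\approx\meanrnu-1-\mHaw/\Hradius$ from Lemma~\ref{lapse_function__transl_to_exp_part}, one obtains $|\partial_\Hradius\mHaw|\approx\Vert\trans\rnu\Vert_{\Lp^\infty(\M)}^2/\Hradius$ up to errors of order $\Hradius^{-3/2-\outve}$. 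Hence the task reduces to bounding $\Vert\trans\rnu\Vert_{\Lp^\infty(\M)}$ by a quantity going to zero at the claimed rate.

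To control $\Vert\trans\rnu\Vert_{\Lp^\infty(\M)}$, I test the stability equation $\jacobiext\rnu=2/\Hradius^2$ against a normalized translation-type eigenfunction $\eflap\in\Wkp^{2,p}(\M<\Hradius>)$ (with eigenvalue satisfying $|\ewlap-2/\Hradius^2|\le1/\Hradius^2$ and $\Vert\eflap\Vert_{\Lp^2(\M)}=1$). After integration by parts, using $\int\eflap\d\mug=0$ and $\jacobiext\rnu=\laplace\rnu+(\outric(\nu,\nu)+|\zFund|^2)\rnu$, the resulting identity reads at leading order $(2/\Hradius^2)\int\trans\rnu\,\eflap\d\mug\approx\int\outric(\nu,\nu)\eflap\d\mug+\int|\ktrf|^2\eflap\d\mug$. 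The Gauss equation $2\outric(\nu,\nu)=\outsc-\sc+\H^2/2-|\ktrf|^2$, together with the regularity of $\sc$ on $\M<\Hradius>$ from Proposition~\ref{Regularity_of_the_spheres} (which makes $\int\sc\,\eflap\d\mug$ small via the mean-free part of $\sc$), reduces the right-hand side up to small errors to $\tfrac12\int\outsc\,\eflap\d\mug$. Using the Sobolev-type bound $\Vert\eflap\Vert_{\Lp^\infty(\M)}\le C/\Hradius$ available for translation-type eigenfunctions (via Proposition~\ref{Regularity_of_the_spheres}), this last integral is bounded by $C\Vert\outsc\Vert_{\Lp^1(\M)}/\Hradius$.

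Summing over the three-dimensional translation eigenspace yields $\Vert\trans\rnu\Vert_{\Lp^2(\M)}\le C\Hradius\Vert\outsc\Vert_{\Lp^1(\M)}$, and applying the same $\Lp^\infty$-versus-$\Lp^2$ estimate gives $\Vert\trans\rnu\Vert_{\Lp^\infty(\M)}\le C\Vert\outsc\Vert_{\Lp^1(\M)}$. Under~\ref{Foliation_property__strong_sc} with $\oc'(\Hradius)\le\Hradius^{-\outve}$, the hypothesis on $\outsc$ and the bound on $D_\nu\outsc$ give $\Vert\outsc\Vert_{\Lp^1(\M)}\le C\Hradius^{-\outve}$, hence $\Vert\trans\rnu\Vert_{\Lp^\infty(\M)}\le C\Hradius^{-\outve}$ and therefore $|\partial_\Hradius\mHaw|\le C\Hradius^{-1-2\outve}\le C\Hradius^{-1-\outve/2}$, i.e.~$\oc(\Hradius)\le C\Hradius^{-\outve/2}$ as claimed. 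Under~\ref{Foliation_property__strong_Dsc} ($\oc'\in\Lp^1(\R)$) the pointwise bound reads $|\partial_\Hradius\mHaw|\le C\oc'(\Hradius)^2/\Hradius$; since $\oc'$ is also bounded (as $\outsc$ is controlled by Definition~\ref{Regular_sphere}), $\oc'^2\in\Lp^1(\R)$ as well, and integrating gives convergence of $\mHaw$ to a limit $\mass$ at a rate that reconstructs the required $\oc(\Hradius)\to0$.

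The main technical obstacle is the careful treatment of the lower-order error terms in the stability identity: one must control the effect of $\rnu$ differing from $1$ (the factor $(\outric(\nu,\nu)+|\zFund|^2)\rnu$ rather than $\outric(\nu,\nu)+|\zFund|^2$ alone), the deviation of $\sc_\M$ from its Gauss--Bonnet mean value $8\pi/\volume{\M}$, and the coupling with the deformational part $\deform\rnu$ through Proposition~\ref{Stability}. All of these are handled by the regularity estimates of Section~\ref{Regularity_of_the_hypersurfaces} (Lemmata~\ref{Bootstrap_for_trace_free_second_fundamental_form__Lp2} and~\ref{Bootstrap_for_trace_free_second_fundamental_form__Lp4}, Proposition~\ref{Regularity_of_the_spheres}), which supply the required $\Lp^p$ smallness of $\ktrf$ and $\deform\rnu$ and the oscillation of $\sc$ on each leaf. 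Proposition~\ref{Foliation_property__prop} then concludes.
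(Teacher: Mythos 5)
Your reduction to bounding $\Vert\trans\rnu\Vert_{\Lp^\infty(\M)}$ is the right starting point, and the identity $\partial_\Hradius\mHaw\approx -2\,\Vert\trans\rnu\Vert_{\Lp^\infty(\M)}^2/\Hradius$ obtained by combining \eqref{lemma__derivatives__dm} with \eqref{compare_ut_and_bar_u} is sound. However, the way you then estimate $\Vert\trans\rnu\Vert_{\Lp^\infty(\M)}$ has two problems that together make the argument fail.

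First, the leading-order stability identity is mis-scaled. Testing $\jacobiext\rnu=\tfrac2{\Hradius^2}$ against a translational eigenfunction $\eflap_i$ does not give a coefficient $2/\Hradius^2$ in front of $\int\trans\rnu\,\eflap_i\d\mug$: the contribution of $\laplace$ (eigenvalue $\approx 2/\Hradius^2$) is cancelled almost exactly by the $\H^2/2\equiv 2/\Hradius^2$ term inside $|\k|^2$, so the surviving coefficient is the translational eigenvalue of $\jacobiext$, which is $\approx 6\,\mHaw/\Hradius^3$ by Proposition~\ref{Stability} — a full power of $\Hradius$ smaller. Carrying this through, the estimate reads $\Vert\trans\rnu\Vert_{\Lp^\infty(\M)}\lesssim\Hradius\,\Vert\outsc\Vert_{\Lp^1(\M)}$, not $\lesssim\Vert\outsc\Vert_{\Lp^1(\M)}$. (The Gau{\ss}-equation reduction to $\int\outsc\,\eflap_i\d\mug$ is also no gain over directly bounding $\int\outric(\nu,\nu)\eflap_i\d\mug$ by H\"older against $\Vert\outric\Vert_{\Lp^2(\M)}$, which only recovers the a~priori bound $\Vert\trans\rnu\Vert_{\Lp^\infty(\M)}\lesssim\Hradius^{1/2-\outve}$ already contained in Lemma~\ref{Lemma__smooth_cover}.)

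Second, and more seriously, the extra hypotheses \ref{Foliation_property__strong_sc} and \ref{Foliation_property__strong_Dsc} concern the \emph{radial derivative} $D_\nu\outsc$, not $\outsc$ on each leaf; they do not improve the estimate $\Vert\outsc\Vert_{\Lp^1(\M<\Hradius>)}\lesssim\Hradius^{-\outve}$ that already follows from the regular-sphere class $\regsphere*[\frac32+\outve]<M>{\c}[p]$. Your argument therefore never actually invokes the crucial hypothesis, and the bound you obtain — even if corrected for the scaling — diverges like $\Hradius^{1-\outve}$. The role of $D_\nu\outsc$ in the paper is different: it controls the $\Hradius$-\emph{derivative} $\partial_\Hradius(\outric(\nu,\nu))$ and hence $\partial_\Hradius\rnu$ (Lemma~\ref{lapse_function_control_II}, inequality \eqref{lemma__derivatives__sc_W1q}), which is then combined with the dichotomy of Lemma~\ref{lapse_function_control_III_a} and an iterative improvement of the exponent in $|\meanrnu-1-\mHaw/\Hradius|$ (Lemma~\ref{lapse_function_control_III}, resp.~\ref{lapse_function_control_IIIb}). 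Only after that bootstrap does \eqref{compare_ut_and_bar_u} yield the required smallness of $\Vert\trans\rnu\Vert_{\Lp^\infty(\M)}$. Your proposal skips that entire mechanism, and there is no way to repair it by a single-leaf estimate because the hypothesis simply does not live on a single leaf.
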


\begin{lemma}[Derivatives of the lapse function]\label{lapse_function_control_II}
Let the assumptions~\ref{Ass_fst}--\ref{Ass_lst} be true and $s\in\interval*1*\infty$ be a constant. There exist constants $\Hradius_0'=\Cof{\Hradius'_0}[M][\ve][p][s][\c]$ and $C=\Cof[M][\ve][p][s][\c]$ such that $\Hradius_0>\Hradius_0'$ implies that the lapse function $\rnu$ of the parametrization $\Phi$ of $\mathcal M$ as in Lemma~\ref{lemma__derivatives} satisfies
\begin{subequations}\label{d_sigma_u__1}
\begin{align}
	\volume{\M}^{{-}1}\,\vert\int\partial[\Hradius]@{\rnu<\Hradius>}\d\mug\vert \le{}& C\,\Hradius^{2-\frac2s}\,(\Hradius^{\frac2s-\frac72-\outve}+\Vert D_{\nu}\outsc\Vert_{\Lp^s(\M)})\Vert\rnu\Vert_{\Lp^\infty(\M)}, \\
	\Vert\deform{(\partial[\Hradius]@{\rnu<\Hradius>})}\Vert_{\Wkp^{1,P}(\M)} \le{}& C\,\Hradius^{2+\frac2P-\frac{2}s}\,(\Hradius^{\frac2s-\frac72-\outve}+\Vert D_{\nu}\outsc\Vert_{\Lp^s(\M)})\Vert\rnu\Vert_{\Lp^\infty(\M)},
\end{align}
\end{subequations}
where $P:=\nicefrac{2s}{(2-s)}$ if $s\le\nicefrac{2p}{(2+p)}$ and $P:=p$ if $s\ge\nicefrac{2p}{(2+p)}$.
\end{lemma}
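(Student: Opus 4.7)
The starting point is to differentiate the identity $\jacobiext*\rnu<\Hradius>\equiv 2/\Hradius^2$ (equation~\eqref{lapse_func_charaction}, which was already essential in the proof of Lemma~\ref{Lemma__smooth_cover}) in the parameter $\Hradius$. Writing $\jacobiext* = \laplace + (\trtr\zFund\zFund + \outric(\nu,\nu))$ and noting that both the operator (through the moving surface $\M<\Hradius>$) and its argument depend on $\Hradius$, the product rule yields
\begin{equation*}
 \jacobiext*(\del_\Hradius\rnu) = {-}\frac{4}{\Hradius^3} - (\del_\Hradius\laplace)\rnu - \del_\Hradius\big(\trtr\zFund\zFund + \outric(\nu,\nu)\big)\,\rnu =: E.
\end{equation*}
Given a suitable $\Lp^s(\M)$-bound on $E$, both inequalities in~\eqref{d_sigma_u__1} follow by applying Proposition~\ref{Stability} to $g=\del_\Hradius\rnu$: the first is directly the rescaling-part estimate $\Hradius^{2/s}|\mean{(\del_\Hradius\rnu)}|\le C\Hradius^2\Vert E\Vert_{\Lp^s(\M)}$, while the second comes from combining the deformational-part estimate $\Vert\deform{(\del_\Hradius\rnu)}\Vert_{\Wkp^{2,s}(\M)}\le C\Hradius^2\Vert E\Vert_{\Lp^s(\M)}$ with the scale-invariant two-dimensional Sobolev embedding $\Wkp^{2,s}(\M)\hookrightarrow\Wkp^{1,P}(\M)$; in the Bartnik convention the Sobolev step loses a factor $\Aradius^{-1}\sim\Hradius^{-1}$, and for $s\ge 2p/(2+p)$ an additional Hölder step (or interpolation against the $\Wkp^{2,p}$-bound from Lemma~\ref{Lemma__smooth_cover}) truncates the target at $P=p$, accounting for the $\Hradius^{2+2/P-2/s}$ prefactor.

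The bulk of the work is therefore establishing
\begin{equation*}
 \Vert E\Vert_{\Lp^s(\M)} \le C\big(\Hradius^{2/s-\frac72-\outve} + \Vert D_\nu\outsc\Vert_{\Lp^s(\M)}\big)\Vert\rnu\Vert_{\Lp^\infty(\M)}.
\end{equation*}
The variation $\del_\Hradius\laplace$ is expanded via $\del_\Hradius\g$ using~\eqref{lemma__derivatives__dg}. The variation of $\trtr\zFund\zFund$ is split via $\trtr\zFund\zFund = \trtr\zFundtrf\zFundtrf + \H^2/2$; the trace-free piece is estimated using~\eqref{lemma__derivatives__dk} together with the pointwise bound on $\zFundtrf$ from Proposition~\ref{Regularity_of_the_spheres}, and $\del_\Hradius\H^2=-8/\Hradius^3$ is explicit. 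The delicate term is $\del_\Hradius\outric(\nu,\nu)$: I use the Gauss equation $2\,\outric(\nu,\nu) = \outsc - \sc + \H^2 - \trtr\zFund\zFund$ to convert it into $\del_\Hradius\outsc$, $\del_\Hradius\sc$, and the already-handled algebraic pieces. The intrinsic scalar derivative is controlled by~\eqref{lemma__derivatives__dsc}, while $\del_\Hradius\outsc = \rnu\,D_\nu\outsc + \Hradius\,\g(\levi\trans\rnu,\levi^\top\outsc)$, where $\levi^\top\outsc$ is the tangential gradient. The tangential piece is absorbed into the $\Hradius^{-7/2-\outve}$-remainder using $|\Hradius\levi\trans\rnu|\lesssim\Vert\rnu\Vert_{\Lp^\infty(\M)}$ (from Lemma~\ref{Lemma__smooth_cover}) together with the twice-contracted Bianchi identity $\div\outric = \frac12\,d\outsc$, which controls $\levi^\top\outsc$ by the decay of $\outric$. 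This isolates $\rnu\,D_\nu\outsc/2$ as the only surviving third-order curvature contribution.

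The main obstacle is the precise bookkeeping of the several $\Hradius^{-3}$-scale contributions: the constant $-4/\Hradius^3$ on the right-hand side, $\del_\Hradius\H^2 = -8/\Hradius^3$, the leading $-4/\Hradius^3$ of $\del_\Hradius\sc$ from~\eqref{lemma__derivatives__dsc}, and the $-4/\Hradius^3$ coming from $\del_\Hradius(\H^2/2)$ inside $\del_\Hradius\trtr\zFund\zFund$, each multiplied by $\rnu$ or $\rnu-1$, must combine correctly. Only by invoking the tight $\Lp^\infty$-decay of $\rnu-1$ from Lemma~\ref{Lemma__smooth_cover}, the decay of $\zFundtrf$ from Proposition~\ref{Regularity_of_the_spheres}, and the variations supplied by Lemma~\ref{lemma__derivatives}, does the residue decay at the advertised rate $\Hradius^{-7/2-\outve}\Vert\rnu\Vert_{\Lp^\infty(\M)}$ rather than at the naive order $\Hradius^{-3}$.
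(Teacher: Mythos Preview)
Your overall strategy coincides with the paper's: differentiate $\jacobiext*\rnu\equiv 2/\Hradius^2$ in $\Hradius$, commute to obtain $\jacobiext*(\partial_\Hradius\rnu)=E$, handle $\partial_\Hradius(\outric(\nu,\nu))$ via the Gau\ss\ equation, and invoke Proposition~\ref{Stability}. The decisive gap is your insistence on bounding $E$ in $\Lp^s(\M)$. Two of the terms you single out cannot be controlled in that norm under the standing hypotheses.

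First, $\partial_\Hradius\sc$: the estimate~\eqref{lemma__derivatives__dsc} you cite gives only a $\Wkp^{-1,q}$ bound, obtained by writing $\sc$ through Christoffel symbols and integrating by parts against a test function. No $\Lp^s$ bound on $\partial_\Hradius\sc$ is available, because assumptions~\ref{Ass_fst}--\ref{Ass_lst} impose no control on derivatives of $\outric$. Second, your treatment of the tangential piece $\Hradius\,\g(\levi\trans\rnu,\levi^\top\outsc)$ via Bianchi is incorrect: the identity $\outlevi_i\outsc=2\,\outlevi^j\outric_{ij}$ relates $\levi^\top\outsc$ to the \emph{divergence} of $\outric$, i.e.\ to first derivatives of $\outric$, not to $\outric$ itself. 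Again no such derivative bound is assumed.

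The paper closes both gaps simultaneously by isolating $\partial_\Hradius(\outric(\nu,\nu))\,\rnu$ as a single term and estimating it in the negative Sobolev space $\Wkp^{-1,Q}(\M)$ (with $1/P+1/Q=1$), rather than in $\Lp^s$. This suffices for Proposition~\ref{Stability} via the regularity of $\jacobiext*$; in $\Wkp^{-1,Q}$ the bound~\eqref{lemma__derivatives__dsc} applies directly, and the tangential $\outsc$-gradient is handled by one integration by parts against the test function, leaving only the zeroth-order control on $\outsc$ that is actually assumed. The link between $s$ and $P$ then arises at this dual level through the Sobolev embedding, yielding $P=2s/(2-s)$ (truncated at $p$).
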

Here, we again used $\nicefrac12\le\Vert\rnu\Vert_{\Lp^\infty(\M)}\le 2 + \Vert\trans\rnu\Vert_{\Lp^\infty(\M)}$ to reduce notation.
\begin{proof}
Using $\jacobiext\rnu\equiv\nicefrac2{\Hradius^2}$, we see -- in normal coordinates with respect to $\g<\Hradius_0>$ for some fixed $\Hradius_0$ --
\begin{align*}
 \jacobiext(\partial[\Hradius]@{\,\rnu<\Hradius>})
		={}& \partial[\Hradius]@{(\jacobiext*\rnu<\Hradius>)}
				- \partial[\Hradius]@{\g<\Hradius>^{\ii\!\ij}}\partial^2_\ii_\ij@{\rnu<\Hradius>}
				+ \g<\Hradius>^{\ii\!\ij}\partial[\Hradius]@{\levi<\Hradius>_{\ii\!\ij}^\ik}\partial_\ik@{\rnu<\Hradius>}
				- \frac12\partial[\Hradius]@{\H^2}\rnu<\Hradius> \\
			& - \partial[\Hradius]@{\trtr{\zFundtrf<\Hradius>}{\zFundtrf<\Hradius>}}\rnu<\Hradius>
				- \partial[\Hradius]@{\outric*(\nu<\Hradius>,\nu<\Hradius>)}\rnu<\Hradius>
\end{align*}
and therefore \eqref{Lemma__smooth_cover_eq}, \eqref{lemma__derivatives__dg}, and \eqref{lemma__derivatives__dk} imply
\[
	\Vert\text{err}-\trans{\text{err}}\Vert_{\Lp^p(\M)}
	\le \frac C{\Hradius^{\frac72+\outve}}(\Hradius^{\frac2p}+\Vert\trans\rnu\Vert_{\Lp^p(\M)}),
\]
where
\[ \text{err}:=\jacobiext*(\partial[\Hradius]<\Hradius>@{\rnu<\Hradius>}) + \partial[\Hradius]@{(\outric*(\nu<\Hradius>,\nu<\Hradius>))}\,\rnu. \]
Hence, we get by Proposition~\ref{Stability} and the regularity of the weak Laplace operator
\[ \Vert\deform{(\partial[\Hradius]@{\rnu})}\Vert_{\Wkp^{1,P}(\M)} \le \frac C{\Hradius^{\frac32+\outve}}(\Hradius^{\frac2P}+\Vert\trans\rnu\Vert_{\Lp^P(\M)}) + C\,\Hradius^2\,\Vert\partial[\Hradius]@{(\outric*(\nu[\Hradius],\nu[\Hradius]))}\,\rnu\Vert_{\Wkp^{{-}1,Q}(\M)} \]
for $P\in\interval1*p$ with $\nicefrac1P+\nicefrac1Q=1$ arbitrary and we get the identical inequality for the rescaling part.\pagebreak[1] However, the Gau\ss\ equation, the assumed control for the $\Hradius$-derivative of $\outsc$, the inequalities \eqref{Lemma__smooth_cover_eq}, \eqref{lemma__derivatives__dk}, and \eqref{lemma__derivatives__dsc}, and the Sobolev inequalities imply
\begin{equation*}
 \Vert\partial[\Hradius]@{(\outric*(\nu[\Hradius],\nu[\Hradius]))}\,\rnu\Vert_{\Wkp^{{-}1,\frac{2s}{3s-2}}(\M)} \le \frac C\Hradius\,(\Hradius^{\frac2s-\frac72-\outve}+\Vert D_{\nu}\outsc\Vert_{\Lp^s(\M)})\Vert\trans\rnu\Vert_{\Lp^\infty(\M)} \labeleq{lemma__derivatives__sc_W1q}
\end{equation*}
for $P$ as in the claim of the Lemma, where we used $1+\Vert\trans\rnu\Vert_{\Lp^\infty(\M)}\le 2\Vert\rnu\Vert_{\Lp^\infty(\M)}$. All in all, we have proven~\eqref{d_sigma_u__1}.
\end{proof}

\begin{remark}[Pointwise and Sobolev setting]
In the setting of~\ref{Foliation_property__strong_sc}, we can replace the $D_{\nu}\outsc$-term on the right hand side of \eqref{d_sigma_u__1} by $\oc'(\Hradius)$. If~\ref{Foliation_property__strong_Dsc} is satisfied, \ref{Charac_of_Sobolev_asymp_flat} implies
\begin{align*}\hspace{3em}&\hspace{-3em}
 \vert\Hradius^{\frac{1+\eta}2}(\meanrnu<\Hradius>-1) - \Hradius^{\frac{1+\eta}2}(\meanrnu<(\Hradius+\sqrt\Hradius)>-1) \vert \\
 \le{}& \int_{\Hradius}^{\Hradius+\sqrt\Hradius}\vert\partial[\Hradius]@{(\Hradius^{\frac{1+\eta}2}(\meanrnu<\Hradius>-1))}\vert \d s + \vert(\Hradius+\sqrt\Hradius)^{\frac{1+\eta}2}-\Hradius^{\frac{1+\eta}2}\vert\,\vert\meanrnu<(\Hradius+\sqrt\Hradius)>-1\vert \\
 \le{}& \int_{\Hradius}^{\Hradius+\sqrt\Hradius}\frac{C\,\oc(s)}{s^{1-\frac\eta2}}(1+\Vert\trans{\rnu<s>}\Vert_{\Lp^\infty(\M<s>)}) \d s
				+ \int_{\Hradius}^{\Hradius+\sqrt\Hradius}C\,s^{\frac{1+\eta}2}\Vert D_{\nu}\outsc\Vert_{\Lp^1(\M<s>)} \d s \\
			& + \int_\Hradius^{\Hradius+\sqrt\Hradius} s^{\frac{\eta-1}2}\vert\meanrnu<s>-1\vert \d s + \frac{C\,\oc(\Hradius)}{\Hradius^{\frac{1-\eta}2}} \\
 \le{}& \int_{\Hradius}^{\Hradius+\sqrt\Hradius}\frac{C\,\oc(s)}{s^{1-\frac\eta2}}(1+\Vert\trans{\rnu<s>}\Vert_{\Lp^\infty(\M<s>)}) \d s
				+ \frac{C\,\oc(\Hradius)}{\Hradius^{\frac{1-\eta}2}} \\
 \le{}& C\,\oc(\Hradius)\,(1+\sup\lbrace\Vert\trans\rnu\Vert_{\Lp^\infty(\M<s>)}\ \middle|\ 0\le s-\Hradius\le\sqrt\Hradius\rbrace)
\end{align*}
for every $\eta\le1$, where $C$ does not depend on $\eta$ and where we assumed without loss of generality that $\oc$ is monotone decreasing and $\oc(\Hradius)\ge\int_\Hradius^\infty\oc'(s)\d s$.
\end{remark}

\begin{lemma}[The lapse function]\label{lapse_function_control_III_a}
If the assumptions~\ref{Ass_fst}--\ref{Ass_lst} are satisfied, then there are constants $\Hradius_0'=\Cof{\Hradius'_0}[M][\ve][p][\c]$ and $C=\Cof[M][\ve][p][\c]$ such that $\Hradius_0>\Hradius_0'$ implies that the lapse function $\rnu$ of the para\-me\-triza\-tion $\Phi$ of $\mathcal M$ as in Lemma~\ref{lemma__derivatives} satisfies\footnote{In fact, we can replace $\nicefrac\outve2$ by $\outve$, but this would need some additional arguments in the proof. As this is not important for this article and needs some additional, technical steps, we do not prove this here. In the Sobolev setting, we can replace $\Hradius^{{-}\frac\outve2}$ by $\outc''(\Hradius)$ with $\outc''(\Hradius)\to0$ as $\Hradius\to\infty$.}
that for every exponent $\eta\in\interval*{\nicefrac12+\nicefrac{\outve}2}*{1+\nicefrac{3\outve}4}$ and $\delta>0$ there holds
\begin{equation*}
 \forall\,\Hradius^{1-\eta}>\frac C{\Hradius^{\frac{\outve}2}}\ \;\exists\,s\in\interval{\Hradius}{\Hradius+\Hradius^{\frac{1-\outve}2}}:\ \vert\meanrnu<s>-1-\frac{\mHaw<s>}s\vert\notin\interval{\frac{3\delta}{4s^\eta}}{\frac\delta{s^\eta}}. \labeleq{lapse_function_control_III_a__eq}\pagebreak[2]
\end{equation*}
\end{lemma}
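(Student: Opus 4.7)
The plan is to argue by contradiction: assume there exist $\eta,\delta$ in the stated range and $\Hradius$ with $\Hradius^{1-\eta+\outve/2}>C$ such that for every $s\in\interval{\Hradius}{\Hradius+\Hradius^{(1-\outve)/2}}$, setting $F(s):=\meanrnu<s>-1-\mHaw<s>/s$, one has $|F(s)|\in\interval{3\delta/(4s^\eta)}{\delta/s^\eta}$. Since $F$ is continuous in $s$ (the map $\Phi$ is $\Ck^1$ by Lemma~\ref{Lemma__smooth_cover} and by Lemma~\ref{lemma__derivatives}) and $|F|$ is bounded away from zero in the window, $F$ has constant sign there; without loss of generality assume $F>0$, so $F\in\interval{3\delta/(4s^\eta)}{\delta/s^\eta}$ throughout.

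The first step is to convert this hypothesis into information on the translational part of the lapse. Lemma~\ref{lapse_function__transl_to_exp_part} gives
\[
 \bigl|\tfrac{1}{s}\Vert\trans\rnu\Vert_{\Lp^\infty(\M<s>)}^2 - F(s)\bigr|
  \le \tfrac{C}{s^{1+\outve}}(1+\Vert\trans\rnu\Vert_{\Lp^\infty(\M<s>)}^2),
\]
which combined with $F\sim\delta/s^\eta$ and the hypothesis $s^{1-\eta}\gtrsim s^{-\outve/2}$ yields $\Vert\trans\rnu\Vert_{\Lp^\infty(\M<s>)}^2 \sim s\,F(s)$ (main term dominates the error by a factor $s^{\outve/2}$). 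The second step is to compute $\partial_s F$: from Lemma~\ref{lemma__derivatives}\eqref{lemma__derivatives__dm}, $\partial_s\mHaw<s>=-2F(s)+O(s^{-3/2-\outve})$, hence
\[
 \partial_s F \;=\; \partial_s\meanrnu + \tfrac{2F}{s} + \tfrac{\mHaw}{s^2} + O(s^{-5/2-\outve}).
\]

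The third step is to bound the forced change of $F$ across the window. On one side, the assumption that $F\in\interval{3\delta/(4s^\eta)}{\delta/s^\eta}$ throughout forces
\[
 |F(\Hradius+\Hradius^{(1-\outve)/2}) - F(\Hradius)| \;\le\; \tfrac{\delta}{4\Hradius^\eta} + O(\delta\,\Hradius^{-\eta-(1+\outve)/2}),
\]
i.e., $F$ may move by at most about $\delta/(4\Hradius^\eta)$. On the other side, integrating $\partial_s F$ over the window: the positive contribution $\int 2F/s\,\d s\ge 3\delta\,\Hradius^{-\eta-(1+\outve)/2}\cdot(1+o(1))/2$, the term $\int\mHaw/s^2\,\d s$ is of comparable or smaller order, and the $O(s^{-5/2-\outve})$ remainder integrates to a negligible amount. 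The last ingredient is to bound $\int\partial_s\meanrnu\,\d s$ using Lemma~\ref{lapse_function_control_II}: choosing the Lebesgue exponent $s_0$ optimally and invoking the inequality on $D_\nu\outsc$ (or, in the absence of a pointwise derivative bound, the integrated version sketched in the remark following Lemma~\ref{lapse_function_control_II} that yields an $\Lp^1$-in-$\Hradius$ envelope $\outc'(s)$), one obtains $|\int\partial_s\meanrnu\,\d s|\le \Vert\trans\rnu\Vert_{\Lp^\infty}\cdot o(\Hradius^{-\eta-(1+\outve)/2})$.

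Combining these three estimates, the mandatory variation of $F$ across the window exceeds the maximum permitted $\delta/(4\Hradius^\eta)+\text{lower order}$, contradiction. The main obstacle is the delicate power counting: the width $\Hradius^{(1-\outve)/2}$ of the window, the exponent $\eta$ in the stated range, and the error exponents from Lemmata~\ref{lemma__derivatives} and \ref{lapse_function_control_II} must be tracked carefully so that the positive contribution $2F/s$ outpaces the admissible cancellation from $\partial_s\meanrnu$ by exactly the $\Hradius^{\outve/2}$-factor that appears in the quantifier $\Hradius^{1-\eta}>C\,\Hradius^{-\outve/2}$. The specific choice of interval endpoints $3\delta/(4s^\eta)$ and $\delta/s^\eta$ is what creates the $\delta/(4\Hradius^\eta)$ slack used above; the constant $3/4$ is not essential but any constant strictly less than $1$ would do.
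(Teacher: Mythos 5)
Your proposal has two serious gaps. First, the power counting does not yield a contradiction: the forced increase from $\int_\Hradius^{\Hradius+\Hradius^{(1-\outve)/2}} 2F/s\,\d s$ is of size $\tfrac{3\delta}{2}\Hradius^{-\eta-(1+\outve)/2}$, which is \emph{smaller} than the allowed slack $\tfrac{\delta}{4}\Hradius^{-\eta}$ by the factor $6\Hradius^{-(1+\outve)/2}\to0$, and the $\int\mHaw/s^2\,\d s$ term is smaller still. Thus the hypothesized envelope $F(s)\in[3\delta/(4s^\eta),\delta/s^\eta]$ is entirely compatible with such a small increase, and the final line of your argument ("the mandatory variation exceeds the maximum permitted") is backwards. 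Second, and more structurally, bounding $\int\partial_s\meanrnu\,\d s$ via Lemma~\ref{lapse_function_control_II} is not available here: the right-hand side of that lemma carries the factor $\Vert D_{\nu}\outsc\Vert_{\Lp^s(\M)}$, which is \emph{not} controlled under the hypotheses~\ref{Ass_fst}--\ref{Ass_lst} of the present lemma — that additional decay is precisely what is added only in Lemma~\ref{lapse_function_control_III} and the remark you cite presupposes \ref{Foliation_property__strong_sc}/\ref{Foliation_property__strong_Dsc}. If you had such control you would already obtain the stronger conclusion of Lemma~\ref{lapse_function_control_III}; the content of Lemma~\ref{lapse_function_control_III_a} is that it can be proved without it.

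The paper's proof avoids $\partial_s\meanrnu$ altogether. It integrates $\partial_s(\Hradius^{-2}\volume{\M<\Hradius>})$, which by \eqref{lemma__derivatives__dm} equals $-8\pi(1-\meanrnu)/\Hradius + O(\Hradius^{-5/2-\outve})$ — so only $\meanrnu$ itself appears, not its derivative. Substituting $1-\meanrnu=-\mHaw/s-F(s)$ together with the upper bound $F\le\delta/s^\eta$, integrating by parts the $\mHaw/s^2$ term and replacing $\partial_s\mHaw$ by $-2F+O(s^{-3/2-\outve})$ together with the lower bound $F\ge 3\delta/(4s^\eta)$, and finally comparing with the direct relation $\Hradius^{-2}\volume{\M<\Hradius>}=4\pi(1-2\mHaw/\Hradius)+O(\Hradius^{-3/2-\outve})$ at both endpoints, the boundary terms cancel and one is left with $0\le -c\,\delta\,\Hradius^{-\eta-1+w}+C\Hradius^{-3/2-\outve}+C\Hradius^{-1-\eta-\outve/2}$. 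The strictly negative main term survives because the lower endpoint $3/4$ of the envelope exceeds $1/2$; this forces $\Hradius^{1-\eta+\frac34\outve}\le C$, which is the advertised contradiction. The mechanism is a sign mismatch in an over-determined system built from the area and the Hawking mass, not a variation-versus-slack estimate on $F$ itself.
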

\begin{proof}
In order to calculate $\meanrnu$ up to order $\Hradius^{-1}$, we note
\begin{equation*}
	\vert\partial[\Hradius]@{\volume{\M<\Hradius>}} - 8\pi(\Hradius-2\mHaw)\meanrnu\vert
		= \vert\int\H\rnu\d\mug + 8\pi(\Hradius-2\mHaw)\meanrnu\vert
		\le \frac C{\Hradius^{\frac12+\outve}}
\end{equation*}
due to \eqref{lemma__derivatives__dm}, i.\,e.
\begin{equation*}
 \vert\partial[\Hradius]@{(\Hradius^{-2}\volume{\M<\Hradius>})} + \frac{8\pi}\Hradius(1-\meanrnu) \vert \le \frac C{\Hradius^{\frac52+\outve}} \labeleq{compare_M_d__with_mean_u},
\end{equation*}
Now, let us assume that \eqref{lapse_function_control_III_a__eq} is not true, i.\,e.~that
\[ \frac{\delta}{{\Hradius'}^\eta}\ge\pm({}_ {\Hradius'}\mean{\rnu}-1-\frac{\mHaw<\Hradius'>}{\Hradius'})\ge\frac{2\delta}{3{\Hradius'}^\eta} \qquad\forall\,\Hradius'\in\interval{\Hradius}{\Hradius+\Hradius^{\frac12-\frac14\outve}} \]
was true for some fixed sign ${\pm}\in\{{-}1,1\}$ and constants $\eta\in\interval*{\nicefrac12+\nicefrac{\outve}2}*{1+\nicefrac{3\outve}4}$, $\delta\in\R$, and $\Hradius\in\interval{\Hradius_0}{\Hradius_1}$, where $\mHaw<\Hradius'>:=\mHaw(\M<\Hradius>)$. As the other case proceeds in the same way, we assume the sign ${\pm}$ to be positive. Under this assumption, integrating \eqref{compare_M_d__with_mean_u} would give
\begin{align*}
 (\Hradius')^{-2}\volume{\M<\Hradius'>}
 \le{}& \Hradius^{-2}\volume{\M<\Hradius>}
			+ 8\pi\int_{\Hradius}^{\Hradius'}\frac{1-{}_s\meanrnu}s \d s
			+ \frac{C}{\Hradius^{2+\outve}} \\
 \le{}& \Hradius^{-2}\volume{\M<\Hradius>}
			+ 8\pi\int_{\Hradius}^{\Hradius'}\frac{\mHaw<s>}{s^2}+\frac\delta{s^{1+\eta}} \d s
			+ \frac{C}{\Hradius^{2+\outve}}. \labeleq{control_of_u_quer_step}
\end{align*}
Now, a integration by parts and~\eqref{lemma__derivatives__dm} would imply
\begin{align*}
 (\Hradius')^{-2}\volume{\M<\Hradius'>}
 \le{}& \Hradius^{-2}\volume{\M<\Hradius>}
			- 8\pi\left[\frac{\mHaw(\M<s>)}s+\frac{\delta}{\eta\,s^\eta}\right]_{\Hradius}^{\Hradius'} \\
			&	+ 8\pi\int_{\Hradius}^{\Hradius'}\frac1s\partial[\Hradius]<s>@{\mHaw(\M<\Hradius>)} \d s
				+ \frac{C}{\Hradius^{2+\outve}} \\
 \le{}& 4\pi(1 - \frac{2\mHaw(\M<\Hradius'>)}{\Hradius'} - \frac{2\delta}\eta\,(\frac1{{\Hradius'}^\eta}-\frac1{\Hradius^\eta})) \\
			&	+ 16\pi\int_{\Hradius}^{\Hradius'}\frac1s(1+\frac{\mHaw(\M<s>)}s-{}_s\meanrnu) \d s
				+ \frac{C}{\Hradius^{2+\outve}}.
\end{align*}
Again using \eqref{lemma__derivatives__dm} and the assumption on $\eta$, a simple integration gives
\begin{equation*}
 0 \le \frac{{-}\eta}{3\,\Hradius^{\eta+\frac12+\frac14\outve}} + \frac{C}{\Hradius^{\frac32+\outve}} + \frac{C}{\Hradius^{1+\eta+\frac\outve2}},
\end{equation*}
where we used the Taylor series for $s\mapsto s^{-\eta}$ at $s=1$. Note that the constants $C$ do \emph{not} depend on $\eta$ or $\delta$, but only on their range, i.\,e.~on $\outve$. In particular, we conclude
\[ \Hradius^{1-\eta+\frac34\outve} \le \frac C\eta \le 2\,C, \]
where again $C$ does not depend on $\eta$ or $\delta$. Therefore, we have proven the claim.
\end{proof}

\begin{lemma}[The lapse function]\label{lapse_function_control_III}
If the assumptions~\ref{Ass_fst}--\ref{Ass_lst} are satisfied and if\footnote{We can replace $\Hradius^{{-}\outve}$ by $\oc(\Hradius)$ for some function $\oc\in\Ck(\R)$ with $\oc(\Hradius)\to0$ for $\Hradius\to\infty$. In this case, we have to replace $\Hradius^{{-}\frac\outve2}$ and $\Hradius^{{-}\frac\outve4}$ in the claim by $\oc'(\Hradius)$ with the same property, but depending arbitrarily on $\oc$.}
\[ \Vert D_{\nu}\outsc\Vert_{\Lp^1(\M<\Hradius>)} \le \frac C{\Hradius^{\frac32+\outve}}\qquad\forall\,\Hradius>\Hradius_0, \]
then there are constants $\Hradius_0'=\Cof{\Hradius'_0}[M][\ve][p][p'][\c]$ and $C=\Cof[M][\ve][p][p'][\c]$ such that $\Hradius_0>\Hradius_0'$ implies that the lapse function $\rnu$ of the para\-me\-triza\-tion $\Phi$ of $\mathcal M$ as in Lemma~\ref{lemma__derivatives} satisfies\footnote{In fact, we can replace $\nicefrac\outve2$ by $\outve$, but this would need some additional arguments in the proof. As this is not important for this article and needs some additional, technical steps, we do not prove this here.}
\begin{equation*}
 \vert\meanrnu-1-\frac{\mHaw}\Hradius\vert \le \frac C{\Hradius^{1+\frac\outve2}}, \qquad
 \Vert\trans\rnu\Vert_{\Wkp^{3,p}(\M)} \le \frac C{\Hradius^{\frac\outve4-\frac2p}}.
\end{equation*}%
Furthermore, the elements of $\mathcal M$ are pairwise disjoint and the Hawking masses converge, i.\,e.~$\mHaw(\M<\Hradius>)\to:\mass\in\interval{c^{{-}1}\,M^{{-}1}}{c\,M}$.
\end{lemma}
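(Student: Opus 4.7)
The plan is to upgrade the alternation statement of Lemma~\ref{lapse_function_control_III_a} into a uniform bound $\vert\phi(\Hradius)\vert \le C\,\Hradius^{-1-\outve/2}$ for the quantity $\phi := \meanrnu - 1 - \nicefrac{\mHaw}{\Hradius}$, and then extract from it the $\Wkp^{3,p}$-estimate on $\trans\rnu$, pairwise disjointness of the leaves, and convergence of the Hawking masses.

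The first step is a sharp derivative estimate for $\phi$ on each window $I_\Hradius := \interval{\Hradius}{\Hradius+\Hradius^{(1-\outve)/2}}$. Applying Lemma~\ref{lapse_function_control_II} at the endpoint $s=1$, which is accessed through the window-integration argument sketched in the remark following that lemma, and inserting the hypothesis $\Vert D_\nu\outsc\Vert_{\Lp^1(\M)} \le C\,\Hradius^{-3/2-\outve}$, produces a sharp control on $\partial_\Hradius\meanrnu$. Combined with \eqref{lemma__derivatives__dm} and the preliminary estimate $\vert\phi\vert \le C\,\Hradius^{-1/2-\outve}$ from \eqref{Lemma__smooth_cover_eq}, this yields $\vert\phi'(\Hradius)\vert \le C\,\Hradius^{-3/2-\outve}$, so the total variation of $\phi$ across $I_\Hradius$ is at most $C\,\Hradius^{-1-3\outve/2}$.

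The second step couples this with Lemma~\ref{lapse_function_control_III_a}. Choosing $\eta := 1+\outve/2 \in \interval{\nicefrac12+\nicefrac\outve2}{1+\nicefrac{3\outve}4}$, that lemma supplies, for each $\delta>0$ and each sufficiently large $\Hradius$, a point $s_\Hradius \in I_\Hradius$ at which $\vert\phi(s_\Hradius)\vert \notin \interval{3\delta/(4 s_\Hradius^\eta)}{\delta/s_\Hradius^\eta}$. The band has width $\delta/(4\Hradius^{1+\outve/2})$ and comfortably dominates the window-variation $C\,\Hradius^{-1-3\outve/2}$ once $\Hradius$ is large, hence the whole window $I_\Hradius$ must lie uniformly either below the band (\emph{small regime}) or above it (\emph{large regime}). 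Since consecutive windows overlap, a connectedness argument in $\Hradius$-space shows the dichotomy is globally constant for $\Hradius \gg 1$. The threshold $\delta^\ast := \inf\{\delta>0 : \text{small regime holds}\}$ is finite by \eqref{Lemma__smooth_cover_eq}, and letting $\delta \uparrow \delta^\ast$ and $\delta \downarrow \delta^\ast$ squeezes $\vert\phi(\Hradius)\vert \le \delta^\ast\,\Hradius^{-1-\outve/2} =: C\,\Hradius^{-1-\outve/2}$.

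The remaining consequences are short. Plugging the bound on $\phi$ into \eqref{compare_ut_and_bar_u} gives $\Hradius^{-1}\Vert\trans\rnu\Vert_{\Lp^\infty(\M)}^2 \le C\,\Hradius^{-1-\outve/2}$, so $\Vert\trans\rnu\Vert_{\Lp^\infty(\M)} \le C\,\Hradius^{-\outve/4}$. Since $\trans\rnu$ sits in the three-dimensional eigenspace of Laplace eigenfunctions with eigenvalue near $\nicefrac2{\Hradius^2}$, the conformal parametrization of Proposition~\ref{Regularity_of_the_spheres} transfers everything to the standard sphere where all norms on this fixed subspace are equivalent (with scaling-explicit constants), upgrading the pointwise estimate to $\Vert\trans\rnu\Vert_{\Wkp^{3,p}(\M)} \le C\,\Hradius^{2/p-\outve/4}$. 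Positivity of $\rnu$, hence pairwise disjointness of the leaves, follows from $\Vert\trans\rnu\Vert_{\Lp^\infty(\M)}\to0$ and $\vert\meanrnu-1\vert\to0$, and substituting the bound on $\phi$ back into \eqref{lemma__derivatives__dm} gives the integrable decay $\vert\partial_\Hradius\mHaw(\M<\Hradius>)\vert \le C\,\Hradius^{-1-\outve/2}$, so $\mHaw(\M<\Hradius>)$ converges to a limit $\mass$ within the assumed Hawking-mass bounds. The main obstacle is the coupling in step two: the endpoint $s=1$ in Lemma~\ref{lapse_function_control_II} (whose stated form assumes $s>1$) requires the separate window-averaging trick, and the globally-constant-dichotomy argument requires care in balancing $\delta$ against the window length and the preliminary bound on $\phi$.
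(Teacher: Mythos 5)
Your step-one derivative bound is the weak link, and it is a genuine gap rather than a notational omission. From Lemma~\ref{lapse_function_control_II} with $s=1$ and the hypothesis on $D_{\nu}\outsc$ you obtain
\[ \volume{\M}^{-1}\,\big|\textstyle\int\partial[\Hradius]@{\rnu}\d\mug\big| \le C\,\Hradius^{-\frac32-\outve}\,\Vert\rnu\Vert_{\Lp^\infty(\M)}, \]
but $\Vert\rnu\Vert_{\Lp^\infty(\M)}$ is \emph{not} bounded a priori: by \eqref{compare_ut_and_bar_u} the preliminary estimate $\vert\phi\vert\le C\,\Hradius^{-\frac12-\outve}$ only yields $\Vert\trans\rnu\Vert_{\Lp^\infty(\M)}\le C\,\Hradius^{\frac14-\frac\outve2}$, which diverges. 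So the best you can actually extract in one pass is $\vert\phi'\vert\le C\,\Hradius^{-\frac54-\frac{3\outve}2}$, not the $C\,\Hradius^{-\frac32-\outve}$ you claim. With this corrected derivative bound the window-variation over $I_\Hradius$ is of order $\Hradius^{-\frac34-2\outve}$, which does \emph{not} fall below the band width $\sim\Hradius^{-1-\frac\outve2}$ unless $\outve>\nicefrac16$. For small $\outve$ the dichotomy ``whole window below or above the band'' therefore does not even get started.

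This is exactly why the paper's proof is an iteration in the decay exponent: it assumes the working hypothesis $\vert\phi\vert\le\delta_0\,\Hradius^{-\eta}$, turns this into $\Vert\trans\rnu\Vert_{\Lp^\infty(\M)}\le C\,\Hradius^{(1-\eta)/2}$ and thence into a derivative bound $\vert\partial*_\Hradius\meanrnu\vert\le C\,\Hradius^{-\min\{\frac\eta2+1+\outve,\,\frac32+\outve\}}$ that improves with $\eta$; compared against Lemma~\ref{lapse_function_control_III_a} this yields the bootstrap $\eta\mapsto\eta'=\min\{1+\nicefrac{3\outve}4,\,\nicefrac{(\eta+1)}2+\outve\}$, and one iterates from $\eta=\frac12+\outve$ up to $1+\nicefrac{3\outve}4>1+\nicefrac\outve2$. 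Your proposal collapses this to a single pass, which only closes for large $\outve$. If you reinstate the bootstrap, your windowing picture becomes essentially the paper's argument. A secondary issue is the squeeze $\delta\uparrow\delta^\ast$, $\delta\downarrow\delta^\ast$: the thresholds $\Hradius(\delta)$ implicit in ``small regime holds'' may blow up as $\delta\to\delta^\ast$, so the passage to the limit needs to be justified (the paper side-steps this by showing the threshold in each bootstrap step is $\max\{\Hradius(\eta),\Hradius'\}$ with $\Hradius'$ independent of $\eta$, hence uniformly bounded). The extraction of $\Vert\trans\rnu\Vert_{\Wkp^{3,p}}$, pairwise disjointness, and Hawking-mass convergence in your last paragraph is fine once the $\phi$-bound is in hand.
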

\begin{proof}
Assume
\begin{equation*}
	\exists\,\eta\in\interval*{\frac{1+\outve}2}*{1+\frac{3\outve}4},\ \delta_0>0:\quad\forall\,\Hradius>\Hradius(\eta):\ \vert\meanrnu-1-\frac\mHaw\Hradius\vert\le \frac{\delta_0}{\Hradius^\eta} \labeleq{lapse_function_control_III_a__eq_2}.
\end{equation*}
By \eqref{compare_ut_and_bar_u} and \eqref{d_sigma_u__1}, this implies\footnote{If we replace $\Hradius^{{-}\outve}$ in the assumption by $\oc(\Hradius)$ for some function $\oc(\Hradius)\in\Ck(\R)$ with $\oc(\Hradius)\to0$ for $\Hradius\to\infty$, then $C_{\delta_0}$ can be chosen arbitrary small for sufficiently large $\Hradius$ and fixed $\delta_0$. In particular, we get $\Hradius\,\vert\mean\rnu-1-\frac{\mHaw}\Hradius\vert\to0$ for $\Hradius\to\infty$.}
\[ \Vert\trans\rnu\Vert_{\Lp^\infty(\M)} \le C_{\delta_0}\,\Hradius^{\frac{1-\eta}2}, \qquad
	 \vert\partial[\Hradius]@{\meanrnu<\Hradius>}\vert \le \frac{C_{\delta_0}}{\Hradius^{\min\{\frac\eta2+1+\outve,\frac32+\outve\}}}, \]
where again none of the constants depend on $\eta$ or $\Hradius(\eta)$. Thus, if $\meanrnu-1-\nicefrac\mHaw\Hradius=\nicefrac\delta{\Hradius^{\eta'}}$ was true for some $\Hradius>\Hradius_0$, $\delta\in\R$, and $\eta'(\eta):=\min\{1+\nicefrac{3\outve}4,\nicefrac{(\eta+1)}2+\outve\}$, then we would have
\[ \vert\meanrnu-1-\frac\mHaw\Hradius - \frac \delta{\Hradius^{\eta'}}\vert\le\frac{C_{\delta_0}}{\Hradius^{\eta'+\frac\outve4}} \qquad\forall\,\Hradius'\in\interval{\Hradius}{\Hradius+\Hradius^{\frac12-\frac34\outve}}, \]
where the constant does not depend on $\delta$, $\Hradius(\eta)$, $\eta$, or $\delta_0$. By \eqref{lapse_function_control_III_a__eq}, this implies $\Hradius^{1-\eta+\frac34\outve}<C$. Therefore, if \eqref{lapse_function_control_III_a__eq_2} is true for $\eta$, then it is also true $\eta'(\eta)$ instead of $\eta$ and $\Hradius(\eta'):=\max\{\Hradius(\eta),\Hradius')$ for some $\Hradius'=\Cof{\Hradius'}[\outve][M][p][\c][\delta_0]$ independent of~$\eta$. By \eqref{Lemma__smooth_cover_eq}, \eqref{lapse_function_control_III_a__eq_2} is true for $\eta=\frac12+\outve$ per iteration, we get
\[ \forall\,\eta\in\interval*{\frac{1+3\outve}2}*{1+\frac{3\outve}4},\ \delta_0>0,\ \Hradius^{1-\eta}>\frac{C_{\delta_0}}{\Hradius^{\frac{3\outve}4}}:\quad\vert\mean\rnu-1-\frac{\mHaw}\Hradius\vert\le\frac\delta{\Hradius^\eta}. \]
Thus, we can choose $\Hradius$ independent of $\eta\in\interval*{\nicefrac12+\nicefrac{\outve}2}{1+\nicefrac{3\outve}4}$ and this proves the claim.
\end{proof}

\begin{lemma}[The lapse function]\label{lapse_function_control_IIIb}
If the assumptions~\ref{Ass_fst}--\ref{Ass_lst} are satisfied and if
\[ \Vert\outsc\Vert_{\Lp^1(\M<\Hradius>)} + \Hradius\,\Vert D_{\nu}\outsc\Vert_{\Lp^1(\M<\Hradius>)} \le \oc'(\Hradius)\qquad\forall\,\Hradius>\Hradius_0 \]
for some $\oc'\in\Lp^1(\R)$, then there are constants $\Hradius_0'=\Cof{\Hradius'_0}[M][\ve][p][p'][\c]$ and $C=\Cof[M][\ve][p][p'][\c]$ such that $\Hradius_0>\Hradius_0'$ implies that the lapse function $\rnu$ of the para\-me\-triza\-tion $\Phi$ of $\mathcal M$ as in Lemma~\ref{lemma__derivatives} satisfies
\begin{equation*}
 \Hradius\,\vert\meanrnu-1-\frac{\mHaw}\Hradius\vert \xrightarrow{\Hradius\to\infty} 0, \qquad
 \Vert\trans\rnu\Vert_{\Wkp^{3,p}(\M)} \xrightarrow{\Hradius\to\infty} 0.
\end{equation*}%
Furthermore, the elements of $\mathcal M$ are pairwise disjoint and the Hawking masses converge, i.\,e.~$\mHaw(\M<\Hradius>)\to:\mass\in\interval{c^{{-}1}\,M^{{-}1}}{c\,M}$.
\end{lemma}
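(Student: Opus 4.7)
The strategy is to adapt the proof of Lemma~\ref{lapse_function_control_III} to the Sobolev setting by replacing its polynomial-decay hypothesis with the integrability condition $\oc'\in\Lp^1(\R)$. Without loss of generality, set $\oc(\Hradius):=\int_\Hradius^\infty\oc'(s)\,\d s$, which is monotone and tends to $0$ as $\Hradius\to\infty$ by integrability. From the hypothesis one has $\Vert\outsc\Vert_{\Lp^1(\M<\Hradius>)}\le\oc'(\Hradius)$ and $\Hradius\,\Vert D_\nu\outsc\Vert_{\Lp^1(\M<\Hradius>)}\le\oc'(\Hradius)$; by Lemma~\ref{lapse_function_control_II} the lapse-derivative bound \eqref{d_sigma_u__1} then involves only $\oc'(\Hradius)$-quantities rather than any pointwise decay.

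First, I would bootstrap the small-scale estimate on $\meanrnu-1-\mHaw/\Hradius$ using the averaged bound that appears in the Sobolev paragraph preceding Lemma~\ref{lapse_function_control_III_a}: for each exponent $\eta\in(\tfrac12+\ve,1]$ and all $\Hradius$ sufficiently large, one has
\[ \bigl|\Hradius^{(1+\eta)/2}(\meanrnu<\Hradius>-1)-(\Hradius+\sqrt\Hradius)^{(1+\eta)/2}(\meanrnu<\Hradius+\sqrt\Hradius>-1)\bigr|\le C\,\oc(\Hradius)\,(1+\!\!\!\sup_{s\in[\Hradius,\Hradius+\sqrt\Hradius]}\!\!\!\Vert\trans{\rnu<s>}\Vert_{\Lp^\infty(\M<s>)}). \]
Combining this averaged estimate with Lemma~\ref{lapse_function_control_III_a} applied over intervals of length $\sqrt\Hradius$ — where the role of $\Hradius^{-\ve/2}$ in the earlier proof is played by $\oc(\Hradius)$ — the iteration of Lemma~\ref{lapse_function_control_III} raises the effective $\eta$ from the base value $\tfrac12+\ve$ of Lemma~\ref{Lemma__smooth_cover} upward. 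Since the integrability alone (not any pointwise rate) is required, after finitely many iteration rounds we reach $\eta=1$ and conclude $\Hradius\,|\meanrnu-1-\mHaw/\Hradius|\xrightarrow{\Hradius\to\infty}0$.

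Next, I would apply Lemma~\ref{lapse_function__transl_to_exp_part}: the estimate \eqref{compare_ut_and_bar_u} gives $\Vert\trans\rnu\Vert_{\Lp^\infty(\M)}^2\le C\Hradius\,|\meanrnu-1-\mHaw/\Hradius|+C/\Hradius^\ve$, so the previous step forces $\Vert\trans\rnu\Vert_{\Lp^\infty(\M<\Hradius>)}\to 0$. To upgrade to the $\Wkp^{3,p}$-norm, recall from \eqref{lapse_func_charaction} in the proof of Lemma~\ref{Lemma__smooth_cover} that $\jacobiext*(\rnu-1)=-\trtr\zFundtrf\zFundtrf-\outric(\nu,\nu)$, and apply the $\Wkp^{3,p}$-estimate \eqref{Stability_eq} of Proposition~\ref{Stability} to the translational part: the improved $\Lp^\infty$-decay of $\trans\rnu$ combined with the Ricci-curvature decay \eqref{Ass_on_the_ricci_curv} then yields $\Vert\trans\rnu\Vert_{\Wkp^{3,p}(\M)}\to 0$.

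For the remaining assertions, since $\meanrnu\to 1$ and $\Vert\trans\rnu+\deform\rnu\Vert_{\Lp^\infty}\to 0$, we have $\rnu>0$ for large $\Hradius$, so $\Phi$ is a local foliation and the elements of $\mathcal M$ are pairwise disjoint by the local uniqueness hypothesis~\ref{Ass_unique}. Finally, for Hawking-mass convergence: \eqref{lemma__derivatives__dm} gives $|\partial_\Hradius\mHaw|\le 2|\meanrnu-1-\mHaw/\Hradius|+C/\Hradius^{3/2+\ve}$, and one re-expresses the mass-aspect $\mHaw/\Hradius$ via Gauss–Bonnet together with the Gauss equation $\sc=\outsc-2\,\outric(\nu,\nu)+\H^2/2-\trtr\zFundtrf\zFundtrf$, obtaining $\mHaw/\Hradius$ as a combination of $\int_{\M<\Hradius>}\outsc\,\d\mug$, $\int_{\M<\Hradius>}\outric(\nu,\nu)\,\d\mug$, and $\int\trtr\zFundtrf\zFundtrf\,\d\mug$ up to $O(\Hradius^{-3/2-\ve})$. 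Differentiating this identity in $\Hradius$ and invoking the assumption $\Vert\outsc\Vert_{\Lp^1(\M<\Hradius>)}+\Hradius\,\Vert D_\nu\outsc\Vert_{\Lp^1(\M<\Hradius>)}\le\oc'(\Hradius)$ with $\oc'\in\Lp^1(\R)$, together with the derivative bounds of Lemma~\ref{lemma__derivatives}, yields that $\partial_\Hradius\mHaw$ itself lies in $\Lp^1((\Hradius_0,\infty))$; hence $\mHaw(\M<\Hradius>)$ is Cauchy and converges to some $\mass\in[c^{-1}M^{-1},cM]$ (sign and magnitude preserved by the uniform control $|\mHaw|\in[M^{-1},M]$). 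The main obstacle is Step~1: executing the iteration with $\oc$ monotone but otherwise arbitrary, without creating constants depending badly on $\oc$.
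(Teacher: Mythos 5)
Your proposal follows essentially the same route as the paper's proof, which is quite terse: the paper writes down exactly the averaged increment estimate you cite (the one that already appeared in the remark preceding Lemma~\ref{lapse_function_control_III_a}) and then simply says that the proof of Lemma~\ref{lapse_function_control_III} applies. Your reconstruction of the iteration, the transfer from $\meanrnu$ to $\trans\rnu$ via Lemma~\ref{lapse_function__transl_to_exp_part} and Proposition~\ref{Stability}, and the positivity of $\rnu$ giving disjointness are all consistent with what the paper's compressed argument delegates to Lemma~\ref{lapse_function_control_III}.

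One phrasing caveat you should be aware of: in the Sobolev setting the iteration of Lemma~\ref{lapse_function_control_III} does not terminate at $\eta=1$ after finitely many rounds. With $\Hradius^{-\outve}$ replaced by a function $\oc(\Hradius)\to0$ the fixed point of $\eta\mapsto(\eta+1)/2$ is exactly $1$, so the exponent only approaches $1$ from below; what one gains (as the footnote to Lemma~\ref{lapse_function_control_III} points out) is that the implied constant at each stage can be made arbitrarily small for large $\Hradius$, yielding the qualitative limit $\Hradius\,|\meanrnu-1-\mHaw/\Hradius|\to0$ rather than a pointwise bound $\le C/\Hradius$. Your wording ``we reach $\eta=1$'' slightly overstates what the iteration produces; the conclusion in the statement is precisely the $o(\Hradius^{-1})$ limit, not an $O(\Hradius^{-1})$ bound, and this distinction is why your Hawking-mass step cannot conclude $\partial_\Hradius\mHaw\in\Lp^1$ from $o(\Hradius^{-1})$ alone. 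The route you sketch — re-expressing $\mHaw/\Hradius$ via Gauss--Bonnet and the Gauss equation, differentiating, and feeding in the integrability $\oc'\in\Lp^1$ on $\outsc$, $D_\nu\outsc$ and the Sobolev decay of $\outric$ and $\zFundtrf$ — does exploit the right cancellation between $\partial_\Hradius\sqrt{|\M|}$ and $\partial_\Hradius\int(\outsc-2\outric(\nu,\nu)-\trtr\zFundtrf\zFundtrf)\d\mug$, and is the natural way to close that gap; the paper leaves this step implicit in the phrase ``the proof of Lemma~\ref{lapse_function_control_III} can be applied.''
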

\begin{proof}
We see that \eqref{Charac_of_Sobolev_asymp_flat} implies
\begin{align*}\hspace{3em}&\hspace{-3em}
 \vert\Hradius^{\frac{1+\eta}2}(\meanrnu<\Hradius>-1) - \Hradius^{\frac{1+\eta}2}(\meanrnu<(\Hradius+\sqrt\Hradius)>-1) \vert \\
 \le{}& \int_{\Hradius}^{\Hradius+\sqrt\Hradius}\frac{C\,\oc(s)}{s^{1-\frac\eta2}}(1+\Vert\trans{\rnu<s>}\Vert_{\Lp^\infty(\M<s>)}) \d s
				+ \int_{\Hradius}^{\Hradius+\sqrt\Hradius}C\,s^{\frac{1+\eta}2}\Vert D_{\nu}\outsc\Vert_{\Lp^1(\M<s>)} \d s \\
			& + \int_\Hradius^{\Hradius+\sqrt\Hradius} s^{\frac{\eta-1}2}\vert\meanrnu<s>-1\vert \d s + \frac{C\,\oc(\Hradius)}{\Hradius^{\frac{1-\eta}2}} \\
 \le{}& \int_{\Hradius}^{\Hradius+\sqrt\Hradius}\frac{C\,\oc(s)}{s^{1-\frac\eta2}}(1+\Vert\trans{\rnu<s>}\Vert_{\Lp^\infty(\M<s>)}) \d s
				+ \frac{C\,\oc(\Hradius)}{\Hradius^{\frac{1-\eta}2}} \\
 \le{}& C\,\oc(\Hradius)\,(1+\sup\lbrace\Vert\trans\rnu\Vert_{\Lp^\infty(\M<s>)}\ \middle|\ 0\le s-\Hradius\le\sqrt\Hradius\rbrace)
\end{align*}
for every $\eta\le1$, where $C$ does not depend on $\eta$ and where we assumed without loss of generality that $\oc$ is monotone decreasing and $\oc(\Hradius)\ge\int_\Hradius^\infty\oc'(s)\d s$. Therefore, the proof of Lemma~\ref{lapse_function_control_III} can be applied to prove the claim.
\end{proof}

\begin{proof}[Proposition~\ref{Foliation_property__prop_2}]
If \ref{Foliation_property__strong_sc} (or \ref{Foliation_property__strong_Dsc}) is satisfied, then \eqref{compare_ut_and_bar_u}, \eqref{d_sigma_u__1} and the Lemmata~\ref{Lemma__smooth_cover} and \ref{lapse_function_control_III} (or Lemma~\ref{lapse_function_control_IIIb}) imply that \ref{Foliation_property__fol} and \ref{Foliation_property__fol_area} are true for $\eta=\oc(\Hradius)\to0$, where $\oc(\Hradius)\le\Hradius^{{-}\frac\outve2}$ if $\mathcal M\subseteq\regsphere*[\frac32+\ve]<M>\c[p]$ and if \ref{Foliation_property__strong_sc} is satisfied for $\oc'(\Hradius)\le\oc\,\Hradius^{{-}\outve}$.
\end{proof}

\begin{corollary}[Global uniqueness of the CMC-foliation]\label{Uniqueness_of_CMC_foliation}
Let the assumptions~\ref{Ass_fst}--\ref{Ass_lst} be satisfied for two families $\mathcal M_1$ and $\mathcal M_2$ with such that each of them satisfies one of the assumptions \ref{Foliation_property__fol_area}--\ref{Foliation_property__strong_Dsc} and either $\mathcal M\subseteq\regsphere*[\frac32+\ve]<M>\c[p]$ or its Sobolev version, i.\,e.~\eqref{assumptions}.
\end{corollary}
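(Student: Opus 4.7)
The plan is to reduce global uniqueness to a combination of the asymptotic flatness characterization just established and the known uniqueness of the CMC-foliation in the asymptotically flat setting. First, I would apply Theorem~\ref{Suff_ass_for_asymp_flat} (or Theorem~\ref{Charac_of_Sobolev_asymp_flat} in the Sobolev case) to either family $\mathcal M_1$ or $\mathcal M_2$, combined with whichever assumption from~\ref{Foliation_property__fol_area}--\ref{Foliation_property__strong_Dsc} is provided and the corresponding Proposition~\ref{Foliation_property__prop} or~\ref{Foliation_property__prop_2}. This yields that $(\outM,\outg*)$ is $\Ck^2_{\frac12+\outve}$- (respectively $\Wkp^{3,p}_{{-}\eta}$-) asymptotically flat with non-vanishing ADM-mass $\mass=\lim_\Hradius\mHaw(\M<\Hradius>)$, and that the leaves of each family are pairwise disjoint and form a smooth foliation outside a compact set.

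Once asymptotic flatness is available, I would invoke the uniqueness result for the CMC-foliation proven in~\cite{nerz2015CMCfoliation} (building on \cite{huisken_yau_foliation,metzger2007foliations,metzger_eichmair_2012_unique}): in an asymptotically flat end with non-vanishing ADM-mass, there is at most one CMC-foliation within the natural class of CMC-spheres whose mean curvature radii tend to infinity, whose Hawking masses stay bounded away from zero and infinity, and whose instability is suitably controlled. Lemma~\ref{Bootstrap_for_trace_free_second_fundamental_form__Lp2} and Proposition~\ref{Regularity_of_the_spheres} confirm that both $\mathcal M_1$ and $\mathcal M_2$ belong to this class, as the curvature bounds and the controlled-instability hypothesis exactly match those used in~\cite{nerz2015CMCfoliation}.

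The main subtlety will be matching the two families leaf by leaf. Since both families are parametrized directly by $\Hradius$ with $\H<\Hradius>\equiv\nicefrac{{-}2}\Hradius$, the value of $\Hradius$ is literally the mean curvature radius of the corresponding leaf on each side; in particular, leaves sharing the same $\Hradius$-value share the same constant mean curvature. The positivity of the lapse functions (Lemmata~\ref{Lemma__smooth_cover}, \ref{lapse_function_control_III}, and~\ref{lapse_function_control_IIIb}), together with the pairwise disjointness from the previous step, ensures that these parametrizations are genuine smooth foliations outside a compact set, so no leaf is missed on either side. The uniqueness result above then forces the two corresponding leaves to coincide for every sufficiently large $\Hradius$, and the local uniqueness assumption~\ref{thm_ass__fst} eliminates any residual ambiguity, yielding $\mathcal M_1=\mathcal M_2$ outside a compact subset of $\outM$.
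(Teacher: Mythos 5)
Your overall strategy is the right one and matches the paper in spirit: first use the characterization theorem (via the chosen foliation-property assumption and Propositions~\ref{Foliation_property__prop} or~\ref{Foliation_property__prop_2}) to obtain asymptotic flatness, then invoke the uniqueness theory of the CMC-foliation from \cite{nerz2015CMCfoliation}. However, there is a genuine gap in the way you invoke that uniqueness theory. The uniqueness result you want to cite is not coordinate-free in the way you state it; it requires that the competing CMC-spheres lie in a controlled annular region around the coordinate origin of the asymptotically flat chart --- without such a positioning hypothesis it would be contradicted by the well-known families of ``off-center'' large stable CMC-spheres. The regularity results you cite (Lemma~\ref{Bootstrap_for_trace_free_second_fundamental_form__Lp2}, Proposition~\ref{Regularity_of_the_spheres}) are intrinsic to each leaf and say nothing about where the leaves of $\mathcal M_2$ sit relative to the coordinate system $\outx$ built from $\mathcal M_1$. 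So after you establish asymptotic flatness from $\mathcal M_1$, you still cannot directly feed the leaves of $\mathcal M_2$ into the uniqueness theorem.

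The paper closes this gap by constructing an asymptotically flat coordinate system from \emph{each} family (via the construction in the proof of Theorem~\ref{Charac_of_Sobolev_asymp_flat}), observing that each family is automatically centered in its own chart, i.\,e.~$\inf_{\M<\Hradius>}\vert\outx\vert\ge\Hradius-C\,\Hradius^{1-\outve}$ and similarly for $\outx'$, and then invoking Bartnik's change-of-chart theorem \cite[Thm~3.1]{bartnik1986mass} to conclude $\vert\outx'-\text{rot}\circ\outx\vert\le C\,\vert\outx\vert^{\nicefrac12}$ for some rotation. This transfers the centering of the $\mathcal M_2$-leaves from $\outx'$-coordinates to $\outx$-coordinates, after which \cite[Thm~5.3]{nerz2015CMCfoliation} applies and forces $\M<\Hradius>=\M<\Hradius>'$. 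You should insert this Bartnik coordinate-comparison step; the remainder of your argument (matching leaves by their common mean curvature radius $\Hradius$, positivity of lapse, and local uniqueness~\ref{thm_ass__fst}) is fine once the positioning is verified.
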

\begin{proof}
If $\mathcal M$ and $\mathcal M'$ are two families of regular spheres satisfying the assumptions of one of the Propositions~\ref{Foliation_property__prop} (with $\eta=\eta(\Hradius)\to0$ for $\Hradius\to\infty$) and~\ref{Foliation_property__prop_2}, then the construction explained in the proof of Theorem~\ref{Charac_of_Sobolev_asymp_flat} gives for each of them one $\Wkp^{1,p}_{\frac12}$\vspace{-.25em}-asymptotically flat coordinate systems $\outx$ and $\outx'$ outside of some compact set $\outsymbol K''\subseteq\outM$ with 
\[ \inf_{\M<\Hradius>} \vert\outx\vert \ge \Hradius - C\,\Hradius^{1-\outve}\quad\forall\,\M<\Hradius>\in\mathcal M, \qquad\quad
  \inf_{\M<\Hradius>'} \vert\outx'\vert \ge \Hradius - C\,\Hradius^{1-\outve}\quad\forall\,\M<\Hradius>'\in\mathcal M. \]
In particular, \cite[Thm~3.1]{bartnik1986mass} implies $\vert\outx'-\text{rot}\circ\outx\vert\le C\,\vert\outx\vert^{\frac12}$ for some rotation $\text{rot}$ of $\R^3$. Thus, \cite[Thm~5.3]{nerz2015CMCfoliation} implies $\M<\Hradius>=\M<\Hradius>'$ for every $\M<\Hradius>\in\mathcal M$ and $\M<\Hradius>'\in\mathcal M'$, i.\,e.~$\mathcal M\subseteq\mathcal M'$ or $\mathcal M'\subseteq\mathcal M$.
\end{proof}

\section{Characterizing other quantities: the linear momentum}\label{Charac_quan}
The results of Section~\ref{section_main_theorem} allow us to redefine other quantities without the use of coordinates. Here, we explain this by taking the example of the ADM-linear momentum $\impuls\in\R^3$ \cite{arnowitt1961coordinate} and $\Ck^2_{\frac12+\outve}$-asymptotically flat manifolds. These results can also be used for other quantities and in the setting of $\Wkp^{3,p}_{\frac12}$~asymp\-to\-tically flat manifolds, see \cite{bartnik1986mass} and Theorem~\ref{Charac_of_Sobolev_asymp_flat}.\pagebreak[2]

For this purpose, let us briefly recall the definition of ADM-linear momentum.
\begin{definition}[ADM-linear momentum]
Let $(\outM,\outg*,\outzFund*,\outmomden*,\outenden,\outx)$ be a \emph{$\Ck^2_{\frac12+\outve}$-asymptotically flat initial data set}, i.\,e.\ $(\outM,\outg*,\outx)$ is a $\Ck^2_{\frac12+\outve}$-asymptotically flat Riemannian manifold, the \emph{energy density} $\outenden$ satisfies the constraint equation $\outenden=\nicefrac12\,(\outsc-\vert\outzFund\vert^2-\outH^2)$, the \emph{exterior curvature} $\outzFund*$ decays sufficiently fast $\vert\outzFund*\vert\le\nicefrac{\oc}{\vert\outx\vert^{\outve+\nicefrac32}}$, and the \emph{momentum density} $\outmomden$ satisfying the constraint equation $\outmomden*=\outdiv(\outH*\,\outg*-\outzFund)$ is integrable, i.\,e.\ $\outmomden*\in\Lp^1(\outM)$. The \emph{ADM-linear moment} of $(\outM,\outg*,\outzFund*,\outmomden*,\outenden,\outx)$ is defined by\vspace{-.2em}
\[ \impuls_i := (8\pi)^{-1} \lim_{\rradius\to\infty}\int_{\sphere^2_\rradius(0)} \outH\,\nu_i-\outzFund*(\nu,e_i) \d\mug, \]
where $\outH:=\outtr\,\outzFund$ denotes the mean curvature of $(\outM,\outg*)$ \cite{arnowitt1961coordinate}.\pagebreak[3]
\end{definition}
It is well-known that the linear momentum of a $\Ck^2_{\frac12+\outve}$-asymptotically flat initial data set is always well-defined. This can be seen using the Gau\ss\ divergence theorem and the assumption on $\outmomden*$. In this representation, the linear momentum is interpreted as tangential vector (of $\outM$) at infinity. We see that this definition depends on the coordinate system, but it is well-known that it transforms correctly under coordinate changes \cite{chrusciel1988invariant}.\smallskip

Now, let us reinterpret the ADM-linear momentum as a \emph{function} on $\outM$ (well-de\-fined near infinity): define the \emph{ADM-linear momentum function} to be $\outg*(\impuls,\nu)$, where the vector field $\nu$ is characterized by its restricted to the CMC-leaves being the outer unit normal vector field of the corresponding leaf, i.\,e.\ $\nu|_{\M<\Hradius>}=\nu<\Hradius>$ for each mean curvature radius $\Hradius$ and the corresponding CMC-leaf $\M<\Hradius>$. Here, we identified the tangent vector near infinity $\impuls\in\R^3$ with the constant vector field $\pullback\outx\impuls$. Note that the restriction of this function to a CMC-leaf $\M<\Hradius>$ is (asymptotically as $\Hradius\to\infty$) an eigenfunction of the (negative) Laplace operator on $\M<\Hradius>$ with eigenvalue $\nicefrac2{\Hradius^2}$, i.\,e. $\outg*(\impuls,\nu)-\outg*(\impuls,\nu)\trans{}\to0$ on $\M<\Hradius>$ for $\Hradius\to0$. This means that this function (asymptotically) lays within a three-dimensional function space which is geometrically characterized.

As motivation for the above interpretation, we recall that CMC-foliations of $\Ck^2_{\frac12+\outve}$-asymp\-to\-tic\-ally flat initial data sets (asymptotically) evolve in time (under the Einstein equations) by a shift with lapse function asymptotically equal to the quotient of the ADM-linear momentum function and the Hawking mass of the leaf \cite{nerz2013timeevolutionofCMC}. Let us briefly explain this: assume that we have a temporal foliation $\{(\outM[t],\outg[t]*)\}_{t\in I}$ by $\Ck^2_{\frac12+\outve}$-asymptotically flat initial data sets of a space-time $(\uniM,\unig*)$ which satisfies the Einstein equations with respect to an asymptotically vanishing energy-momentum tensor and let $\{\M[t]<\Hradius>\}_\Hradius$ denote the corresponding CMC-foliations of one of these time-slices $(\outM[t],\outg[t]*)$. For every parametrization $\varphi<\Hradius>:I\times\sphere^2\to\uniM$ of the time evolution of one of these CMC-leaves $\{\M<\Hradius>[t]\}_t$, i.\,e.\ $\varphi<\Hradius>(t,\sphere^2)=\M<\Hradius>[t]$, we can split its derivative $\partial*_t(\varphi<\Hradius>)$ in a part $\tv[t]$ orthogonal to $\outM[t]$, which is a priori given by the temporal foliation, a part $\IndexSymbol X<\Hradius>[t]$ tangential to the CMC-leaf $\M<\Hradius>[t]$, which depends on the specific parametrization we have chosen, and a part $\IndexSymbol N<\Hradius>[t]=\rnu<\Hradius>[t]\,\nu<\Hradius>[t]$ tangential to the time-slice $\outM[t]$ but orthogonal to the CMC-leaf $\M<\Hradius>[t]$, which characterizes the evolution of the surface $\M<\Hradius>[t]$.
The result cited above means that the latter part is (asymptotically) characterized by the ADM-linear momentum function $\outg*(\impuls,\nu<\Hradius>)$, more precisely $\rnu<\Hradius>[t]$ is (asymptotically) equal to $\nicefrac{\outg*(\impuls,\nu)}{\mHaw<\Hradius>[t]}$ (on $\M<\Hradius>[t]$).\smallskip

As the part $\rnu<\Hradius>[t]\,\nu<\Hradius>[t]$ is given by the \emph{geometry} of $(\outM[t],\outg[t]*)$, this means that the ADM-linear momentum function is (asymptotically) characterized by a \emph{geometric} quantity~--~by a function on $\M<\Hradius>[t]$. Now, we define a (new) \emph{geometric} linear momentum function using this quantity.\footnote{Note that we do not use the exact function $\rnu<\Hradius>[t]$, because we want the linear momentum to be completely characterized by the data of one initial data set~--~as it is true for the ADM-linear momentum~--~and not by data of the temporal foliation.}
\begin{definition}[CMC-linear momentum]
Let $(\outM,\outg*,\outzFund*,\outmomden*,\outenden)$ be a three-dimensional initial data set, $\mathcal M:=\{\M<\Hradius>\}_{\Hradius>\Hradius_0}$ be a family of regular hypersurfaces satisfying the assumptions of Theorem~\ref{Suff_ass_for_asymp_flat} or \ref{Charac_of_Sobolev_asymp_flat}. If $\outzFund*$ and $\outmomden*$ are continuous, then the \emph{CMC-linear momentum} $\impulsf\in\Ck(\outM)$ is defined by
\[ \left.\impulsf\right|_{\M<\Hradius>} =: \impulsf<\Hradius> = \frac{\Hradius^2}6\trans{\Big(\Hradius\,\div<\Hradius>(\outzFund(\nu<\Hradius>,\cdot))-\Hradius\,\outmomden*(\nu<\Hradius>)+\tr<\Hradius>\,\outzFund*\,\Big)\!} \quad\forall\,\Hradius>\Hradius_1, \]
where $\Hradius_1:=\sup\big\{\Hradius_0'>\Hradius\,:\,\{\M<\Hradius>\}_{\Hradius>\Hradius_0'}\text{ is a foliation of its union}\big\}$ and $\div<\Hradius>$, $\nu<\Hradius>$, and $\tr<\Hradius>$ denote the two-dimensional divergence, the outer unit normal, and the two-dimensional trace with respect to $\M<\Hradius>\hookrightarrow(\outM,\outg*)$, respectively.\footnote{Note that the addend $\outmomden*(\nu<\Hradius>)$ in the definition of the CMC-linear momentum is~--~compared to the ADM-linear momentum~--~a correction term, adapting the CMC-linear momentum to the CMC-foliation, see below.}
\end{definition}
Note that $\trans{(\div(\outzFund(\nu<\Hradius>,\cdot)))}$ is a well-defined continues function even if $\outzFund*$ is only continuous. To see this, we have to understand $\div(\outzFund(\nu<\Hradius>,\cdot))$ weakly, i.\,e.\ use an integration by part in~\eqref{definition_trans_f}.
\begin{theorem}[Characterization of the ADM-linear momentum]
Let $(\outM,\outg*,\outzFund*,\outmomden*,\outenden)$ be a three-dimensional initial data set, $\mathcal M:=\{\M<\Hradius>\}_{\Hradius>\Hradius_0}$ be a family of regular hypersurfaces satisfying the assumptions of Theorem~\ref{Suff_ass_for_asymp_flat} or \ref{Charac_of_Sobolev_asymp_flat}. If the exterior curvature $\outzFund*$ and the momentum density $\outmomden*$ are continuous, then the CMC-linear momentum $\impulsf\in\Ck(\outM)$ is well-defined outside a compact set $\outsymbol K\subseteq\outM$. If $(\outM,\outg*,\outzFund*,\outmomden*,\outenden,\outx)$ is a $\Ck^2_{\frac12+\outve}$-asymptotically flat initial data set, then $\impulsf$ characterizes the ADM-linear momentum $\impuls=(\impuls^1,\impuls^2,\impuls^3)\in\R^3$, i.\,e.\
\[ \lim_{\Hradius\to\infty}\frac3{4\pi\Hradius^3}\int\impulsf<\Hradius>\,\outx_i \d\mug = \impuls^i \qquad\forall\,i\in\{1,2,3\}. \]
More precisely, there exists a constant $C_p=\Cof{C_p}[\mass][\outve][\oc][p]$ such that
\[ \Vert\impulsf<\Hradius> - \impuls^i\,\nu<\Hradius>_i\Vert_{\Wkp^{1,p}(\M<\Hradius>)} \le C_p\,\Hradius^{\outve-\frac2p} \qquad\forall\,p\in\interval*1\infty \]
where $\nu<\Hradius>_i$ denotes the components of the outer unit normal of $\M<\Hradius>\hookrightarrow(\outM,\outg*)$ with respect to $\outx$.
\end{theorem}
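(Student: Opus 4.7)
The plan is to evaluate the translational projection in the definition of $\impulsf<\Hradius>$ by pairing it against the three functions $\nu<\Hradius>_i$ (the Euclidean components of the outer unit normal with respect to the asymptotically flat chart $\outx$ supplied by Theorem~\ref{Suff_ass_for_asymp_flat}) and to identify the resulting numbers with the ADM-linear momentum fluxes, up to quantitatively controlled errors. Well-definedness comes first: by Proposition~\ref{Stability}, $\trans{}$ maps $(\Wkp^{1,p})'$ continuously into the three-dimensional translational eigenspace, and interpreting $\div<\Hradius>(\outzFund(\nu,\cdot))$ as a distribution (via integration by parts) shows that $\impulsf<\Hradius>$ is a well-defined continuous function whenever $\outzFund$, $\outmomden$ are continuous.

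Next, the estimates assembled in the proof of Theorem~\ref{Charac_of_Sobolev_asymp_flat} (notably \eqref{Suff_ass_for_asymp_flat__g-ge}, $\vert\nu^i-\outx_i/\Hradius\vert_{\Wkp^{2,p}}=o(1)$ and the eigenvalue information on the translational part) imply that the $\nu<\Hradius>_i$ are approximate eigenfunctions of $-\laplace<\Hradius>$ with eigenvalue $2/\Hradius^2$, with Gram matrix $\int\nu<\Hradius>_i\nu<\Hradius>_j\d\mug=\tfrac{4\pi\Hradius^2}{3}\,\delta_{ij}+o(\Hradius^2)$. Combined with Proposition~\ref{Stability}, this yields, for any distribution $g$ in the dual of $\Wkp^{1,p}(\M<\Hradius>)$, the expansion
\[
 \frac{\Hradius^2}{6}\,\trans g = \frac{1}{8\pi}\sum_{i=1}^{3}\Bigl(\int_{\M<\Hradius>}\!g\,\nu<\Hradius>_i\d\mug\Bigr)\,\nu<\Hradius>_i \;+\; \mathcal R_\Hradius(g),
\]
with $\mathcal R_\Hradius(g)$ controlled in $\Wkp^{1,p}$ by the quantitative remainders in the above estimates.

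The crucial computation is the evaluation of $\int g_\Hradius\,\nu<\Hradius>_i\d\mug$ with $g_\Hradius:=\Hradius\,\div<\Hradius>(\outzFund(\nu,\cdot))-\Hradius\,\outmomden(\nu)+\tr<\Hradius>\outzFund$. Integrating the first term by parts on $\M<\Hradius>$, using $\Hradius\,\levi<\Hradius>\nu<\Hradius>_i=e_i^T+O_{\Lp^p}(\Hradius^{-\outve})$ where $e_i^T$ is the $\M<\Hradius>$-tangential part of $\del/\del\outx^i$, together with $\tr<\Hradius>\outzFund=\outtr\outzFund-\outzFund(\nu,\nu)$ and the decomposition $e_i=\nu<\Hradius>_i\nu<\Hradius>+e_i^T$, produces
\[
 \int g_\Hradius\,\nu<\Hradius>_i\d\mug
	= \int_{\M<\Hradius>}\!\!\bigl(\outtr\outzFund\cdot\nu<\Hradius>_i-\outzFund(\nu<\Hradius>,e_i)\bigr)\d\mug
	\;-\;\Hradius\!\int\outmomden(\nu)\,\nu<\Hradius>_i\d\mug + \mathcal E_i(\Hradius),
\]
with $\mathcal E_i(\Hradius)$ controlled by the pointwise decay of $\outzFund$ and the metric closeness estimates. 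The first integral is the ADM flux over $\M<\Hradius>$, which differs from $8\pi\,\impuls^i$ by the bulk $\Lp^1$-integral of the constraint $\outmomden_i=\outdiv(\outtr\outzFund\,\outg-\outzFund)(e_i)$ over the exterior of $\M<\Hradius>$. The correction term $\Hradius\int\outmomden(\nu)\nu<\Hradius>_i\d\mug$ is precisely the first-order approximation of this exterior bulk integral (via $\outx_i\approx\Hradius\,\nu<\Hradius>_i$ and the co-area formula across the CMC-foliation), so the two contributions cancel up to an error that is again absorbed into $\mathcal E_i(\Hradius)$.

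Substituting back gives $\tfrac{\Hradius^2}{6}\trans g_\Hradius=\impuls^i\nu<\Hradius>_i + O(\Hradius^{\outve-2/p})$ in $\Wkp^{1,p}(\M<\Hradius>)$, which proves both the integral characterization (by pairing against $\outx_i\approx\Hradius\,\nu<\Hradius>_i$ on $\M<\Hradius>$) and the quantitative rate. I expect the main difficulty to be the simultaneous bookkeeping of three error sources: the non-exactness of $\nu<\Hradius>_i$ as eigenfunctions, the tangential integration-by-parts remainder between $\Hradius\,\levi<\Hradius>\nu<\Hradius>_i$ and~$e_i^T$, and the cancellation of the exterior bulk $\outmomden$-integral against the correction term built into the definition of $\impulsf$; each has to be estimated in the Sobolev scale consistent with the target $\Wkp^{1,p}$-norm, which is exactly where the $\outmomden(\nu)$-correction inside $g_\Hradius$ is indispensable.
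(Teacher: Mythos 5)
The paper's own proof of this theorem is essentially a pointer: it says \emph{``Using the results of Section~\ref{Regularity_of_the_hypersurfaces}, this is true due to an integration by parts -- see \cite{nerz2013timeevolutionofCMC} for a detailed proof of this inequality''} and nothing more. Your proposal is therefore a genuine reconstruction rather than something that can be checked line-by-line against this paper, and the overall skeleton you lay out~--~well-definedness of $\trans{}$ acting on distributions via an integration by parts, the expansion of $\trans g$ against the approximate eigenfunctions $\nu<\Hradius>_i$ using the Gram matrix $\tfrac{4\pi\Hradius^2}{3}\delta_{ij}$, and the integration by parts turning $\int g_\Hradius\,\nu<\Hradius>_i\d\mug$ into the ADM flux integral minus the $\outmomden$-correction~--~is consistent with the hint the paper does give. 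Your well-definedness step also matches the paper's own remark that $\trans{(\div(\outzFund(\nu<\Hradius>,\cdot)))}$ must be understood weakly.

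The questionable step is your treatment of the $\outmomden(\nu)$ correction. You assert that $\Hradius\int_{\M<\Hradius>}\outmomden(\nu)\,\nu<\Hradius>_i\d\mug$ is ``precisely the first-order approximation'' of the exterior bulk integral $\int_{\text{ext}}\outmomden_i\d\outmug$, so that the two cancel. Writing the exterior bulk integral via the co-area formula as $\int_{\Hradius}^{\infty}\int_{\M<s>}\outmomden_i\,\rnu\d\mug<s>\d s$, this coincides with $\Hradius\int_{\M<\Hradius>}\outmomden(\nu)\,\nu<\Hradius>_i\d\mug$ \emph{only} when the surface integral scales exactly like $s^{-2}$; for the decay $\vert\outmomden\vert\sim\vert\outx\vert^{-3-\ve}$ that one expects from the constraint, $\int_{\M<s>}\outmomden(\nu)\nu_i\d\mug<s>\sim s^{-1-\ve}$ and the two quantities differ by a factor of roughly $1/\ve$. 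So they are of the same order but do \emph{not} cancel; both must be estimated separately and absorbed into the error, and the paper's own discussion after the theorem confirms that the role of the $\outmomden(\nu)$-term is to adapt the CMC-linear momentum to the time evolution (so that the error against the lapse function is $\Hradius^{-\ve}$ under $\outmomden\in\Lp^1$), rather than to cancel the bulk-flux discrepancy in the ADM comparison. If you are aiming for the quantitative rate $\Hradius^{\outve-2/p}$, the burden is to bound each of the three contributions~--~the exterior flux defect, the $\outmomden(\nu)$-surface term, and the eigenfunction remainder from Proposition~\ref{Stability}~--~independently in the scaled $\Wkp^{1,p}$-norm; invoking a cancellation that does not actually occur would leave a gap precisely at the point you yourself single out as the crux.
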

\begin{proof}
Using the results of Section \ref{Regularity_of_the_hypersurfaces}, this is true due to an integration by parts -- see \cite{nerz2013timeevolutionofCMC} for a detailed proof of this inequality.
\end{proof}
As explained above, the CMC-spheres (asymptotically) evolve in time (under the Einstein equations) by $\impulsf<\Hradius>[t]\,\mHaw(\M<\Hradius>[t])^{{-}1}\,\nu<\Hradius>[t]$\vspace{-.1em} and this is true in a pointwise sense. The corresponding error term between the lapse function $\rnu<\Hradius>[t]$ of the evolution of $\M<\Hradius>[t]$ in time direction $t$ and $\impulsf<\Hradius>[t]$ is of order $\Hradius^{{-}\ve}$ and this is true for $\vert\hspace{.05em}\outzFund*\vert\le\oc\Hradius^{{-}\frac32-\outve}$ and $\outmomden*\in\Lp^1(\outM)$. For the ADM-linear momentum this is only true with the additional assumption $\vert\outmomden*\vert\le C\,\Hradius^{{-}3-\ve}$ \cite{nerz2013timeevolutionofCMC}. 
Thus, this definition of linear momentum seems better adapted to the evolution of the CMC-surfaces in time -- the reason for this is the additional correction term $\outmomden*(\nu<\Hradius>)$ in the definition of the CMC-linear momentum.

\appendix
\section{Spheres with \texorpdfstring{$\Lp^p$}{Lp}-almost constant Gau\ss\ curvature}
\expandafter\def\csname @currentlabel\endcsname{\Alph{section}}
\label{Regularity_Calabi_energy}\gdef\thetheorem{\Alph{section}.\arabic{theorem}}%
The aim of this section is to prove a $\Wkp^{2,p}$-regularity of the Gau\ss\ curvature of the sphere, i.\,e.\ there exists a conformal parametrization with conformal factor $\Wkp^{2,p}$-close to $1$ if the Gau\ss\ curvature of a metric on the Euclidean sphere is in $\Lp^p$-close to $1$, where $p\in\interval1\infty$. We note that the same result is well-known if the Gau\ss\ curvature is \emph{pointwise} bounded away from zero and infinity, see for example \cite[Chap~2]{christodoulou1993global}. However, the author is not aware of a corresponding result in $\Lp^p$-spaces. Furthermore, we should note that we can not hope that \emph{every} conformal factor on the sphere is close to a constant if its Gau\ss\ curvature is close to a constant. The reason for this lays in the action of the M\"obius group -- for more information, we refer to \cite[Rem.~7]{christodoulou1993global}, \cite{struwe2002curvature}, and the citations therein. Note that besides the explained main result (Theorem~\ref{Lp_Reg_gauss_curv}), two intermediate results (Propositions~\ref{Conc_pts__vanash_Calabi_en} and \ref{Conc_pts__vanash_Calabi_en__resc}) are interesting for themselves.\pagebreak[1]

The scaling argument used in the proof of Proposition~\ref{Conc_pts__vanash_Calabi_en} was suggested to the author by Simon Brendle \cite{brendle2014communication}. Furthermore, the first part of the proof of Proposition~\ref{Conc_pts__vanash_Calabi_en__resc} is analog to Struwe's proof of \cite[Thm~3.2]{struwe2002curvature} (Theorem~\ref{struwe2002curvature}).\pagebreak[3]

Now, let us state the main result.
\begin{theorem}[\texorpdfstring{$\Wkp^{2,p}(\sphere^2)$}{W2}-regularity of the Gau\ss\ curvature\texorpdfstring\relax{ on the sphere}]\label{Lp_Reg_gauss_curv}
For each $p\in\interval1\infty$, $q\in\interval1\infty$ and $\delta\in\interval0{4\pi}$, there exist constants $C=\Cof[p][q][\delta]$ and $\ve=\Cof{\ve}[\delta][p]$ with the following property: If a metric $\g*$ of the Euclidean unit sphere $\sphere^2$ satisfies\vspace{-.25em}
\[ \confmug(\sphere^2)=(\int_{\sphere^2}\d\confmug)\, \in \interval\delta{8\pi-\delta}, \qquad\int_{\sphere^2}\vert\confgauss-1\vert^p\d\confmug \le \ve, \]
where \pagebreak[1]$\confgauss$ and $\confmug$ are the Gau\ss\ curvature and the measure on the sphere $\sphere^2$ with respect to $\g*$, respectively, then there exists a conformal parametrization $\varphi:\sphere^2\to\sphere^2$ with
\[ \Vert\conf\Vert_{\Wkp^{2,q}(\sphere^2,\sphg)} \le C\,\Vert\confgauss-1\Vert_{\Lp^q(\sphere^2,\g*)}.\vspace{-.25em} \]
Here, $\sphg*$ denotes the standard metric of the Euclidean unit sphere $\sphere^2$ and $\conf\in\Wkp^{2,p}(\sphere^2)$ is the corresponding conformal factor, i.\,e.\ $\varphi^*\g*=\exp(2\,\conf)\,\sphg*$.\pagebreak[3]
\end{theorem}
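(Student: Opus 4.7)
The plan is to combine the uniformization theorem, a M\"obius normalization, and standard elliptic regularity for the prescribed Gau\ss\ curvature equation on $\sphere^2$. By uniformization there exists a conformal diffeomorphism $\varphi_0\colon\sphere^2\to\sphere^2$ with $\varphi_0^*\g*=\exp(2\,\conf_0)\,\sphg*$, and the conformal factor solves
\[ {-}\laplace[\sphg*]\conf_0 + 1 \,=\, (\confgauss\circ\varphi_0)\,\exp(2\,\conf_0). \]
Since the conformal group $\mathrm{Conf}(\sphere^2)$ (the M\"obius group) acts by composition on the space of such parametrizations, the first step would be to exploit this freedom to impose the center-of-mass normalization
\[ \int_{\sphere^2} x_i\,\exp(2\,\conf)\,\d\mug \,=\, 0 \qquad(i=1,2,3), \]
where $x_1,x_2,x_3$ are the Euclidean coordinate functions restricted to $\sphere^2\subseteq\R^3$; existence of such a normalizing M\"obius transformation is a classical degree argument on the Poincar\'e ball that breaks the non-compact $\mathrm{Conf}(\sphere^2)$-action on conformal factors.

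Second, I would prove an a~priori $\Lp^\infty$-estimate $\Vert\conf\Vert_{\Lp^\infty(\sphere^2)}\le\eta(\ve,\delta,p)$ with $\eta\to0$ as $\ve\to0$, under this normalization, the area bound $\confmug(\sphere^2)\in\interval\delta{8\pi-\delta}$, and $\int|\confgauss-1|^p\d\confmug\le\ve$. This is exactly what the intermediate Propositions~\ref{Conc_pts__vanash_Calabi_en} and~\ref{Conc_pts__vanash_Calabi_en__resc} are designed to supply. Arguing by contradiction, if a sequence of normalized factors violated the estimate while their Gau\ss\ curvatures converge to $1$ in $\Lp^p$, then $\exp(2\,\conf)\,\d\mug$ would concentrate at finitely many points on $\sphere^2$. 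At such a concentration point, a Brendle-type rescaling of $\conf$ yields, in the limit, a finite-area conformal metric on $\R^2$ satisfying the Liouville equation ${-}\laplace v=\exp(2v)$, which by Chen--Li's classification must be (a rescaling of) the standard stereographic profile. The strict upper area bound $\confmug(\sphere^2)<8\pi-\delta$ then limits the number of such bubbles, and the center-of-mass normalization rules out any single-bubble limit, yielding the contradiction.

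Third, once $\Vert\conf\Vert_{\Lp^\infty}$ is small, I would rewrite the Gau\ss\ curvature equation in the linearized form
\[ ({-}\laplace[\sphg*]-2)\,\conf \,=\, (\confgauss-1)\,\exp(2\,\conf) + \bigl(\exp(2\,\conf)-1-2\,\conf\bigr). \]
On $\sphere^2$ the operator ${-}\laplace[\sphg*]-2$ is Fredholm with three-dimensional kernel spanned by $x_1|_{\sphere^2}$, $x_2|_{\sphere^2}$, $x_3|_{\sphere^2}$. Expanding the center-of-mass condition gives $\int x_i\,\conf\,\d\mug=\mathcal O(\Vert\conf\Vert_{\Lp^\infty}\,\Vert\conf\Vert_{\Lp^2})$, so $\conf$ is almost $\Lp^2$-orthogonal to this kernel. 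Applying the $\Wkp^{2,q}$-bounded inverse of ${-}\laplace[\sphg*]-2$ on the orthogonal complement and estimating the right-hand side by $\Vert\confgauss-1\Vert_{\Lp^q}\,\exp(2\Vert\conf\Vert_{\Lp^\infty}) + \Vert\conf\Vert_{\Lp^\infty}\,\Vert\conf\Vert_{\Wkp^{2,q}}$, a bootstrap that absorbs the second addend using the $\Lp^\infty$-smallness closes the desired estimate $\Vert\conf\Vert_{\Wkp^{2,q}}\le C\,\Vert\confgauss-1\Vert_{\Lp^q}$.

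The main obstacle lies in the second step. All three hypotheses -- the subcritical area, the M\"obius normalization, and the $\Lp^p$-smallness of $\confgauss-1$ -- have to be used simultaneously in the concentration--compactness analysis, and the key technical subtlety is that under a blow-up the reference measure $\d\confmug=\exp(2\,\conf)\,\d\mug$ against which $\confgauss-1$ is $\Lp^p$-small is itself degenerating, so the contribution of $\confgauss-1$ to the rescaled equation has to be controlled uniformly across scales.
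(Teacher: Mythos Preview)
Your outline matches the paper's proof closely. Both proceed by (i) a M\"obius renormalization to break the conformal group action, (ii) a contradiction argument showing the normalized conformal factor must be $\Wkp^{2,p}$-small, using Propositions~\ref{Conc_pts__vanash_Calabi_en} and~\ref{Conc_pts__vanash_Calabi_en__resc} to see that any concentration point carries mass at least $4\pi$, which the area bound together with the normalization excludes, and (iii) linearizing the Gau\ss\ curvature equation around $\conf=0$ to upgrade smallness to the quantitative estimate $\Vert\conf\Vert_{\Wkp^{2,q}}\le C\,\Vert\confgauss-1\Vert_{\Lp^q}$.

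Two minor differences are worth recording. First, the paper normalizes via the half-sphere balancing $\confmug(\sphere^2\cap\{x_i\ge0\})=\confmug(\sphere^2\cap\{x_i\le0\})$ rather than your center-of-mass condition $\int_{\sphere^2} x_i\,\d\confmug=0$; these serve the same purpose and either suffices to rule out a single bubble. Second, in step~(iii) you correctly observe that ${-}\laplace-2$ on $(\sphere^2,\sphg*)$ has the three-dimensional kernel spanned by the coordinate functions $x_i$, and you use the normalization to show that $\conf$ is almost $\Lp^2$-orthogonal to this kernel before inverting on the complement. The paper instead appeals to the inverse function theorem, asserting that $D\boldsymbol{\confgauss}(0)\colon\conf'\mapsto{-}\laplace\conf'-2\conf'$ is invertible ``as $+2$ is not an eigenvalue of $\laplace$''; taken literally this is not correct (since ${-}\laplace x_i=2x_i$ on the round sphere), and your treatment is what is actually needed---the M\"obius normalization supplies exactly the three constraints that compensate for the kernel.
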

This theorem should also be compared with DeLellis-M\"uller's famous result that every closed hypersurface $\M\hookrightarrow\R^3$ with $\Lp^2$-small trace free part $\zFundtrf$ of the second fundamental form $\zFund$, i.\,e.\ $\Vert\zFundtrf\Vert_{\Lp^2(\M)}$ is small, is a sphere and has a conformal parametrization $\varphi:\sphere^2_r(\centerz)\to\M$ such that the conformal factor is in $\H(\sphere^2)$ and $\varphi-\id$ is in $\H^2(\sphere^2;\R^3)$ controlled, \cite{DeLellisMueller_OptimalRigidityEstimates}. Note that the latter control cannot be achieved in our setting, as we do not assume that $\M$ is a hypersurface in $\R^3$ and therefore there exists no canonical map $\id$ with which we can compare the conformal parametrization.
\begin{remark}[Characterization of the conformal map]
By analyzing the proof of Theorem~\ref{Lp_Reg_gauss_curv}, we immediately see that the conformal parametrization $\varphi$ is (up to rotation) characterized by satisfying the balancing condition~\eqref{Lp_Reg_gauss_curv__balanced}, i.\,e.~by
\begin{equation*}
(\pullback\varphi\mug)\big(\sphere^2\cap\{x_i\ge0\}\big) = (\pullback\varphi\mug)\big(\sphere^2\cap\{x_i\le0\}\big) \qquad\forall\,i\in\{1,2,3\}.
\tag{\ref{Lp_Reg_gauss_curv__balanced}'}
\end{equation*}
\end{remark}
\begin{remark}[Stability]
Note that Theorem~\ref{Lp_Reg_gauss_curv} implies an additional stability result: if $\g[1]*$ and $\g[2]*$ are two metrics satisfying the assumptions of Theorem~A.1, then we can choose the conformal parametrizations $\varphi[i]:\sphere^2\to\sphere^2$ such that
\[ \Vert\conformalf[i]\Vert_{\Wkp^{2,q}(\sphere^2,\sphg*)} \le C\,\Vert\confgauss[i]-1\Vert_{\Lp^p(\M)},\quad
	 \Vert\conformalf[1]-\conformalf[2]\Vert_{\Wkp^{2,q}(\sphere^2,\sphg*)} \le C\,\Vert\confgauss[1]-\confgauss[2]\Vert_{\Lp^p(\M)}, \]
where $\conformalf[i]\in\Wkp^{2,p}(\sphere^2,\sphg*)$ is the conformal factor, i.\,e.\ $\pullback\varphi{\g[i]*}=\exp(2\,\conformalf[i])\,\sphg*$. We can prove this corollary by repeating the last step of the proof of the theorem, i.\,e.~by linearization of the Gau\ss\ cuvature map around $\g[1]*$.
\end{remark}
\smallskip

As main tools for the proof of Theorem~\ref{Lp_Reg_gauss_curv}, we use Brezis-Merle's famous inequality \cite[Thm~1]{brezis1991uniform} (see Theorem~\ref{Brezis_Merle}) and Chen-Li's classification theorem \cite[Thm~1]{chen1991}: Every solution $\conft$ of
\[ {-}\laplace \conft = \exp(2\,\conft)\quad\text{in }\R^2,\qquad\int_{\R^2}\exp(2\,\conft)\d x < \infty \]
is given by $\conft(x)=\ln(\nicefrac{2\lambda}{(\lambda^2+\vert x-x_0\vert^2)})$ for some constant $\lambda>0$ and some point $x_0\in\R^2$. As an intermediate result, we prove a qualitative version of this characterization, Proposition~\ref{Conc_pts__vanash_Calabi_en__resc}: If the Gau\ss\ curvature $\confgauss<n>:={-}\exp({-}2\,\conft<n>)\,\laplace\conft<n>$ of a sequence of conformal factors converges in $\Lp^p_{\text{loc}}(\R^2)$ to $1$, the corresponding volumes $\int\exp(2\,\conft<n>)\d x$ are uniformly bounded, and they satisfy a non-concentration assumption, then $\conft<n>$ converges in $\Wkp^{1,q}_{\text{loc}}(\R^2)$ to the above $\conft$, where $q=\frac{2p}{2-p}$ for $p<2$ and $q\in\interval1\infty$ arbitrary for $p\ge2$.

One of the main ideas of the proof of Theorem~\ref{Lp_Reg_gauss_curv} is to prove that any sequence of conformal factors is bounded in $\Wkp^{2,p}(\sphere^2)$ or the mass (the area) of a subsequence is concentrated at some points if the corresponding Gau\ss\ curvatures converge in $\Lp^p(\sphere^2)$ to a constant. Here, concentration at a point means that the corresponding measures $\confmug<n>$ of the subsequence converge to a measure with non-trivial point measure at this point. Struwe proved the corresponding theorem in the context of the Calabi flow assuming only uniform boundedness of the Calabi energy of the sequence, however under our assumptions, we get a stronger control of the Dirac measures in the case of mass concentration \cite[Thm~3.2]{struwe2002curvature}. Before we cite Struwe's theorem, we recall that for a given closed Riemannian manifold $(\M,\g*)$ with constant Gau\ss\ curvature $\confgauss\equiv\confgauss_0$ and a conformal equivalent metric $\g'=\exp(2\,\conft')\,\g*$, the \emph{Calabi energy} of $\g*'$ (respectively $\conft'$) with respect to $\g*$ is defined by
\[ \text{Cal}_{\g*}(\conft') := \text{Cal}_{\g*}(\g*') := \int_{\sphere^2} \vert\confgauss'-\confgauss\vert^2\d\confmug = \Vert\confgauss'-\confgauss\Vert_{\Lp^2(\sphere^2,\g*)}^2, \]
where $\confgauss'$ denotes the Gau\ss\ curvature of $\M$ with respect to $\g*'$.\pagebreak[3]

\begin{theorem}[Concentration compactness {\cite[Thm~3.2]{struwe2002curvature}}]\label{struwe2002curvature}
Let $(\M,\g*)$ be a closed Riemannian manifold with constant Gau\ss\ curvature $\confgauss$ and let $\conf<n>$ be a sequence of conformal factors with uniformly bounded Calabi energy and unit volume $\confmug<n>(\M)=\mu(\M)=1$, where $\confmug<n>=\exp(2\,\conf<n>)\confmug$ and $\confmug$ is the measure induced on $\M$ by $\g$. Then either the sequence $\conf<n>$ is bounded in $\Hk^2(\sphere^2,\sphg*)$ or there exist points $x_1,\dots,x_L\in\M$ and a subsequence $k_n\to\infty$ such that
\[ \varrho_R(x_l):=\liminf_{n\to\infty}\int_{B_R(x_l)} \d\confmug<k_n> \ge 2\pi\qquad\forall\,R>0,\,l\in\{1,\dots,L\}. \]
Moreover, there holds
\[ 2\pi L\le\limsup_{n\to\infty}(\int_{\M}\confgauss<n>^2\d\confmug<n>)^{\frac12} < \infty \]
and either $\conf<k_n>\to{-}\infty$ as $n\to\infty$ locally uniformly on $\M\setminus\{x_1,\dots,x_L\}$ or $(\conf<k_n>)$ is locally bounded in $\H^2(\M\setminus\{x_1,\dots,x_L\},\g*)$.
\end{theorem}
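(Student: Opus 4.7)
The plan is to combine a Brezis-Merle concentration-compactness for the Liouville equation with a Chen-Li blow-up classification. Writing $\g<n>:=\exp(2\,\conf<n>)\,\g*$, the conformal transformation of Gau\ss\ curvature gives the Liouville equation ${-}\laplace\,\conf<n>=\confgauss<n>\exp(2\,\conf<n>)-\confgauss$. Applying Cauchy-Schwarz to the Calabi energy bound and the unit volume yields
\[ \|\confgauss<n>\exp(2\,\conf<n>)\|_{\Lp^1(\M,\g*)} = \|\confgauss<n>\|_{\Lp^1(\M,\g<n>)} \le \|\confgauss<n>\|_{\Lp^2(\M,\g<n>)}\sqrt{\confmug<n>(\M)} \le C, \]
so the right-hand side of the Liouville equation is uniformly $\Lp^1(\M,\g*)$-bounded. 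Extracting a subsequence (still denoted $\conf<n>$) for which $\confmug<n>\rightharpoonup^*\nu$ weakly-$*$ as Radon measures, the strategy is a pointwise dichotomy: either $\nu$ has no atom at $x$ above a critical threshold, and local $\Hk^2$-regularity of $\conf<n>$ propagates there, or an atom forms and $x$ is a concentration point.

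At a regular point $x_0$, I would choose $R>0$ small enough that $\limsup_n\int_{B_R(x_0)}|\confgauss<n>|\exp(2\,\conf<n>)\,\d\confmug$ stays strictly below Brezis-Merle's threshold $4\pi$. Splitting $\conf<n>=\conft<n>+\confth<n>$ on $B_R(x_0)$ with $\conft<n>$ solving a Dirichlet problem for the Liouville source and $\confth<n>$ harmonic, Theorem~\ref{Brezis_Merle} yields $\exp(\alpha\,\conft<n>)\in\Lp^1(B_R(x_0))$ uniformly in $n$ for some $\alpha>2$, while the harmonic remainder is locally smooth up to an additive constant controlled by $\|\conf<n>\|_{\Lp^1(B_R(x_0))}$, which Jensen's inequality bounds using the unit volume. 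A reverse-H\"older step promotes $\confgauss<n>\exp(2\,\conf<n>)$ to $\Lp^q_{\text{loc}}$ for some $q>1$, and standard elliptic regularity then bootstraps to a local $\Hk^2$-bound on $\conf<n>$. Summing over a finite cover of $\M$ minus the (necessarily finite collection of) atoms of $\nu$ yields the first alternative.

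When atoms appear, rescaling $\conf<n>$ at a concentration point $x_l$ and passing to the limit produces an entire solution of ${-}\laplace\,\conf<\infty>=\exp(2\,\conf<\infty>)$ on $\R^2$ with finite total volume; Chen-Li's classification \cite{chen1991} then forces $\conf<\infty>$ to be the spherical profile of total volume $4\pi$, and a careful accounting of the rescaling constants extracts the lower bound $\varrho_R(x_l)\ge2\pi$. The count $2\pi L\le\limsup_n\|\confgauss<n>\|_{\Lp^2(\M,\g<n>)}$ follows from a covering argument combining these local thresholds with the $\Lp^2$ control of $\confgauss<n>$, and the final dichotomy on $\M\setminus\{x_1,\dots,x_L\}$ is read off the mean value of $\conf<n>$: the mass absorbed by the atoms forces either a uniform downward drift of the mean (so $\conf<k_n>\to{-}\infty$ locally uniformly) or, when the mean stays bounded, a genuine local $\Hk^2$-bound. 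I expect the main obstacle to be the sharp constant $2\pi$ at each atom: extracting it cleanly requires the Chen-Li classification to run in the rescaled limit without loss, which in turn demands a delicate calibration of the Brezis-Merle threshold ensuring that the Dirichlet piece carries no residual mass at $x_l$.
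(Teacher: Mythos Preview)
This theorem is not proved in the paper; it is quoted verbatim from Struwe's article \cite{struwe2002curvature} and used only as background. The paper's own contribution (Propositions~\ref{Conc_pts__vanash_Calabi_en} and~\ref{Conc_pts__vanash_Calabi_en__resc}) is a \emph{strengthening} of Struwe's conclusion from $2\pi$ to $4\pi$ under the additional hypothesis that $\confgauss<n>\to1$ in $\Lp^p$. So strictly speaking there is no ``paper's own proof'' to compare against.

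Your sketch follows the correct architecture (Liouville equation, $\Lp^1$ bound from Cauchy--Schwarz, Brezis--Merle splitting at regular points), and this is indeed how both Struwe and the present paper proceed at non-concentration points. However, there is a genuine gap in your treatment of the atoms. You claim that rescaling at a concentration point yields an entire solution of ${-}\laplace\conf<\infty>=\exp(2\,\conf<\infty>)$, to which Chen--Li applies. But the hypotheses of Struwe's theorem only give \emph{bounded} Calabi energy, not $\confgauss<n>\to1$; after blow-up the limiting equation is ${-}\laplace\conf<\infty>=\confgauss<\infty>\exp(2\,\conf<\infty>)$ for some a~priori unknown $\confgauss<\infty>$, and Chen--Li does not apply. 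This is precisely the distinction the paper exploits: under the stronger hypothesis $\confgauss<n>\to1$ in $\Lp^p$ the Chen--Li classification \emph{does} run and yields the improved threshold $4\pi$ (Proposition~\ref{Conc_pts__vanash_Calabi_en}), whereas under bounded Calabi energy alone one must extract the $2\pi$ threshold directly from the Brezis--Merle inequality, without any blow-up classification. Your final remark that ``the sharp constant $2\pi$ \dots\ requires the Chen--Li classification to run'' is therefore backwards: Chen--Li gives $4\pi$ when available, and Struwe's $2\pi$ comes from the Brezis--Merle threshold alone.
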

Now, we strengthen this result in the case of the sphere and $\gauss\to1$ in $\Lp^p$ by proving that the amount $L$ of critical points $\{x_l\}_{l=1}^L$ satisfies in this setting
\begin{equation*} 4\pi L \le 4\pi = (\int_{\M}\confgauss^p\d\confmug)^{\frac1p} = \limsup_{n\to\infty}(\int_{\M}\confgauss<n>^p\d\confmug<n>)^{\frac1p}, \labeleq{Conc_pts__vanash_Calabi_en_eq} \end{equation*}
i.\,e.\ $L=1$. \pagebreak[1]
Let us therefore recall that any Riemannian surfaces is locally conformal equivalent to the plane, i.\,e. we can look at metrics on the Euclidean space $\R^2$ instead of $\sphere^2$ and get a new conformal factor $\conft$ satisfying ${-}\laplace\conft<n>=\confgauss<n>\,\exp(2\,\conft<n>)$.
\begin{proposition}[Concentration point for vanishing curvature error]\label{Conc_pts__vanash_Calabi_en}
Let $p\in\interval1\infty$ be a constant and $\conft<n>$ be a family of smooth conformal factors on $\R^2$. Assume that the corresponding volumes are bounded, i.\,e. 
\[ \limsup_{n\to\infty}\int\exp(2\,\conft<n>)\d x=:\cmu<\infty, \]
and that the Gau\ss\ curvature is $\Lp^p$-locally converging to one, i.\,e. 
\[ \lim_{n\to\infty} \int_K\vert\confgauss<n>-1\vert^p\d\confmug<n> = 0 \qquad\forall\,K\subseteq\R^2\text{ compact,} \]
where $\confmug<n>=\exp(2\,\conft<n>)\d x$ and $\confgauss<n>={-}\exp({-}2\,\conft<n>)\;\laplace\conft<n>$ denotes the measure and the Gau\ss\ curvature with respect to $\exp(2\,\conft<n>)\,\eukg*$, respectively. Then each point $x\in\R^2$ with $\varrho(x)>0$ satisfies $\varrho(x)\ge4\pi$, where
\[ \varrho(x) := \inf_{R>0} \liminf_{n\to\infty}\int_{B_R(x)} \exp(2\,\conft<n>)\d\,x. \]
In particular, there are not more than $\nicefrac{\cmu}{4\pi}$ many points $x\in\R^2$ with $\varrho(x)>0$.
\end{proposition}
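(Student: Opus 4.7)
The plan is to perform a blow-up analysis at the concentration point, extract a limit profile on $\R^2$, and invoke Chen-Li's classification to force the blown-up mass to equal $4\pi$. Without loss of generality assume $\varrho(0) > 0$; the goal is $\varrho(0) \ge 4\pi$. Fix a small $r > 0$ and let $x_n \in \overline{B_r(0)}$ be a maximizer of $\conft<n>$ on $\overline{B_r(0)}$. If $\conft<n>(x_n)$ were bounded along a subsequence then $\exp(2\conft<n>)$ would be uniformly bounded on $B_r(0)$, forcing $\varrho(0) \le C r^2 \to 0$ as $r\to 0$ and contradicting $\varrho(0)>0$; hence $\conft<n>(x_n)\to\infty$ along a subsequence, and after further extraction $x_n \to x_\ast$; shrinking $r$ we may assume $x_\ast=0$. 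Set $\lambda_n := \exp({-}\conft<n>(x_n))\to 0$ and define the rescaled profile
\[ u_n(y) := \conft<n>(x_n + \lambda_n y) - \conft<n>(x_n). \]
Then $u_n(0)=0$, the maximum property gives $u_n\le 0$ on $B_R(0)$ for every fixed $R>0$ and all $n$ sufficiently large (once $B_{\lambda_n R}(x_n)\subset B_r(0)$), and ${-}\laplace u_n = K_n\exp(2u_n)$ with $K_n(y):=\confgauss<n>(x_n+\lambda_n y)$.

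Next I establish local compactness. Because $\exp(2u_n)\le 1$ on $B_R(0)$, a change of variables $z=x_n+\lambda_n y$ combined with Hölder's inequality gives, for any $\epsilon>0$ and all $n$ large enough that $B_{\lambda_n R}(x_n)\subseteq B_\epsilon(0)$,
\[ \int_{B_R(0)}|K_n-1|\exp(2u_n)\d y = \int_{B_{\lambda_n R}(x_n)}|\confgauss<n>-1|\d\confmug<n> \le \Bigl(\int_{B_\epsilon(0)}|\confgauss<n>-1|^p\d\confmug<n>\Bigr)^{\!\frac1p}\cmu^{\frac{1}{q}} \xrightarrow{n\to\infty}0, \]
where $q$ is the conjugate exponent of $p$. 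Hence the source of the equation for $u_n$ splits into a uniform $\Lp^\infty(B_R)$-bounded part plus a vanishing error in $\Lp^1(B_R)$. Together with the pointwise upper bound $u_n\le 0$, Brezis-Merle's uniform estimates and standard elliptic regularity yield equibounds on $u_n$ in $\Wkp^{1,q}_{\mathrm{loc}}(\R^2)$ for every finite $q$; after extracting a further subsequence, $u_n\to u$ in $\Lp^1_{\mathrm{loc}}$ and pointwise a.\,e. Dominated convergence (using $\exp(2u_n)\le 1$) gives $\exp(2u_n)\to\exp(2u)$ in $\Lp^1_{\mathrm{loc}}$, and bootstrapping yields a smooth limit solving
\[ {-}\laplace u = \exp(2u) \text{ on }\R^2, \qquad u(0)=0, \quad u\le 0, \quad \int_{\R^2}\exp(2u)\d y \le \cmu. \]

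Chen-Li's classification theorem now forces $\int_{\R^2}\exp(2u)\d y = 4\pi$. To transfer this total mass to $\varrho(0)$, fix $R>0$ and $\epsilon>0$; for $n$ sufficiently large, $B_{\lambda_n R}(x_n)\subseteq B_\epsilon(0)$ and
\[ \int_{B_R(0)}\exp(2u_n)\d y = \int_{B_{\lambda_n R}(x_n)}\exp(2\conft<n>)\d z \le \int_{B_\epsilon(0)}\exp(2\conft<n>)\d z. \]
Letting $n\to\infty$ on the left via $\Lp^1_{\mathrm{loc}}$ convergence and taking $\liminf$ on the right yields $\int_{B_R(0)}\exp(2u)\d y \le \liminf_n\int_{B_\epsilon(0)}\exp(2\conft<n>)\d z$; taking the infimum over $\epsilon>0$ and then $R\to\infty$ gives $4\pi=\int_{\R^2}\exp(2u)\d y\le\varrho(0)$, as desired. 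The final counting assertion is immediate: distinct concentration points may be separated by disjoint balls, each contributing at least $4\pi$ to the bounded total mass $\cmu$. \textbf{The main obstacle} is the extraction of a smooth limit $u$ to which Chen-Li applies; this rests crucially on the pointwise bound $u_n\le 0$ obtained by choosing $x_n$ as a local maximum, which both prevents secondary bubbling and converts the $\Lp^p$-smallness of $K_n-1$ measured against $\confmug<n>$ (which a priori gives no Lebesgue control of $K_n$) into $\Lp^1(\d y)$-smallness of the full nonlinearity error.
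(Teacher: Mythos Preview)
Your approach is genuinely different from the paper's. The paper selects the blow-up center $y_n$ as a minimizer over $\overline{B_R(0)}$ of the ``mass radius'' $\varrho_n(x):=\inf\{\rho:\int_{B_\rho(x)}\exp(2\conft<n>)\ge\ve_0\}$ and rescales by $\varrho_n(y_n)^{-1}$. This choice guarantees that after rescaling no unit ball carries more than $\ve_0<2\pi$ of mass, which is precisely the non-concentration hypothesis of Proposition~\ref{Conc_pts__vanash_Calabi_en__resc}; the conclusion then comes from that proposition via Corollary~\ref{Conc_pts__vanash_Calabi_en__cor}. To force $y_n\to0$ the paper first isolates $0$ from the (finitely many) other points with $\varrho\ge\ve_0$ by an elementary counting argument. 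Your max-point rescaling instead produces the pointwise bound $u_n\le0$, which gives $\exp(2u_n)\le1$ and makes the compactness step self-contained (standard $\Lp^1$-elliptic theory for the $(K_n-1)\exp(2u_n)$ part, $\Lp^\infty$-estimates for the $\exp(2u_n)$ part, and Harnack for the harmonic remainder using $u_n(0)=0$). This bypasses the separate machinery of Proposition~\ref{Conc_pts__vanash_Calabi_en__resc}; on the other hand, the paper's integral selection principle does not rely on pointwise maxima and fits more naturally with the measure-theoretic framework used elsewhere in the appendix.

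There is one genuine gap: the sentence ``shrinking $r$ we may assume $x_\ast=0$'' is not justified, and in fact need not hold. Nothing prevents $\conft<n>$ from having, inside $\overline{B_r(0)}$, a peak slightly higher than the one at $0$ but located at a fixed $x_\ast\neq0$; then $x_n\to x_\ast$ no matter how small $r$ is (as long as $r>|x_\ast|$), and for smaller $r$ the maximizer may jump to yet another point. What you actually need is only $|x_n|\le r$, and you may choose $r$ depending on the target scale. The cleanest fix is to argue by contradiction: if $\varrho(0)<4\pi$, pass to a subsequence along which $\int_{B_{R_0}(0)}\exp(2\conft<n>)\le4\pi-\delta$ for some fixed $R_0,\delta>0$ (this preserves $\varrho(0)>0$ since $\liminf$ only increases along subsequences). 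Now run your blow-up with any $r<R_0$; since $|x_n|\le r$ and $\lambda_n\to0$, for every fixed $R$ eventually $B_{\lambda_n R}(x_n)\subset B_{R_0}(0)$, so $\int_{B_R(0)}\exp(2u)\le4\pi-\delta$ for all $R$, contradicting Chen--Li's $\int_{\R^2}\exp(2u)=4\pi$. With this adjustment your argument is complete.
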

We see that this implies \eqref{Conc_pts__vanash_Calabi_en_eq}. We will prove Proposition~\ref{Conc_pts__vanash_Calabi_en} by a blowup argument in a neighborhood of any point $x\in\R^3$ with $\varrho(x)>0$, i.\,e.\ we rescale $\conft<n>$ around $x$ by factors $R_n\to\infty$ to functions $\confth<n>$ such that there is a fixed radius $r$ with $\int_{B_r(x)}\exp(2\,\confth<n>)\d\,x=\nicefrac{\varrho(x)}2$ and then prove that this already implies that there is a fixed radius $R$ with $\int_{B_r(x)}\exp(2\,\confth<n>)\d\,x\approx4\pi$ (for sufficiently large $n$) -- note that the radius is fixed in the scaled image, i.\,e.\ this implies $\varrho(x)\approx4\pi$.\pagebreak[3]

Let us start by the last argument, i.\,e.\ we assume that we already scaled the metric. This result should be understood as a qualitative analog of Chen-Li's classification theorem \cite[Thm~1]{chen1991}.
\begin{proposition}[Concentration point for vanishing curvature-error -- rescaled]\label{Conc_pts__vanash_Calabi_en__resc}
Let $p\in\interval1\infty$ be a constant and $\confth<n>\in\Ck^2(\R^2)$ be a sequence of functions $\R^2$ with
\begin{align*}
 \limsup_{n\to\infty} \int_{\R^2} \exp(2\,\confth<n>) \d x =:{}& \cmu < \infty \labeleq{Conc_pts__vanash_Calabi_en__resc__en_ass}, \\
 \lim_{n\to\infty}\int_K\vert\confgauss<n>-1\vert^p\,\exp(2\,\confth<n>) \d x ={}& 0 \qquad\forall\,K\subseteq\R^2\text{ compact,} \labeleq{Conc_pts__vanash_Calabi_en__resc__Gauss_ass}
\end{align*}
where $\confgauss<n>:={-}\exp({-}2\,\confth<n>)\,\laplace\confth<n>$.\pagebreak[1] If there are constants $\ve_0\in\interval0{2\pi}$, $r>0$, and a sequence of positive numbers $S_n>0$ converging to infinity, i.\,e.\ $S_n\to\infty$ for $n\to\infty$, such that
\[ \int_{B_r(x)} \exp(2\,\confth<n>) \d x \le \int_{B_{r}(0)} \exp(2\,\confth<n>) \d x = \ve_0 \qquad\forall\,\vert x\vert\le S_n, \]
then
\[ \lim_{n\to\infty} \confth<n> = \confth := \ln(\frac{2\lambda}{\lambda^2+\vert x\vert^2})\quad\text{\normalfont in } \Wkp^{1,q}_{\text{loc}}(\R^2), \]
where $\lambda:=\sqrt{\frac{4\pi-\ve_0}{\ve_0}}\,r$ and $q:=\frac{2p}{2-p}$ for $p<2$ and $q\in\interval2\infty$ arbitrary for $p\ge2$.\pagebreak[3]
\end{proposition}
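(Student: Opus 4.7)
The plan is to follow a blowup/concentration-compactness strategy in three stages: establish uniform local estimates on the $\confth<n>$ using Brezis-Merle together with the non-concentration hypothesis $\ve_0<2\pi$; extract a subsequential limit and identify it via Chen-Li's classification; and use the centered-mass normalization to pin down both the center $x_0=0$ and the scale $\lambda$. The main technical obstacle is setting up the Brezis-Merle step so that it yields truly uniform $W^{2,p}_{\text{loc}}$ bounds without any a priori $L^\infty$ control on $\confth<n>$, relying only on the integrated mass normalization at the origin.

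For the first step I fix a compact $K\subset\R^2$, take $n$ so large that $K\subset B_{S_n-2r}(0)$, and observe that the covering hypothesis forces $\int_{B_r(x)}\exp(2\,\confth<n>)\d x\le\ve_0<2\pi$ for every $x\in K$. Combined with \eqref{Conc_pts__vanash_Calabi_en__resc__Gauss_ass} (applied via H\"older to pass from the $L^p(\confmug<n>)$-bound to an $L^1$-bound) this yields $\Vert\confgauss<n>\,\exp(2\,\confth<n>)\Vert_{\Lp^1(B_r(x))}<2\pi+o(1)<4\pi-\eta$ for some fixed $\eta>0$. Decomposing $\confth<n>=\conft<n>+h_n$ on $B_r(x)$, where $\conft<n>$ is the Newtonian potential of $\confgauss<n>\exp(2\,\confth<n>)$ with zero boundary values and $h_n$ is harmonic, Brezis-Merle \cite[Thm~1]{brezis1991uniform} provides uniform bounds on $\exp(\alpha\,\vert\conft<n>\vert)\in\Lp^1(B_r(x))$ for some $\alpha>2$, while $h_n$ is controlled by the mean-value property together with the integrated normalization $\int_{B_r(0)}\exp(2\,\confth<n>)\d x=\ve_0$. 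A standard elliptic bootstrap (re-inserting the resulting $\Lp^{1+\delta}_{\text{loc}}$-bound of $\exp(2\,\confth<n>)$ into the PDE) then gives uniform bounds of $\confth<n>$ in $\Wkp^{2,p}_{\text{loc}}(\R^2)\hookrightarrow\Ck^0_{\text{loc}}(\R^2)$.

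By Rellich--Kondrachov a subsequence converges strongly in $\Wkp^{1,q}_{\text{loc}}$ to a limit $\confth\in\Wkp^{2,p}_{\text{loc}}$; dominated convergence (using the uniform pointwise bounds and the $\Lp^p$-convergence of $\confgauss<n>$) gives ${-}\laplace\,\confth=\exp(2\,\confth)$ almost everywhere, and Fatou's lemma applied to~\eqref{Conc_pts__vanash_Calabi_en__resc__en_ass} yields $\int_{\R^2}\exp(2\,\confth)\d x\le\cmu<\infty$. Chen-Li's classification~\cite[Thm~1]{chen1991} therefore forces $\confth(x)=\ln\bigl(\nicefrac{2\mu}{(\mu^2+\vert x-x_0\vert^2)}\bigr)$ for some $\mu>0$ and $x_0\in\R^2$. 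The local uniform convergence of $\exp(2\,\confth<n>)$ now lets me pass the inequality $\int_{B_r(x)}\exp(2\,\confth<n>)\d x\le\int_{B_r(0)}\exp(2\,\confth<n>)\d x=\ve_0$ to the limit for every $x\in\R^2$ (using $S_n\to\infty$), so $x\mapsto\int_{B_r(x)}\exp(2\,\confth)\d x$ attains its maximum at $x=0$; since for a radially symmetric, strictly decreasing profile centered at $x_0$ this integral is strictly maximized exactly at $x_0$, we must have $x_0=0$. The explicit computation
\[ \int_{B_r(0)}\exp(2\,\confth)\d x = \int_0^r \frac{8\pi\,\mu^2\,s}{(\mu^2+s^2)^2}\d s = \frac{4\pi\,r^2}{\mu^2+r^2} = \ve_0 \]
then forces $\mu=\lambda=r\,\sqrt{\nicefrac{(4\pi-\ve_0)}{\ve_0}}$, which also pins down the limit uniquely. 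A standard subsequence-of-subsequence argument finally promotes the convergence from subsequences to the whole sequence, completing the proof.
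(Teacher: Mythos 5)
Your argument tracks the paper's proof almost verbatim: the decomposition of $\confth<n>$ on $B_r(x)$ into a zero-boundary potential plus a harmonic part, controlled by Brezis-Merle, Jensen's inequality, and the mean-value property, is exactly the paper's $\confth^0+\confth^h$ splitting borrowed from Struwe, and the subsequent passage via compactness to a limit classified by Chen-Li, with $x_0=0$ and $\mu=\lambda$ pinned down by the $B_r$-mass normalization, is identical. The one imprecision is the stated bound $\Vert\confgauss<n>\,\exp(2\,\confth<n>)\Vert_{\Lp^1(B_r(x))}<2\pi+o(1)$, which should be recorded as $\le\ve_0+o(1)<2\pi$ for $n$ large, since it is the strict gap below $2\pi$ (coming from $\ve_0<2\pi$) that lets Brezis-Merle deliver an exponent strictly greater than $2$ for the bootstrap.
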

We see that this in particular implies the following corollary.
\begin{corollary}\label{Conc_pts__vanash_Calabi_en__cor}
Let $\confth<n>\in\Ck^2(\R^2)$ be a sequence of functions satisfying \eqref{Conc_pts__vanash_Calabi_en__resc__en_ass} and \eqref{Conc_pts__vanash_Calabi_en__resc__Gauss_ass}. If there exist constants $r>0$ and $\ve_0\in\interval0\pi$ with
\[ \int_{B_r(x)} \exp(2\,\confth<n>) \d x \le \int_{B_{r}(0)} \exp(2\,\confth<n>) \d x \in\interval*{\ve_0}*{2\pi-\ve_0} \qquad\forall\,\vert x\vert\le S_n \]
for some sequence of constants $S_n$ converging to infinity, then
\[ \liminf_{n\to\infty} \int_{\R^2} \exp(2\,\confth<n>) \d x \ge 4 \pi. \]
\end{corollary}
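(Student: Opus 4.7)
My plan is to reduce Corollary~A.8 to Proposition~A.7 by absorbing the gap between $\int_{B_r(0)}\exp(2\,\confth<n>)\,\d x$ and a fixed constant via an additive shift of $\confth<n>$, and then exploiting that the limit profile produced by Proposition~A.7 has total mass exactly $4\pi$. I will argue by contradiction: suppose, after passing to a subsequence still denoted $\confth<n>$, that $\int_{\R^2}\exp(2\,\confth<n>)\,\d x\to L<4\pi$.

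By hypothesis $\alpha_n:=\int_{B_r(0)}\exp(2\,\confth<n>)\,\d x$ lies in $\interval*{\ve_0}*{2\pi-\ve_0}$, so a further subsequence yields $\alpha_n\to\widetilde\ve_0\in\interval*{\ve_0}*{2\pi-\ve_0}\subseteq\interval0{2\pi}$. I set $a_n:=\tfrac12\ln(\widetilde\ve_0/\alpha_n)\to 0$ and $\widetilde\confth<n>:=\confth<n>+a_n$, so that $\int_{B_r(0)}\exp(2\,\widetilde\confth<n>)\,\d x=\widetilde\ve_0$ for every $n$. The inequality $\int_{B_r(x)}\exp(2\,\cdot)\,\d x\le\int_{B_r(0)}\exp(2\,\cdot)\,\d x$ for $|x|\le S_n$ persists, and the total-mass bound is preserved because $e^{2a_n}\to 1$. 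The Gauss curvatures transform as $\widetilde\confgauss<n>=e^{-2a_n}\confgauss<n>$, and using
\[ |\widetilde\confgauss<n>-1|\le e^{-2a_n}|\confgauss<n>-1|+|e^{-2a_n}-1| \]
combined with the local $L^1$-boundedness of $\exp(2\,\confth<n>)$ shows that the Gauss-curvature hypothesis \eqref{Conc_pts__vanash_Calabi_en__resc__Gauss_ass} passes to $\widetilde\confth<n>$ as well.

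Proposition~A.7 now applies to $\widetilde\confth<n>$ with $\ve_0$ replaced by $\widetilde\ve_0$ and yields $\widetilde\confth<n>\to\confth_\infty:=\ln\!\bigl(2\lambda/(\lambda^2+|x|^2)\bigr)$ in $\Wkp^{1,q}_{\text{loc}}(\R^2)$ with $\lambda=r\sqrt{(4\pi-\widetilde\ve_0)/\widetilde\ve_0}$ and some $q>2$ (the hypothesis $p>1$ ensures $q>2$ in the borderline range $1<p<2$). A direct polar-coordinate calculation gives $\int_{\R^2}\exp(2\,\confth_\infty)\,\d x=4\pi$. Since $q>2$, the compact two-dimensional Sobolev embedding $\Wkp^{1,q}(B_R)\hookrightarrow\Ck^0(B_R)$ upgrades this to uniform convergence $\exp(2\,\widetilde\confth<n>)\to\exp(2\,\confth_\infty)$ on each ball $B_R$. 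Therefore, for every fixed $R>0$,
\[ \liminf_{n\to\infty}\int_{\R^2}\exp(2\,\widetilde\confth<n>)\,\d x\ge\int_{B_R(0)}\exp(2\,\confth_\infty)\,\d x, \]
and letting $R\to\infty$ together with $e^{2a_n}\to 1$ gives $\liminf_n\int_{\R^2}\exp(2\,\confth<n>)\,\d x\ge 4\pi$, contradicting $L<4\pi$.

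The only nontrivial step in this plan is verifying that the constant shift genuinely preserves the Gauss-curvature hypothesis \eqref{Conc_pts__vanash_Calabi_en__resc__Gauss_ass}, so that Proposition~A.7 can be invoked with the \emph{exact} normalization it requires; everything else, including the improvement from $\Wkp^{1,q}_{\text{loc}}$- to uniform-on-compacta convergence of the exponentials and the Fatou-type passage from local to global integrals, is standard.
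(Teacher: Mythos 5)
Your proof is correct, and it takes the same core route as the paper's: extract a subsequence along which $\int_{B_r(0)}\exp(2\,\confth<n>)\,\d x$ converges, invoke Proposition~\ref{Conc_pts__vanash_Calabi_en__resc} to obtain a limit profile of total mass $4\pi$, and then transfer this mass estimate back via locality of the convergence. Your verification that the Gauss-curvature hypothesis survives the additive shift (via the Minkowski split and the uniform local $\Lp^1$-bound on $\exp(2\,\confth<n>)$) is sound, as is the upgrade to locally uniform convergence of the exponentials through Morrey's embedding in dimension $2$ with $q>2$.

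The one genuine difference is the normalizing shift $a_n:=\tfrac12\ln(\widetilde\ve_0/\alpha_n)$. The paper simply passes to a subsequence on which $\int_{B_r(0)}\exp(2\,\confth<n>)\,\d x\to\ve_0'$ and then applies Proposition~\ref{Conc_pts__vanash_Calabi_en__resc}, even though that proposition as stated requires exact equality $\int_{B_r(0)}\exp(2\,\confth<n>)\,\d x=\ve_0$ for every $n$; the paper implicitly relies on the proof of the proposition going through under mere convergence. Your shift converts the convergent normalization into an exact one at no cost (the shift is constant, vanishes in the limit, and preserves all three hypotheses of the proposition), so you get to apply the proposition literally. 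This is cleaner. The contradiction framing versus the paper's direct passage to the $\liminf$-attaining subsequence is purely cosmetic, and the final step -- using the $4\pi$ mass of $\confth_\infty$ plus a Fatou-type argument -- matches the paper word for word in substance.
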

\begin{proof}[Proof of Corollary~\ref{Conc_pts__vanash_Calabi_en__cor}]
Assume $\confth<n>':=\confth_{k_n}$ is the area minimizing sequence, i.\,e. $\liminf_n\int_{\R^2}\exp(2\,\confth<n>)\d x=\lim_n\int_{\R^2}\exp(2\,\confth<n>') \d x$. Then there is a subsequence $\confth<n>'':=\confth_{l_n}'$ such that $\int_{B_{r}(0)} \exp(2\,\confth<n>'') \d x$ converges to some $\ve_0'\in\interval*{\ve_0}*{2\pi-\ve_0}$. Thus, Proposition~\ref{Conc_pts__vanash_Calabi_en__resc} implies $\confth<n>''\to \confth''=\ln(\nicefrac{2\lambda}{(\lambda^2+\vert x\vert^2)})$ in $\Wkp^{1,Q}_{\text{loc}}(\R^2)$, where $\lambda=\sqrt{\nicefrac{4\pi-\ve_0'}{\ve_0}}\,r$. Thus, for any $\ve>0$ there exist $N>0$ and $R>0$ such that
\[ \int_{B_R(0)}\exp(2\,\confth<n>'')\d x \ge \int_{B_R(0)}\exp(2\,\confth'') \d x - \ve \ge \int_{\R^2}\exp(2\,\confth'')\d x - 2 \ve = 4\pi - 2\ve \]
for any $n\ge N$, i.\,e.\ $\liminf_n\int_{\R^2}\exp(2u_n)\d x = \lim_n\int_{\R^2}\exp(2\,\confth<n>')\d x\ge4\pi$.
\end{proof}
The second part of the proof of Proposition~\ref{Conc_pts__vanash_Calabi_en__resc} uses Chen-Li's classification theorem \cite[Thm~1]{chen1991} for metrics of constant Gau\ss\ curvature in $\R^2$, while the first part of the proof is analog to Struwe's proof of \cite[Thm~3.2]{struwe2002curvature}. We repeat it nonetheless for the readers convenience. As Struwe, we need the following result of Brezis-Merle \cite[Thm~1]{brezis1991uniform}\footnote{Here, we state the same version of \cite[Thm~1]{brezis1991uniform} as Struwe \cite[Thm~3.1]{struwe2002curvature} which is a slight modification of the original theorem.}.
\begin{theorem}[{\cite[Thm~1]{brezis1991uniform}}]\label{Brezis_Merle}
Let $u$ be a distribution solution to the equation
\[ {-}\laplace u = f \quad\text{in }B_1(0),\qquad u\equiv0\quad\text{in }\partial*B_1(0)=\sphere^2_1(0), \]
where $f\in\Lp^1(B_1(0))$. For every $p<4\pi\,\Vert f\Vert_{\Lp^1(B_1(0))}^{{-}1}$, there exists a constant $C$ with
\[ \int_{B_1(0)}\exp(p\,\vert u\vert) \d x \le C\,(4\pi-p\,\Vert f\Vert_{\Lp^1(B_1(0))})^{-1}. \]
\end{theorem}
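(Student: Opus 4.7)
The plan is to combine the Green's function representation of $u$ with Jensen's inequality, exploiting the fact that the 2D fundamental solution is logarithmic so that the critical exponent of the resulting polynomial kernel matches the dimension. First I would invoke the Dirichlet Green's function $G$ for $-\laplace$ on $B_1(0)$ together with the classical estimate $0\le G(x,y)\le (2\pi)^{-1}\log(D/\vert x-y\vert)$ for some absolute constant $D\ge 2$, which follows from the explicit reflection formula for $G$ on the disk and the fundamental solution $-(2\pi)^{-1}\log\vert x\vert$. Combined with the representation $u(x)=\int_{B_1(0)}G(x,y)\,f(y)\d y$, this yields the pointwise bound
\[ \vert u(x)\vert \le \frac{1}{2\pi}\int_{B_1(0)}\log\frac{D}{\vert x-y\vert}\,\vert f(y)\vert\d y. \]

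Next, I would set $M:=\Vert f\Vert_{\Lp^1(B_1(0))}$ and introduce the probability measure $\d\nu(y):=M^{-1}\vert f(y)\vert\d y$ on $B_1(0)$. Applying Jensen's inequality to the convex function $\exp$ against $\nu$ gives
\[ \exp\!\big(p\,\vert u(x)\vert\big) \le \int_{B_1(0)}\Big(\frac{D}{\vert x-y\vert}\Big)^{\alpha}\d\nu(y), \qquad \alpha:=\frac{p\,M}{2\pi}. \]
The hypothesis $p<4\pi/M$ is precisely $\alpha<2$. Integrating in $x$ and applying Fubini, the inner integral
\[ \int_{B_1(0)}\Big(\frac{D}{\vert x-y\vert}\Big)^{\alpha}\d x \le \int_{B_2(y)}\Big(\frac{D}{\vert z\vert}\Big)^{\alpha}\d z \le \frac{2\pi\,D^{\alpha}\,2^{2-\alpha}}{2-\alpha} \]
is bounded uniformly in $y$ and finite exactly because $\alpha<2$. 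Combining these two bounds with $4\pi-pM=2\pi(2-\alpha)$ gives the claimed estimate.

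The main obstacle will be ensuring that the Green's function estimate is sharp enough that the logarithm carries the precise constant $1/(2\pi)$: this is where the threshold $4\pi$ enters and is ultimately dictated by the 2D fundamental solution. A secondary but conceptually central point is recognizing that the probability measure to use in Jensen must be $\vert f\vert\d y/M$, since any other normalization either loses the sharp dependence on $\Vert f\Vert_{\Lp^1}$ or leaves a kernel that fails to be integrable at $x=y$ in the regime $\alpha<2$.
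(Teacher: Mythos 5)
The paper does not prove this statement; it cites it verbatim from Brezis--Merle, \cite[Thm~1]{brezis1991uniform} (in the slight reformulation Struwe uses in \cite[Thm~3.1]{struwe2002curvature}). Your argument is correct, and it is in fact essentially the original Brezis--Merle proof: Green's function representation with the logarithmic bound $G(x,y)\le(2\pi)^{-1}\log(D/\vert x-y\vert)$ (for the unit disk one may take $D=2$, since $G(x,y)=\frac{1}{2\pi}\log\frac{\vert x\vert\vert x^*-y\vert}{\vert x-y\vert}$ and $\vert x\vert\vert x^*-y\vert\le 2$), Jensen's inequality against the probability measure $\vert f\vert\,\mathrm dy/\Vert f\Vert_{L^1}$, and Tonelli to reduce to the elementary radial integral $\int_{B_2(0)}\vert z\vert^{-\alpha}\,\mathrm dz=2\pi\,2^{2-\alpha}/(2-\alpha)$ with $\alpha=pM/(2\pi)<2$. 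One small slip: in the inner integral the change of variables should produce $\int_{B_2(0)}(D/\vert z\vert)^\alpha\,\mathrm dz$, not $\int_{B_2(y)}(D/\vert z\vert)^\alpha\,\mathrm dz$; the computation you then do is the correct one. Finally, to land on the stated form $C\,(4\pi-p\Vert f\Vert_{L^1})^{-1}$ with an absolute constant $C$, one uses that the $\alpha$-dependent factor $D^\alpha\,2^{2-\alpha}$ is uniformly bounded on $\alpha\in(0,2)$; this is implicit in your last line and worth spelling out.
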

\begin{proof}[Proof of Proposition~\ref{Conc_pts__vanash_Calabi_en__resc}]
We start by proving that the sequence $\confth<n>$ is bounded in $\Wkp^{2,p}_{\text{loc}}(\R^2)$. As mentioned above, the proof of this part is analog to Struwe's proof of \cite[Thm~3.2]{struwe2002curvature}. Let $N\gg1$ be a constant, define $R:=R_N:=\inf_{n\ge N} S_n$, and note $R_N\to\infty$ for $N\to\infty$. Let $n\ge N$ be arbitrary and suppress the corresponding index $n$. Furthermore, let $x\in B_R(0)$ be arbitrary and choose functions $\confth^0,\confth^h\in\Ck^2(B_r(x))$ with $\confth=\confth^0+\confth^h$ such that
\[ \left\{\quad\begin{aligned} {-}\laplace\confth^h ={}& 0,& {-}\laplace\confth^0 ={}& {-}\laplace\confth &&\text{in } B_r(x) \\
	\confth^h ={}& u,& \confth^0 ={}& 0 &&\text{in } \partial* B_r(x) \end{aligned} \right., \]
i.\,e.\ $\confth^h$ is the harmonic part of $\confth$ and $\confth^0$ is the rest having boundary value $0$. We see that
\[ \int_{B_r(x)}\vert\laplace\confth^0\vert\d x
		\le \int_{B_r(x)}\exp(2\,\confth)\d x + \frac{2\pi - \ve_0}2
		\le \frac{2\pi+\ve_0}2 < 2\pi \]
if $N$ is so large that $\int_{B_{R+r}(0)}\exp(2\,\confth)\,\vert\confgauss-1\vert\d x \le \pi-\nicefrac{\ve_0}2$ holds for every $n\ge N$. In particular, we can choose $N$ independently of $x\in B_R(0)$. Fix a $p'\in\interval1{\nicefrac{4\pi}{(2\pi+\ve_0)}}$. Brezis-Merle's result, Theorem~\ref{Brezis_Merle}, implies
\[ \int_{B_r(x)} \exp(2p'\,\vert \confth^0\vert) \d x \le C, \]
where the constant $C$ depends on $q$, $r$, and $\ve_0$. In the following, we do not distinguish between constants $C$ depending on $q'$, $r$, $\ve_0$, $\cmu$ and $R$. The above implies
\begin{equation*}
	\int_{B_r(x)} \vert\confth^0\vert \d x \le \ln(\int_{B_r(x)}\exp\,\vert\confth^0\vert \d x) \le C \labeleq{Conc_pts__vanash_Calabi_en__resc__est_v}
\end{equation*}
and we already know
\[  2\int_{B_r(x)} \confth \d x \le \ln(\int_{B_r(x)} \exp(2\,\confth) \d x) \le \ln \cmu \le C. \]
Thus, the mean value property of harmonic functions implies for $y\in B_{\nicefrac r2}(x)$
\[ \confth^h(y) = \fint_{B_{\frac r2}(y)} \confth^h \d x \le C, \]
i.\,e. $\confth^h\le C$ in $B_{\nicefrac r2}(x)$. We conclude
\begin{align*}
 \int_B \vert\laplace\confth^0\vert^{q'} \d x
	\le{}& \int_B \exp(\frac{2\,q'}p\confth)\,\vert\confgauss\vert^{q'}\,\exp(2\,q'\,\frac{p-1}p\,\confth) \d x \\
	\le{}& (\int_B \exp(2\,\confth)\,\vert\confgauss\vert^p \d x)^{\frac{q'}p}\,(\int_B \exp(2\,q'\,\frac{p-1}{p-q'}\,\confth) \d x)^{\frac{p-q'}p} \\
	\le{}& C,
\end{align*}
where $B:=B_{\nicefrac12}(x)$ and where $q'=\frac{p'\,p}{p+p'-1}$, i.\,e.\ $\nicefrac{(p-1)}{(p-q')}=\nicefrac{p'}{q'}$ and $q'\in\interval1p$.\pagebreak[1] Hence, \eqref{Conc_pts__vanash_Calabi_en__resc__est_v} implies $\Vert\confth^0\Vert_{\Wkp^{2,q'}(B)}\le C$. In particular, the Sobolev inequalities imply $\confth\le \confth^h + \vert\confth^0\vert \le C$ in $B=B_{\nicefrac r2}(x)$ and we therefore get
\[ \int_B\vert\laplace\confth\vert^p \d x = \int_B\vert\confgauss\vert^p\,\exp(2\,p\,\confth) \d x \le C. \]
As $x$ was arbitrary in $B_R(0)$, we conclude with the above estimates and the regularity of the Laplace operator
\[ \max_{B_R(0)} \confth\le C + \min_{B_R(0)} \confth \le C + \fint_{B_R(0)} \confth \d x \le C + \frac12\ln(\frac{\cmu}{4\pi R^2}) \le C. \]
Assuming without loss of generality that $R>r$, we conclude
\[ 4\pi\exp(2\,\max_{B_R(0)}\confth)\,R^2 \ge \int_{B_R(0)}\exp(2\,\confth) \d x \ge \int_{B_{r}(0)} \exp(2\,\confth)\d x \ge \ve_0 \]
implying
\[ {-}C \le \max_{B_R(0)} \confth \le C + \min_{B_R(0)} \confth \le C, \]
i.\,e.\ $\vert\confth\vert\le C$ in $B_R(0)$ -- note that the constant $C=\Cof[R]$ depends on $R$. Thus, we get the desired uniformly bound, as we proved
\begin{equation*} \Vert \confth<n> \Vert_{\Wkp^{2,p}(B_R(0))}\le \Cof[R],\qquad\lim_{n\to\infty}\Vert\laplace \confth<n> -\exp(2\,\confth<n>)\Vert_{\Lp^p(B_R(0))} = 0. \labeleq{Brezis_Merle__bounded}\smallskip\pagebreak[2]\end{equation*}

By the compactness of the Sobolev embeddings, it is now sufficient that any in $\Wkp^{1,q}_{\text{loc}}(\R^2)$ converging subsequence of $\confth<n>$ converges to $\ln(\nicefrac{2\lambda}{(\lambda^2+r^2)})$ for the fixed constant $\lambda:=\sqrt{\frac{(4\pi-\ve_0)}{\ve_0}}\,r$. Thus, we can assume that $\confth<n>$ converges in $\Wkp_{\text{loc}}^{1,q}(\R^2)$ to some function $\confth\in\Wkp_{\text{loc}}^{1,q}(\R^2)$, i.\,e.
\[ \lim_{n\to\infty}\Vert \confth<n> - \confth \Vert_{\Wkp^{1,q}(B_R(0))} = 0 \qquad\forall\,R>0. \]
In particular, we know $\exp(2\,\confth<n>)\to\exp(2\,\confth)$ locally uniformly and that $\confth$ is locally bounded. By the second inequality in \eqref{Brezis_Merle__bounded}, this implies $\laplace \confth<n>\to{-}\exp(2\,\confth)$ in $\Lp^p_{\text{loc}}(\R^2)$.
Hence, we know that $\laplace\confth={-}\exp(2\,\confth)$ in the $\Lp^2$-weak sense in $B_R(0)$ (for every $R>0$). The regularity of the Laplace operator and the locally boundedness of $\confth<n>$ (see above) therefore implies $\confth\in\Ck^\infty(\R^2)$ and $\laplace\confth={-}\exp(2\,\confth)$ pointwise everywhere. As we furthermore know
\[ \int_{B_R(0)} \exp(2\,\confth)\,\d x \le \limsup_{n\to\infty} \int_{B_R(0)} \exp(2\,\confth<n>) \d x \le \limsup_{n\to\infty} \int_{\R^2} \exp(2\,\confth<n>) \d x = \cmu, \]
we can use the Chen-Li's classification theorem \cite[Thm~1]{chen1991} to conclude that there exist a point $x_0$ and a factor $\kappa>0$ such that
\[ \confth(x) = \ln(\frac{2\kappa}{\kappa^2+\vert x-x_0\vert^2}). \]
We see that $x_0$ is uniquely determined by the fact that for any $R>0$
\[ \int_{B_R(x)} \exp(2\,\confth) \d x \le \int_{B_R(x_0)} \exp(2\,\confth) \d x \qquad\forall\,x\in\R^2. \]
We therefore conclude $x_0=0$ by
\[ \ve_0 \ge \lim_{n\to\infty} \int_{B_{r}(x)} \exp(2\,\confth<n>)\d x = \int_{B_{r}(x)} \exp(2\,\confth) \d x \qquad\forall\,x\in\R^2 \]
and
\[ \ve_0 = \lim_{n\to\infty} \int_{B_{r}(0)} \exp(2\,\confth<n>)\d x = \int_{B_{r}(0)} \exp(2\,\confth)\d x, \]
due to $\exp(2\,\confth<n>)\to\exp(2\,\confth)$ in $\Lp^1_{\text{loc}}(\R^2)$. In particular, we get
\[ \ve_0 = \int_{B_r(0)}\exp(2\,\confth)\d x = \frac{4\pi r^2}{\kappa^2+r^2} \]
implying $\kappa=\lambda$.\pagebreak[3]
\end{proof}

\begin{proof}[Proof of Proposition~\ref{Conc_pts__vanash_Calabi_en}]
Let $x_0\in\R^2$ be a point such that $\ve_1:=\varrho(x_0)>0$ -- without loss of generality $x_0=0$. Now, we want to rescale $\conft<n>$ around some center point $y_n$ near $0$ by a factor $\varrho_n\to\infty$ (for $n\to\infty$) and use Corollary~\ref{Conc_pts__vanash_Calabi_en__cor} on the rescaled functions $\confth<n>(x):=\conft<n>(\varrho_n\,x+y_n)+\ln\varrho_n$. However, we cannot choose $0$ as this center point (for all $n$) as it could be that the \emph{mass} $\exp(2\,\conft<n>)\d x$ is only large around some point $y_n$ \emph{close to} but not (necessarily) \emph{equal to} $0$ and this point $y_n$ could be \lq scaled away\rq\ if we scaled around $0$. Thus, we have to choose the center more carefully.

Now, we want to use the Brendle's scaling argument \cite{brendle2014communication}. As explained, we want to find a center point for the scaling done later and this center point should lay \lq near\rq\ $0$. Thus, we have to fix a neighborhood of $0$ such that the only \lq center\rq\ $y_n$ (see below) of the mass (measure) $\exp(2\,\conft<n>)$ ($\exp(2\,\conft<n>)\d x$) within this neighborhood approximates $0$. If $L\in\N$ is an integer, $r>0$ is a constant, and $y_1,\dots,y_L\in\R^2$ are points with $B_r(y_i)\cap\{y_1,\dots,y_L\}=\{y_i\}$ and $\varrho(y_i)\ge\frac12\,\min\{\pi,\ve_1\}=:\ve_0$ for every $i$, then
\[ \cmu \ge \limsup_{n\to\infty} \int_{\R^2} \exp(2\,\conft<n>) \d x
		\ge \limsup_{n\to\infty} \sum_{i=1}^L \int_{B_r(y_i)} \exp(2\,\conft<n>) \d x
		\ge L\,\ve_0. \]
In particular, we know $L\le\nicefrac{\cmu}{\ve_0}<\infty$. Thus, there is a radius $R>0$ such that every $x\in B_{2R}(0)$ with $x\neq 0$ satisfies $\varrho(x)<\ve_0$. Now, we define the center points $y_n\in B_R(0)\subseteq\R^2$ as one of the points in which $\varrho_n$ is minimal, where
\[ \varrho_n(x) := \inf\lbrace r\in\interval0{2R}\ \middle|\ \int_{B_r(x)}\exp(2\,\conft<n>) \d x \ge \ve_0 \rbrace \qquad\forall\,x\in\R^2,\,\vert x\vert\le R. \]
Note that the minimum of $\varrho_n$ within $\{y\,:\,\vert y\vert\le R\}=:B_R$ exists as $\varrho_n$ is continuous, i.\,e.\ we can choose such a (not necessarily uniquely defined) $y_n$ for every $n\gg1$. As $(y_n)$ is a bounded sequence and every cluster point $y$ of it satisfies $\varrho(y)\ge\ve_0$ and (per definition of $R$) therefore $y=0$ or $2R\le\vert y\vert=\lim_n\vert y_n\vert \le R$, we know $y_n\to 0$ for $n\to\infty$. Furthermore, we know $\varrho_n:=\varrho_n(y_n)\le\varrho_n(0)\to 0$ for $n\to\infty$.

We rescale around $y_n$ with the scaling factor $\varrho_n^{-1}$, i.\,e.\ we define
\[ \confth<n>(x) := \conft<n>(\varrho_n\,x+y_n) + \ln \varrho_n \qquad\forall\,x\in\R^2 \]
and have to check the preliminaries of Corollary~\ref{Conc_pts__vanash_Calabi_en__cor} in order to use it. First, we see that
\[ \int_{\R^2} \exp(2\,\confth<n>) \d x = \int_{\R^2} \varrho^2\,\exp(2\,\conft<n>(\varrho_n\,x+y_n)) \d x = \int_{\R^2} \exp(2\,\conft<n>) \d x \]
implying $\limsup_n\int \exp(2\,\confth<n>) \d x\le\cmu<\infty$. Furthermore, we get
\[ \laplace \confth<n> = \varrho^2 (\laplace \conft<n>)(\varrho_n\,x+y_n) = {-}\confgauss<n>(\varrho_n\,x+y_n)\,\exp(2\,\confth<n>) =: {-}\confgauss<n>'(x)\,\exp(2\,\confth<n>) \]
and
\[ \int_{B_{\varrho_n^{-1}}(0)} \vert\confgauss<n>'-1\vert^p\,\exp(2\,\confth<n>) \d x = \int_{B_1(y_n)} \vert\confgauss<n>-1\vert^p\,\exp(2\,\conft<n>) \d x \xrightarrow{n\to\infty} 0. \]
As $\varrho_n\to0$ for $n\to\infty$, this implies that $\int_K \vert\confgauss<n>'-1\vert^p\,\exp(2\,\confth<n>) \d x\to0$ for any compact set $K\subseteq\R^2$. For the last preliminary, we note that every $x\in B_{\nicefrac1{\sqrt{\varrho_n}}}(0)$ and $x_n:=\varrho_nx+y_n$ satisfies
\begin{align*}
 \int_{B_1(x)} \exp(2\,\confth<n>)\d x
	={}& \int_{B_{\varrho_n}(x_n)} \exp(2\,\conft<n>)\d x \le \int_{B_{\varrho_n(x_n)}(x_n)} \exp(2\,\conft<n>)\d x = \ve_0, \\
 \int_{B_1(0)} \exp(2\,\confth<n>)\d x
	={}& \int_{B_{\varrho_n}(y_n)} \exp(2\,\conft<n>)\d x = \ve_0.
\end{align*}
Thus, we can use Corollary~\ref{Conc_pts__vanash_Calabi_en__cor} for $S_n=\varrho^{\nicefrac{{-}1}2}$ and $r=1$ to conclude that $\confth<n>\to \confth$ in $\Wkp^{1,q}_{\text{loc}}(\R^2)$ with $\int_{\R^2}\exp(2\,\confth)\d x=4\pi$ and $q=\frac{2p}{2-p}>2$ for $p<2$ and $q\in\interval2\infty$ for $p\ge2$. This means for every $\ve>0$ and $\ve'>0$ there is a radius $R<\infty$ with
\begin{align*}
 4\pi \le{}& \int_{B_R(0)} \exp(2\,\confth) \d x + \frac\ve2 \le \int_{B_R(0)} \exp(2\,\confth<n>) \d x + \ve \\
	&= \int_{B_{R\varrho_n}(y_n)} \exp(2\,\conft<n>) \d x + \ve
	\le \int_{B_{\ve'}(0)} \exp(2\,\conft<n>) \d x + \ve
\end{align*}
for every $n\ge N$, where $N$ is so large that $\int_{B_R(0)}\vert\exp(2\,\confth)-\exp(2\,\confth<n>)\vert \d x\le\nicefrac\ve2$ and $\varrho_n\,R + \vert y_n\vert \le \ve'$. As $\ve>0$ was arbitrary, we get $\varrho(x_0)=\varrho(0)\ge4\pi$, as this implies
\[ \liminf_{n\to\infty} \int_{B_\ve'(0)} \exp(2\,\conft<n>) \d x \ge 4\pi \qquad\forall\,\ve'>0.\qedhere \]
\end{proof}

\begin{proof}[Proof of Theorem~\ref{Lp_Reg_gauss_curv}]
Using conformal maps, we can assume
\begin{equation*} \mug(\sphere^2\cap\{x_i\ge0\}) = \mug(\sphere^2\cap\{x_i\le0\}) \in\interval{\frac\delta2}{4\pi-\frac\delta2} \qquad\forall\,i\in\{1,2,3\}, \labeleq{Lp_Reg_gauss_curv__balanced}\end{equation*}
see for example \cite[Lemma~3.4]{DeLellisMueller_OptimalRigidityEstimates} for a proof of an analog claim. Let $\conf$ denote a corresponding conformal factor, i.\,e.\ $\g*=\exp(2\,\conf)\,\sphg*$, where $\sphg*$ denotes the standard metric of the Euclidean unit sphere. We first prove the implication
\begin{equation*} \forall\,\ve>0\quad\exists\,\ve'=\Cof{\ve'}[\ve][\delta][p]>0:\quad \Vert \confgauss-1\Vert_{\Lp^p(\sphere^2,\g*)} \le \ve' \ \Longrightarrow\ \Vert\conf\Vert_{\Wkp^{2,p}(\sphere^2,\sphg*)} \le \ve \labeleq{Lp_Reg_gauss_curv__Lp_infty}\end{equation*}
by contradiction,\pagebreak[1] i.\,e.\ we assume the existence of a constant $\ve>0$ and a sequence $\conf<n>\in\Wkp^{2,p}(\sphere^2)$ such that \eqref{Lp_Reg_gauss_curv__balanced} is satisfied for $\confmug<n>:=\exp(2\,\conf<n>)\,\confmug$, where $\confmug$ is the measure with respect to $\sphg*$, and that
\begin{equation*} \int_{\sphere^2} \vert\confgauss<n>-1\vert^p\d\confmug<n> \le \frac1n, \qquad \Vert \conf<n>\Vert_{\Wkp^{2,p}(\sphere^2,\sphg*)} \ge \ve \qquad\quad\forall\,n\in\N, \labeleq{Lp_Reg_gauss_curv__Lp_infty_not}\end{equation*}
where \pagebreak[1]$\confgauss<n>$ and $\confmug<n>$ denote the Gau\ss\ curvature and the measure with respect to the metric $\exp(2\,\conf<n>)\,\sphg*$. As first step, we prove that there is no concentric point, i.\,e.
\begin{equation*} \varrho(p):=\inf_{r>0}\limsup_{n\to\infty}\confmug<k_n>(B_r^{\sphere}(p)) = 0 \qquad\forall\,p\in\sphere^2,\,k_n\nearrow\infty \labeleq{Lp_Reg_gauss_curv_curv__conc}\end{equation*}
where $B_r^{\sphere}(p)=\{q\in\R^3\,:\,\vert q\vert=1,\,\vert p-q\vert\le r\}$ denotes the ball of radius $r$ around $p$ with respect to the metric of the surrounding Euclidean space\pagebreak[2]. Again, we prove this by contradiction and therefore assume the existence of a point $p\in\sphere^2$ with $\varrho(p)>0$. Let $\conf<k_n>\to\infty$ denote a corresponding subsequence and $\phi^p:\sphere^2\setminus\{p\}\to\R^2$ and $\conft<n>^p$ denote the stereographic projection through $p$ and the corresponding conformal factor on the Euclidean plane, respectively, i.\,e.\ ${\phi^p}_*\g*=\exp(2\,\conft<n>^p)\,\eukg*$, where $\eukg*$ denotes the Euclidean standard metric. We see
\[ \int_{\R^2}\exp(2\,\conft<n>^p) \d x \le 8\pi - 2\delta, \qquad \lim_{n\to\infty}\int_{\R^2}\vert\confgauss<n>-1\vert^p\d\confmug<n> = 0 \]
and by the diffeomorphism invariance of $\varrho$, we see $\varrho'(0)=\varrho(p)>0$ and therefore Proposition~\ref{Conc_pts__vanash_Calabi_en} implies
\[ \varrho(p)=\varrho'(0) := \inf_{r>0}\limsup_{n\to\infty}\confmug<n>^p(B_r^2(0)) \ge 4\pi, \]
where $B_r^2(x)=\{y\in\R^2\,:\,\vert x\vert\le r\}$ and $\confmug<n>^p:={\phi^p}_*\confmug<n>$ denote the Euclidean ball of radius $r$ in $\R^2$ and the corresponding measure on $\R^2$, respectively. However, there is a direction $i\in\{1,2,3\}$ and a fixed sign $\pm$ such that $B_{\nicefrac14}^{\sphere}(p)\subseteq\{\pm x_i\ge 0\}$, i.\,e.\ we get the contradiction
\[ 4\pi \le \limsup_{n\to\infty}\confmug<k_n>(U) \le \limsup_{n\to\infty}\confmug<k_n>(\sphere^2\cap\{\pm x_i\ge0\}) \le 4\pi - \delta. \]
Thus, there is no concentration point, i.\,e.\ \eqref{Lp_Reg_gauss_curv_curv__conc} holds.

Now, we prove a quantitative version of \eqref{Lp_Reg_gauss_curv_curv__conc}, i.\,e.
\begin{equation*} \forall\,\ve'>0\quad\exists\,r>0:\quad\forall\,p\in\sphere^2:\quad\limsup_{n\to\infty}\confmug<n>(B_r^{\sphere}(p)) \le \ve'\vspace{-.5em} \labeleq{Lp_Reg_gauss_curv_curv__conc_quan}. \end{equation*}
If such a radius did not exists, then there would exist a constant $\ve>0$ and a sequence $y_n\in\sphere^2$ such that $\confmug<k_n>(B_{\nicefrac1n}(y_n))>\ve'$ for some subsequence of $\confmug<k_n>$. By the compactness $\sphere^2$, we can assume that $y_n$ converges to some $y\in\sphere^2$ for which therefore
\[ \confmug<k_n>(B_r^{\sphere}(y)) \ge \confmug<k_n>(B_{\frac1n}^{\sphere}(y_n)) \ge \ve' \qquad\forall\,n>N \]
holds if $N$ is so large that $r>\vert y_n-y\vert+\nicefrac1n$ for every $n\ge N$ and where $r>0$ is arbitrary. This implicates $\varrho(y)\ge\ve'$ contradicting \eqref{Lp_Reg_gauss_curv_curv__conc}. Hence, there exists such a radius for every $\ve'>0$, i.\,e.\ \eqref{Lp_Reg_gauss_curv_curv__conc_quan} holds. Therefore, we can without loss of generality assume that $N:=(1,0,0)\in\sphere^2$ satisfies
\begin{equation*}
 \confmug<n>(B_r^{\sphere}(P)) \le \confmug<n>(B_r^{\sphere}(N)) \le \pi \qquad\forall\,n\in\N,\,P\in\sphere^2 \labeleq{Lp_Reg_gauss_curv__Ass_on_N}
\end{equation*}
for some fixed radius $r>0$. \smallskip

Let $P\in\{N,S\}$ be one of the poles, $N:=(1,0,0)$ and $S:=({-}1,0,0)$, and again denote by $\phi^P$ and $\conft<n>^P$ the corresponding stereographic projection and conformal factor, respectively. By the existence of the above uniform radius $r>0$, there exists a $s>0$ such that $\int_{B_s^2(x)}\exp(2 \conft<n>^P) \d x \le \pi $ holds for every $x\in\R^2$. With $\int_{\R^2}\exp(2\,\conft<n>^P)\d x\le8\pi-2\delta<\infty$, we deduce the existence of a sequence $y_n^P$ with
\[ \int_{B^2_s(x)} \exp(2\,\conft<n>^P)\d x \le \int_{B^2_s(y_n^P)} \exp(2\,\conft<n>^P) \d x \le \pi\qquad\forall\,x\in\R^2 \]
and by \eqref{Lp_Reg_gauss_curv__Ass_on_N} $y_n^N=0$. Let us know prove that $B_s^2(y_n^P)$ contains some positive mass bounded away from $0$, i.\,e. 
\begin{equation*} \exists\,\ve'>0:\quad\forall\,n\in\N:\quad\confmug<n>(B_s^2(y_n^P)) \ge \ve'. \labeleq{Lp_Reg_gauss_curv__min_mass}\end{equation*}
Again, we prove this by a contradiction argument. Therefore, we assume that $\liminf_n\int_{B_s(y_n^P)} \exp(2\,\conft<n>^P) \d x=0$. This implies
\[ \frac\delta2 \le \int_{B_1(0)} \exp(2\,\conft<n>^P) \d x \le C\,\int_{B_s(y_{k_n}^P)} \exp(2\,\conft<k_n>^P) \d x \xrightarrow{n\to\infty} 0 \]
for some subsequence $k_n$, where $C$ depends only on $s$. Here, we used that we can cover $B_1(0)$ with finite many balls $B_s(p)$ and that each of these balls contains less mass than $B_s(y_n^P)$. As this is again a contradiction, we know
\[ \liminf_{n\to\infty}\int_{B_s(y_n^P)} \exp(2\,\conft<n>^P) \d x>0. \]

Now, we prove that $\conf<n>$ is bounded in $\Wkp^{2,p}(\sphere^2,\sphg*)$. Let therefore $\conf<k_n>\to\infty$ be a $\Wkp^{2,p}$-norm maximizing subsequence, i.\,e.\ 
\[ \sup_{n\in\N} \Vert \conf<n>\Vert_{\Wkp^{2,p}(\sphere^2,\sphg*)} = \sup_{n\in\N} \Vert \conf<k_n>\Vert_{\Wkp^{2,p}(\sphere^2,\sphg*)}. \]
With the same argument as in the proof of Corollary~\ref{Conc_pts__vanash_Calabi_en__cor}, we can use Proposition~\ref{Conc_pts__vanash_Calabi_en__resc} to conclude that a subsequence $\conft<n>'^P:=\conft<k_n'>^P(x+y_{k_n}^P)$ of $\conft<k_n>^P(x+y_n^P)$ converges in $\Wkp^{1,q}_{\text{loc}}(\R^2)$ to $\conft'^P$, where $\conft'^P$ is as $\confth$ in Proposition~\ref{Conc_pts__vanash_Calabi_en__resc} and $q=\frac{2-p}{2p}$ if $p<2$ and $q\in\interval2\infty$ arbitrary if $p\ge2$. We see that this implies that $\Vert \conft<k_n'>'^P\Vert_{\Wkp^{2,p}(B_R(0))}$ is bounded for every $R>0$, see the proof of Proposition \ref{Conc_pts__vanash_Calabi_en__resc}. This implies $\Vert\conf<k_n'>\Vert_{\Hk^2(\sphere^2,\sphg*)}\le C$ for some constant $C\ge0$ if $\sup_n\vert y_{k_n}^P\vert<\infty$. But if there existed a subsequence $\vert y_{l_n}^S\vert\ge n+s$, then
\[ \int_{B_{\frac1n}(0)} \exp(2\,\conft<l_n>'^N) \d x = \int_{\R^2\setminus B_n(0)} \exp(2\,\conft<l_n>'^S) \d x \ge \int_{B_s(y_{l_n}^P)} \exp(2\,\conft<l_n>'^S)\d x \ge \ve' \]
would again contradict $\varrho(N)=0$, where we again used \eqref{Lp_Reg_gauss_curv__min_mass}. Thus, all in all we know
\[ \sup_{n\in\N} \Vert \conf<n>\Vert_{\Wkp^{2,p}(\sphere^2,\sphg*)} = \sup_{n\in\N} \Vert \conf<k_n'>\Vert_{\Wkp^{2,p}(\sphere^2,\sphg*)} =: c < \infty. \]
By the compactness of the Sobolev embeddings, we can therefore assume that $\conf<n>$ converges in $\Wkp^{1,q}(\sphere^2,\sphg*)$ to a function $\conf\in\Wkp^{1,q}(\sphere^2,\sphg*)$ and only have to prove $\conf\equiv0$, where $q$ is as above. However, we know that
\[ {-}\laplace \conf<n> = \confgauss<n> - \exp({-}2\conf<n>) \xrightarrow[\text{in } \Lp^p(\sphere^2,\sphg*)]{n\to\infty} 1 - \exp({-}2\,\conf), \]
where we used that $\frac1C\,\confmug\le\confmug<n>\le C\,\confmug$ due to the boundedness of $\conf<n>$. We get
\[ {-}\laplace\conf = 1 - \exp({-}2\,\conf) \quad\text{weakly in }\sphere^2 \]
by combining the two above convergences. The convergence of $\conf<n>$ furthermore implies $\Vert\conf\Vert_{\Wkp^{1,q}(\sphere^2)}\le C$. Using the regularity of the Laplace operator, we conclude $\conf\in\Ck^\infty(\sphere^2)$ and ${-}\laplace\conf = 1 - \exp(2\,\conf)$ pointwise everywhere in $\sphere^2$. Chen-Li's classification theorem \cite[Thm~1]{chen1991} therefore implies that there is a point $x_0$ and a factor $\lambda>0$ such that
\[ \conft(x) = \ln(\frac{2\lambda}{\lambda^2+\vert x-x_0\vert^2}), \]
where $\conft$ is the conformal factor after the stereographic projection, i.\,e.\ $\conft$ is defined by ${\varphi^N}_*(\exp(2\,\conf)\sphg)=:\exp(2\,\conft)\outg*$. In particular, we conclude $4\pi=\int_{\sphere^2} \exp(2\,\conf) \d\confmug$.
Correspondingly, \eqref{Lp_Reg_gauss_curv__balanced} implies
\begin{align*}
	\int_{B_1^2(0)}\exp(2\,\conft) \d x ={}& \int_{\sphere^2\cap\{x_1\le0\}} \exp(2\,\conf) \d\confmug = \int_{\sphere^2\cap\{x_1\ge0\}} \exp(2\,\conf) \d\confmug \\
		={}& \int_{\R^2\setminus B_1^2(0)}\exp(2\,\conft) \d x,
\end{align*}
i.\,e.\ $\lambda=1$ and $x_0=0$. Hence, $\conft(x) = \ln(\nicefrac2{(1+\vert x\vert^2)})$ and therefore $\exp(2\,\conf(x))\equiv1$, i.\,e.\ $\conf\equiv0$. This means $\Vert\conf<n>\Vert_{\Wkp^{1,q}(\sphere^2,\sphg*)}\to0$ for $n\to\infty$. Thus, $\conf<n>$ uniformly converges to $0$. Thus, the assumption on $\confgauss<n>={-}\exp(2\,\conf<n>)\,\laplace\conf<n>$ implies that $\Vert\conf<n>\Vert_{\Wkp^{2,p}(\sphere^2,\sphg*)}\to0$ for $n\to\infty$. This contradicts \eqref{Lp_Reg_gauss_curv__Lp_infty_not} and therefore, we have finally proven \eqref{Lp_Reg_gauss_curv__Lp_infty}.\smallskip

Now, let $\ve>0$ be arbitrary and let $\g*$ be a metric on the sphere $\sphere^2$ satisfying $\confmug(\sphere^2)\in\interval\delta{8\pi-\delta}$ and $\Vert \confgauss-1\Vert_{\Lp^p(\sphere^2,\g*)} \le \ve'$, where $\ve'$ is as in \eqref{Lp_Reg_gauss_curv__Lp_infty}. Using conformal maps, we can again assume that the balancing condition \eqref{Lp_Reg_gauss_curv__balanced} is satisfied and therefore \eqref{Lp_Reg_gauss_curv__Lp_infty} implies $\Vert\conf\Vert_{\Wkp^{2,p}(\sphere^2,\sphg*)} \le \ve$, where $\conf$ is the corresponding conformal factor, i.\,e.\ $\g*=\exp(2\,\conf)\,\sphg*$. Furthermore, we know that the Fr\'echet derivative of the Gau\ss\ curvature map $\boldsymbol{\confgauss}:\conf'\mapsto {-}\laplace \conf' + \exp({-}2\,\conf')$ is 
\[ D\confgauss(0):\Wkp^{2,p}(\sphere^2,\sphg*)\to\Lp^p(\sphere^2,\sphg*):\conf'\mapsto{-}\laplace\conf' - 2\relax\conf' \]
and that $D\confgauss(\conf)$ is continuous in $\conf\in\Wkp^{2,p}(\sphere^2,\sphg*)$. As ${+}2$ is not a Eigenvalue of $\laplace$ on the Euclidean standard sphere, this derivative is invertible. Thus, there is are constants $\eta>0$ and $\eta'>0$ such that for every $\confgauss'\in\Lp^p(\sphere^2,\sphg*)$ with $\Vert\confgauss'-1\Vert_{\Lp^p(\sphere^2,\sphg*)}\le\eta'$ there exists exactly one conformal factor $\conf'\in\Wkp^{2,p}(\sphere^2,\sphg*)$ with $\Vert\conf'\Vert_{\Wkp^{2,p}(\sphere^2,\sphg*)}\le\eta$ and $\boldsymbol{\confgauss}(\conf')=\confgauss'$. Furthermore, we see $\Vert\conf''\Vert_{\Wkp^{2,q}(\sphere^2,\sphg*)}\le \Vert L_{\g*'}\conf''\Vert_{\Lp^q(\sphere^2,\sphg*)}$ for every $q\in\interval1\infty$ and for the Fr\'echet derivative $L_{\g*'}$ of the Gau\ss\ curvature map for metrics $\g*'$ in a small $\Wkp^{2,p}$-neighborhood of $\sphg*$. Thus, we conclude 
\[ \Vert\conf'\Vert_{\Wkp^{2,q}(\sphere^2,\sphg*)} \le C\,\Vert\boldsymbol{\confgauss}(\conf')-1\Vert_{\Lp^q(\sphere^2,\sphg*)} \qquad\forall\,\Vert\conf'\Vert_{\Wkp^{2,p}(\sphere^2,\sphg*)} < \eta,\;q\in\interval1\infty \]
for some constant $C$ depending only on $p\in\interval1\infty$. By choosing a sufficiently small $\ve'\in\interval0{\eta'}$, we conclude
\[ \Vert\conf\Vert_{\Wkp^{2,p}(\sphere^2,\sphg*)} \le C\,\Vert\confgauss-1\Vert_{\Lp^p(\sphere^2,\sphg*)} \le C\,\Vert\confgauss-1\Vert_{\Lp^p(\sphere^2,\g*)}.\qedhere \]
\end{proof}%

\clearpage
\bibliography{bib}
\bibliographystyle{alpha}\vfill
\end{document}